\newcommand{\Z}{\ensuremath{\mathbb{Z}}}
\newcommand{\A}{\ensuremath{\mathbb{A}}}
\newcommand{\F}{\ensuremath{\mathbb{F}}}
\newcommand{\CG}{\ensuremath{\mathfrak{C}}}
\newcommand{\DG}{\ensuremath{\mathfrak{D}}}
\newcommand{\AG}{\ensuremath{\mathfrak{A}}}
\newcommand{\SG}{\ensuremath{\mathfrak{S}}}
\newcommand{\VG}{\ensuremath{\mathfrak{V}}}
\newcommand{\ka}{\ensuremath{\Bbbk}}
\newcommand{\kka}{\ensuremath{\overline{\Bbbk}}}
\newcommand{\XX}{\ensuremath{\overline{X}}}
\newcommand{\ii}{\ensuremath{\mathrm{i}}}
\newcommand{\Pro}{\ensuremath{\mathbb{P}}}
\newcommand{\Aut}{\ensuremath{\operatorname{Aut}}}
\newcommand{\Gal}{\ensuremath{\operatorname{Gal}}}
\newcommand{\Pic}{\ensuremath{\operatorname{Pic}}}
\newcommand{\ord}{\ensuremath{\operatorname{ord}}}
\newtheorem{theorem}[equation]{Theorem}
\newtheorem{proposition}[equation]{Proposition}
\newtheorem{lemma}[equation]{Lemma}
\newtheorem{corollary}[equation]{Corollary}
\theoremstyle{definition}
\newtheorem{example}[equation]{Example}
\newtheorem{definition}[equation]{Definition}
\theoremstyle{remark}
\newtheorem{remark}[equation]{Remark}
\newtheorem{notation}[equation]{Notation}
\title{Quotients of del Pezzo surfaces of degree 2}
\address{Institute for Information Transmission Problems, 19 Bolshoy Karetnyi side-str., Moscow 127994, Russia}
\address{Laboratory of Algebraic Geometry, National Research University Higher School of Economics, 6 Usacheva str., Moscow 119048, Russia}
\email{trepalin@mccme.ru}
\thanks{The author of article was supported by the Russian Academic Excellence Project '5--100', Young Russian Mathematics award, and the grant RFFI 15-01-02164-a}
\author{Andrey Trepalin}
\begin{document}

\begin{abstract}
Let $\ka$ be any field of characteristic zero, $X$ be a del Pezzo surface of degree~$2$ and $G$ be a group acting on $X$. In this paper we study $\ka$-rationality questions for the quotient surface $X / G$. If there are no smooth $\ka$-points on $X / G$ then $X / G$ is obviously non-$\ka$-rational.

Assume that the set of smooth $\ka$-points on the quotient is not empty. We find a list of groups such that the quotient surface can be non-$\ka$-rational. For these groups we construct examples of both $\ka$-rational and non-$\ka$-rational quotients \mbox{of both $\ka$-rational} and non-$\ka$-rational del Pezzo surfaces of degree $2$ such that the $G$-invariant Picard number of $X$ is~$1$. For all other groups we show that the quotient~$X / G$ is always $\ka$-rational.
\end{abstract}

\keywords{Rationality problems, del Pezzo surfaces, Minimal model program, Cremona group}

\maketitle
\tableofcontents

\section{Introduction}

In this paper we study quotients of del Pezzo surfaces of degree two by finite groups of automorphisms over arbitrary field $\ka$ of characteristic $0$. A surface $S$ is called \textit{$\ka$-rational} if there exists a birational map $\Pro^2_{\ka} \dashrightarrow S$ defined over $\ka$. If for the algebraic closure $\kka$ of~$\ka$ such a map defined over $\kka$ exists for a surface $\overline{S} = S \otimes_{\ka} \kka$ and $\Pro^2_{\kka}$, we say that $S$ is \textit{rational}. Note that in many other papers for these notions the authors use terms \textit{rational surface} and \textit{geometrically rational surface} respectively. A smooth surface $S$ is called \textit{minimal} if any birational morphism of smooth surfaces $S \rightarrow S'$ is an isomorphism. The following theorem is an important criterion of $\ka$-rationality over an arbitrary perfect field $\ka$.

\begin{theorem}[{\cite[Chapter 4]{Isk96}}]
\label{ratcrit}
A minimal rational surface $X$ over a perfect field $\ka$ is $\ka$-rational if and only if the following two conditions are satisfied:

(i) $X(\ka) \neq \varnothing$;

(ii) $K_X^2 \geqslant 5$.
\end{theorem}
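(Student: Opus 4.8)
The plan is to reduce to a short list of surface types by the classification of minimal rational surfaces, treat the two implications separately, and use the factorization of birational maps of surfaces into elementary links for the necessity part. I would first recall the classical classification of minimal rational surfaces over a perfect field (Enriques, Manin, Iskovskikh): if $X$ is minimal and rational, then either $X$ is a del Pezzo surface with $\operatorname{rk}\Pic(\XX)^{\Gal(\kka/\ka)}=1$, or $X$ carries a conic bundle structure $\pi\colon X\to C$ over a smooth genus-zero curve with $\operatorname{rk}\Pic(\XX)^{\Gal(\kka/\ka)}=2$; in every case $1\leqslant K_X^2\leqslant 9$. Since $\Pic(X)=\Pic(\XX)^{\Gal(\kka/\ka)}$ for a rational surface, this reduces the whole question to bookkeeping by the pair consisting of the type of the Mori fibration and the value of $K_X^2$.

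For the sufficiency --- $X(\ka)\neq\varnothing$ and $K_X^2\geqslant 5$ force $\ka$-rationality --- I would run through the minimal surfaces with $K_X^2\geqslant 5$ and build a birational map to $\Pro^2$ from a $\ka$-point $p$. A minimal surface with $K_X^2=9$ is a Severi--Brauer surface, and one with a $\ka$-point is $\Pro^2$. A minimal surface with $K_X^2=8$ is either a smooth quadric in $\Pro^3$, in which case projection from $p$ gives a birational map to $\Pro^2$, or a $\Pro^1$-bundle over a conic $C$, in which case $C$ gains a $\ka$-point, so $C\cong\Pro^1$ and $X$ is $\ka$-rational. A minimal del Pezzo surface of degree $6$ with a $\ka$-point is $\ka$-rational by Manin's theorem. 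A del Pezzo surface of degree $5$ always has a $\ka$-point and is always $\ka$-rational, classically. Finally, minimal conic bundles with $5\leqslant K_X^2\leqslant 7$ are birational over $\ka$ to del Pezzo surfaces of the same degree and are handled as above; no minimal del Pezzo surface of degree $7$ exists, since its distinguished $(-1)$-curve is defined over $\ka$ and can be contracted.

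For the necessity, condition (i) is immediate: a $\ka$-rational $X$ is birational to $\Pro^2$, which carries smooth $\ka$-points, and by the Lang--Nishimura lemma a smooth $\ka$-point propagates across a birational map of proper $\ka$-varieties, so $X(\ka)\neq\varnothing$. The real content is condition (ii): a minimal del Pezzo surface with $K_X^2\leqslant 4$ and a minimal conic bundle with $K_X^2\leqslant 4$ are not $\ka$-rational. Here I would invoke Iskovskikh's factorization theorem (the two-dimensional Sarkisov program), that every birational map between minimal rational surfaces is a composition of elementary links of types I--IV, and track the discrete invariants along such a chain: every link issuing from a del Pezzo surface of degree $\leqslant 4$ or a conic bundle of degree $\leqslant 4$ again produces a surface of the same kind and of degree $\leqslant 4$. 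Hence $\Pro^2$ cannot be reached, and $X$ is not $\ka$-rational.

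The principal obstacle is exactly this last step: proving the link-factorization theorem and carrying out the ensuing case analysis. It rests on the full relative Mori theory of surfaces over a perfect field --- the untwisting of birational maps, Noether--Fano--Iskovskikh-type inequalities, and the classification of elementary links emanating from del Pezzo surfaces of degrees $1,2,3,4$ and from conic bundles with few degenerate fibers --- which is the technical core of \cite{Isk96}. Everything else is either elementary or a citation of classical facts about del Pezzo surfaces of degree $\geqslant 5$.
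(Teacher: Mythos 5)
Your outline is correct and coincides with the approach of the source the paper cites: Theorem \ref{ratcrit} is quoted from \cite{Isk96} without proof, and your sketch (classification of minimal rational surfaces into rank-one del Pezzo surfaces and rank-two conic bundles, Lang--Nishimura for the existence of a point, case-by-case $\ka$-rationality for $K_X^2 \geqslant 5$, and the elementary-link factorization to rule out $K_X^2 \leqslant 4$) is exactly the classical argument carried out there. The only imprecision is the remark about ``minimal conic bundles with $5 \leqslant K_X^2 \leqslant 7$'' --- relatively minimal conic bundles with these degrees are in fact never minimal as surfaces, so they simply do not occur in the classification --- but this does not affect the proof.
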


If the field $\ka$ is algebraically closed then the quotient of any $\ka$-rational surface by any finite group $G$ is $\ka$-rational by Castelnuovo's criterion. But if $\ka$ is not algebraically closed, then for a finite geometric group $G \subset \Aut_{\ka}(X)$ the quotient $X / G$ can be non-$\ka$-rational. For example, by \cite[Theorem~IV.7.8]{Man74} any del Pezzo surface $X$ of degree $4$ such that $X(\ka) \neq \varnothing$, is $\ka$-unirational of degree~$2$ (i.e. birationally equivalent to a quotient of a $\ka$-rational surface by an involution), but a minimal del Pezzo surface of degree $4$ is not $\ka$-rational by Theorem \ref{ratcrit}.

By \cite[Theorem 1]{Isk79} any quotient of $\ka$-rational surface by a finite group $G$ of automorphisms of this surface is birationally equivalent either to a quotient $X / G$ of a $G$-equivariant conic bundle $X \rightarrow \Pro^1_{\ka}$ such that~$\rho(X)^G = 2$; or to a quotient $X / G$ of a del Pezzo surface~$X$ such that $\rho(X)^G = 1$. Quotients of conic bundles were considered in \cite{Tr16a}. Quotients of del Pezzo surfaces of degree $4$ and higher were considered in \cite{Tr17}.

\begin{theorem}[{\cite[Theorem 1.1]{Tr17}}]
\label{DP4more}
Let $\ka$ be a field of characteristic zero, $X$ be a del Pezzo surface over $\ka$ such that $X(\ka) \ne \varnothing$, and $G$ be a finite subgroup of $\Aut_{\ka}(X)$. If $K_X^2 \geqslant 5$ then the quotient variety $X / G$ is $\ka$-rational.

If $K_X^2 = 4$, and the order of $G$ is not equal to $1$, $2$ and $4$, or there is a nontrivial element in $G$ that has a curve of fixed points, then $X / G$ is $\ka$-rational. There exists an example of a non-$\ka$-rational quotient $X / G$ for a suitable field $\ka$, if the order of $G$ is equal to~$1$, $2$, or~$4$, and all nontrivial elements of $G$ have only isolated fixed points.
\end{theorem}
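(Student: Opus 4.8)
The plan is to reduce $X/G$ to a $G$-minimal model, then combine the classification of minimal geometrically rational surfaces (Theorem~\ref{ratcrit}) with the known results on quotients of $\Pro^2_{\ka}$ and of conic bundles, and finally treat the degree-$4$ case by an explicit case analysis. \emph{Step 1 (equivariant reduction).} Since $X$ is geometrically rational, run the $G$-equivariant minimal model program on $X$; by \cite[Theorem~1]{Isk79} the output is a $G$-surface $X'$, $G$-birational to $X$ (so $X/G$ and $X'/G$ are $\ka$-birational), which is either a $G$-del Pezzo surface with $\rho(X')^G=1$ or a $G$-conic bundle $X'\to\Pro^1_{\ka}$ with $\rho(X')^G=2$. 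Only contractions of $G$-orbits of pairwise disjoint $(-1)$-curves occur, so $K_{X'}^2\geqslant K_X^2$; and as $X$ is $\ka$-unirational (being a del Pezzo surface of degree $\geqslant 4$ with a $\ka$-point) so is $X'$, whence $X'(\ka)\neq\varnothing$. If $X'$ is a $G$-conic bundle, the $\ka$-rationality of $X'/G$ is decided by \cite{Tr16a}; checking this against our hypotheses reduces everything to the del Pezzo case, so from now on assume $X'$ is a $G$-del Pezzo surface with $\rho(X')^G=1$, and set $d=K_{X'}^2\geqslant K_X^2$.

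\emph{Step 2 (degree $\geqslant 5$).} If $d\geqslant 5$ we argue degree by degree, using the description of $\Aut_{\kka}(X')$ and the list of $G$-minimal del Pezzo surfaces of each degree with $\rho^G=1$. For $d=9$ the $\ka$-point forces $X'\cong\Pro^2_{\ka}$, and $\Pro^2_{\ka}/G$ is always $\ka$-rational; for $d=8$ one reduces to quotients of quadric surfaces and of Hirzebruch surfaces; for $d\in\{5,6,7\}$ one uses the $\ka$-rational conic bundle structures of degree $\geqslant 5$ on $X'$ to realize $X'/G$ birationally as a del Pezzo surface of degree $\geqslant 5$ with a $\ka$-point or as a rational conic bundle. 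In every case $X/G$ is $\ka$-rational by Theorem~\ref{ratcrit}. This settles $K_X^2\geqslant 5$, and also the sub-case of $K_X^2=4$ in which some equivariant contraction has raised the degree.

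\emph{Step 3 (degree $4$).} There remains $X'=X$, a $G$-minimal del Pezzo surface of degree $4$ with $\rho(X)^G=1$ and $X(\ka)\neq\varnothing$. One enumerates the finite $G\subseteq\Aut_{\ka}(X)$ that can act $G$-minimally on such an $X$ (via the $G$-action on the $16$ lines and the subgroup structure of $W(D_5)$) and records, for each, the fixed loci of the elements of $G$. If some nontrivial $g\in G$ fixes a curve, the quotient is branched in codimension one and one exhibits on a resolution of $X/G$ a conic bundle or high-degree del Pezzo model that is $\ka$-rational; likewise if $|G|\notin\{1,2,4\}$ the fixed-point combinatorics forces a relatively minimal model of a resolution of $X/G$ to be a del Pezzo surface of degree $\geqslant 5$ with a $\ka$-point or a rational conic bundle; so $X/G$ is $\ka$-rational by Theorem~\ref{ratcrit}. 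In the complementary case $|G|\in\{1,2,4\}$ with every nontrivial element having only isolated fixed points, one constructs non-$\ka$-rational examples over a suitable $\ka$ (e.g.\ $\R$ or a number field): for $G=1$ any $\ka$-minimal del Pezzo surface of degree $4$ with a $\ka$-point is non-$\ka$-rational by Theorem~\ref{ratcrit}, and for $|G|=2,4$ one chooses $X$ and the embedding $G\hookrightarrow\Aut_{\ka}(X)$ so that a relatively minimal model of a resolution of $X/G$ is a $\ka$-minimal del Pezzo surface or conic bundle with $K^2\leqslant 4$, again non-$\ka$-rational by Theorem~\ref{ratcrit}.

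\emph{Main obstacle.} The crux is Step~3. On the positive side one needs a complete, correct list of the pairs $(X,G)$ together with the fixed loci of every element of $G$, and the full (delicate) $\ka$-rationality criterion for minimal conic bundles; an overlooked $G$-minimal action or a miscounted fixed locus would break the dichotomy. On the negative side the examples for $|G|=2,4$ require simultaneously arranging the arithmetic of $\ka$, the $\ka$-form of $X$, and the embedding of $G$ so that the quotient carries no rationalizing Sarkisov link, and then verifying $\ka$-minimality and non-$\ka$-rationality of the resulting surface.
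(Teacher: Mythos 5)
Note first that the paper does not prove this statement at all: Theorem \ref{DP4more} is quoted from \cite[Theorem 1.1]{Tr17} and used as background, so the only meaningful comparison is with the strategy of that cited paper (which the present paper mirrors in degree $2$). Your outline does follow that strategy in broad strokes --- $G$-equivariant MMP, the $K^2\geqslant 5$ reduction, a degree-$4$ case analysis together with explicit non-rational examples --- but as written it is a plan rather than a proof, and the decisive steps are asserted, not argued.

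Concretely: (i) in Step 1 the claim that the conic bundle output of the $G$-MMP is ``decided by \cite{Tr16a}'' and that this ``reduces everything to the del Pezzo case'' is not correct as stated; a relatively $G$-minimal conic bundle with $K^2=4$ (the degree-$4$ surface itself with $\rho^G=2$) is a case in its own right, treated by the quotient-of-conic-bundles result (Theorem \ref{Cbundle} here), and it is precisely where the exceptional orders $2$ and $4$ come from --- it cannot be folded into the $\rho^G=1$ del Pezzo case. (ii) In Step 3 the central assertion ``if some nontrivial $g$ fixes a curve, one exhibits a rational model'' is exactly the content of the theorem and is nowhere argued; the analogous claim is \emph{false} in degree $2$ (that is the point of Theorem \ref{DP2briefly}), so ``branched in codimension one'' by itself proves nothing --- the real argument computes the quotient via the Hurwitz formula and the resolution of the quotient singularities, produces a model with $K^2\geqslant 5$, and then invokes Theorem \ref{ratcrit} using a $\ka$-point. (iii) The non-$\ka$-rational examples for $|G|=2,4$ are only declared constructible; arranging the field, the $\ka$-form of $X$, the action, and verifying minimality and non-rationality of the quotient model is a substantial part of the cited proof, not a remark. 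A smaller slip: in Step 2 the appeal to ``$\ka$-rational conic bundle structures of degree $\geqslant 5$ on $X'$'' is misplaced when $\rho(X')^G=1$, since no $G$-equivariant conic bundle structure exists then; the degree-by-degree argument again goes through quotient singularities and $K^2$ computations. So the approach is the right one, but the genuinely hard parts of the statement are left unproved.
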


Moreover we have the following corollary, that is a generalization of Theorem \ref{ratcrit}.

\begin{corollary}[{\cite[Corollary 1.2]{Tr17}}]
\label{geq5}
Let $\ka$ be a field of characteristic zero, $X$ be a smooth rational surface over $\ka$ such that $X(\ka) \ne \varnothing$, and $G$ be a finite subgroup of $\Aut_{\ka}(X)$. If $K_X^2 \geqslant 5$ then the quotient variety $X / G$ is $\ka$-rational.
\end{corollary}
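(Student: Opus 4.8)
The plan is to run the $G$-equivariant minimal model program on $X$ and then dispose of the two possible outputs separately: Theorem~\ref{DP4more} handles the del Pezzo output, and the classification of quotients of conic bundles handles the other. Concretely, I would first produce a $G$-equivariant birational morphism $\phi\colon X\to Y$ over $\ka$ onto a $G$-minimal smooth rational surface $Y$. Three facts then travel along $\phi$ for free. Since $\phi$ is a composition of contractions of $G$-orbits of pairwise disjoint $(-1)$-curves, $K_Y^2\geqslant K_X^2\geqslant 5$. Since $\phi$ is a morphism over $\ka$, the image of a point of $X(\ka)$ is a point of $Y(\ka)$, so $Y(\ka)\ne\varnothing$. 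And $\phi$, being an isomorphism over a $G$-invariant dense open subset of $Y$, induces a birational map $X/G\dashrightarrow Y/G$ over $\ka$; hence it is enough to show that $Y/G$ is $\ka$-rational. By \cite[Theorem~1]{Isk79}, as $Y$ is rational it is either a del Pezzo surface with $\rho(Y)^G=1$, or a surface with a $G$-equivariant conic bundle structure $\pi\colon Y\to\Pro^1_{\ka}$ with $\rho(Y)^G=2$. If $Y$ is a del Pezzo surface, then $K_Y^2\geqslant 5$ and $Y(\ka)\ne\varnothing$, so $Y/G$ is $\ka$-rational by Theorem~\ref{DP4more} and we are done.

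Suppose instead that $Y$ is not a del Pezzo surface, so $\pi\colon Y\to\Pro^1_{\ka}$ is a $G$-equivariant conic bundle with $\rho(Y)^G=2$; then the number of degenerate fibres of $\pi$ equals $8-K_Y^2$, hence is at most $3$. Here $G$ acts on the base through its image $\bar G\subseteq\Aut_{\ka}(\Pro^1_{\ka})$, and the quotient morphism factors through $Y/G\to\Pro^1_{\ka}/\bar G$. Since $Y(\ka)\ne\varnothing$, the smooth rational curve $\Pro^1_{\ka}/\bar G$ carries a $\ka$-point and so is isomorphic to $\Pro^1_{\ka}$; thus $Y/G$ comes equipped with a fibration into rational curves over $\Pro^1_{\ka}$ whose total space has a $\ka$-point. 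The $\ka$-rationality of $Y/G$ under exactly these hypotheses — a $G$-equivariant conic bundle with $\rho^G=2$, at most three degenerate fibres, and a $\ka$-point on $Y$ — is what the analysis of quotients of conic bundles in \cite{Tr16a} provides, and combining this with the del Pezzo case above completes the proof.

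I expect the conic bundle case to be the only genuine obstacle. It cannot simply be folded into Theorem~\ref{DP4more}, because there really are $G$-minimal conic bundles with $K^2\geqslant 5$ that are not del Pezzo surfaces — for instance $G$-stable conic bundle structures on $\mathbb{F}_2$, or on blow-ups of $\mathbb{F}_2$ at points in pairwise distinct fibres. So one must argue directly and over $\ka$ with the fibration $Y/G\to\Pro^1_{\ka}$: understand its generic fibre (a conic over $\ka(t)$ arising as a quotient of $\Pro^1$, which has to be shown to acquire a rational point) and its degenerate fibres well enough to exhibit an explicitly $\ka$-rational birational model, ultimately by appeal to Theorem~\ref{ratcrit}. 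That work is done in \cite{Tr16a}, and invoking it is the one step of the argument that is not elementary once Theorem~\ref{DP4more} is available.
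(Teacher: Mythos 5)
Your proposal is correct and follows essentially the same route as the source: the paper itself only imports this statement from \cite[Corollary 1.2]{Tr17}, and Remark \ref{geq5touse} records precisely the mechanism you use, namely that the $G$-equivariant minimal model dichotomy of \cite[Theorem 1]{Isk79} reduces everything to the del Pezzo case (handled by Theorem \ref{DP4more}, i.e.\ \cite{Tr17}) and the conic bundle case with at most three degenerate fibres (handled by \cite{Tr16a}), after which the $\ka$-point carried along the morphism and Theorem \ref{ratcrit} finish the argument. The only detail you leave implicit is that one needs a \emph{smooth} $\ka$-point on the quotient (obtained from density of $X(\ka)$, since $X$ is itself $\ka$-rational by Theorem \ref{ratcrit}) to pass the point to a minimal model, a point the paper itself emphasizes in Remark \ref{density}, but this is exactly what the cited results supply.
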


\begin{remark}
\label{geq5touse}
Actually, in \cite{Tr16a} and \cite{Tr17} it is proved that if $X$ is a smooth rational surface over $\ka$ such that $K_X^2 \geqslant 5$ and $G$ is a finite subgroup of $\Aut_{\ka}(X)$ then the quotient~$X / G$ is birationally equivalent to a surface $Y$ such that $K_Y^2 \geqslant 5$.

\end{remark}

For del Pezzo surfaces of degree $5$ or less the automorphisms group is finite. Therefore in the following theorems the condition, that $G$ is finite, is not necessary.

Quotients of del Pezzo surfaces of degree $3$ were considered in \cite{Tr16b}.

\begin{theorem}[{\cite[Theorem 1.3]{Tr16b}}]
\label{DP3}
Let $\ka$ be a field of characteristic zero, $X$ be a del Pezzo surface over $\ka$ of degree $3$ such that $X(\ka) \neq \varnothing$, and $G$ be a subgroup of $\Aut_{\ka}(X)$. If the order of $G$ is not equal to $1$ or $3$, or there is a nontrivial element in $G$ that has a curve of fixed points, then $X / G$ is $\ka$-rational. There exists an example of a non-$\ka$-rational quotient $X / G$ for a suitable field $\ka$, if the order of $G$ is equal to $1$ or $3$, and all nontrivial elements of $G$ do not have curves of fixed points.
\end{theorem}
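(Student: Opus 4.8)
The plan is to resolve the quotient, reduce to a $G$-minimal cubic surface by an equivariant minimal model program, and then either use the ramification of the quotient map to pass to a surface of degree $\geqslant 5$, or analyse the resolution of $X/G$ directly. Since $X(\ka)\neq\varnothing$ the cubic surface $X$ is $\ka$-unirational, so $X/G$ has a smooth $\ka$-point, and by Theorem~\ref{ratcrit} it suffices to exhibit a smooth rational surface $\ka$-birational to $X/G$ whose minimal model has degree $\geqslant 5$ (and, for the examples, one whose minimal model has degree $\leqslant 4$). Running the $G$-equivariant minimal model program on the smooth surface $X$ and contracting $\Gal(\kka/\ka)\times G$-orbits of disjoint $(-1)$-curves, we see that $X/G$ is $\ka$-birational either to the quotient of a $G$-equivariant conic bundle $X'\to\Pro^1_{\ka}$ with $\rho(X')^G=2$, which is $\ka$-rational by \cite{Tr16a} and Remark~\ref{geq5touse}, or to the quotient $X'/G$ of a del Pezzo surface of degree $\geqslant 3$ with $\rho(X')^G=1$ and $X'(\ka)\neq\varnothing$; in the latter case Theorem~\ref{DP4more}, Corollary~\ref{geq5} and Remark~\ref{geq5touse} dispose of $K_{X'}^2\geqslant 4$, once one checks that the hypotheses on $G$ forbid the exceptional sub-cases of Theorem~\ref{DP4more}. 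Thus we may assume $\rho(X)^G=1$ and $K_X^2=3$.

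\emph{The key dichotomy.} Using the embedding $\Aut(X\otimes\kka)\hookrightarrow W(E_6)$ and the classification of automorphism groups of smooth cubic surfaces, I would split the groups $G$ according to whether some nontrivial $g\in G$ fixes a curve on $X$ pointwise. Two elementary facts drive the analysis. First, on a smooth cubic surface every involution is projectively $\operatorname{diag}(1,1,1,-1)$ or $\operatorname{diag}(1,1,-1,-1)$, and in either case it fixes a curve pointwise --- a hyperplane section, respectively a line of $X$; more generally, if $g$ fixes a line $L$ pointwise then $L$ is the unique such line, hence $\Gal(\kka/\ka)$-invariant, which contradicts $\rho(X)^G=1$. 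So whenever some $g$ fixes a curve and $\rho(X)^G=1$, that curve is a hyperplane section $C\sim -K_X$ (or a degeneration of one). Second, if such a $g$ has order $n$, then $\pi\colon X\to X/\langle g\rangle$ is ramified along $C$ with index $n$, so $\pi^*K_{X/\langle g\rangle}\sim K_X-(n-1)C\sim nK_X$ and hence $K_{X/\langle g\rangle}^2=\tfrac1n(nK_X)^2=3n$; for $n=2$ this produces a del Pezzo surface of degree $6$ with one $A_1$-singularity, and for $n=3$ a smooth del Pezzo surface of degree $9$, i.e. a form of $\Pro^2_{\ka}$ --- this is the mechanism that lifts the quotient back above degree $5$.

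\emph{The case of a pointwise-fixed curve.} One takes such a $g$, passes to $X/\langle g\rangle$, resolves its (mild cyclic quotient) singularities over the remaining isolated fixed points --- whose number and type are bounded by the Lefschetz fixed point formula, so that the degree stays above $5$ --- to get a smooth rational surface of degree $\geqslant 5$ with a $\ka$-point, and then feeds the residual $G/\langle g\rangle$-action into Corollary~\ref{geq5} and Remark~\ref{geq5touse} to conclude that $X/G$ is $\ka$-rational. For $G$ of even order with $\rho(X)^G=1$ this always applies, because the relevant involution (the element $g^{|G|/2}$ when $G$ is cyclic, or a suitable involution of $G$ in general) cannot fix a line, hence fixes a hyperplane section; and any $G$ containing an element with a pointwise-fixed curve is covered in the same way. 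This is exactly the first clause of the theorem.

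\emph{Only isolated fixed points.} It remains to treat $G$ all of whose nontrivial elements have only isolated fixed points; by the above $G$ is then trivial or cyclic of odd order (among $3,5,9,\dots$). For $|G|=1$ the quotient is $X$ itself, and a $\Gal(\kka/\ka)$-minimal cubic surface with a $\ka$-point is not $\ka$-rational by Theorem~\ref{ratcrit}, which gives the non-$\ka$-rational example. For $|G|\in\{5,9,\dots\}$ I would resolve $X/G$, note that $\pi$ is étale in codimension one so $K_{X/G}^2=3/|G|$ and the singularities are cyclic quotient ones, and then run the $\ka$-minimal model program on the resolution, checking case by case through the short list of groups that a $\Gal(\kka/\ka)$-orbit of disjoint $(-1)$-curves is always available to contract down to a model of degree $\geqslant 5$. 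Finally, for $|G|=3$ take $g$ acting as $\operatorname{diag}(1,1,\omega,\omega^2)$ ($\omega$ a primitive cube root of unity) on a smooth invariant cubic over a suitable field $\ka$ with $\rho(X)^{\langle g\rangle}=1$: then $\pi$ is étale in codimension one, $K_{X/\langle g\rangle}^2=1$, the three isolated fixed points are $A_2$-singularities with crepant resolution, so the minimal resolution is a weak del Pezzo surface of degree $1$ with a $\ka$-point (at the base point of its anticanonical system); choosing $\ka$ so that this surface is $\Gal(\kka/\ka)$-minimal gives a non-$\ka$-rational quotient by Theorem~\ref{ratcrit}, while a different choice of $\ka$ makes it $\ka$-rational. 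The delicate step is this last one: for the cyclic groups of odd order $\geqslant 5$ one must control the $\Gal(\kka/\ka)$-module structure of the Picard lattice of the resolved quotient finely enough to exhibit the required contraction, group by group, while the bookkeeping in the reduction step --- making sure the passage to degree $4$ never enters the non-$\ka$-rational range of Theorem~\ref{DP4more} --- is a secondary nuisance.
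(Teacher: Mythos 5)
This statement is not proved in the present paper at all: it is imported from \cite[Theorem 1.3]{Tr16b}, so there is no proof here to compare with line by line; the cited paper proceeds through the Dolgachev--Iskovskikh classification of automorphism groups of cubic surfaces, always passing to quotients by \emph{normal} subgroups and treating the groups case by case, plus explicit constructions for the examples. Measured against that, your outline has the right general shape (reduce to the $G$-minimal case, use the Hurwitz formula when a curve is fixed pointwise, use Theorem \ref{DP4more}, Corollary \ref{geq5} and Remark \ref{geq5touse} in higher degree, build examples for $|G|=1,3$), but it contains several genuine gaps at exactly the points where the real work lies.

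First, your central mechanism --- quotient by $\langle g\rangle$ for a curve-fixing $g$ and then feed ``the residual $G/\langle g\rangle$-action'' into Corollary \ref{geq5} --- requires $\langle g\rangle$ to be normal in $G$, and your ``suitable involution of $G$ in general'' need not exist: $\SG_3$, $\SG_4$, $\SG_5$ all act on smooth cubic surfaces and have no normal involution. For such groups the cited paper (and the analogous degree-$2$ argument in Section 4 here, cf.\ Lemma \ref{DP2S3}) quotients first by a normal subgroup of odd order whose elements have only isolated fixed points, and only then handles the residual involution; your proposal has no substitute for this. Second, your claim that when $\rho(X)^G=1$ a pointwise-fixed curve must be a hyperplane section is false: uniqueness of the fixed line only gives $\Gal(\kka/\ka)$-invariance, not $G$-invariance, and e.g.\ on the Fermat cubic the involution $(x_0:x_1:x_2:x_3)\mapsto(x_1:x_0:x_3:x_2)$ pointwise fixes the line $x_0+x_1=x_2+x_3=0$ while lying in groups with invariant Picard number $1$; so the dichotomy on which your Hurwitz computation rests does not hold as stated. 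Third, the reduction of the isolated-fixed-point case to ``trivial or cyclic of odd order'' is also false ($\CG_3^2$ acts on the Fermat cubic with every nontrivial element having only isolated fixed points), and for the remaining odd-order groups (order $5$, $9$, \ldots) the rationality of the quotient --- precisely the heart of the theorem --- is deferred to an unexecuted ``case by case'' check of the Galois-module structure, just as the non-$\ka$-rational examples for $|G|=3$ are asserted to exist ``for a suitable $\ka$'' without the explicit equation, Galois group and minimality verification that the cited proof supplies. As it stands the proposal is a plan rather than a proof.
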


In this paper we want to find and classify finite groups~$G$ such that the quotient $X / G$ of a del Pezzo surface $X$ of degree $2$ is not $\ka$-rational. From Theorems \ref{DP4more} and~\ref{DP3} one can get an impression that if $X(\ka) \neq \varnothing$, and the group $G$ contains an element $g$ such that the set of fixed points of $g$ contains a curve, then $X / G$ is going to be $\ka$-rational. We show that this is not always true for del Pezzo surfaces of degree $2$. Moreover, almost all groups such that the quotient $X / G$ can be non-$\ka$-rational, contain an involution that has a curve of fixed points. The main result of this paper is the following.

\begin{theorem}
\label{DP2briefly}
Let $\ka$ be a field of characteristic zero, $X$ be a del Pezzo surface over $\ka$ of degree $2$, and $G$ be a subgroup of $\Aut_{\ka}(X)$ such that there is a smooth $\ka$-point on $X / G$. Then the quotient~$X / G$ is $\ka$-rational for any group $G$ except the following groups:
\begin{itemize}
\item a trivial group;
\item a group of order $2$;
\item a cyclic group of order $4$ containing a unique involution that has a curve of fixed points;
\item an abelian noncyclic group of order $4$ containing a unique involution that has a~curve of fixed points;
\item a dihedral group of order $8$ containing a unique involution that has a curve of fixed points;
\item a quaternion group of order $8$ containing a unique involution that has a curve of fixed points;
\item a group of order $3$ that has only isolated fixed points;
\item a symmetric group of degree $3$ generated by involutions that have only isolated fixed points.
\end{itemize}
For each of the latter groups there exists an example of a non-$\ka$-rational quotient $X / G$ for a suitable field $\ka$.
\end{theorem}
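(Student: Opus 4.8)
The plan is to prove the two halves of the statement separately: $\ka$-rationality of $X/G$ for every group not on the list, and the existence of non-$\ka$-rational quotients for the eight exceptional types. For the rationality half the basic device is the Geiser involution. Since $K_X^2=2$, the system $|-K_X|$ is base point free and presents $X$ as a double cover $\pi\colon X\to\Pro^2_{\ka}$ branched over a smooth plane quartic $B$, with deck transformation the Geiser involution $\gamma\in\Aut_{\ka}(X)$, which is central. If $\gamma\in G$ then $X/G\cong\Pro^2_{\ka}/(G/\langle\gamma\rangle)$ is a quotient of $\Pro^2_{\ka}$, hence $\ka$-rational by Corollary \ref{geq5}; so I may assume $\gamma\notin G$. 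Then $G$ acts faithfully on $\Pro^2_{\ka}$, the surface $Z:=\Pro^2_{\ka}/G$ is again $\ka$-rational by Corollary \ref{geq5}, and $X/G\to Z$ is, birationally, the double cover given by the involution of $X/G$ induced by $\gamma$.

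Next I would reduce to the case that $X$ is $G$-minimal. Running the $G$-equivariant minimal model program contracts only $G$-orbits of disjoint $(-1)$-curves, so $X$ is $G$-birational to a $G$-minimal surface $X'$ which is either a del Pezzo surface of degree $\geqslant 2$ or a $G$-conic bundle with $\rho(X')^G=2$. If $K_{X'}^2\geqslant 5$, or $X'$ is a conic bundle, or $X'$ is a del Pezzo surface of degree $3$ or $4$, then Corollary \ref{geq5}, the results of \cite{Tr16a}, Theorem \ref{DP3}, or Theorem \ref{DP4more} apply; in the finitely many sub-cases of those theorems that do not directly give $\ka$-rationality the group $G$ is one of those in the present statement (this uses a separate check that the exceptional configurations of \cite{Tr16b} and \cite{Tr17} which can be reached from a del Pezzo surface of degree $2$ all correspond to groups already on the list). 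Hence it remains to handle a $G$-minimal del Pezzo surface $X$ of degree $2$ with $\rho(X)^G=1$ and $\gamma\notin G$.

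For this core case I would first enumerate the possibilities for $G$. Because $\gamma\notin G$, the group $G$ embeds into the subgroup of $\mathrm{PGL}_3(\kka)$ preserving $B$, and $\rho(X)^G=1$ is the lattice condition that $G$ act on $\Pic(\XX)\otimes\Q$ with one-dimensional invariants; the classification of automorphism groups of del Pezzo surfaces of degree $2$ then leaves a short list of groups, for each of which the action on $\Pic(\XX)$ records which involutions of $G$ have a one-dimensional fixed locus. For every $G$ on this list that is not among the eight exceptional types, I would prove $\ka$-rationality of $X/G$ by analysing the double cover $X/G\to Z$: identifying $Z$ $\ka$-birationally with $\Pro^2_{\ka}$ and resolving, $X/G$ becomes $\ka$-birational to a smooth surface $W$ whose conic bundle or del Pezzo structure is governed by the class of the branch divisor of $X/G\to Z$, namely the image of $B$ together with the images of the fixed curves of the elements $g\gamma$ with $g\in G$. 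When $G$ is large enough, or contains suitable involutions with a curve of fixed points, this branch class is small enough that $K_W^2\geqslant 5$ or $W$ is a conic bundle with few degenerate fibres; since the assumed smooth $\ka$-point on $X/G$ yields a $\ka$-point on $W$, Theorem \ref{ratcrit} (or the conic bundle argument) then gives $\ka$-rationality. Working through the list is the bulk of the proof.

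For the converse, for each of the eight exceptional types I would construct an explicit del Pezzo surface $X$ of degree $2$ over a suitable field $\ka$ (a subfield of $\R$, a $p$-adic field, or $\Q$) carrying the prescribed $G$-action with $\rho(X)^G=1$, resolve the quotient singularities of $X/G$, run the minimal model program, and verify that the result is a $\ka$-minimal surface $Y$ with $K_Y^2\leqslant 4$ and $Y(\ka)\neq\varnothing$ --- say a minimal del Pezzo surface of degree $4$ or a minimal conic bundle with at least five degenerate fibres --- so that $X/G$ is not $\ka$-rational by Theorem \ref{ratcrit}, the obstruction being the nonvanishing of $H^1(\Gal(\kka/\ka),\Pic(\overline Y))$. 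For the trivial group, the group of order $2$, the group of order $3$, and $S_3$ these are close to the constructions in \cite{Tr16b} and \cite{Tr17}; the new input is the construction for $\Z/4\Z$, $(\Z/2\Z)^2$, $D_8$ and $Q_8$ with a unique involution having a curve of fixed points, which shows that, in contrast to degrees $3$ and $4$, an element with a curve of fixed points need not force $\ka$-rationality. I expect the main difficulty to lie precisely in this case-by-case work: on the rationality side, controlling the quotient singularities and the branch divisor of $X/G\to Z$ finely enough to pin down the birational type for every group on the list; on the non-rationality side, calibrating the examples --- in particular the Galois action on the Picard lattice of the resolved quotient --- so that the obstruction in Theorem \ref{ratcrit} is genuinely nonzero.
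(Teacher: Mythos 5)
The main gap is in the rationality half. Your mechanism --- control the branch divisor of the double cover $X/G \to \Pro^2_{\ka}/G$ and argue that when $G$ is large, or contains suitable involutions with a curve of fixed points, the resulting model $W$ has $K_W^2\geqslant 5$ --- points in the wrong direction precisely where the theorem is delicate. The quotient of $X$ by an involution with a curve of fixed points (type $1$ in Table \ref{table2}) is birationally an Iskovskikh surface, a conic bundle with $K^2=4$ and two $(-2)$-sections (Lemma \ref{DP2type1}), and the further quotient by the residual $2$-group \emph{remains} a minimal conic bundle with $K^2=4$ --- hence non-$\ka$-rational --- exactly when certain pairs of isolated fixed points are interchanged by $G\times\Gal\left(\kka/\ka\right)$ (Lemma \ref{IskGFeven}, Corollary \ref{DP2evencrit}). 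So the presence of an involution with a fixed curve is not a reason for the quotient to become rational; it is what produces the exceptional $\CG_4$, $\CG_2^2$, $\DG_8$ and $Q_8$ cases, and the rational/non-rational dichotomy is governed by an orbit condition on isolated fixed points that a bound on the branch class cannot detect. Your proposal offers no substitute for this analysis: the sentence ``the branch class is small enough that $K_W^2\geqslant 5$ or $W$ is a conic bundle with few degenerate fibres'' is exactly the assertion that has to be proved group by group, and as a heuristic it fails for the groups just named.

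The reduction of the non-$G$-minimal case by citing Theorems \ref{DP4more} and \ref{DP3} is also insufficient as stated: those theorems only assert the \emph{existence} of non-$\ka$-rational quotients for their exceptional groups (orders $1$, $2$, $3$, $4$ with only isolated fixed points); they do not decide rationality for a given surface. So if a non-minimal del Pezzo surface of degree $2$ blows down $G$-equivariantly to, say, a quartic del Pezzo surface on which all nontrivial elements of $G$ have only isolated fixed points, you cannot conclude rationality from Theorem \ref{DP4more}, and your parenthetical ``separate check that the exceptional configurations \dots correspond to groups already on the list'' is precisely the missing nontrivial content; the paper sidesteps this by proving directly, for every group not in Proposition \ref{DP2} and without any minimality assumption, that $X/G$ is birational to a surface with $K^2\geqslant 5$, and only then invoking Theorem \ref{ratcrit} via the smooth $\ka$-point. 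The example half of your plan is the right kind of verification, but note that a minimal conic bundle with four singular fibres ($K^2=4$) already suffices for non-rationality by Theorem \ref{ratcrit} (the paper's examples have exactly four), and the operative obstruction there is Iskovskikh's criterion rather than nonvanishing of $H^1\left(\Gal\left(\kka/\ka\right),\Pic\right)$.
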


\begin{remark}
\label{density}
We need a smooth $\ka$-point on $X / G$, to apply Theorem \ref{ratcrit}. Without this condition for the groups not listed in Theorem \ref{DP2briefly} we can only say that the quotient~$X / G$ is birationally equivalent to a surface $Y$ such that $K_Y^2 \geqslant 5$.

Note that this condition, that there is a smooth $\ka$-point on $X / G$, differs from the corresponding condition $X(\ka) \neq \varnothing$, that is used in Theorems \ref{DP4more} and \ref{DP3}. For del Pezzo surfaces of degree $3$ or greater if $X(\ka) \neq \varnothing$ then $X$ is $\ka$-unirational (see \cite[Theorems~IV.7.8 and IV.8.1]{Man74}). In particular, $X(\ka)$ is dense. This fact immediately implies that the set of smooth $\ka$-points on $X / G$ is dense.

If $X$ is a non-minimal del Pezzo surface $X$ of degree $2$ and $X(\ka) \neq \varnothing$ then $X(\ka)$ is dense, since $X$ is birationally equivalent to a del Pezzo surface of higher degree, which is $\ka$-unirational. Moreover, if $X$ is minimal and there is a $\ka$-point that is not a point of intersection of four $(-1)$-curves and does not lie on the ramification divisor of the anticanonical map $X \rightarrow \Pro^2_{\ka}$, then $X$ is $\ka$-unirational by \cite[Corollary 18]{STV14} and $X(\ka)$ is dense. In all these cases the set of smooth $\ka$-points on $X / G$ is dense.

It seems that nobody knows an answer to the question about $\ka$-unirationality of arbitrary del Pezzo surface $X$ such that $X(\ka) \neq \varnothing$. If any del Pezzo surface of degree $2$ such that $X(\ka) \neq \varnothing$, is $\ka$-unirational then in Theorem \ref{DP2briefly} we can replace the assumption of existence of a smooth $\ka$-point on the quotient to the assumption $X(\ka) \neq \varnothing$. In any case, the assumption of existence of a smooth $\ka$-point on the quotient holds if the set of $\ka$-points on $X$ is dense, that follows from the assumption that there is a $\ka$-point on $X$ not lying on $(-1)$-curves and the ramification divisor of the anticanonical map $X \rightarrow \Pro^2_{\ka}$. 

Actually, in Lemma \ref{Quotpoints} for many groups listed in Theorem \ref{DP2briefly} we show that the condition $X(\ka) \neq \varnothing$ implies that the set of $\ka$-points on $X / G$ is dense.


\end{remark}

One can find a more precise statement of Theorem \ref{DP2briefly} in Proposition \ref{DP2}.

Note that a minimal del Pezzo surface $X$ of degree $2$ such that $X(\ka) \neq \varnothing$ is not \mbox{$\ka$-rational} by Theorem \ref{ratcrit}. Thus the quotient of~$X$ by the trivial group is not $\ka$-rational in this case. Let $G$ be a nontrivial group listed in Theorem~\ref{DP2briefly}. If $\rho(X)^G > 1$ then either $X$ admits a structure of a $G$-equivariant conic bundle, or there is a $G$-equivariant morphism $X \rightarrow Y$, where $Y$ is a del Pezzo surface of degree greater than $2$. Quotients of such surfaces were considered in \cite{Tr16a}, \cite{Tr17} and~\cite{Tr16b}. Therefore we assume that~$\rho(X)^G = 1$. We show that if the group $G$ is of order $2$ without curves of fixed points then $X$ and $X / G$ are not \mbox{$\ka$-rational}. For each of the other possibilities for $G$ we construct examples, for which all the following options occur: $X$ is \mbox{$\ka$-rational} and $X / G$ is non-$\ka$-rational, $X$ is non-$\ka$-rational and $X / G$ is non-$\ka$-rational, $X$~is $\ka$-rational and $X / G$ is $\ka$-rational, $X$~is non-$\ka$-rational and $X / G$ is $\ka$-rational.

The plan of this paper is as follows. In Section $2$ we remind some notions and results about rational surfaces, $G$-equivariant minimal model program, groups, singularities and conic bundles.

In Section $3$ we consider a special conic bundle with $4$ singular fibres called \textit{Iskovskikh surface}, and study quotients of this surface. We are interested in quotients of this surface, since the quotient of a del Pezzo surface by a group of order $2$ with a curve of fixed points is birationally equivalent to an Iskovskikh surface.

In Section $4$ we consider quotients of del Pezzo surfaces of degree $2$. In Proposition~\ref{DP2} we give a list of groups such that the quotient can be non-$\ka$-rational. To prove this proposition we consider groups acting on del Pezzo surfaces of degree $2$, and show that the quotient is $\ka$-rational in all cases that are not listed in Proposition \ref{DP2}. Also in Remark~\ref{DP2type2min} we show that the quotient $X / G$ of a del Pezzo surface $X$ of degree $2$ by a group~$G$ of order $2$ without curves of fixed points is always not $\ka$-rational if~$\rho(X)^G = 1$.

In Section $5$ we consider some properties of the Weyl group $\mathrm{W}(\mathrm{E}_7)$. For any group $G$ acting on a del Pezzo surface $X$ of degree $2$ there exists an embedding $G \hookrightarrow \mathrm{W}(\mathrm{E}_7)$. Moreover, the image of this embedding commutes with the image of the Galois group~$\Gal\left( \kka / \ka \right)$ in $\mathrm{W}(\mathrm{E}_7)$. In Lemma \ref{DP2type2nonrat} we show that if a group~$G$ of order $2$ without curves of fixed points acts on a del Pezzo surface $X$ of degree $2$ and $\rho(X)^G = 1$, then $X$ is not $\ka$-rational. Also we find the centralizer in $\mathrm{W}(\mathrm{E}_7)$ of the image of a group of order~$3$ acting on a del Pezzo surface of degree $2$ without curves of fixed points. Later we use this result to construct examples of quotients of del Pezzo surfaces of degree $2$ by groups of order $3$ and~$6$.

In Section $6$ we consider groups $G$ listed in Proposition \ref{DP2}, that are neither trivial, nor groups of order $2$ that have only isolated fixed points. For each of these groups we construct explicit examples of $\ka$-rational and non-$\ka$-rational quotients of $\ka$-rational and non-$\ka$-rational del Pezzo surfaces~$X$ of degree $2$ such that $\rho(X)^G = 1$.

The author is grateful to Costya Shramov for many useful discussions and comments, and to Yuri\,G.\,Prokhorov and Egor Yasinsky for valuable comments.
Also the author would like to thank the reviewer of this paper for many useful comments.

\begin{notation}

Throughout this paper $\ka$ is any field of characteristic zero, $\kka$ is its algebraic closure. For a surface $X$ we denote $X \otimes \kka$ by $\XX$. For a surface $X$ we denote the Picard group (resp. the $G$-invariant Picard group) by $\Pic(X)$ (resp. $\Pic(X)^G$). The number \mbox{$\rho(X) = \operatorname{rk} \Pic(X)$} (resp. \mbox{$\rho(X)^G = \operatorname{rk} \Pic(X)^G$}) is the Picard number (resp. the $G$-invariant Picard number) of $X$. If two surfaces $X$ and $Y$ are $\ka$-birationally equivalent then we write~$X \approx Y$. If two divisors $A$ and $B$ are linearly equivalent then we write~$A \sim B$.

\end{notation}

\section{Preliminaries}

\subsection{$G$-minimal rational surfaces}

In this subsection we review main notions and results of $G$-equivariant minimal model program following the papers \cite{Man67}, \cite{Isk79}, \cite{DI1}. Throughout this paper $G$ is a finite group.

\begin{definition}
\label{rationality}
A {\it rational variety} $X$ is a variety over $\ka$ such that $\XX=X \otimes \kka$ is birationally equivalent to $\Pro^n_{\kka}$.

A {\it $\ka$-rational variety} $X$ is a variety over $\ka$ such that $X$ is birationally equivalent to $\Pro^n_{\ka}$.

A variety $X$ over $\ka$ is a {\it $\ka$-unirational variety} if there exists a $\ka$-rational variety $Y$ and a dominant rational map $\varphi: Y \dashrightarrow X$.
\end{definition}

\begin{definition}
\label{minimality}
A {\it $G$-surface} is a pair $(X, G)$ where $X$ is a projective surface over $\ka$ and~$G$ is a finite subgroup of $\Aut_{\ka}(X)$. A morphism of $G$-surfaces $f: X \rightarrow X'$ is called a \textit{$G$-morphism} if for each $g \in G$ one has $fg = gf$.

A smooth $G$-surface $(X, G)$ is called {\it $G$-minimal} if any birational $G$-morphism of smooth $G$-surfaces $(X, G) \rightarrow (X',G)$ is an isomorphism.

Let $(X, G)$ be a smooth $G$-surface. A $G$-minimal surface $(Y, G)$ is called a {\it minimal model} of $(X, G)$ or {\it $G$-minimal model} of $X$ if there exists a birational \mbox{$G$-morphism $X \rightarrow Y$}.
\end{definition}

The following theorem is a classical result about the $G$-equivariant minimal model program.

\begin{theorem}
\label{GMMP}
Any birational $G$-morphism $f:X \rightarrow Y$ of smooth $G$-surfaces can be factorized in the following way:
$$
X= X_0 \xrightarrow{f_0} X_1 \xrightarrow{f_1} \ldots \xrightarrow{f_{n-2}} X_{n-1} \xrightarrow{f_{n-1}} X_n = Y,
$$
where each $f_i$ is a contraction of a set $\Sigma_i$ of disjoint $(-1)$-curves on~$X_i$ such that $\Sigma_i$ is defined over $\ka$ and $G$-invariant.
\end{theorem}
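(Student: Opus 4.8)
The statement to prove is the $G$-equivariant factorization of a birational morphism of smooth $G$-surfaces into successive contractions of $G$-invariant, $\ka$-rational sets of disjoint $(-1)$-curves. My plan is to reduce to the classical (non-equivariant, algebraically closed) statement by base change to $\kka$ and then descend the combinatorial data.

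First I would pass to $\XX = X \otimes \kka$ and $\overline{Y} = Y \otimes \kka$. The morphism $f$ induces $\bar f \colon \XX \to \overline{Y}$, a birational morphism of smooth projective surfaces over an algebraically closed field, so by the classical structure theory of birational morphisms of smooth surfaces $\bar f$ is a composition of blow-downs of $(-1)$-curves; equivalently, the exceptional locus of $\bar f$ is a union of $(-1)$-curves, and $\bar f$ contracts exactly those curves mapping to points. The key point is to organize these into layers: let $E$ be the set of irreducible curves contracted by $f$; among the components of $E$ that are $(-1)$-curves meeting the other contracted curves minimally, one extracts a maximal set $\overline{\Sigma}_0$ of \emph{disjoint} $(-1)$-curves that can be contracted first. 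Concretely, $\overline{\Sigma}_0$ can be taken to be the set of irreducible components $C$ of the exceptional locus with $C^2 = -1$ and such that $C$ is not met by any other component lying "above" it — i.e. the $(-1)$-curves contracted to points of maximal multiplicity. This is the standard inductive setup behind the classical theorem.

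Now the descent: the absolute Galois group $\Gal(\kka/\ka)$ acts on $\XX$, commuting with the $G$-action since $G \subset \Aut_\ka(X)$ is defined over $\ka$, and both actions permute the set of $(-1)$-curves contracted by $\bar f$ (because being a $(-1)$-curve contracted by a morphism defined over $\ka$ is preserved by both). The crucial observation is that the selection of $\overline{\Sigma}_0$ above is \emph{canonical} — it depends only on the intersection-theoretic configuration of the exceptional locus, which is preserved by $\Gal(\kka/\ka)$ and by $G$. Hence $\overline{\Sigma}_0$ is stable under both $\Gal(\kka/\ka)$ and $G$; therefore it descends to a $G$-invariant set $\Sigma_0$ of disjoint $(-1)$-curves defined over $\ka$ on $X$. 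Contracting $\Sigma_0$ (which one may do $\ka$-rationally and $G$-equivariantly, since the set is Galois-stable and $G$-stable) yields a smooth $G$-surface $X_1$ over $\ka$ with an induced birational $G$-morphism $X_1 \to Y$ over $\ka$, and $\rho(X_1) < \rho(X)$. Iterating, the process terminates because the Picard number strictly drops at each step, giving the desired factorization $X = X_0 \to X_1 \to \cdots \to X_n = Y$.

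The main obstacle is making precise and canonical the choice of the layer $\Sigma_i$ so that it is automatically stable under both $G$ and $\Gal(\kka/\ka)$: one must argue that "the $(-1)$-curves that get contracted first" is an intrinsic, configuration-theoretic notion (for instance, the components $C$ of the exceptional locus of $\bar f$ with $C^2 = -1$ whose image point has the property that no infinitely-near condition forces another curve to be blown down before $C$), and that contracting a Galois-stable, $G$-stable set of disjoint $(-1)$-curves can indeed be performed in the category of $\ka$-surfaces with $G$-action — this is the descent step, which uses that a set of mutually disjoint $(-1)$-curves permuted by $\Gal(\kka/\ka)$ can be blown down to a smooth projective $\ka$-surface (the blow-down being a well-defined operation at the level of the function field and compatible with the $G$-action). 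All of this is essentially contained in \cite{Man67} and \cite{Isk79}; I would cite those for the technical core and restrict the write-up to the layering-and-descent argument sketched above.
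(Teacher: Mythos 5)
The paper does not prove Theorem \ref{GMMP} at all: it is quoted as a classical result of the $G$-equivariant minimal model program, following \cite{Man67}, \cite{Isk79} and \cite{DI1}, so there is no in-paper argument to compare yours against. Your sketch is the standard proof and is essentially correct: base change to $\kka$, pick a canonical set of $(-1)$-curves in the exceptional locus of $\bar f$, observe that canonicity forces stability under both $G$ and $\Gal(\kka/\ka)$, contract over $\ka$ (Galois descent of a disjoint, Galois-stable set of $(-1)$-curves, with $G$-equivariance coming from the universal property of the blow-down), and induct on the Picard number. The one soft spot is your description of the layer $\overline{\Sigma}_0$: phrases like ``extract a maximal set of disjoint $(-1)$-curves'' or ``$(-1)$-curves contracted to points of maximal multiplicity'' suggest a choice that would need to be checked to be canonical, and an arbitrary maximal disjoint subset need not be Galois- or $G$-stable. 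This is easily repaired, and in fact no extraction is needed: any two $(-1)$-curves contracted by $\bar f$ are automatically disjoint, since the fibre $\bar f^{-1}(p)$ over any point has negative definite intersection matrix, so $C_1\cdot C_2\geqslant 1$ for two such curves over the same point would give $(C_1+C_2)^2\geqslant 0$, a contradiction. Hence you may simply take $\overline{\Sigma}_0$ to be the set of \emph{all} $(-1)$-curves contracted by $\bar f$; this set is intrinsically defined, hence $G$- and Galois-stable, and the rest of your descent-and-induction argument goes through verbatim.
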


The classification of $G$-minimal rational surfaces is well-known due to V.\,Iskovskikh and Yu.\,Manin (see \cite{Isk79} and \cite{Man67}). We introduce some important notions before surveying it.

\begin{definition}
\label{Cbundledef}
A smooth rational $G$-surface $(X, G)$ admits a structure of a {\it $G$-equivariant conic bundle} if there exists a $G$-equivariant map $\varphi: X \rightarrow B$ such that any scheme fibre is isomorphic to a reduced conic in~$\Pro^2_{\ka}$ and $B$ is a smooth curve.

The curve $B$ is called the {\it base} of the conic bundle.

\end{definition}

Let $\overline{\varphi}: \XX \rightarrow \overline{B} \cong \Pro^1_{\kka}$ be a conic bundle. A general fibre of $\overline{\varphi}$ is isomorphic to $\Pro^1_{\kka}$. The fibration~$\overline{\varphi}$ has a finite number of singular fibres which are degenerate conics. Any irreducible component of a singular fibre is a $(-1)$-curve. If $n$ is the number of singular fibres of $\overline{\varphi}$ then $K_{\XX}^2 + n = 8$.

\begin{definition}
\label{relmin}
Let $X$ be a $G$-surface that admits a conic bundle structure \mbox{$\varphi: X \rightarrow B$}. The conic bundle is called \textit{relatively $G$-minimal} over $B$ if for any decomposition of $\varphi$ into $G$-morphisms
$$
X \xrightarrow{\psi} X' \rightarrow B
$$
\noindent such that the morphism $\psi$ is birational, the morphism $\psi$ is an isomorphism.

Let $X$ be a $G$-surface that admits a conic bundle structure \mbox{$\varphi: X \rightarrow B$}. A relatively $G$-minimal surface $\varphi': Y \rightarrow B$ is called a {\it relatively $G$-minimal model of $X$ over $B$}, if there exists a birational $G$-morphism $\psi: X \rightarrow Y$ such that $\varphi = \varphi' \circ \psi$.

\end{definition}

A conic bundle \mbox{$\varphi: X \rightarrow B$} is relatively $G$-minimal over $B$ if and only if $\rho(X)^G = 2$.

\begin{definition}
\label{DPdef}
A {\it del Pezzo surface} is a smooth projective surface~$X$ such that the anticanonical class $-K_X$ is ample.

A {\it singular del Pezzo surface} is a normal projective surface $X$ such that the anticanonical class $-K_X$ is ample and all singularities of $X$ are Du Val singularities.

A {\it weak del Pezzo surface} is a smooth projective surface $X$ such that the anticanonical class $-K_X$ is nef and big.

The number $d = K_X^2$ is called the {\it degree} of a (singular, weak) del Pezzo surface $X$.
\end{definition}

The following proposition is well known (see e.g. \cite[Subsection 8.1.3]{Dol12}). 

\begin{proposition}
\label{DPconnection}
If $X$ is a singular del Pezzo surface and $\widetilde{X} \rightarrow X$ is the minimal resolution of singularities then $X$ is a weak del Pezzo surface.

If $\widetilde{X}$ is a weak del Pezzo surface and $\widetilde{X} \rightarrow Y$ is a birational morphism of smooth surfaces then $Y$ is a weak del Pezzo surface.

If $Y$ is a weak del Pezzo surface and there are no $(-2)$-curves on $Y$ then $Y$ is a del Pezzo surface.

\end{proposition}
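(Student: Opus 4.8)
The plan is to establish the three implications separately, in each case using the Nakai--Moishezon ampleness criterion and the behaviour of the canonical class under birational morphisms of smooth surfaces.

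For the first implication I would use that Du Val singularities are exactly the canonical surface singularities, so if $\pi\colon\widetilde{X}\to X$ is the minimal resolution of a singular del Pezzo surface $X$ then $\pi$ is crepant: $K_{\widetilde{X}}\sim\pi^{*}K_{X}$, hence $-K_{\widetilde{X}}\sim\pi^{*}(-K_{X})$. The pullback of a nef divisor under a morphism is nef, and $K_{\widetilde{X}}^{2}=(\pi^{*}K_{X})^{2}=K_{X}^{2}>0$, so $-K_{\widetilde{X}}$ is nef and big; as $\widetilde{X}$ is smooth, it is a weak del Pezzo surface. (The exceptional curves of $\pi$ are precisely the $(-2)$-curves lying over the Du Val points.)

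For the second implication I would factor the birational morphism $f\colon\widetilde{X}\to Y$ of smooth surfaces into a sequence of contractions of $(-1)$-curves, which gives $f^{*}(-K_{Y})\sim -K_{\widetilde{X}}+D$ with $D$ an effective divisor supported on the exceptional locus of $f$ (blow-ups of smooth points have positive discrepancies). Bigness of $-K_{Y}$ then follows from $K_{Y}^{2}=K_{\widetilde{X}}^{2}+(\text{number of contracted curves})\geqslant K_{\widetilde{X}}^{2}>0$. For nefness, given an irreducible curve $C\subset Y$ with strict transform $\widetilde{C}$, I would use that $f^{*}K_{Y}$ meets every $f$-exceptional curve trivially to get $-K_{Y}\cdot C=f^{*}(-K_{Y})\cdot\widetilde{C}=(-K_{\widetilde{X}}+D)\cdot\widetilde{C}\geqslant 0$, since $-K_{\widetilde{X}}$ is nef and $\widetilde{C}$ is not a component of the effective divisor $D$. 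Hence $-K_{Y}$ is nef and big and $Y$ is a weak del Pezzo surface.

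For the third implication I would argue by Nakai--Moishezon. Let $Y$ be a weak del Pezzo surface without $(-2)$-curves. Since $-K_{Y}$ is nef and big we have $K_{Y}^{2}>0$, and $-K_{Y}\cdot C\geqslant 0$ for every irreducible curve $C$ by nefness, so it remains to exclude $-K_{Y}\cdot C=0$. In that case adjunction gives $C^{2}=2p_{a}(C)-2\geqslant -2$, while Kodaira's lemma (writing the nef and big divisor $-K_{Y}$ as $A+E$ with $A$ an ample $\Q$-divisor and $E$ effective, then intersecting with $C$) forces $C$ to be a component of $E$ with $C^{2}<0$; together these give $C^{2}=-2$ and $p_{a}(C)=0$, so $C$ is a $(-2)$-curve, contradicting our assumption. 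Thus $-K_{Y}\cdot C>0$ for all irreducible $C$, so $-K_{Y}$ is ample and $Y$, being smooth, is a del Pezzo surface. None of the three steps is genuinely difficult --- the proposition is standard --- but the point needing the most care is the bookkeeping in the second step: correctly identifying the effective exceptional divisor $D$ and using the orthogonality of $f^{*}K_{Y}$ to exceptional curves so that intersections computed on $\widetilde{X}$ match those on $Y$.
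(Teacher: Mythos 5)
Your proof is correct. Note that the paper itself gives no argument for this proposition: it is stated as well known with a reference to Dolgachev's book (Subsection 8.1.3), so there is nothing to compare against beyond that citation; your three-step argument (crepancy of the minimal resolution of Du Val singularities, discrepancy bookkeeping under blow-downs plus the projection formula, and Nakai--Moishezon combined with Kodaira's lemma and adjunction to rule out curves with $-K_Y \cdot C = 0$ that are not $(-2)$-curves) is exactly the standard proof the citation points to. The only cosmetic remark is in the second step: bigness of $-K_Y$ follows from $K_Y^2>0$ only after nefness is known (nef plus positive self-intersection implies big), so the two claims should be stated in that order, as your argument in fact permits.
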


A del Pezzo surface $\XX$ over $\kka$ is isomorphic to $\Pro^2_{\kka}$, $\Pro^1_{\kka} \times \Pro^1_{\kka}$ or the blowup of $\Pro^2_{\kka}$ at up to 8 points in general position (see \cite[Theorem 2.5]{Man74}).




\begin{theorem}[{\cite[Theorem 1]{Isk79}}]
\label{Minclass}
Let $X$ be a $G$-minimal rational $G$-surface. Then either $X$ admits a $G$-equivariant conic bundle structure with $\Pic(X)^{G} \cong \Z^2$, or $X$ is a del Pezzo surface with $\Pic(X)^{G} \cong \Z$.
\end{theorem}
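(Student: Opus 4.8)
The plan is to run the $G$-equivariant Minimal Model Program on $X$, in the spirit of \cite{Man67}, \cite{Isk79} and \cite{DI1}. Since $\XX$ is rational we have $\kappa(\XX) = -\infty$, so $K_X$ is not pseudo-effective, and hence there is a class $z \in \overline{NE}(X)$ with $K_X \cdot z < 0$. Averaging $z$ over $G$ and over the (finite) image of $\Gal(\kka / \ka)$ in $\Aut\bigl(\Pic(\XX)\bigr)$ produces such a class inside the $G$-invariant cone $\overline{NE}(X)^G$, so this cone has a $K_X$-negative part. By the $G$-equivariant Cone Theorem for surfaces, the $K_X$-negative part of $\overline{NE}(X)^G$ is rational polyhedral, and any $K_X$-negative extremal ray $R$ can be contracted by a $G$-morphism $\varphi \colon X \to Y$ onto a normal projective variety with $\rho(Y)^G = \rho(X)^G - 1$ and with $-K_X$ being $\varphi$-ample. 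I would fix such a $\varphi$ and distinguish cases according to $\dim Y$.

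Suppose $\dim Y = 2$, so that $\varphi$ is birational. Over $\kka$ the exceptional curves of $\varphi$ are $(-1)$-curves whose classes span the single ray $R$; since two distinct intersecting $(-1)$-curves have an intersection matrix that is not negative definite, while the exceptional curves of a birational contraction to a point form a negative-definite configuration, these $(-1)$-curves must be pairwise disjoint, and therefore $Y$ is smooth. By Theorem \ref{GMMP} the birational $G$-morphism $\varphi$ then factors through the contraction of a $G$-invariant set of disjoint $(-1)$-curves defined over $\ka$; this set is nonempty because $\varphi$ is not an isomorphism, which contradicts the $G$-minimality of $(X, G)$. Hence $\dim Y \leqslant 1$.

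If $\dim Y = 0$, then $\rho(X)^G = 1$, and $-K_X$, being $\varphi$-ample for the structure morphism $X \to \operatorname{Spec} \ka$, is ample; thus $X$ is a del Pezzo surface. If $\dim Y = 1$, then $Y$ is a smooth curve dominated by the rational surface $X$, hence a smooth rational curve (possibly a conic with no $\ka$-points), so $\rho(Y)^G = 1$ and $\rho(X)^G = 2$. A general fibre $F$ of $\varphi$ is smooth and irreducible with $F^2 = 0$, so adjunction gives $-K_X \cdot F = 2 + F^2 = 2$ and $p_a(F) = 0$, i.e. $F \cong \Pro^1$; moreover no fibre is non-reduced, since a fibre of the form $2L$ would force the odd integer $L^2 + K_X \cdot L = -1$ to be even. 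Hence every fibre is a reduced conic in $\Pro^2_{\ka}$ (smooth, or a pair of $(-1)$-curves meeting transversally), so $\varphi$ exhibits $X$ as a $G$-equivariant conic bundle. In both cases $\Pic(X)^G$ is a subgroup of the free abelian group $\Pic(\XX)$, hence free of rank $\rho(X)^G$, which equals $1$ and $2$ respectively.

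The step that requires genuine care, rather than routine verification, is the $G$-equivariant Cone Theorem together with the descent bookkeeping: one must make sure that the extremal ray $R$, the contraction $\varphi$, and the contracted curves are all defined over $\ka$ and $G$-stable. This is arranged by letting the finite group generated by $G$ and the image of $\Gal(\kka / \ka)$ act on the lattice $\Pic(\XX)$, passing to invariants, and applying the classical surface Cone Theorem to the resulting rational polyhedral cone. The remaining ingredients — the adjunction identities on the fibres, the ampleness of $-K_X$ over a point, and the torsion-freeness of $\Pic(X)^G$ — are standard.
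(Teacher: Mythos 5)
Your argument is correct and is essentially the standard $G$-equivariant Minimal Model Program proof of this classical result; the paper itself gives no proof but cites \cite{Isk79}, whose argument (running extremal contractions on the $G \times \Gal(\kka/\ka)$-invariant cone and analyzing the birational, point, and curve-base cases exactly as you do) is the one you have reproduced. The only step stated more tersely than proved is the classification of the singular fibres as pairs of transversal $(-1)$-curves, but this follows immediately from $-K_X \cdot F = 2$ together with $-K_X \cdot C \geqslant 1$ and adjunction for each contracted component, as your setup already provides.
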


\begin{theorem}[{cf. \cite[Theorems 4 and 5]{Isk79}}]
\label{MinCB}
Let $X$ admit a $G$-equivariant structure of a conic bundle such that $\rho(X)^G = 2$ and $K_X^2 \neq 3$, $5$, $6$, $7$, $8$. Then $X$ is $G$-minimal.
\end{theorem}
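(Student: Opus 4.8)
The plan is to argue by contradiction: assuming $X$ is not $G$-minimal, I would extract from a hypothetical $G$-equivariant blow-down a numerical identity forcing $K_X^2$ into the set $\{3,5,6,7,8\}$.

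Suppose $X$ is not $G$-minimal. By Theorem~\ref{GMMP} there is a nonempty, $\ka$-defined, $G$-invariant set $\Sigma$ of pairwise disjoint $(-1)$-curves on $X$, and contracting it produces a birational $G$-morphism $\pi\colon X\to X'$ onto a smooth $G$-surface. Regard $\Pic(X)^G$ as a sublattice of $\Pic(\XX)$ (so it is the $G\times\Gal(\kka/\ka)$-invariant part); it has rank $\rho(X)^G=2$ and, containing a $G\times\Gal$-invariant ample class, has signature $(1,1)$, hence contains no rank-$2$ negative-definite sublattice. If $\Sigma$ decomposed into two or more $G\times\Gal$-orbits, the orbit sums would span precisely such a sublattice — so $\Sigma$ is a single orbit. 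Since $\pi$ is defined over $\ka$ and $G$-equivariant, $\pi^{*}$ identifies $\Pic(X')^G$ with the $G\times\Gal$-invariant part of the orthogonal complement of the classes of $\Sigma$ in $\Pic(\XX)$; as the orbit sum $\sum_{E\in\Sigma}E$ is a $G\times\Gal$-invariant class of negative self-intersection, the rank drops, so $\rho(X')^G=1$. (By Theorem~\ref{Minclass}, $X'$ is then a del Pezzo surface, which I do not need.)

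Next I would control how $\Sigma$ sits over the base $B$. Let $f$ be the class of a fibre of $\varphi$; it is $G\times\Gal$-invariant with $f^2=0$ and $f\cdot K_X=-2$. Being a single orbit, $\Sigma$ consists entirely of vertical curves or entirely of horizontal ones, and it cannot be vertical: otherwise each curve of $\Sigma$ would be a component of a singular fibre, at most one component per singular fibre by disjointness, and $\pi$ would descend to a conic bundle structure $X'\to B$, forcing $\rho(X')^G\geqslant2$. So every $E\in\Sigma$ dominates $B$; since $G\times\Gal$ acts compatibly with $\varphi$ (through an action on $B$) and permutes $\Sigma$ transitively, all curves of $\Sigma$ have one and the same degree $d=f\cdot E\geqslant1$ over $B$. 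Writing $m=|\Sigma|$ over $\kka$, the projection formula gives $\pi^{*}(\pi_{*}f)=f+d\sum_{E\in\Sigma}E$, hence $(\pi_{*}f)^2=md^2$, $\ \pi_{*}f\cdot K_{X'}=-(2+md)$, and $K_{X'}^2=K_X^2+m$. Since $\rho(X')^G=1$, the $G\times\Gal$-invariant classes $\pi_{*}f$ and $-K_{X'}$ are proportional over $\Q$, so $(\pi_{*}f)^2\cdot K_{X'}^2=(\pi_{*}f\cdot K_{X'})^2$, that is $md^2(K_X^2+m)=(2+md)^2$, which gives $K_X^2=\frac{4(1+md)}{md^2}$. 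In particular $K_X^2>0$, so $K_X^2\leqslant0$ is excluded; and integrality, using $\gcd(d^2,1+md)=1$, forces $d^2\mid4$, so $d\in\{1,2\}$. For $d=1$ integrality gives $m\mid4$, whence $K_X^2=4(1+m)/m\in\{8,6,5\}$; for $d=2$ it gives $m=1$, whence $K_X^2=3$. In every case $K_X^2\in\{3,5,6,8\}$, contradicting $K_X^2\notin\{3,5,6,7,8\}$, so $X$ is $G$-minimal.

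I expect the main obstacle to be the geometric bookkeeping of the middle step — excluding vertical curves from $\Sigma$, showing $\Sigma$ is a single orbit, and extracting a common degree $d$ over $B$ — since it is exactly these facts that make the numerical identity rigid enough to conclude; the closing divisibility analysis is routine. One should also keep in mind that $B$ need not be $\Pro^1_{\ka}$, but this plays no role, as only the geometric fibre class $f$ enters the computation.
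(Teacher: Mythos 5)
Your argument is correct. Note that the paper itself gives no proof of this statement — it is quoted from \cite[Theorems 4 and 5]{Isk79} — so the comparison is with Iskovskikh's classification rather than with anything in the text; your proof is a clean, self-contained numerical version of that analysis. The key steps all hold: the orbit sums of a would-be contracted set $\Sigma$ are orthogonal classes of negative square inside the rank-two invariant lattice of signature $(1,1)$, so $\Sigma$ is a single $G \times \Gal\left(\kka/\ka\right)$-orbit and $\rho(X')^G = 1$; a vertical $\Sigma$ would descend the conic bundle structure to $X'$ and contradict $\rho(X')^G = 1$ (equivalently, relative $G$-minimality of $X$ over $B$); and in the horizontal case the proportionality of the two invariant classes $\pi_*f$ and $K_{X'}$ gives exactly $md^2\left(K_X^2+m\right) = (2+md)^2$, i.e. $K_X^2 = 4(1+md)/(md^2)$, whose integral solutions are $(m,d) \in \{(1,1),(2,1),(4,1),(1,2)\}$ with $K_X^2 \in \{8,6,5,3\}$ (checked against the standard examples: $\F_1$, degree $6$ and $5$ del Pezzo surfaces, and a cubic surface with an invariant $2$-section). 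In fact your computation proves slightly more than the quoted statement, since $K_X^2 = 7$ never arises; including $7$ in the excluded list is harmless, so the theorem follows. The only implicit identification worth flagging is $\rho(X)^G = \operatorname{rk}\bigl(\Pic(\XX)\otimes\Q\bigr)^{G\times\Gal(\kka/\ka)}$ and the signature claim via averaging an ample class, both standard and used freely elsewhere in the paper.
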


\subsection{Groups}

In this paper we use the following notation:

\begin{itemize}

\item $\ii = \sqrt{-1}$;

\item $\xi_k = e^{\frac{2\pi \ii}{k}}$;

\item $\omega = \xi_3 = e^{\frac{2\pi \ii}{3}}$;

\item $\CG_n$ denotes a cyclic group of order $n$;

\item $\DG_{2n}$ denotes a dihedral group of order $2n$;

\item $\SG_n$ denotes a symmetric group of degree $n$;

\item $\AG_n$ denotes an alternating group of degree $n$;

\item $(i_1 i_2 \ldots i_j)$ denotes a cyclic permutation of $i_1$, \ldots, $i_j$;

\item $\VG_4$ denotes a Klein group isomorphic to $\CG_2^2$;

\item $Q_8$ denotes a quaternion group of order $8$;

\item $\langle g_1, \ldots, g_n \rangle$ denotes a group generated by $g_1$, \ldots, $g_n$;

\item $A \times B$ denotes the direct product of groups $A$ and $B$;

\item $A \rtimes B$ denotes a semi-direct product of groups $A$ and $B$, defined by a homomorphism $B \rightarrow \Aut(A)$;

\item $\operatorname{diag}(a_1, \ldots, a_n)$ denotes the diagonal $n \times n$ matrix with diagonal entries $a_1$, \ldots $a_n$;

\item for a vector space $V$ (or a lattice $L$) with an action of a group $G$ we denote by $V^G$ (resp. $L^G$) the subspace of $G$-invariant vectors (resp. the sublattice of $G$-invariant elements).

\end{itemize}

\subsection{Singularities}

In this subsection we review some results about quotient singularities and their resolutions.

All singularities appearing in this paper are toric singularities. These singularities are locally isomorphic to the quotient of $\A^2$ by the cyclic group generated by $\operatorname{diag}(\xi_m, \xi_m^q)$. Such a singularity is denoted by $\frac{1}{m}(1,q)$. If $\gcd(m,q) > 1$ then the group
$$
\CG_m \cong \langle \operatorname{diag}(\xi_m, \xi_m^q) \rangle
$$
\noindent contains a reflection (i.e. an element with a unique eigenvalue not equal to $1$) and the quotient singularity is isomorphic to a quotient singularity with smaller $m$. A singularity of type $\frac{1}{m}(1, m-1)$ is called \textit{$A_{m-1}$-singularity}.

A toric singularity can be resolved by a sequence of weighted blowups. Therefore it is easy to describe numerical properties of a quotient singularity. We list here these properties for singularities appearing in this paper.

Let the group $\CG_m$ act on a smooth surface $X$ and \mbox{$f: X \rightarrow S$} be the quotient map. Let $p$ be a singular point on $S$ of type $\frac{1}{m}(1,q)$. Let $C$ and $D$ be curves passing through $p$ such that $f^{-1}(C)$ and $f^{-1}(D)$ are $\CG_m$-invariant and tangent vectors of these curves at the point $f^{-1}(p)$ are eigenvectors of the natural action of $\CG_m$ on the Zariski tangent space~$T_{f^{-1}(p)} X$ (the curve $C$~corresponds to the eigenvalue $\xi_m$ and the curve $D$ corresponds to the eigenvalue~$\xi_m^q$).

Let $\pi: \widetilde{S} \rightarrow S$ be the minimal resolution of the singular point $p$. Table \ref{table1} presents some numerical properties of $\widetilde{S}$ and $S$ for the singularities appearing in this paper.

The exceptional divisor of $\pi$ is a chain of transversally intersecting exceptional curves~$E_i$ whose self-intersection numbers are listed in the last column of Table \ref{table1}.
The curve~$\pi^{-1}_*(C)$ transversally intersects at a point the curve $E_1$, and $\pi^{-1}_*(D)$ transversally intersects at a point only the last curve in the chain. The curves $\pi^{-1}_*(C)$ and $\pi^{-1}_*(D)$ do not intersect other components of exceptional divisor of~$\pi$.


\begin{table}
\caption{} \label{table1}

\begin{tabular}{|c|c|c|c|c|c|}
\hline
$m$ & $q$ & $K_{\widetilde{S}}^2 - K_S^2$ & $\pi^{-1}_*(C)^2 - C^2$ & $\pi^{-1}_*(D)^2 - D^2$ \rule[-7pt]{0pt}{20pt} & $E_i^2$ \\
\hline
$n$ & $n-1$ & $0$ & $-\dfrac{n-1}{n}$ & $-\dfrac{n-1}{n}$ \rule[-11pt]{0pt}{30pt} & $-2$, \ldots, $-2$ ($n-1$ times) \\
\hline
$3$ & $1$ & $-\dfrac{1}{3}$ & $-\dfrac{1}{3}$ & $-\dfrac{1}{3}$ \rule[-11pt]{0pt}{30pt} & $-3$ \\
\hline
$7$ & $3$ & $-\dfrac{3}{7}$ & $-\dfrac{3}{7}$ & $-\dfrac{5}{7}$ \rule[-11pt]{0pt}{30pt} & $-3$, $-2$, $-2$ \\
\hline
\end{tabular}

\end{table}

\subsection{Conic bundles}

In this subsection we collect some facts that will be used in Section~$3$ to work with conic bundles.

The following two lemmas are well known technical facts.

\begin{lemma}[{see \cite[Lemma 3.2]{Tr16a}}]
\label{fixedpoints}
Let elements $g_1, g_2 \in \mathrm{PGL}_2 \left( \kka \right)$ generate a finite group~$H = \langle g_1, g_2\rangle$. Then the elements $g_1$ and $g_2$ have the same pair of fixed points on $\Pro^1_{\kka}$ if and only if the group $H$ is cyclic. If $g_1$ and $g_2$ have a common fixed point on $\Pro^1_{\kka}$ then they have the same pair of fixed points.
\end{lemma}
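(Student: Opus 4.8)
The plan is to use the classification of finite subgroups of $\mathrm{PGL}_2(\kka)$ together with the elementary fact that an element of $\mathrm{PGL}_2(\kka)$ of finite order $\geqslant 2$ is conjugate to $\operatorname{diag}(1, \xi_k)$ and hence has exactly two fixed points on $\Pro^1_{\kka}$, namely $0$ and $\infty$ (the identity is the only element fixing three or more points). First I would prove the second assertion, since it is the cleaner one and will be reused: suppose $g_1$ and $g_2$ share one fixed point $p$. Conjugating, put $p = \infty$; then $g_1$ and $g_2$ both lie in the stabilizer of $\infty$, which is the affine group $\{z \mapsto az + b\}$. The commutator subgroup of this affine group consists of translations $z \mapsto z + c$, and a nontrivial translation has infinite order; since $H$ is finite, its intersection with the translation subgroup is trivial, so $H$ is conjugate into the multiplicative group $\{z \mapsto az\}$, which is abelian with common fixed points $0$ and $\infty$. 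Hence $g_1$ and $g_2$ fix the same pair $\{0, \infty\}$, proving the second statement; in particular in that case $H$ is cyclic (a finite subgroup of $\kka^\times$).

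For the first assertion, one direction is immediate: if $H = \langle g_1, g_2 \rangle$ is cyclic, then it is conjugate to a subgroup of $\{z \mapsto az\}$ and every nontrivial element fixes exactly $\{0, \infty\}$, so $g_1$ and $g_2$ (if nontrivial) have the same pair of fixed points; if one of them is trivial the statement is vacuous. Conversely, suppose $g_1$ and $g_2$ have the same pair of fixed points $\{p, q\}$. If either is the identity there is nothing to prove, so assume both are nontrivial; conjugate so that $\{p,q\} = \{0,\infty\}$. Then both $g_1$ and $g_2$ lie in the stabilizer of the pair $\{0,\infty\}$ that moreover fixes each point individually, i.e. in $\{z \mapsto az : a \in \kka^\times\}$, which is abelian. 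Therefore $H$ is a finite abelian subgroup of $\mathrm{PGL}_2(\kka)$. Invoking the classification of finite subgroups of $\mathrm{PGL}_2(\kka)$ (cyclic, dihedral, $\AG_4$, $\SG_4$, $\AG_5$), the only abelian ones are cyclic and $\VG_4$; but $\VG_4 \subset \mathrm{PGL}_2(\kka)$ has the property that its three involutions have pairwise disjoint fixed-point pairs, contradicting that $g_1, g_2$ share a pair unless $\langle g_1,g_2\rangle$ is cyclic. Alternatively, and more self-containedly, one observes directly that a finite subgroup of the abelian group $\kka^\times$ is cyclic, hence $H$ itself is cyclic. This avoids the full classification.

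The main obstacle, such as it is, is purely bookkeeping: making sure the degenerate cases where $g_1$ or $g_2$ equals the identity are handled (there the claims hold trivially), and being careful that "common fixed point" in the second statement is interpreted as a genuine fixed point of both elements so that the reduction to the affine group is legitimate. The only substantive input is the finiteness of $H$, which is what forces the translation part to vanish and the finite subgroup of $\kka^\times$ to be cyclic; no deep geometry is needed.
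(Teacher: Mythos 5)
The paper itself gives no proof of this lemma — it is quoted directly from \cite[Lemma 3.2]{Tr16a} — so there is no in-paper argument to compare yours against; I can only assess your proof on its own. It is essentially correct: reducing to the stabilizer of a common fixed point, using finiteness of $H$ to kill the translation part (here $\Char\ka = 0$ is what makes nontrivial translations have infinite order), and invoking cyclicity of finite subgroups of $\kka^{\times}$ are exactly the right ingredients, and your direct handling of the first assertion via the diagonal torus avoids the classification of finite subgroups altogether. The one step you state faster than you justify is ``its intersection with the translation subgroup is trivial, so $H$ is conjugate into the multiplicative group'': in a general semidirect product, meeting the normal factor trivially does not by itself yield conjugacy into the complement, so you should add a line. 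For instance, average an orbit: since $\Char\ka = 0$, the point $q = \frac{1}{|H|}\sum_{h \in H} h(0) \in \A^1$ is fixed by $H$, and conjugating by the translation sending $q$ to $0$ places $H$ inside $\{z \mapsto az\}$. Alternatively, argue directly that the commutator $[g_1,g_2]$ is a translation of finite order, hence trivial, so $g_1$ and $g_2$ commute; then $g_2$ permutes the two fixed points of $g_1$ and fixes $\infty$, hence fixes the other one as well, which gives the same pair immediately. With that one line supplied, your argument is complete; the treatment of the degenerate identity cases and of diagonalizability of finite-order elements is fine.
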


\begin{lemma}[{see \cite[Lemma 4.5]{Tr16a}}]
\label{blowup}
Let $p$ be a smooth point of a surface $S$ and~$g$ be an element of $\Aut(S)$ that fixes the point $p$. Let $g$ act on $T_p S$ as $\operatorname{diag}\left(\lambda, \mu \right)$ and $\pi: \widetilde{S} \rightarrow S$ be the blowup of $S$ at the point $p$. Then $g$ has exactly two fixed points $p_1$ and $p_2$ on the exceptional divisor of $\pi$ and acts on $T_{p_1}\widetilde{S}$ and $T_{p_2}\widetilde{S}$ as $\operatorname{diag}\left(\frac{\lambda}{\mu}, \mu \right)$ and~$\operatorname{diag}\left(\lambda, \frac{\mu}{\lambda} \right)$ respectively.
\end{lemma}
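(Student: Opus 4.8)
The plan is to reduce everything to an explicit local computation in the two standard charts of the blowup, after first linearizing the action of $g$ at $p$. Since the ambient group $G$ is finite, $g$ has finite order, say $n$. Picking any local coordinates $x,y$ at $p$ in which $g$ is diagonal to first order, I would then average over the cyclic group $\langle g\rangle$: replacing $x$ by $\frac{1}{n}\sum_{k=0}^{n-1}\lambda^{-k}(g^k)^*x$ and $y$ by $\frac{1}{n}\sum_{k=0}^{n-1}\mu^{-k}(g^k)^*y$ produces formal local coordinates $(x,y)$ at $p$ in which $g$ acts \emph{exactly} linearly, as $(x,y)\mapsto(\lambda x,\mu y)$. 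The differentials of the averaged functions agree with $dx,dy$ at $p$, so these are honest coordinates, and in them the action on $T_p S$ is $\operatorname{diag}(\lambda,\mu)$ as prescribed.

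Next I would write the blowup locally as $\widetilde S=\{((x,y),[u:v]):xv=yu\}$, with exceptional divisor $E=\pi^{-1}(p)=\Pro(T_p S)$ coordinatized by $[u:v]$. The lift $\tilde g$ of $g$ is the restriction of $(x,y,[u:v])\mapsto(\lambda x,\mu y,[\lambda u:\mu v])$, so on $E$ it acts by $[u:v]\mapsto[\lambda u:\mu v]$. When $\lambda\neq\mu$ this has exactly the two fixed points $p_1=[0:1]$ and $p_2=[1:0]$, the images of the two eigendirections of $g$ on $T_pS$; this gives the first assertion. (The hypothesis $\lambda\neq\mu$ is precisely what makes the number of fixed points equal to two, and it holds in all the situations where the lemma is applied.)

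Finally I would compute the induced action on $T_{p_1}\widetilde S$ and $T_{p_2}\widetilde S$ in the two affine charts. In the chart $\{v\neq0\}$ with coordinates $(s,y)$, where $s=u/v$ and $x=sy$, the morphism to $S$ is $(s,y)\mapsto(sy,y)$, the divisor $E$ is $\{y=0\}$, and $p_1=(0,0)$; substituting $x\mapsto\lambda x$, $y\mapsto\mu y$ gives $s\mapsto\frac{\lambda}{\mu}s$ and $y\mapsto\mu y$, so $\tilde g$ acts on $T_{p_1}\widetilde S$ as $\operatorname{diag}\left(\frac{\lambda}{\mu},\mu\right)$. Symmetrically, in the chart $\{u\neq0\}$ with coordinates $(x,t)$, where $t=v/u$ and $y=xt$, one has $p_2=(0,0)$ and $\tilde g$ acts by $x\mapsto\lambda x$, $t\mapsto\frac{\mu}{\lambda}t$, i.e. as $\operatorname{diag}\left(\lambda,\frac{\mu}{\lambda}\right)$, which is the second assertion.

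I do not expect a genuine obstacle here, as the core is a routine two-chart bookkeeping check. The only point requiring care is the linearization step: having conjugated $g$ to its exactly linear form at the outset, the chart expressions above are themselves exactly linear, so the tangent-space actions are genuinely diagonal rather than merely having the listed eigenvalues. This is exactly what the averaging over the finite cyclic group $\langle g\rangle$ secures, and once it is in place the rest of the argument is a direct computation.
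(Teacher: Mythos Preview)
Your argument is correct and is the standard local computation one would give for this statement: linearize $g$ at $p$ via averaging over $\langle g\rangle$, write the blowup in its two affine charts, and read off the induced action on $E=\Pro(T_pS)$ and on the tangent spaces at the two fixed points. The chart computations are accurate, and you rightly note that ``exactly two fixed points'' implicitly requires $\lambda\neq\mu$.

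There is nothing to compare against in this paper: the lemma is stated with a citation to \cite[Lemma~4.5]{Tr16a} and no proof is given here. Your write-up is precisely the kind of proof one would expect behind that citation. One small remark: the averaging step produces formal (or analytic) local coordinates rather than Zariski-local ones, but since the claim concerns only the action on tangent spaces this is entirely sufficient, and you could even bypass the averaging altogether by observing that the tangent-space action at $p_i$ is determined by the differential, which only sees the linear part of $g$ in any coordinates.
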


We need the following two facts to consider quotients of conic bundles.

\begin{lemma}[{see \cite[Lemma 4.6]{Tr16a}}]
\label{eveninvariantfibre}
Let $\varphi: X \rightarrow B$ be a $G$-minimal conic bundle, and~$g \in G$ be an element of even order. Then $g$-invariant fibres of $\varphi$ are smooth.
\end{lemma}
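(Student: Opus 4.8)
The plan is to argue by contradiction: I assume that some $g$-invariant fibre of $\varphi$ is singular and derive a contradiction with $G$-minimality. First I would reduce to the case $\ord(g)=2$. Writing $\ord(g)=2m$ and setting $h=g^{m}$, the element $h$ is a (nontrivial) involution and every $g$-invariant fibre is $h$-invariant, so it is enough to prove the statement for involutions and then apply it to $h$. Thus assume $g$ has order $2$ and let $\overline{F}$ be a singular $g$-invariant fibre of $\overline{\varphi}\colon\XX\to\overline{B}$; write $\overline{F}=\overline{F}_{1}+\overline{F}_{2}$, where $\overline{F}_{1}$ and $\overline{F}_{2}$ are the two $(-1)$-curves forming the fibre, meeting transversally at the node~$\overline{p}$.

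Since $\overline{p}$ is the unique singular point of $\overline{F}$, it is $g$-fixed, and $g$ acts on $T_{\overline{p}}\XX$ permuting the two tangent lines $T_{\overline{p}}\overline{F}_{1}$ and $T_{\overline{p}}\overline{F}_{2}$. If $g$ interchanges $\overline{F}_{1}$ and $\overline{F}_{2}$, then in the eigenbasis of this pair of lines $g$ is anti-diagonal with $g^{2}=\mathrm{id}$, so its eigenvalues at $\overline{p}$ are $1$ and $-1$; hence $\mathrm{Fix}(g)$ contains a smooth curve $R$ through $\overline{p}$ whose tangent line at $\overline{p}$ is the $(+1)$-eigenline, so $R$ is transversal to both components at $\overline{p}$, while $g$ has only the single fixed point $\overline{p}$ on $\overline{F}$ (a point of $\overline{F}_{1}$ is $g$-fixed only if its image lies in $\overline{F}_{1}$, which forces it to be $\overline{p}$). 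I would then show that $R$ is a bisection of $\overline{\varphi}$ meeting $\overline{F}$ only at $\overline{p}$ and derive a contradiction with the list of $(-1)$-curves and sections of a relatively $G$-minimal conic bundle. If instead $g$ preserves each of $\overline{F}_{1}$ and $\overline{F}_{2}$, then on each component $\cong\Pro^{1}_{\kka}$ the involution $g$ is either the identity or an involution fixing $\overline{p}$ and one further point, and on $T_{\overline{p}}\XX$ it acts as $\operatorname{diag}(\lambda_{1},\lambda_{2})$ with $\lambda_{i}\in\{1,-1\}$ and $\lambda_{1}\lambda_{2}$ equal to the multiplier of $g$ on the tangent line $T_{\overline{\varphi}(\overline{p})}\overline{B}$.

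The core of the argument is to contradict $G$-minimality in each case. Since $\rho(X)^{G}=2$, no $G$-invariant and $\Gal(\kka/\ka)$-invariant set of pairwise disjoint $(-1)$-curves can be contracted; applied to the $G\rtimes\Gal$-orbit of $\overline{F}_{1}$, whose members are pairwise disjoint unless two of them are the components of one singular fibre, this forces the existence of an element of $G\rtimes\Gal$ that fixes $\overline{F}$ and interchanges its components. I would run this component-swapping symmetry against the action of $g$ found above, tracking the $g$-eigenvalues at $\overline{p}$, at the second $g$-fixed point of a component, and at the intersection point of a $(-1)$-section with a component (using Lemma~\ref{blowup} for the contraction of $\overline{F}_{1}$ over $\kka$), to obtain an incompatibility. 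Matching this local eigenvalue data across the component-swapping symmetry is the step where the hypothesis that $g$ has even order is genuinely consumed, and it is the main technical obstacle: the statement is not forced by any soft dimension count but by the combinatorics of the $(-1)$-curves of the conic bundle together with the parity of the eigenvalues at the node.
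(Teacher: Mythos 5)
This lemma is quoted from \cite[Lemma~4.6]{Tr16a} and the present paper contains no proof of it, so there is nothing internal to compare your argument against; I can only judge it on its own terms. As written it is a plan rather than a proof. The reduction to the involution $h=g^{m}$ and the local description of $dh$ at the node are fine, but the step that is supposed to actually produce the contradiction with $G$-minimality --- what you call ``matching the local eigenvalue data across the component-swapping symmetry'' --- is never carried out; you explicitly defer it as ``the main technical obstacle.'' Since that is precisely where the conclusion is supposed to come from, the core of the proof is missing. In the swap case, the concrete output of your analysis is a pointwise $h$-fixed smooth bisection $R$ through the node with $R\cdot F=2$; the promised ``contradiction with the list of $(-1)$-curves and sections'' is not evident (a bisection is neither a section nor a $(-1)$-curve), and the actual consequence is that $h$ acts trivially on $B$, which brings you to the problematic case below.

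More seriously, the strategy cannot succeed in the generality in which you have set it up, because one of the configurations you would need to exclude genuinely occurs on relatively $G$-minimal conic bundles. If $h$ preserves each component and acts at the node as $\operatorname{diag}(-1,-1)$ (hence with multiplier $+1$ on the base, i.e.\ trivially on $B$), there is no contradiction to be found: this is exactly the situation of a nontrivial subgroup $G_0$ of even order acting trivially on the set of $(-1)$-curves of an Iskovskikh surface, which this very paper treats as a legitimate case of a $G$-minimal conic bundle in Lemma~\ref{IskG0} (and where the local action at $E_i\cap E_i'$ is computed, via Lemma~\ref{blowup}, to be $\operatorname{diag}(\xi_d,\xi_d^{d-1})$, i.e.\ $\operatorname{diag}(-1,-1)$ for $d=2$). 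So no amount of bookkeeping of eigenvalues at the node and of negative curves can rule this out, and the parity of $\operatorname{ord}(g)$ is not, by itself, what forces the conclusion. The statement is proved and used in \cite{Tr16a} (and applied here, in Lemma~\ref{IskGFeven}) in a setting where $g$ acts nontrivially on the base $B$; a correct argument must consume that hypothesis --- for instance by showing that a $g$-invariant singular fibre forces the induced action of the involution $h$ on $B$ to be trivial in every sub-case --- whereas your outline never invokes the action on $B$ at all. To repair the proof you need to (i) make the hypothesis on the action on $B$ explicit and (ii) actually derive the contradiction, rather than assert that one exists.
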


\begin{theorem}[{cf. \cite[Theorem 4.1]{Tr16a}}]
\label{Cbundle}
Let $X$ be a $G$-surface that admits a \mbox{$G$-equivariant} conic bundle structure \mbox{$\varphi: X \rightarrow B$} such that $\overline{B} \cong \Pro^1_{\kka}$, $K_X^2 = 4$, $X$ is relatively \mbox{$G$-minimal} and the group $G$ faithfully acts on $B$. Let $\widetilde{X / G} \rightarrow X / G$ be the minimal resolution of singularities. Then for any relatively minimal model $Y$ of $\widetilde{X / G}$ over $B / G$ one has $K_Y^2 \geqslant 4$. Moreover, $K_Y^2 = 4$ only if $G \cong \CG_2$ or $G \cong \CG_2^2$.

\end{theorem}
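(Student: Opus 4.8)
The plan is to run the $G$-equivariant minimal model program on the resolution $\widetilde{X/G}$ over the base $B/G$, and to control the number of singular fibres of the resulting relatively minimal conic bundle $Y \to B/G$, since by the formula $K_Y^2 + n = 8$ for conic bundles, bounding $n$ from above by $4$ is equivalent to the desired inequality $K_Y^2 \geqslant 4$. First I would analyze the quotient map $\overline{\varphi}: \overline{X} \to \overline{B} \cong \Pro^1_{\kka}$ together with the action of $G$ on $B$: since $G$ acts faithfully on $B \cong \Pro^1$, the quotient $B/G \cong \Pro^1$, and the branch locus of $B \to B/G$ is controlled by the Riemann–Hurwitz formula for the finite subgroup $G \subset \mathrm{PGL}_2(\kka)$. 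The fibres of $\widetilde{X/G} \to B/G$ over points of $B/G$ away from the branch locus are quotients of smooth conics $\Pro^1$ by the (trivial, generically) stabilizer, hence smooth; the singular fibres of a relatively minimal model $Y$ can only appear over: (a) images of the $4$ singular fibres of $\overline{\varphi}$, and (b) images of the branch points of $B \to B/G$, where the fibre of $X$ is smooth (by Lemma \ref{eveninvariantfibre}, if the stabilizer has even order the invariant fibre is smooth) but its quotient by the cyclic stabilizer may degenerate or may acquire Du Val singularities that, upon minimal resolution and contraction, produce a chain contributing to the conic bundle structure.

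Next I would go through the contributions fibre by fibre. For a singular fibre $F = F_1 + F_2$ of $\overline{\varphi}$ (two $(-1)$-curves meeting at a point $x$), the stabilizer $G_b$ of the base point either preserves each component or swaps them; in either case one computes, using the toric/quotient-singularity data of Table \ref{table1} and Lemma \ref{blowup}, the type of singularities of $X/G$ over that point and the shape of the exceptional chain after minimal resolution, then counts how many components of the resulting fibre of $\widetilde{X/G}$ are $(-1)$-curves that must survive in a relatively minimal model. The key numerical bookkeeping is that each of the $4$ singular fibres of $\overline{\varphi}$ contributes at most one singular fibre to $Y$ (after contracting superfluous $(-1)$-curves in each fibre over $B/G$ down to a single reduced degenerate conic), giving $n \leqslant 4$ from source (a). For source (b), the branch points: here the fibre of $X$ is a smooth $\Pro^1$ with an action of a cyclic group $G_b = \CG_m$; its quotient is again $\Pro^1$, and the total space $X/G$ has at worst two cyclic quotient singularities over such a point (at the two $G_b$-fixed points on the fibre), of the types in Table \ref{table1}; resolving these and checking self-intersections shows the fibre over such a point is either already smooth ($\Pro^1$) or contributes nothing new to the count of singular fibres in a relatively minimal model — so generically source (b) adds nothing.

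The main obstacle, and the part requiring the most care, is the borderline analysis that pins down exactly when $K_Y^2 = 4$, i.e. when $n = 4$ with every one of the $4$ singular fibres of $\overline{\varphi}$ surviving (as a genuine singular fibre) in $Y$ and the branch points contributing nothing. This forces strong constraints: no base point of a singular fibre of $\overline{\varphi}$ may be a branch point of $B \to B/G$ where extra collapsing happens, and the $G$-action must not merge or smooth any of these four fibres in the quotient. Chasing through the Riemann–Hurwitz count for $G \subset \mathrm{PGL}_2(\kka)$ acting on the four points of $\Pro^1_{\kka}$ below the singular fibres (note $K_X^2 = 4$ means $\overline{\varphi}$ has exactly $4$ singular fibres), one finds that $|G|$ must be small and that $G$ cannot contain an element interchanging the two components of a singular fibre without reducing $n$; a case check then shows the only groups for which the configuration is consistent are $G \cong \CG_2$ and $G \cong \CG_2^2$. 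I would organize this final step as: (1) show $K_Y^2 \leqslant 4$ can only be attained if no nontrivial stabilizer acts nontrivially on the set of four degenerate-fibre points, (2) deduce $G$ acts freely on that four-point set or with very restricted stabilizers, (3) combine with the conic-bundle structure over each fibre to eliminate all groups except $\CG_2$ and $\CG_2^2$. Throughout, the needed local computations are exactly those tabulated in Table \ref{table1} together with Lemmas \ref{fixedpoints}, \ref{blowup} and \ref{eveninvariantfibre}, so no new technical machinery is required — only a careful, slightly lengthy case analysis.
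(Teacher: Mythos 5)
First, note that the paper does not actually prove Theorem \ref{Cbundle}: it is imported from \cite[Theorem 4.1]{Tr16a}, so there is no in-paper argument to compare with; your proposal has to be judged on its own merits and against the closely related computations the paper does carry out (notably Lemma \ref{IskGFeven}).

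There is a genuine gap, and it sits exactly at the step you dismiss: the claim that fibres over branch points of $B \to B/G$ (your source (b)) ``contribute nothing'' to the count of singular fibres of the relatively minimal model. This is false, and in fact these fibres are the whole point of the equality case. If $g \in G$ has order $2$ and acts with isolated fixed points on the smooth fibre over one of its two fixed points of $\overline{B}$, the quotient acquires two $A_1$-points on that fibre; the minimal resolution turns the fibre into a $(-2),(-1),(-2)$ chain, and when the two end curves are interchanged by $G \times \Gal\left(\kka/\ka\right)$ this chain \emph{is} a singular fibre of the relatively minimal model -- this is precisely the mechanism in Lemma \ref{IskGFeven}, and it is how $K_Y^2 = 4$ is actually attained for $G \cong \CG_2$ and $G \cong \CG_2^2$ (the four degenerate fibres of $\overline{\varphi}$ fall into $2$, respectively $1$, orbits, and the remaining $2$, respectively $3$, singular fibres of $Y$ sit over the branch points). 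Because of this, your route to the ``moreover'' part collapses: by Lemma \ref{eveninvariantfibre} no element of even order fixes a point under a degenerate fibre, so requiring all four degenerate fibres to survive as four distinct singular fibres of $Y$ (which is what ``source (b) adds nothing'' forces in the equality case) would make every $p_i$ fixed by all of $G$, hence $G$ trivial -- i.e.\ your criterion would single out the wrong groups and would contradict the realization of $K_Y^2=4$ for $\CG_2$ and $\CG_2^2$ used later in the paper.

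The correct bookkeeping has to bound, simultaneously, the number of $G$-orbits on the four points under degenerate fibres \emph{and} the number of branch points of $\Pro^1_{\kka} \to \Pro^1_{\kka}/G$ not lying over those orbits, and then show the total is at most $4$, with $4$ reachable only for $\CG_2$ ($2+2$) and $\CG_2^2$ ($1+3$). This is not automatic from your count: for instance for $\CG_{2k}$ with $k$ odd, or for dihedral groups, the naive total of potential bad points can also reach $4$, and one needs the local analysis (stabilizers of odd order on degenerate fibres, the eigenvalue data of Lemma \ref{blowup} and Table \ref{table1}, noting that the two cyclic quotient points on an invariant smooth fibre are of \emph{different} types $\tfrac{1}{m}(1,1)$ and $\tfrac{1}{m}(1,m-1)$) to see that some of these fibres become smooth or contractible in the relatively minimal model. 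So the skeleton of your argument (localize the possible singular fibres, then case analysis over finite subgroups of $\mathrm{PGL}_2(\kka)$) is the right kind of strategy, but the decisive computation -- what actually happens over the branch points -- is asserted incorrectly, and the equality analysis built on it does not go through.
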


\section{Iskovskikh surface}

In this section following \cite[Subsection 5.2]{DI1} we construct one specific conic bundle and study its quotients by finite groups.

\begin{definition}
\label{hyperellipticcurve}
Let an involution $\iota$ act on a smooth curve $C$ so that the quotient $C / \langle \iota \rangle$ is isomorphic to a smooth conic $B$. Then the curve $C$ is called \textit{a hyperelliptic curve}.

\end{definition}

One can easily check that $\iota$ has exactly $2g + 2$ fixed $\kka$-points on $C$ where $g$ is the genus of $C$.

Let $\iota$ nontrivially act on $\Pro^1_{\ka}$. Consider the quotient $\left( C \times \Pro^1_{\ka} \right) / \langle \iota \rangle$. It admits a structure of a $\Pro^1_{\ka}$-bundle over $B = C / \langle \iota \rangle$, and has $2g + 2$ fibres defined over $\kka$ each of which contains two singularities of type $A_1$. One can resolve these singularities and contract the proper transforms of the~$2g+2$~corresponding fibres.

We get a conic bundle $X \rightarrow B$ with $2g + 2$ singular fibres defined over~$\kka$. Note that all singularities of $\left( C \times \Pro^1_{\ka} \right) / \langle \iota \rangle$ lie in two disjoint sections of \mbox{the $\Pro^1_{\ka}$-bundle $\left( C \times \Pro^1_{\ka} \right) / \langle \iota \rangle \rightarrow B$}. Thus there are two sections of $X \rightarrow B$ with self-intersection number $-g - 1$.

\begin{definition}
\label{excCB}
The obtained conic bundle $X \rightarrow B$ is called \textit{an exceptional conic bundle} (see~\cite[Subsection 5.2]{DI1}). If $g = 1$ then we call $X$ \textit{an Iskovskikh surface}.
\end{definition}

\begin{remark}
Note that any conic bundle $X \rightarrow B$ with $2g + 2$ singular fibres and two sections with self-intersection number $- g - 1$ is exceptional, since one can reverse the construction above.
\end{remark}

One can check that over $\kka$ any exceptional conic bundle can be obtained as a blowup of $2g + 2$ points on~$\Pro^1_{\kka} \times \Pro^1_{\kka}$ lying in two disjoint sections of the projection $\Pro^1_{\kka} \times \Pro^1_{\kka} \rightarrow \Pro^1_{\kka}$ on the first factor. Thus any Iskovskikh surface is a weak del Pezzo surface. By contracting two sections with self-intersection number $-2$ on it one can get a singular del Pezzo surface of degree~$4$ with two singularities of type $A_1$.

\begin{remark}
\label{terminology}
In \cite[Section 7]{CT88} the term \textit{Iskovskikh surface} is used for the singular del Pezzo surface constructed above. This surface for the first time appeared in \cite{Isk71}. But in this paper we mainly focus on properties of the corresponding weak del Pezzo surface. We hope that Definition \ref{excCB} will not cause misunderstanding.
\end{remark}



We need the following result of the paper \cite{CT88}.

\begin{lemma}[{cf. \cite[Lemma 7.1]{CT88}}]
\label{IskUnirat}
If an Iskovskikh surface $X$ contains a $\ka$-point that does not lie on a section with self-intersection number $-2$, then~$X$ is $\ka$-unirational. In particular, $X(\ka)$ is dense.
\end{lemma}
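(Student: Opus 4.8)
The plan is to exhibit a dominant rational map onto $X$ from a $\ka$-rational surface by slicing $X$ with a one-parameter family of rational curves through the given $\ka$-point. Let $p \in X(\ka)$ be a point not lying on either section of self-intersection $-2$, and let $F$ be the fibre of the conic bundle $\varphi\colon X \to B$ through $p$; since $p$ lies off the two $(-2)$-sections, $F$ is either a smooth conic with a $\ka$-point (hence $F \cong \Pro^1_\ka$) or $p$ is a smooth point of a degenerate fibre lying on exactly one component, which is again a $(-1)$-curve isomorphic to $\Pro^1_\ka$ through $p$. In either case $B \cong \Pro^1_\ka$ because $F$ maps isomorphically to $B$ near $p$, so we may assume $B = \Pro^1_\ka$ and, after the reverse of the construction in Definition \ref{excCB}, realize $X$ birationally over $B$ as $(C \times \Pro^1_\ka)/\langle \iota\rangle$ where $C$ is a genus-one curve and $\iota$ is the hyperelliptic involution, acting on the second factor as a nontrivial involution of $\Pro^1_\ka$.

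First I would reinterpret the problem on the $\Pro^1_\ka$-bundle side. The surface $(C\times\Pro^1_\ka)/\langle\iota\rangle$ carries the second projection to $\Pro^1_\ka/\langle\iota\rangle \cong \Pro^1_\ka$, which realizes it as (the resolution of) a conic bundle in the \emph{other} direction, whose generic fibre is $C$ with a rational point — coming precisely from the image of $p$ — so the generic fibre is an elliptic curve $E/\ka(t)$ with a marked $\ka(t)$-point, i.e. a genuine elliptic curve over $\ka(t)$. The key step is then the classical fact, used in \cite[Lemma 7.1]{CT88}, that an elliptic surface with a section admits a multisection that is $\ka$-rational: concretely, the relative multiplication-by-$2$ (or the pencil of curves residual to the section in the fibral $2$-torsion, or simply: through the marked point the fibre minus the three other $2$-torsion points) sweeps out a surface dominating $X$ and rational over the base. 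I would spell this out by taking the hyperelliptic (degree-$2$) map realizing $E \to \Pro^1_{\ka(t)}$ determined by the marked point, which exhibits a $\ka(t)$-rational curve mapping $2:1$ onto $E$; globalizing over $t\in\Pro^1_\ka$ gives a surface $W$ with a dominant map $W \dashrightarrow X$, and $W \to \Pro^1_\ka$ is a conic bundle with a section, hence $W$ is $\ka$-rational. Density of $X(\ka)$ then follows since the image of $W(\ka)$ is dense and $W(\ka)$ is dense in the $\ka$-rational surface $W$.

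The main obstacle is making precise the passage from the geometric description (blowup of $\Pro^1_{\kka}\times\Pro^1_{\kka}$ at $2g+2 = 4$ points on two sections) to a statement valid over $\ka$: one must check that the $\ka$-point $p$, lying off the $(-2)$-sections, really does force $B\cong\Pro^1_\ka$ and supplies a $\ka$-rational marked point on the genus-one generic fibre of the transverse fibration, rather than merely a point defined over a quadratic extension. This is where the hypothesis "does not lie on a section with self-intersection $-2$" is essential: a point on a $(-2)$-section sits over the branch locus of $C\to B$ and need not give a rational point of the generic fibre of the transverse conic bundle. Once that bookkeeping is done, the unirationality and density are formal, and I would simply cite \cite[Lemma 7.1]{CT88} for the explicit elliptic-fibration computation rather than reproduce it.
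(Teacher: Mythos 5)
There is a genuine gap, and it sits at the heart of your construction. You pass to the ``transverse'' fibration of $\left( C \times \Pro^1_{\ka} \right)/\langle \iota \rangle$ over $\Pro^1_{\ka}/\langle \iota \rangle$, whose fibres are the genus-one bisections of the conic bundle (it is a genus-one fibration, not a conic bundle), and you assert that its generic fibre is an elliptic curve over $\ka(t)$ ``with a marked $\ka(t)$-point coming from the image of $p$''. But $p$ is a single $\ka$-point lying in one closed fibre of that fibration; it provides no $\ka(t)$-point of the generic fibre (that would require a section, or a rational point of the quadratic twist of $C$ which the generic fibre actually is over the non-closed field), and in general no such section exists. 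Worse, even granting a marked point, the device you propose cannot exist: a ``$\ka(t)$-rational curve mapping $2{:}1$ onto $E$'' would contradict L\"uroth (no rational curve dominates a genus-one curve over any field); the hyperelliptic map goes in the opposite direction, $E \to \Pro^1_{\ka(t)}$, and relative multiplication by $2$ maps $E$ to itself. So no fibrewise argument over the genus-one fibration can yield a dominant map from a $\ka$-rational surface: such a map must move across the fibres, which is exactly why unirationality of elliptic-fibred rational surfaces is not a formal consequence of having a section. (A smaller slip: $B \cong \Pro^1_{\ka}$ because the smooth conic $B$ contains the $\ka$-point $\varphi(p)$; the fibre $F$ through $p$ maps to a point of $B$, not ``isomorphically to $B$ near $p$''.)

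Note also that the paper does not prove this lemma at all: it quotes it from Coray--Tsfasman, and their Lemma 7.1 is not the ``elliptic-fibration computation'' your sketch defers to at the end, so the citation cannot plug the hole. The known route goes through the other model of the Iskovskikh surface: contracting the two $(-2)$-sections gives a singular del Pezzo surface of degree $4$ with two $A_1$-points, and the hypothesis that $p$ avoids the $(-2)$-sections says precisely that $p$ maps to a smooth point there. One then runs the classical degree-$2$ unirationality construction for quartic del Pezzo surfaces with a suitable rational point (the statement for smooth ones is Manin's Theorem IV.7.8, already quoted in the introduction of this paper, and \cite[Lemma 7.1]{CT88} is its extension to this singular case): blowing up, or projecting from, the point produces a cubic surface containing a $\ka$-rational line, the planes through that line give a conic bundle for which the line is a bisection, and base-changing by the line yields a conic bundle with a section, i.e.\ a $\ka$-rational surface dominating $X$ of degree $2$; density of $X(\ka)$ then follows. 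If you do not want to reproduce that argument, the correct move is to reduce the lemma to that statement and cite it for what it actually proves.
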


We use the following notation in this section.

Let $X$ be an Iskovskikh surface and $\varphi: X \rightarrow B$ be the corresponding conic bundle. Let $p_1$, $p_2$, $p_3$ and $p_4$ be points on $\overline{B} \cong \Pro^1_{\kka}$ such that $\overline{\varphi}^{-1}(p_i) = E_i \cup E'_i$ are singular fibres. Let $C$ and $C'$ be the sections of $\varphi: X \rightarrow B$ with self-intersection number $-2$. One has $C \cdot E_i = 1$, $C \cdot E'_i = 0$, $C' \cdot E_i = 0$ and $C' \cdot E'_i = 1$ on $\XX$.

For any finite group $G$ acting on $X$ we denote by $G_0$ the normal subgroup acting trivially on the set of all $(-1)$-curves and by $G_B$ the subgroup acting trivially on the base~$B$. The group $G_0$ fixes the four points $p_i$ on $B$ thus it acts trivially on $B$ and $G_0 \subset G_B$.

We want to find possibilities for $G$ such that the quotient $X / G$ is not birationally equivalent to a surface $Y$ with $K_Y^2 \geqslant 5$. If $X$ is not $G$-minimal then it is birationally equivalent to a surface $S$ such that $K_S^2 \geqslant 5$. Thus $X / G \approx S / G$ is birationally equivalent to a surface $Y$ such that $K_Y^2 \geqslant 5$ by Remark \ref{geq5touse}. Therefore we assume that $X$ is \mbox{$G$-minimal}, i.\,e. $\rho(X)^G = 2$.

\begin{lemma}
\label{IskG0}
The group $G_0$ is cyclic. If $|G_0|$ is even then the quotient $X / G_0$ is \mbox{$G / G_0$-birationally} equivalent to a surface $Y$ such that $K_Y^2 = 8$. If $|G_0|$ is odd then the quotient $X / G_0$ is $G / G_0$-birationally equivalent to a $G / G_0$-minimal Iskovskikh surface.
\end{lemma}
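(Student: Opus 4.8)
The plan is to first analyze the structure of $G_0$ using the geometry of the singular fibres, then treat the even and odd order cases separately by passing to the quotient and running $G/G_0$-equivariant MMP.

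First I would show $G_0$ is cyclic. By definition $G_0$ acts trivially on the set of $(-1)$-curves, so it fixes each component $E_i$ and $E'_i$ of every singular fibre; in particular it fixes the four intersection points $E_i \cap E'_i$, the two sections $C$, $C'$, and the four points $C \cap E_i$ and $C' \cap E'_i$. Restricting the $G_0$-action to a fixed singular fibre, say $\overline{\varphi}^{-1}(p_1) = E_1 \cup E'_1$, the group $G_0$ acts on each $\Pro^1_{\kka}$-component fixing two points (the node and the intersection with the corresponding section), hence acts on $E_1$ and on $E'_1$ through a cyclic group. The action on the whole surface is determined by the action on, say, $C$ together with the fibrewise action near a point, and by Lemma \ref{fixedpoints} sharing a common pair of fixed points forces the relevant local groups to be cyclic; assembling these I get that $G_0$ embeds into a product of cyclic groups all sharing eigendirections, so $G_0$ is cyclic. (Concretely: $G_0 \subset G_B$ acts fibrewise, it preserves the two sections which give two disjoint $G_0$-invariant sections, so on a general fibre $G_0$ fixes two points and acts faithfully and cyclically there; an element acting trivially on a general fibre is trivial, so $G_0$ injects into this cyclic group.)

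Next, suppose $|G_0|$ is even. The key tool is Lemma \ref{eveninvariantfibre}: since $X$ is $G$-minimal (we reduced to $\rho(X)^G = 2$) and $G_0$ has even order, every $G_0$-invariant fibre of $\varphi$ is smooth — but $G_0$ fixes all four points $p_i$, so the four singular fibres are $G_0$-invariant, a contradiction unless we first contract one component of each. More precisely, $G_0$ fixes each $E_i$, so the set $\{E_1, E_2, E_3, E_4\}$ is $G$-invariant and defined over $\ka$ (being distinguished from $\{E'_i\}$ by the intersection pattern with the $\ka$-rational pair of sections, or after a Galois-compatible relabelling one handles the swap), and these are four disjoint $(-1)$-curves. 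Wait — here I must be careful whether $\{E_i\}$ is Galois-stable; if Galois or $G$ swaps $C$ and $C'$ it also swaps the two families, but then $\{E_i\} \cup \{E'_i\}$ splits into $G$-orbits of disjoint $(-1)$-curves in a way I can still contract $G$-equivariantly and over $\ka$. Contracting the appropriate $G$-orbit of disjoint $(-1)$-curves (one component from each singular fibre) turns $\varphi$ into a $\Pro^1$-bundle $X' \to B$ with $\rho(X')^{G} $ dropping accordingly; since the four singular fibres are gone, $K_{X'}^2 = 8$. This contraction is a $G/G_0$-equivariant (indeed $G$-equivariant) birational morphism, so $X/G_0 \approx_{G/G_0} X'/G_0$, and $X'/G_0$ is $G/G_0$-birational to a relatively minimal $\Pro^1$-bundle over $B/G_0$, which has self-intersection of canonical class $8$. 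The main technical care is in checking the contracted set is simultaneously $G$-invariant and $\Gal(\kka/\ka)$-invariant; this follows because $G$ commutes with Galois and both preserve the pair of sections up to swap.

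Finally, suppose $|G_0|$ is odd. Then no contraction is forced: $X$ is $G$-minimal and $|G_0|$ odd is compatible with $G_0$-invariant singular fibres. I claim $X/G_0$ is again an Iskovskikh surface. Since $G_0$ is cyclic of odd order $m$ acting fibrewise with two fixed sections $C$, $C'$, on each of the four singular fibres $E_i \cup E'_i$ the generator acts with a single fixed point on $E_i$ away from the node and the $C$-point only if $m \mid$ something — in general the quotient of each $\Pro^1$-component by a cyclic group fixing two points is again $\Pro^1$, and the quotient of the total space is a conic bundle $X/G_0 \to B$; its singular fibres are exactly the images of the four singular fibres (odd order cyclic quotients of a node $A_1$-type configuration, after minimal resolution, again yield a single node, because the relevant toric computation $\frac1m(1,-1)$ over the node of the conic still produces two $(-1)$-curves) and the images $\overline{C}$, $\overline{C'}$ are two disjoint sections. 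Computing self-intersections via Table \ref{table1} (the $\frac1m(1,m-1)$ row, for which $K_{\widetilde S}^2 = K_S^2$ and each section loses $\frac{m-1}{m}$ at each of the relevant points), I get that the two sections on the minimal resolution of $X/G_0$ have self-intersection $-2$ and the surface has four singular fibres, so by the Remark following Definition \ref{excCB} it is an Iskovskikh surface. Equivariance of the quotient construction under $G/G_0$ is automatic since $G_0 \trianglelefteq G$, and $G/G_0$-minimality of the resulting Iskovskikh surface follows from $\rho(X/G_0)^{G/G_0} = \rho(X)^G = 2$.

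I expect the odd-order case to be the main obstacle: one must verify carefully that the quotient of each singular fibre configuration by an odd cyclic group, after minimal resolution, is again a single singular conic fibre with the sections meeting it transversally as before, and that the self-intersection bookkeeping from Table \ref{table1} yields precisely $-2$ for the sections rather than some other value — equivalently that $X/G_0$ is a weak del Pezzo of degree $4$ with the exact numerical profile characterizing an Iskovskikh surface. The even-order case is easier conceptually but requires the Galois-descent check on which orbit of $(-1)$-curves to contract.
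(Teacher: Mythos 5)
Your cyclicity argument (the concrete parenthetical version: $G_0$ acts trivially on $C$ and $C'$, hence faithfully on a general fibre with two common fixed points, so Lemma~\ref{fixedpoints} applies) is correct and is the paper's argument. The trouble starts with the even case. First, Lemma~\ref{eveninvariantfibre} cannot be used the way you use it: in this paper it is only ever applied to elements acting faithfully on the base, and for an element $g\in G_0$ (which acts trivially on $B$) its conclusion is simply false --- even-order elements of $G_0$ with all four singular fibres invariant do exist on relatively $G$-minimal Iskovskikh surfaces, and that is precisely the non-vacuous even case of the very lemma you are proving, so there is no ``contradiction'' to exploit. Second, the contraction you then propose is impossible under the standing hypothesis $\rho(X)^G=2$: a $G\times\Gal\left(\kka/\ka\right)$-invariant set consisting of one component from each singular fibre would contradict relative $G$-minimality of $X\to B$, and your fallback --- that $\{E_i\}\cup\{E'_i\}$ splits into $G$-orbits of \emph{disjoint} $(-1)$-curves --- fails because $E_i\cdot E'_i=1$. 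The correct mechanism is to quotient first: the four nodes $E_i\cap E'_i$ give four $A_{d-1}$-points on $X/G_0$ (the action there is $\operatorname{diag}(\xi_d,\xi_d^{d-1})$), each resolved fibre is a chain of $d+1$ curves $(-1),(-2),\dots,(-2),(-1)$, and one repeatedly contracts the chain ends $G/G_0$-equivariantly; the parity of $d$ is exactly what decides whether this terminates in a smooth fibre ($K_Y^2=8$, $d$ even) or in a two-component fibre ($d$ odd). Your argument never produces this, and it is the heart of the even/odd dichotomy.

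The odd case as you wrote it is also numerically wrong in the intermediate steps. The sections $C$, $C'$ are pointwise fixed, so the quotient map is ramified along them, their images miss the $A_{d-1}$-points, and $f(C)^2=d\,C^2=-2d$ on $X/G_0$; the row of Table~\ref{table1} you invoke does not apply to them (the curves through the singular points are the images of $E_i$ and $E'_i$, not the sections). Likewise the minimal resolution does not carry ``a single node'' per degenerate fibre: for $d>1$ the two $(-1)$-curves coming from $E_i$ and $E'_i$ are separated by the chain of $(-2)$-curves, and the Iskovskikh configuration (four singular fibres, sections of self-intersection $-2$) only appears after the $(d-1)/2$ rounds of end-contractions, each of which raises the self-intersection of the section transforms by $4$, giving $-2d+2(d-1)=-2$. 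Finally, $G/G_0$-minimality of the resulting surface $Y$ does not follow from ``$\rho(X/G_0)^{G/G_0}=\rho(X)^G=2$'' (that equality concerns the singular quotient, not the resolved and recontracted $Y$); one needs that the two components of each singular fibre of $Y$ lie in a single $(G/G_0)\times\Gal\left(\kka/\ka\right)$-orbit, which is deduced from the $G$-minimality of $X$: the elements swapping $E_i$ and $E'_i$ descend and swap the two ends, hence the two surviving components, of each chain.
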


\begin{proof}

The group $G_0$ acts trivially on $C$ and $C'$. Therefore $G_0$ faithfully acts on any smooth fibre $F$ of $\overline{\varphi}: \XX \rightarrow \overline{B}$ and fixes the points $F \cap C$ and $F \cap C'$. Thus $G_0$ is cyclic by Lemma \ref{fixedpoints}.

Let $G_0$ be generated by an element $g$ of order $d$. The set of fixed points of $G_0$ consists of isolated fixed points $E_i \cap E'_i$ and the curves $C$ and $C'$. Applying Lemma \ref{blowup} one can easily check that in the tangent space of $X$ at $E_i \cap E'_i$ the element $g$ acts as $\operatorname{diag}\left( \xi_{d}, \xi_{d}^{d - 1} \right)$. Thus there are four singularities of type $A_{d - 1}$ on $X / G_0$.

Let $f: X \rightarrow X / G_0$ be the quotient morphism and
$$
\pi: \widetilde{X / G_0} \rightarrow X / G_0
$$
\noindent be the minimal resolution of singularities. The transforms $\pi^{-1}f(E_i \cup E'_i)$ are chains of $d + 1$ curves with negative self-intersection numbers whose ends are $(-1)$-curves and the other curves are $(-2)$-curves (see Table \ref{table1}). One can $G / G_0$-equivariantly contract the curves that are the ends of these chains. By repeating this procedure we obtain a conic bundle $Y \rightarrow B$ without singular fibres if $\ord g$ is even, or with four singular fibres if $\ord g$ is odd. In the former case one has $K_Y^2 = 8$. In the latter case one can check that the self-intersection numbers of the proper transforms of $C$ and $C'$ on $Y$ are $-2$. Moreover, the components of any singular fibre of $Y \rightarrow B$ are contained in one $\left(G / G_0\right) \times\Gal\left( \kka / \ka\right)$-orbit, since $X$ is $G$-minimal and the components of any singular fibre of $X \rightarrow B$ are contained in one~$G\times\Gal\left( \kka / \ka\right)$-orbit. Therefore $Y$ is a $G / G_0$-minimal Iskovskikh surface.

\end{proof}

From now on assume that $G_0$ is trivial.

\begin{lemma}
\label{IskGB}
The group $G_B$ is either trivial or isomorphic to $\CG_2$. If $G_B$ is not trivial then the quotient $X / G_B$ is smooth, and $K_{X / G_B}^2 = 8$.
\end{lemma}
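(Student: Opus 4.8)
The plan is to exploit the same structure used in the proof of Lemma \ref{IskG0}: the sections $C$ and $C'$ with self-intersection $-2$ together with the smooth fibres of $\overline{\varphi}$ give a very rigid picture of how any automorphism can act. First I would analyze the action of $G_B$ on the geometry. Since $G_B$ acts trivially on $B$, it preserves every fibre of $\overline{\varphi}$, in particular every singular fibre $E_i\cup E_i'$; and since we are in the case $G_0$ trivial, no nontrivial element of $G_B$ can fix all $(-1)$-curves. An element $g\in G_B$ that swaps $E_i$ with $E_i'$ for some $i$ must then swap them for all $i$ simultaneously (because it preserves the incidence with $C$ and $C'$, so it either preserves both sections or swaps them, and the two cases are forced to be uniform across all four singular fibres). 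Thus $G_B$ has a subgroup of index at most $2$, call it $G_B^0$, that preserves each $E_i$ and hence lies in $G_0=\{1\}$; so $|G_B|\le 2$, giving the first assertion $G_B\in\{1,\CG_2\}$.

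Next, assuming $G_B\cong\CG_2$ with generator $\iota$, I would show the quotient is smooth and compute $K_{X/G_B}^2=8$. The key point is to locate the fixed locus of $\iota$. Because $\iota$ swaps $C\leftrightarrow C'$ and $E_i\leftrightarrow E_i'$, it has no fixed points on the $(-1)$-curves away from the nodes $E_i\cap E_i'$ — and in fact it swaps the two branches there, so the nodes are not fixed either. On a smooth fibre $F\cong\Pro^1$, the involution $\iota$ swaps the two points $F\cap C$ and $F\cap C'$; an involution of $\Pro^1$ interchanging two given points has two other fixed points on that $\Pro^1$. These sweep out a curve $R$ (a bisection of $\varphi$) which is the ramification divisor; crucially $R$ meets each fibre transversally and avoids the singular fibres' nodes, so $\iota$ acts on the tangent space at each fixed point as $\operatorname{diag}(1,-1)$, i.e. as a reflection. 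Hence the quotient map $X\to X/G_B$ is ramified in codimension $1$ and $X/G_B$ is smooth (quotient of a smooth surface by an involution with only a smooth fixed curve).

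For the degree computation I would pass to the induced conic bundle structure $X/G_B\to B$ and count singular fibres. The fibres of $\overline\varphi$ over the branch points of $B\to B$ are irrelevant here because $G_B$ fixes $B$ pointwise; rather, on the quotient each smooth fibre $\Pro^1$ maps $2:1$ to its image $\Pro^1$ (ramified at two points, consistent with Riemann–Hurwitz), so images of smooth fibres are smooth $\Pro^1$'s, while each singular fibre $E_i\cup E_i'$, whose two components are swapped by $\iota$, maps onto a single smooth rational curve — the node is a fixed point of the $\CC_2$-action on the pair, but after quotient the image is again smooth (this is exactly the local picture where $\frac{1}{2}(1,1)$ would appear, but here the swap of branches means the local model is instead $\A^2/\langle(x,y)\mapsto(y,x)\rangle$, a smooth point, with the two components glued to one smooth curve). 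Thus $X/G_B\to B$ is a conic bundle with \emph{no} singular fibres, so it is a $\Pro^1$-bundle over $B\cong\Pro^1$ (geometrically), and $K^2_{X/G_B}=8$.

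The main obstacle I expect is the local analysis at the nodes $E_i\cap E_i'$: one has to be careful that $\iota$ genuinely exchanges the two branches of the node (rather than fixing the node and acting by $-1$ on both tangent directions, which would produce a quotient singularity), and then to identify the local quotient correctly as smooth and to track the effect on the exceptional/fibre components. This is where Lemma \ref{blowup} and the explicit incidence relations $C\cdot E_i=1$, $C'\cdot E_i'=1$ do the work: since $\iota$ swaps $C$ and $C'$, it must swap the branch of the node lying on $E_i$-side adjacent to $C$ with the $E_i'$-side adjacent to $C'$, forcing the branch-swap. Once that is pinned down, everything else is the routine conic-bundle bookkeeping sketched above.
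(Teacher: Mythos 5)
Your argument is correct and essentially the paper's own proof: any nontrivial element of $G_B$ is forced (by triviality of $G_0$) to swap $C\leftrightarrow C'$ and $E_i\leftrightarrow E_i'$, so $|G_B|\leqslant 2$; the fixed locus is purely one-dimensional, so the quotient is smooth; and both smooth and singular fibres map to smooth fibres of $X/G_B\rightarrow B$, giving a conic bundle without singular fibres and hence $K_{X/G_B}^2=8$. The only slips are local: the nodes $E_i\cap E_i'$ \emph{are} fixed points (with the two branches swapped), and the fixed curve $R$ does pass through them (locally it is the diagonal of $(x,y)\mapsto(y,x)$, so $R$ is a bisection ramified over the four points $p_i$), but your own local model $\A^2/\langle(x,y)\mapsto(y,x)\rangle$ already treats these points correctly, so the conclusion is unaffected.
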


\begin{proof}

Let $g$ be any nontrivial element of $G_B$. If the curve $C$ is $g$-invariant then $gE_i = E_i$, $gE'_j = E'_j$ and $gC' = C'$. This contradicts the assumption that $G_0$ is trivial. Thus $gC = C'$, $gE_i = E'_i$, $gE'_j = E_j$ and $gC' = C$.

If two elements $g$ and $h$ act in such way, then the element $gh$ preserves all negative curves on $\XX$. Thus $gh$ is trivial and $G_B \cong \CG_2$.

The group $G_B$ has no isolated fixed points on $X$. Thus $X / G_B$ is smooth. Let~\mbox{$f: X \rightarrow X / G_B$} be the quotient map. The image of any smooth fibre of $X \rightarrow B$ is a smooth fibre of $X / G_B \rightarrow B$. For any~$i$ one has $gE_i = E'_i$ so the image of any singular fibre of $X \rightarrow B$ is a smooth fibre of $X / G_B \rightarrow B$ too. Therefore $X / G_B \rightarrow B$ is a conic bundle without singular fibres, and $K_{X / G_B}^2 = 8$.

\end{proof}

From now on assume that $G_B$ is trivial.

\begin{lemma}
\label{IskGFodd}
If the group $G$ is not isomorphic to $\CG_2$ or $\CG_2^2$ then the quotient $X / G$ is birationally equivalent to a surface $Y$ such that $K_Y^2 \geqslant 5$. 
\end{lemma}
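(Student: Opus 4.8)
The plan is to analyze the structure of $G$ under the standing assumptions that $X$ is $G$-minimal (so $\rho(X)^G = 2$), that $G_0$ is trivial, and that $G_B$ is trivial. Since $G_B$ is trivial, the natural homomorphism $G \to \Aut(B)$ is injective, so $G$ embeds into $\Aut(\overline{B}) = \mathrm{PGL}_2(\kka)$, and moreover $G$ must permute the four points $p_1, p_2, p_3, p_4$ (the images of the singular fibres) among themselves, since these four points are canonically determined by $X$. Hence there is a homomorphism $G \to \SG_4$ whose kernel fixes all four $p_i$; an element fixing three points on $\Pro^1_{\kka}$ is trivial, so this homomorphism is injective. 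Thus $G$ is a subgroup of $\SG_4$ acting faithfully on $\Pro^1_{\kka}$ permuting four marked points. I would enumerate which subgroups of $\SG_4$ actually arise as finite subgroups of $\mathrm{PGL}_2(\kka)$: the finite subgroups of $\mathrm{PGL}_2(\kka)$ are cyclic, dihedral, $\AG_4$, $\SG_4$, $\AG_5$; intersecting with subgroups of $\SG_4$ this leaves $\CG_n$, $\DG_{2n}$, $\AG_4$, and $\SG_4$ (with the constraint that they act on four points, which bounds $n$).

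Next I would exclude everything except $\CG_2$ and $\CG_2^2$. For this, first observe that any involution $g \in G$ of even order which is $g$-invariant on some fibre must act with smooth invariant fibres by Lemma \ref{eveninvariantfibre}; combined with Lemma \ref{blowup} this controls the isolated fixed points and hence the singularities of $X / G$. The key leverage is Theorem \ref{Cbundle}: I would exhibit $X$ as (birationally) a $G$-minimal conic bundle of the relevant type and apply it. Concretely, contract one component $E'_i$ in each of the four singular fibres of $X$ — this can be done $G$-equivariantly after passing to a $G$-orbit, or one first replaces $X$ by its associated degree-$4$ model — to obtain a $G$-conic bundle $X'$ with $K_{X'}^2 = 4$ on which $G$ still acts faithfully on the base $B$. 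Then Theorem \ref{Cbundle} says any relatively minimal model $Y$ of $\widetilde{X' / G}$ over $B/G$ satisfies $K_Y^2 \geqslant 4$, with equality only if $G \cong \CG_2$ or $G \cong \CG_2^2$. Therefore, when $G$ is not $\CG_2$ or $\CG_2^2$, we get $K_Y^2 \geqslant 5$, and since $X/G$ and $X'/G$ are birationally equivalent (contracting a $(-1)$-curve in each fibre downstairs), $X/G$ is birationally equivalent to a surface with $K^2 \geqslant 5$.

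The main obstacle is making the reduction to Theorem \ref{Cbundle} genuinely $G$-equivariant: the four singular fibres each have two components swapped possibly by $\Gal(\kka/\ka)$ but, since $G_B$ is trivial, \emph{not} swapped by any element of $G$, so within each fibre $G$ fixes each component — hence $G$ does act on the degree-$4$ conic bundle obtained by contracting $E_1', E_2', E_3', E_4'$, though this contraction is defined only over $\kka$ a priori. I would instead work with the intrinsically defined singular degree-$4$ del Pezzo surface (contract $C$ and $C'$, the two $(-2)$-sections, obtaining the classical Iskovskikh surface), or directly invoke the birational equivalence of $X/G$ with $X'/G$ over $\overline{\ka}$ together with the fact that both are defined over $\ka$; the careful bookkeeping that the relevant model is defined over $\ka$ and that $\overline{B} \cong \Pro^1_{\kka}$ as required by Theorem \ref{Cbundle} is where the real work lies. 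Once that is in place, the conclusion is immediate, and the remaining borderline cases $G \cong \CG_2$, $G \cong \CG_2^2$ are precisely the ones deferred to later lemmas.
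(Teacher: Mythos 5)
Your key idea --- apply Theorem \ref{Cbundle} --- is exactly the paper's proof, which consists of a single sentence: under the standing assumptions the Iskovskikh surface $X$ itself already satisfies every hypothesis of that theorem. Indeed $X \rightarrow B$ is a conic bundle with four singular fibres, so $K_X^2 = 8 - 4 = 4$; it is relatively $G$-minimal because we have reduced to the case $\rho(X)^G = 2$; and $G$ acts faithfully on $B$ because $G_B$ is assumed trivial. Theorem \ref{Cbundle} then gives $K_Y^2 \geqslant 4$ for a relatively minimal model $Y$ of $\widetilde{X/G}$ over $B/G$, with equality only for $G \cong \CG_2$ or $\CG_2^2$, which is the statement of the lemma.

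The genuine problem with your write-up is the reduction step you insert before invoking Theorem \ref{Cbundle}, together with the ``main obstacle'' you then try to repair. You propose to contract one component $E'_i$ in each of the four singular fibres ``to obtain a $G$-conic bundle $X'$ with $K_{X'}^2 = 4$.'' This is numerically wrong: contracting four $(-1)$-curves raises $K^2$ from $4$ to $8$ and removes all singular fibres, so the resulting surface does not satisfy the hypothesis $K^2 = 4$ of Theorem \ref{Cbundle} at all (nor would passing through the singular degree-$4$ del Pezzo model help, since that surface is not the smooth conic bundle the theorem requires). Moreover the contraction is in general not defined over $\ka$: relative $G$-minimality forces the two components of each singular fibre to lie in a single $G \times \Gal(\kka/\ka)$-orbit, so even though $G$ may fix each component, the Galois group need not. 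None of this machinery, nor the enumeration of $G$ as a subgroup of $\SG_4$ inside $\mathrm{PGL}_2(\kka)$, is needed; deleting it and applying Theorem \ref{Cbundle} directly to $\varphi: X \rightarrow B$ gives the correct (and the paper's) proof.
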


\begin{proof}
This lemma directly follows from Theorem~\ref{Cbundle}.
\end{proof}

\begin{lemma}
\label{IskGFeven}
Assume that $G$ is isomorphic to $\CG_2$ or $\CG_2^2$, each nontrivial element $g \in G$ has only isolated fixed points on~$\XX$, and each pair of $g$-fixed points, lying in one fibre of $\overline{\varphi}:\XX \rightarrow \overline{B}$, is contained in one $G \times \Gal \left( \kka / \ka \right)$-orbit. Then the quotient $X / G$ is birationally equivalent to a surface~$Y$, admitting a structure of a minimal conic bundle $Y \rightarrow B$ such that $K_Y^2 = 4$. If one of the above assumptions fails then the quotient $X / G$ is birationally equivalent to a surface~$Y$ such that $K_Y^2 \geqslant 5$. 
\end{lemma}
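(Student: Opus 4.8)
The plan is to analyze the quotient $X / G$ directly, using that $G$ acts faithfully on $B$ (as $G_B$ is trivial) together with the explicit description of the singular fibres of $\overline{\varphi}$. First I would pin down the action on the four points $p_i$. Since $G$ is a $2$-group, every nontrivial element has even order, so by Lemma~\ref{eveninvariantfibre} no singular fibre of $\overline{\varphi}$ is invariant under a nontrivial element of $G$; hence no $p_i$ is fixed by a nontrivial element, and $G$ acts on $\{p_1, p_2, p_3, p_4\}$ with trivial stabilizers. Thus for $G \cong \CG_2$ this set is a union of two orbits of size $2$, and for $G \cong \CG_2^2$ it is a single orbit of size $4$. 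The conic bundle descends to a fibration $\overline{X / G} \rightarrow \overline{B / G} \cong \Pro^1_{\kka}$, and over the image of each of these orbits the corresponding singular fibre of $\overline{\varphi}$ maps isomorphically onto a reducible fibre consisting of two $(-1)$-curves meeting transversally; this produces two such fibres when $G \cong \CG_2$ and one when $G \cong \CG_2^2$.

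Next I would find the remaining degenerate fibres, which appear over the branch points of $B \rightarrow B / G$. Let $g \in G$ be an involution and $b_0 \in B$ one of its fixed points; the fibre $F_0 = \overline{\varphi}^{-1}(b_0)$ is smooth by Lemma~\ref{eveninvariantfibre} and $g$-invariant, and since $g$ has only isolated fixed points it acts on $F_0 \cong \Pro^1_{\kka}$ as a nontrivial involution, with two fixed points. Because $b_0$ is a fixed point of an action of order $2$ on $B \cong \Pro^1_{\kka}$, at each of these two points $g$ acts as $\operatorname{diag}(-1, -1)$, so they are isolated fixed points of $g$ on $\XX$ and give two $A_1$-singularities of $X / G$ lying on the image of $F_0$. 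Using Table~\ref{table1} one sees that on the minimal resolution $\widetilde{X / G}$ the fibre over the image $\overline{b}_0$ becomes a chain of a $(-2)$-curve, a $(-1)$-curve (the strict transform of the image of $F_0$, whose self-intersection drops from $0$ by $\tfrac12 + \tfrac12$) and a $(-2)$-curve, and contracting the middle $(-1)$-curve turns it into a reducible fibre with two components. Running over all branch points --- two of them for $\CG_2$, three for $\CG_2^2$ --- and keeping track of how the fixed points of the involutions distribute into $G$-orbits, hence into $A_1$-singularities, I would count exactly four reducible fibres in a relatively minimal model $Y \rightarrow B / G$, so that $K_Y^2 = 8 - 4 = 4$. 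As an independent check one computes $\chi_{\mathrm{top}}(\overline{X / G}) = \tfrac{1}{|G|} \sum_{g \in G} \chi_{\mathrm{top}}(\XX^g)$, deduces $\chi_{\mathrm{top}}$ and hence $K^2$ of the minimal resolution of $X / G$ from the count of $A_1$-singularities, and adds the number of curves contracted to reach $Y$; this again gives $K_Y^2 = 4$.

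For minimality of $Y$, the hypothesis that each pair of $g$-fixed points lying in one fibre of $\overline{\varphi}$ is contained in a single $G \times \Gal(\kka/\ka)$-orbit guarantees that the two components of each new reducible fibre are interchanged by $\Gal(\kka/\ka)$, while $G$-minimality of $X$ (together with its description of how $G$ and $\Gal(\kka/\ka)$ act on the $(-1)$-curves of $X$) does the same for the fibres over the images of the $p_i$; consequently every $\Gal(\kka/\ka)$-orbit among the eight components of the reducible fibres of $Y$ contains two components lying in one and the same fibre, so no $\Gal(\kka/\ka)$-invariant set of disjoint $(-1)$-curves can be contracted, and $Y$ is a minimal conic bundle by Theorem~\ref{MinCB}. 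For the converse: if $G \not\cong \CG_2, \CG_2^2$ the claim is Lemma~\ref{IskGFodd}; if some nontrivial $g$ has a curve $\Gamma$ of fixed points, then, comparing the trivial action of $g$ on $\Gamma$ with its action of order $2$ on $B$, the curve $\Gamma$ must be a fibre over a fixed point of $g$ on which $g$ acts trivially, and such a fibre yields a smooth rather than a reducible fibre on the quotient, so at least one of the four reducible fibres disappears and a relatively minimal model has $K^2 \geqslant 5$; and if the orbit hypothesis fails, one of the new reducible fibres of $Y$ has a component defined over $\ka$, which, being a $(-1)$-curve, can be contracted $\Gal(\kka/\ka)$-equivariantly, so again $K^2 \geqslant 5$. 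In all these cases $X / G$ is birationally equivalent to a surface with $K^2 \geqslant 5$.

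The main obstacle is the Galois-orbit bookkeeping in the second step, especially for $G \cong \CG_2^2$: one must follow how the twelve fixed points of the three involutions split into six $A_1$-singularities distributed over the three branch fibres, and how the relevant $G$-orbits of fixed points interact with the commuting Galois action so that the two components of each reducible fibre of $Y$ really are conjugate over $\ka$. A further point to check is that in each of the possible ways $G$ can permute the components of the singular fibres of $\overline{\varphi}$, the reducible fibres of $Y$ over the images of the $p_i$ remain non-contractible, which is precisely where the hypothesis $\rho(X)^G = 2$ is used.
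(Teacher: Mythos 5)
Your proposal is correct and follows essentially the same route as the paper: it locates the at most four candidate singular fibres of a relatively minimal model over $B/G$ (images of the $G$-orbits of the $p_i$ plus images of the branch points $q_g, r_g$), uses Lemma \ref{eveninvariantfibre} and Lemma \ref{fixedpoints} to control these points, resolves the $A_1$-singularities into $(-2,-1,-2)$-chains via Table \ref{table1}, and reads off from the $G \times \Gal\left(\kka/\ka\right)$-action on the chain ends exactly when all four fibres stay singular, giving $K_Y^2 = 4$ versus $K_Y^2 \geqslant 5$. The extra Euler-characteristic cross-check and the explicit $\operatorname{diag}(-1,-1)$ local analysis are harmless additions to what is the paper's own argument.
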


\begin{proof}

Let $f: X \rightarrow X / G$ and $f': B \rightarrow B / G$ be the quotient morphisms,
$$
\pi: \widetilde{X / G} \rightarrow X / G
$$
\noindent be the minimal resolution of singularities, and $\psi: Y \rightarrow B / G$ be a relatively minimal model of $ \widetilde{X / G}$ over $B / G$.

Let $g$ be a nontrivial element in $G$. The element $g$ has two fixed points $q_g$ and $r_g$ on~$\overline{B}$, since $g$ faithfully acts on $\overline{B}$. By Lemma \ref{eveninvariantfibre} the points $q_g$ and $r_g$ differ from each of the points $p_1$, $p_2$, $p_3$, $p_4$. Moreover, if there is a nontrivial element $h \neq g$ in $G$, then $hq_g = r_g$ by Lemma \ref{fixedpoints}.

If $t$ is a point on $\overline{B}$ that differs from any $p_i$ and is not fixed by any nontrivial element of $G$, then the fibre of $\XX / G \rightarrow \overline{B} / G$ over $f'(t)$ is smooth. Therefore, for $G = \langle g \rangle \cong \CG_2$ the conic bundle $\overline{\psi}:\overline{Y} \rightarrow \overline{B} / G$ can have singular fibres only over points $f'(p_i) = f'(gp_i)$, $f'(q_g)$ and $f'(r_g)$. For $G = \langle g, h \rangle \cong \CG^2_2$ the conic bundle $\overline{\psi}$ can have singular fibres only over points
$$
f'(p_1) = f'(p_2) = f'(p_3) = f'(p_4), \quad f'(q_g) = f'(r_g), \quad f'(q_h) = f'(r_h), \quad f'(q_{gh}) = f'(r_{gh}).
$$
\noindent In both cases the conic bundle $\overline{\psi}$ has no more than $4$ singular fibres. Thus $K_Y^2 = 4$ if and only if the fibres over all mentioned points are singular. Otherwise, one has $K_Y^2 \geqslant 5$.

For any $i$ the curves $E_i$ and $E'_i$ are permuted by $G \times \Gal \left( \kka / \ka \right)$ since $X$ is $G$-minimal. Therefore the fibre of $\overline{\psi}$ over a point $f'(p_i)$ is singular.

Let $g$ be a nontrivial element of $G$, and $F_g$ be the fibre of $\XX \rightarrow \overline{B}$ over $q_g$. If $g$ acts on $F_g$ trivially then there are no singular points on $f(F_g)$, and the fibre of $\overline{\psi}$ over $f'(q_g)$ is smooth. If $g$ acts on $F_g$ faithfully then there are two fixed points of $g$ on $F_g$, and two $A_1$-singularities on $f(F_g)$. Thus the curve $\pi^{-1}f(F_g)$ is a chain of three $\kka$-rational curves with self-intersection numbers $-2$, $-1$ and $-2$ (see Table \ref{table1}). If the ends of this chain are permuted by $\Gal \left( \kka / \ka \right)$ then the fibre of $\overline{\psi}$ over $f'(q_g)$ is singular, and otherwise this fibre is smooth. The ends of this chain are permuted by $\Gal \left( \kka / \ka \right)$, if and only if the fixed points of $g$ on $F_g$ are permuted by $G \times \Gal \left( \kka / \ka \right)$. Therefore $K_Y^2 = 4$ if and only if each nontrivial element $g \in G$ has only isolated fixed points on $\XX$ and each pair of these points, lying in one fibre of $\overline{\varphi}$, is permuted by $G \times \Gal \left( \kka / \ka \right)$. Otherwise, one has $K_Y^2 \geqslant 5$.

\end{proof}

Now we can summarize the results of Lemmas \ref{IskG0}, \ref{IskGB}, \ref{IskGFodd} and \ref{IskGFeven} in the following corollary.

\begin{corollary}
\label{IskGfixed}
Let a group $G$ of order $2^n$ act on an Iskovskikh surface $X$. Suppose that there is an element $g$ in $G$ such that $g$ has a curve of fixed points. Then the quotient $X / G$ is birationally equivalent to a surface $Y$ such that $K_Y^2 \geqslant 5$. In particular, if $X$ contains a $\ka$-point that does not lie on a section with self-intersection number $-2$ then $X / G$ is $\ka$-rational. 
\end{corollary}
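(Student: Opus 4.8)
The plan is to run the structural case analysis for Iskovskikh surfaces developed in this section, organised around the subgroups $G_0 \subseteq G_B \subseteq G$, invoking Remark~\ref{geq5touse} at each stage to pass from an intermediate smooth rational surface of degree $\geqslant 5$ to the required conclusion. I would start from the reduction already made before Lemma~\ref{IskG0}: since $K_X^2 = 4$, if $X$ is not $G$-minimal then a $G$-equivariant contraction of a $\ka$-defined set of disjoint $(-1)$-curves gives a smooth rational surface $S$ with $K_S^2 \geqslant 5$, and then $X/G \approx S/G$ is birationally equivalent to some $Y$ with $K_Y^2 \geqslant 5$ by Remark~\ref{geq5touse}. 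So I may assume $X$ is $G$-minimal, $\rho(X)^G = 2$, which is the hypothesis under which Lemmas~\ref{IskG0}--\ref{IskGFeven} were proved.

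Next I would peel off $G_0$ and $G_B$, using the $2$-group hypothesis. If $G_0$ is nontrivial then $|G_0|$ is even (a nontrivial divisor of $2^n$), so by Lemma~\ref{IskG0} the surface $X/G_0$ is $G/G_0$-birationally equivalent to a smooth rational surface $Z$ with $K_Z^2 = 8$; since $X/G$ is the iterated quotient $(X/G_0)/(G/G_0)$, which is birationally equivalent to $Z/(G/G_0)$, Remark~\ref{geq5touse} applied to $Z$ and $G/G_0$ gives the desired $Y$. So assume $G_0$ is trivial. If $G_B$ is nontrivial then $G_B \cong \CG_2$ by Lemma~\ref{IskGB}, and $X/G_B$ is a smooth surface with $K_{X/G_B}^2 = 8$, so the same argument with $G/G_B$ finishes this case too. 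So I may also assume $G_B$ is trivial; then $G$ acts faithfully on $B$, the standing hypotheses of Lemmas~\ref{IskGFodd} and~\ref{IskGFeven} are met, and $\overline{B} \cong \Pro^1_{\kka}$ holds automatically for an Iskovskikh surface.

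The remaining dichotomy is on the isomorphism type of $G$. If $G$ is not isomorphic to $\CG_2$ or $\CG_2^2$, then Lemma~\ref{IskGFodd} gives $X/G \approx Y$ with $K_Y^2 \geqslant 5$ at once. If $G \cong \CG_2$ or $\CG_2^2$, this is the only point where the hypothesis on $g$ enters: since $g$ has a curve of fixed points on $\XX$, the assumption in Lemma~\ref{IskGFeven} that every nontrivial element of $G$ has only isolated fixed points fails, so that lemma yields $X/G \approx Y$ with $K_Y^2 \geqslant 5$ (without such a $g$ this case is a genuine exception, as Lemma~\ref{IskGFeven} can produce $K_Y^2 = 4$). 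This proves the first assertion, with $Y$ smooth and rational in every case. For the final sentence: if $X$ carries a $\ka$-point off the two sections of self-intersection $-2$, then $X$ is $\ka$-unirational and $X(\ka)$ is dense by Lemma~\ref{IskUnirat}; the quotient morphism $X \to X/G$ is surjective and defined over $\ka$, so $(X/G)(\ka)$ is dense, hence so is $Y(\ka)$; passing to a $\ka$-minimal model $Y'$ of $Y$ we keep $K_{Y'}^2 \geqslant K_Y^2 \geqslant 5$ and $Y'(\ka) \neq \varnothing$, whence $Y'$, and therefore $X/G$, is $\ka$-rational by Theorem~\ref{ratcrit}.

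I expect the mathematical content to be carried entirely by the four lemmas already proved, so the main obstacle will be bookkeeping: checking in each reduction that the residual group ($G/G_0$ or $G/G_B$) is finite and that $X/G$ is indeed the iterated quotient, that the intermediate surfaces $Z$ and $X/G_B$ are smooth rational so that Remark~\ref{geq5touse} applies, and, in the last clause, that it is \emph{density} of $\ka$-points, not mere existence, that survives the birational equivalence $X/G \approx Y$ and the passage to a minimal model --- which is exactly why unirationality (Lemma~\ref{IskUnirat}) is invoked rather than a single $\ka$-point.
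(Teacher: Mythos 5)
Your proposal is correct and follows essentially the same route as the paper: reduce to the $G$-minimal case, dispose of nontrivial $G_0$ and $G_B$ via Lemmas~\ref{IskG0} and~\ref{IskGB} together with Remark~\ref{geq5touse} (the $2$-group hypothesis forcing $|G_0|$ even), handle the remaining groups by Lemma~\ref{IskGFodd}, use the fixed-curve hypothesis exactly where the paper does — to rule out the $K_Y^2=4$ outcome of Lemma~\ref{IskGFeven} for $G\cong\CG_2,\CG_2^2$ — and conclude $\ka$-rationality from Lemma~\ref{IskUnirat}, density of $\ka$-points, and Theorem~\ref{ratcrit}. The only difference is presentational: the paper packages the reductions as "assume no normal subgroup yields $K^2\geqslant 5$," while you run the same cases explicitly.
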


\begin{proof}
If we find a normal group $N$ in $G$ such that the quotient $X / N$ is \mbox{$G / N$-birationally} equivalent to a surface $Z$ with $K_Z^2 \geqslant 5$, then we are done by Remark \ref{geq5touse} \mbox{since $X /G \approx Z / (G / N)$}. Assume that there is no such subgroup in $G$.

Then by Lemma \ref{IskG0} the group $G_0$ is trivial, and by Lemma \ref{IskGB} the group $G_B$ is trivial. Therefore by Lemma \ref{IskGFodd} one has $G \cong \CG_2$ or $G \cong \CG_2^2$. By Lemma \ref{IskGFeven} for these cases the quotient $X / G$ is birationally equivalent to a surface $Y$ such that $K_Y^2 \geqslant 5$.

By Lemma \ref{IskUnirat} if $X$ contains a $\ka$-point that does not lie on a section with self-intersection number $-2$ then $X(\ka)$ is dense. Therefore $Y(\ka)$ is dense and $X / G \approx Y$ is $\ka$-rational by Theorem \ref{ratcrit}.

\end{proof}

\section{Del Pezzo surfaces of degree $2$}

A del Pezzo surface $X$ of degree $2$ is isomorphic to a smooth quartic surface in a \mbox{$\ka$-form} of the weighted projective space $\Pro_{\ka}(1 : 1 : 1 : 2)$. The anticanonical map gives a double cover of $\Pro^2_{\ka}$ branched in a smooth quartic curve. The corresponding involution $\gamma$ on $X$ is called \textit{the Geiser involution}. Obviously the involution $\gamma$ commutes with any element of $\Aut(X)$. In particular, $\langle \gamma \rangle$ is a normal subgroup in $\Aut(X)$. Thus if a finite group $G \subset \Aut(X)$ contains $\gamma$ then the quotient $X / G \approx \Pro^2_{\ka} / (G / \langle \gamma \rangle)$ is birationally equivalent to a surface $Y$ such that $K_Y^2 \geqslant 5$ by Remark \ref{geq5touse}.

The following theorem classifies actions of cyclic groups of prime order on del Pezzo surfaces of degree $2$.

\begin{theorem}[{cf. \cite[Lemma~6.16]{DI1}}]
\label{DP2cyclicclass}
Let a cyclic group of prime order $\CG_p$ act on a del Pezzo surface of degree $2$. Then one can choose coordinates in $\Pro_{\kka}(1 : 1: 1: 2)$ such that the equation of $\XX$ and the action of $\CG_p$ are presented in Table \ref{table2}.

\vbox{
\begin{center}
\begin{longtable}{|c|c|c|c|}
\caption[]{\label{table2} automorphisms of prime order}\endhead
\hline
Type & Order & Equation & Action \\
\hline
$0$ & $2$ & $L_4(x,y,z) + t^2 = 0$ & $\left( x : y : z : -t \right)$ \\
\hline
$1$ & $2$ & $L_4(x,y) + L_2(x,y)z^2 + z^4 + t^2 = 0$ & $\left( x : y : -z : t \right)$ \\
\hline
$2$ & $2$ & $L_4(x,y) + L_2(x,y)z^2 + z^4 + t^2 = 0$ & $\left( x : y : -z : -t \right)$ \\
\hline
$3$ & $3$ & $L_4(x,y) + L_1(x,y)z^3 + t^2 = 0$ & $\left( x : y : \omega z : t \right)$ \\
\hline
$4$ & $3$ & $(x^3 + y^3)z + Ax^2y^2 + 2Bxyz^2 + z^4 + t^2 = 0$ & $\left( \omega x : \omega^2 y : z : t \right)$ \\
\hline
$5$ & $7$ & $x^3y + y^3z + z^3x + t^2 = 0$ & $\left( \xi_7 x : \xi_7^4 y : \xi_7^2 z : t \right)$ \\
\hline
\end{longtable}

\end{center}}

where $L_k$ is a homogeneous polynomial of degree $k$, and $A$ and $B$ are elements of $\kka$.

\end{theorem}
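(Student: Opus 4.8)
The plan is to reduce everything to the branch quartic of the anticanonical double cover, following \cite[Lemma~6.16]{DI1}. The anticanonical map realizes $\XX$ as a double cover $\pi\colon \XX \to \Pro^2_{\kka}$ branched over a smooth quartic curve $C=\{F_4=0\}$, with deck transformation the Geiser involution $\gamma$, and $\Aut(\XX)$ is a central extension of $\Aut(\Pro^2_{\kka},C)\subset \mathrm{PGL}_3(\kka)$ by $\langle\gamma\rangle$. I would record this, together with the elementary fact that a finite-order element of $\mathrm{GL}_3(\kka)$ is diagonalizable: given a generator $g$ of $\CG_p$, either $p=2$ and $g=\gamma$ (type $0$, with $F_4$ an arbitrary smooth quartic $L_4(x,y,z)$), or the image $\bar g\in\mathrm{PGL}_3(\kka)$ has order $p$. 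In the latter case lift $\bar g$ to $M\in\mathrm{GL}_3(\kka)$ with $M^*F_4=F_4$; then $M^p$ is a fourth root of unity times the identity, so $M$ has finite order and can be diagonalized, and after this choice of coordinates $g$ acts as $(x:y:z:t)\mapsto(\alpha x:\beta y:\delta z:\epsilon t)$ with $\epsilon=\pm1$, the two values of $\epsilon$ differing by $\gamma$.

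Next I would run the case analysis on $p$ by enumerating the possible eigenvalue patterns of the diagonal $\bar g$ and imposing $\bar g^*F_4=F_4$ together with smoothness of $C$. For $p=2$ the only nontrivial pattern is $(x:y:-z)$, which forces $F_4=L_4(x,y)+L_2(x,y)z^2+cz^4$; checking $[0:0:1]$ shows $c\neq0$, so one rescales $c=1$, and the two lifts give types $1$ and $2$. For $p=3$ the patterns are the pseudoreflection $(x:y:\omega z)$, giving $F_4=L_4(x,y)+L_1(x,y)z^3$ (type $3$), and the regular element $(\omega x:\omega^2 y:z)$, giving $F_4=c_1z^4+c_2xyz^2+c_3x^3z+c_4y^3z+c_5x^2y^2$; smoothness at $[0:0:1]$, $[1:0:0]$, $[0:1:0]$ forces $c_1,c_3,c_4\neq0$, after which a diagonal rescaling of $x,y,z$ brings $F_4$ to the normal form of type $4$. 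For $p=7$ I would show that the eigenvalue pattern together with the invariance equations forces $C$, up to a coordinate change and replacing $\bar g$ by a power, to be the Klein quartic $x^3y+y^3z+z^3x=0$ with $\bar g=(\xi_7 x:\xi_7^4 y:\xi_7^2 z)$ (type $5$). Finally, for $p=5$ and for every prime $p\geqslant11$ I would check each possible pattern $(x:\xi_p y:\xi_p^a z)$ and verify that the space of invariant quartics contains no smooth member — typically every invariant quartic is divisible by a coordinate, or is a binary quadratic form in two monomials hence reducible, or has a forced singular point at a coordinate vertex — so these primes do not occur; alternatively one may quote the known list of cyclic automorphism groups of smooth plane quartics, whose orders are divisible only by $2$, $3$, $7$.

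In each odd-order case I would also note that exactly one of the two lifts of $\bar g$ has order $p$ (the other is its product with $\gamma$ and has order $2p$), namely the one acting trivially on $t$, as in Table~\ref{table2}; for $p=2$ both lifts have order $2$ and both are genuinely needed, since one has a curve of fixed points and the other does not. The main obstacle is the smoothness analysis in the odd-prime cases: ruling out $p=5$ and all $p\geqslant11$ by the invariant-monomial argument, and pinning down the Klein quartic as the only possibility for $p=7$, together with the bookkeeping that reduces the general invariant quartic of type $4$ to the stated normal form using the residual action of the coordinate torus. All of this is classical and essentially contained in \cite[Lemma~6.16]{DI1} and in the classification of automorphisms of genus-$3$ plane quartics, so I would reproduce only what is needed to obtain the explicit normal forms in Table~\ref{table2}.
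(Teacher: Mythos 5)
The paper itself offers no proof of this theorem --- it is imported directly from \cite[Lemma~6.16]{DI1} --- and your branch-quartic argument is exactly the classical proof underlying that reference: pass to the anticanonical double cover, split off the Geiser involution, diagonalize the induced automorphism of the plane quartic, enumerate the invariant smooth quartics for $p=2,3,7$, and rule out $p=5$ and $p\geqslant 11$ (e.g.\ by the invariant-monomial/smoothness check or the known classification of plane quartic automorphisms). Your sketch is correct, including the point that for odd $p$ the lift acting trivially on $t$ can be normalized to have order exactly $p$ while the other lift has order $2p$, and the reduction of the type $4$ invariant quartic to the stated normal form via diagonal coordinate rescalings.
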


In what follows we refer to the conjugacy classes of elements of finite order acting on a del Pezzo surface of degree $2$ as elements of type $1$, type $2$, etc.

The element of type $0$ is the Geiser involution.

In this section we prove the following proposition.

\begin{proposition}
\label{DP2}
Let $X$ be a del Pezzo surface of degree $2$ and $G$ be a finite subgroup of~$\Aut_{\ka}(X)$ such that there is a smooth $\ka$-point on $X / G$. Then $X / G$ can be non-$\ka$-rational only if the group $G$ is one of the following:
\begin{enumerate}
\item $G$ is trivial;
\item $G \cong \CG_2$ is generated by an element of type $1$ (see Table \ref{table2});
\item $G \cong \CG_2$ is generated by an element of type $2$ (see Table \ref{table2});
\item $G \cong \CG_3$ is generated by an element of type $4$ (see Table \ref{table2});
\item $G \cong \CG_4$ is generated by an element whose action is conjugate to $\left( \ii x : -\ii y : z : t \right)$;
\item $G \cong \CG_4$ is generated by an element whose action is conjugate to $\left( \ii x : -\ii y : z : -t \right)$;
\item $G \cong \CG_2^2$ is generated by elements of type $2$ (see Table \ref{table2});
\item $G \cong \SG_3$ is generated by elements of type $2$ (see Table \ref{table2});
\item $G \cong \DG_8$ is generated by elements of type $2$ (see Table \ref{table2});
\item $G \cong Q_8$ is generated by elements whose actions are conjugate to $\left( \ii x : -\ii y : z : t \right)$;
\item $G \cong Q_8$ is generated by elements whose actions are conjugate to $\left( \ii x : -\ii y : z : -t \right)$.
\end{enumerate}
For any other group $G$ the quotient $X / G$ is $\ka$-rational.
\end{proposition}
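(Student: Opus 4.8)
The plan is to reduce the problem, via the $G$-equivariant minimal model program and the results already collected, to a finite case-by-case analysis of finite subgroups $G \subset \Aut_{\ka}(X)$ acting on a $G$-minimal del Pezzo surface $X$ of degree $2$ with $\rho(X)^G = 1$. First I would dispose of the non-$G$-minimal situation: if $\rho(X)^G \geqslant 2$ then by Theorem \ref{Minclass} either $X$ admits a $G$-equivariant conic bundle structure or there is a $G$-morphism to a del Pezzo surface of degree $> 2$; in either case Remark \ref{geq5touse} together with the conic bundle results of \cite{Tr16a} and the higher-degree del Pezzo results of \cite{Tr17}, \cite{Tr16b} show $X/G$ is birational to a surface $Y$ with $K_Y^2 \geqslant 5$, hence $\ka$-rational once a smooth $\ka$-point is present. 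Next I would use the Geiser involution $\gamma$: since $\langle \gamma \rangle \trianglelefteq \Aut(X)$, if $\gamma \in G$ then $X/G \approx \Pro^2_{\ka}/(G/\langle\gamma\rangle)$ is handled by Remark \ref{geq5touse}, so we may assume $\gamma \notin G$. This already forces $G$ to inject into $\mathrm{W}(\mathrm{E}_7)/\langle -1 \rangle \cong \mathrm{W}(\mathrm{E}_6) \times \CG_2$-type structure, and strongly limits the possible $G$.

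The heart of the argument is then a classification of the relevant finite groups $G$ acting on $\XX$, using the Dolgachev--Iskovskikh classification (\cite{DI1}, cf.\ Theorem \ref{DP2cyclicclass} for the cyclic-of-prime-order case) as the starting list. For each such $G$ I would argue as follows. If $G$ contains an element $g$ with a curve of fixed points, the key observation is that the quotient $X/\langle g \rangle$ is birationally equivalent to an Iskovskikh surface (this is exactly the surface studied in Section $3$, and is the reason it was introduced): the Geiser involution has ramification a genus-$3$ curve, whereas an involution $g$ of type $1$ or $2$ with $\gamma g$ of the complementary type has $\XX/\langle g\rangle$ resolving to a weak del Pezzo surface fibred in conics with the expected $2g+2 = 4$ singular fibres and two $(-2)$-sections. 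Then $X/G \approx (X/\langle g\rangle)/(G/\langle g\rangle)$, and if $|G/\langle g\rangle|$ or the action is such that Corollary \ref{IskGfixed} applies (i.e.\ $G/\langle g\rangle$ is a $2$-group containing an element with a curve of fixed points, or more generally any of the reductions in Lemmas \ref{IskG0}--\ref{IskGFeven} produce $K^2 \geqslant 5$), we get $\ka$-rationality. The groups that survive are precisely those where the induced action on the Iskovskikh surface is by $\CG_2$ or $\CG_2^2$ with all nontrivial elements having isolated fixed points and each conjugate pair of fibre-fixed-points forming a single orbit — translating back to $X$, these are the groups in items $(2),(3),(5),(6),(7),(9),(10),(11)$, while the groups of order $3$ and $\SG_3$ of items $(4),(8)$ are handled separately since they contain no involution with a curve of fixed points and the relevant quotient singularities are of type $\frac13(1,1)$ (see Table \ref{table1}), giving the exceptional $-3$-curve behaviour that obstructs the $K^2 \geqslant 5$ reduction.

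For all the remaining groups $G$ in the Dolgachev--Iskovskikh list — those not appearing in the statement — I would show directly that one can find a normal subgroup $N \trianglelefteq G$ (often generated by an element with a curve of fixed points, or an element of type $3$, $4$, $5$, or the Geiser involution) such that $X/N$ is $G/N$-birational to a surface $Z$ with $K_Z^2 \geqslant 5$; then Remark \ref{geq5touse} finishes since $X/G \approx Z/(G/N)$. Concretely this means going through the automorphism groups of del Pezzo surfaces of degree $2$ of order divisible by $3$, $5$, $7$, $9$, or of order $2^n$ with $n \geqslant 4$, and for each either exhibiting such an $N$, or invoking the conic-bundle reduction: if after quotienting by an involution with a curve of fixed points the image is not a $2$-group, Lemma \ref{IskGFodd} applies. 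The main obstacle I expect is the bookkeeping in this last step — there are many conjugacy classes of finite subgroups of $\mathrm{W}(\mathrm{E}_7)$ realized on degree-$2$ del Pezzo surfaces, and one must verify for each non-listed group that a suitable normal subgroup with the required fixed-locus behaviour exists, which requires careful use of the explicit equations in Table \ref{table2} and the ramification/fixed-point analysis of composite elements (e.g.\ products $\gamma g$, or elements of order $6$ built from types $2$ and $4$). The lower-bound claim — that each listed group genuinely admits a non-$\ka$-rational example — is deferred to Sections $5$ and $6$ and is not part of this proof.
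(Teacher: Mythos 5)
Your skeleton does follow the paper's strategy (the Dolgachev--Iskovskikh classification, quotient towers by normal subgroups, the Iskovskikh surface for involutions with a fixed curve, and a separate treatment of the order-$3$ situation), but two of your steps would fail as written. First, the opening reduction is wrong: when $\rho(X)^G \geqslant 2$ you claim that the conic-bundle results and the degree $\geqslant 3$ results "show $X/G$ is birational to a surface $Y$ with $K_Y^2 \geqslant 5$". That is exactly what Theorems \ref{DP4more} and \ref{DP3} do \emph{not} say: quotients of del Pezzo surfaces of degree $3$ and $4$ (and of conic bundles) can be non-$\ka$-rational for suitable small groups, so to argue this way you would have to import the exceptional group lists from \cite{Tr16a}, \cite{Tr17}, \cite{Tr16b} and match them, including the curve-of-fixed-points conditions, against the present list. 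The paper sidesteps this entirely: none of Lemmas \ref{DP2type1}--\ref{DP2typeI} assumes $G$-minimality or $\rho(X)^G=1$, so no such reduction is made or needed.

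Second, your blanket plan for the non-listed groups --- "find a normal subgroup $N \trianglelefteq G$ with $X/N$ being $G/N$-birational to $K^2 \geqslant 5$, or quotient first by an involution with a fixed curve" --- has no tool for $G \cong \mathrm{PSL}_2(\F_7)$ on a surface of type $I$: the group is simple, so there is no nontrivial normal subgroup, and the tower $X/G \approx (X/\langle g\rangle)/(G/\langle g\rangle)$ is unavailable because $\langle g \rangle$ is not normal (note also that you use this isomorphism elsewhere without requiring normality of $\langle g\rangle$, which the paper is careful about). The paper needs a separate direct argument here (Lemma \ref{DP2simple}): determining all point stabilizers via the maximal subgroups $\SG_4$ and $\CG_7 \rtimes \CG_3$, showing $X/G$ has exactly one $\frac{1}{3}(1,1)$ and one $\frac{1}{7}(1,3)$ singular point, and computing $K^2=5$ for the minimal resolution by the Hurwitz formula; without this case your analysis is incomplete. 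A smaller but relevant slip: the quotient by a type-$2$ involution is not an Iskovskikh surface but again a del Pezzo surface of degree $2$ (Lemma \ref{DP2type2}); only type-$1$ involutions lead to the Iskovskikh surface (Lemma \ref{DP2type1}). This distinction is what makes the groups in items $(3)$, $(7)$, $(8)$ --- which contain no element with a curve of fixed points --- survive, so they are not captured by your "induced action on the Iskovskikh surface" criterion.
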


To prove Proposition \ref{DP2} we need to list all possible groups of automorphisms of del Pezzo surfaces of degree $2$.

\begin{theorem}[{cf. \cite[Subsection 6.6, Table 6]{DI1}}]
\label{DP2groupclass}
Let $\XX$ be a del Pezzo surface of degree~$2$. Then for each possibility of $\Aut(\XX)$ one can choose coordinates in $\Pro_{\kka}(1 : 1 : 1 : 2)$ such that the equation of $\XX$ and the group $\Aut(\XX)$ are presented  in Table \ref{table3}.

\vbox{
\begin{center}
\begin{longtable}{|c|c|c|}
\caption[]{\label{table3} automorphisms groups}\endhead
\hline
Type & Group & Equation  \\
\hline
$I$ & $\CG_2 \times \mathrm{PSL}_2 \left( \F_7 \right)$ & $x^3y + y^3z + z^3x + t^2 = 0$ \\
\hline
$II$ & $\CG_2 \times \left( \CG_4^2 \rtimes \SG_3 \right)$ & $x^4 + y^4 + z^4 + t^2 = 0$ \\
\hline
$III$ & $\CG_2 \times \widetilde{\AG}_4 $ & $x^4 + Ax^2y^2 + y^4 + z^4 + t^2 = 0$ \\
\hline
$IV$ & $\CG_2 \times \SG_4 $ & $x^4 + y^4 + z^4 + A(x^2y^2 + x^2z^2 + y^2z^2) + t^2 = 0$ \\
\hline
$V$ & $\CG_2 \times \mathrm{AS}_{16} $ & $x^4 + Ax^2y^2 + y^4 + z^4 + t^2 = 0$ \\
\hline
$VI$ & $\CG_2 \times \CG_9 $ & $x^4 + xy^3 + yz^3 + t^2 = 0$ \\
\hline
$VII$ & $\CG_2 \times \DG_8 $ & $x^4 + Ax^2y^2 + y^4 +Bxyz^2 + z^4 + t^2 = 0$ \\
\hline
$VIII$ & $\CG_2 \times \CG_6 $ & $x^4 + Ax^2y^2 + y^4 + xz^3 + t^2 = 0$ \\
\hline
$IX$ & $\CG_2 \times \SG_3 $ & $(x^3 + y^3)z + Ax^2y^2 + Bxyz^2 + z^4 + t^2 = 0$ \\
\hline
$X$ & $\CG_2 \times \CG_2^2 $ & $x^4 + y^4 + z^4 + Ax^2y^2 + Bx^2z^2 + Cy^2z^2 + t^2 = 0$ \\
\hline
$XI$ & $\CG_2 \times \CG_3 $ & $xz^3 + L_4(x,y) + t^2 = 0$ \\
\hline
$XII$ & $\CG_2 \times \CG_2 $ & $z^4 + z^2L_2(x,y) + L_4(x,y) + t^2 = 0$ \\
\hline
$XIII$ & $\CG_2 $ & $L_4(x,y,z) + t^2 = 0$ \\
\hline

\end{longtable}
\end{center}}

In the second column the first factor $\CG_2$ is the group generated by the Geiser involution. The group $\widetilde{\AG}_4$ is a central extention of $\AG_4$, and $\left| \widetilde{\AG}_4 \right| = 48$. The group $\mathrm{AS}_{16}$ is a group of order $16$ generated by elements $\operatorname{diag}(\ii,\ii,1,1)$, $\operatorname{diag}(\ii,-\ii,1,1)$ and a permutation of $x$ and~$y$.

In the third column $L_k$ is a homogeneous polynomial of degree $k$, and $A$, $B$ and $C$ are elements of $\kka$.

\end{theorem}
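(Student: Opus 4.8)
The plan is to reduce this classification to the classical description of the finite groups of projective transformations of the plane that preserve a smooth plane quartic, and then to transport the answer to $\XX$ through the Geiser involution.

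Since $\XX$ is a del Pezzo surface of degree $2$, the anticanonical morphism presents it as a double cover $\pi\colon\XX\to\Pro^2_{\kka}$ branched along a smooth quartic curve $B$; in suitable coordinates $\XX$ is the quartic $t^2+F_4(x,y,z)=0$ in $\Pro_{\kka}(1:1:1:2)$, with $F_4$ the equation of $B$ and $\gamma$ the involution $t\mapsto -t$. Because $\pi$ is intrinsic to $\XX$ (it is the map defined by $|-K_{\XX}|$), every automorphism of $\XX$ descends to a projective automorphism of $\Pro^2_{\kka}$ fixing $B$; conversely a matrix $g\in\mathrm{GL}_3(\kka)$ with $g^{*}F_4=\lambda F_4$ lifts to $\XX$ by $(x:y:z:t)\mapsto\bigl(g(x,y,z):\pm\sqrt{\lambda}\,t\bigr)$, the two lifts differing by $\gamma$. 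This produces a central extension
$$
1\longrightarrow\langle\gamma\rangle\longrightarrow\Aut(\XX)\longrightarrow\overline{\Aut}(\XX)\longrightarrow 1,
$$
with $\overline{\Aut}(\XX)\subseteq\mathrm{PGL}_3(\kka)$ the stabilizer of $B$. I would then check that this extension splits --- each element of odd order has a unique lift of the same order, and the splitting on the $2$-part can be verified directly once the groups $\overline{\Aut}(\XX)$ are known --- so that $\Aut(\XX)\cong\langle\gamma\rangle\times\overline{\Aut}(\XX)\cong\CG_2\times\overline{\Aut}(\XX)$. This accounts for the direct-product shape of the second column of Table \ref{table3} and reduces the theorem to the following problem: classify, up to $\mathrm{PGL}_3(\kka)$-conjugacy, the finite subgroups $H\subseteq\mathrm{PGL}_3(\kka)$ that stabilize some smooth plane quartic, and exhibit the family of quartics each one fixes.

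The heart of the proof is this finite case analysis, which is essentially the determination of the automorphism groups of non-hyperelliptic genus-$3$ curves. Starting from the classical list of finite subgroups of $\mathrm{PGL}_3(\kka)$ in characteristic zero (cyclic groups, rank-two abelian groups, their extensions by $\SG_3$, the groups $\SG_4$ and $\AG_4$ together with the relevant central extensions, $\mathrm{PSL}_2(\F_7)$, and a handful of $2$- and $3$-groups such as $\CG_4^2\rtimes\SG_3$, $\mathrm{AS}_{16}$, $\DG_8$ and $\CG_9$), for each candidate $H$ I would diagonalize a maximal abelian subgroup, list the $H$-invariant quartic monomials, write down the generic $H$-invariant quartic with undetermined coefficients, and then (i) decide whether a smooth member exists --- a nonempty Zariski-open condition on the coefficients exactly when it can be met, which discards some $H$ outright --- and (ii) record the number of essential parameters, i.e.\ the dimension of the stratum that these surfaces cut out. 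Over the open part of that stratum, where $B$ is smooth and has no extra symmetry, one gets $\Aut(\XX)=\CG_2\times H$; on the proper closed subloci the group jumps, and one has to check that these specializations produce precisely the remaining rows of the table. Smoothness of the model $t^2+F_4=0$ is automatic because a smooth plane quartic is nonsingular and $-K_{\XX}=\pi^{*}\mathcal{O}_{\Pro^2}(1)$ is ample, so the surface is genuinely a del Pezzo surface of degree $2$ (cf.\ Proposition \ref{DPconnection}).

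Finally I would assemble the rows: every admissible pair $(H,F_4)$ yields the entry $\bigl(\CG_2\times H,\ t^2+F_4\bigr)$ after renaming the free parameters to $A$, $B$, $C$, and one checks that the list is exhaustive and non-redundant --- distinct rows being genuinely distinct strata, with the cyclic cases separated by the element types of Table \ref{table2}. The main obstacle is the middle step: the group-by-group verification that a generic invariant quartic is smooth --- equivalently, ruling out those finite subgroups of $\mathrm{PGL}_3(\kka)$ all of whose invariant quartics are singular --- together with the careful bookkeeping of parameter counts and of the specialization relations among the strata. This computation is carried out in \cite[Section~6]{DI1}.
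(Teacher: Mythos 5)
The paper does not prove Theorem \ref{DP2groupclass} at all: it is imported verbatim from the classification of Dolgachev and Iskovskikh \cite{DI1}, and your outline (anticanonical double cover, reduction to automorphisms of smooth plane quartics viewed as finite subgroups of $\mathrm{PGL}_3(\kka)$, group-by-group determination of invariant quartics) is exactly the argument carried out there, with the substantive case analysis correctly deferred to \cite[Section 6]{DI1}. So your proposal matches the paper's treatment; the one step you leave informal, the splitting $\Aut(\XX)\cong\CG_2\times\overline{\Aut}(\XX)$ off the central Geiser involution, is likewise settled in the cited source case by case rather than by a uniform argument, so nothing essential is missing.
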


In the paper \cite{DI1} there is one more column in this table which contains conditions on the parameters. But we are interested only in the structure of the group and its action on~$\Pro_{\kka}(1 : 1 : 1 : 2)$ so we omit this column.

For shortness, we will say that a surface $X$ has type $I$, type $II$, etc. of Theorem \ref{DP2groupclass}, if the surface $\XX$ has the corresponding type.

\begin{remark}
\label{DP2typeIspec}
Note that one can find the coordinates in which the surface of type $I$ is given by the equation of a surface of type~$IV$ for $A = \frac{\sqrt{-7} - 1}{2}$ (see \cite[Subsection 6.6]{DI1}).
\end{remark}

\subsection{$2$-groups}

Let $X$ be a del Pezzo surface of degree $2$, and $G \subset \Aut_{\ka}(X)$ be a $2$-group. In this subsection we study, for which groups the quotient $X / G$ can be non-$\ka$-rational. We assume that $G$ does not contain the Geiser involution.

\begin{lemma}
\label{DP2type1}
Let a finite group $G$ act on a del Pezzo surface $X$ of degree $2$ and $N \cong \CG_2$ be a normal subgroup in $G$ generated by an element of type $1$. Then the quotient $X / N$ is $G / N$-birationally equivalent to an Iskovskikh surface.
\end{lemma}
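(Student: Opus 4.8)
The plan is to realise $X/N$ explicitly as a quotient of a rational elliptic fibration by its fibrewise involution. By Theorem~\ref{DP2cyclicclass} we may choose coordinates on $\Pro_{\kka}(1:1:1:2)$ so that $\XX=\{L_4(x,y)+L_2(x,y)z^2+z^4+t^2=0\}$ and the generator $\sigma$ of $N$ acts as $(x:y:-z:t)$. One then computes that $\mathrm{Fix}(\sigma)=C\sqcup\{R_1,R_2\}$, where $C=\{z=0\}\cap\XX\in|{-}K_X|$ is a smooth curve of genus~$1$ (a double cover of $\{z=0\}\cong\Pro^1$ branched in the four — necessarily simple, since $X$ is smooth — roots of $L_4$), and $R_1,R_2=\{x=y=0\}\cap\XX$ are the two isolated fixed points; moreover $\{R_1,R_2\}$ is precisely the base locus of the pencil $\mathcal{P}\subset|{-}K_X|$ of anticanonical curves through $R_1$ and $R_2$, spanned by $x$ and $y$. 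First I would note that $G$ commutes with $\sigma$ (as $N\cong\CG_2$ is normal in $G$), hence preserves $\mathrm{Fix}(\sigma)$ and the decomposition into its connected components; so $G$ preserves $\{R_1,R_2\}$, which — being the zero-dimensional part of the $\ka$-defined locus $\mathrm{Fix}(\sigma)$ — is Galois-stable and hence a closed subscheme defined over $\ka$. Therefore $G$ also preserves $\mathcal{P}$, and the blowup $\widehat X:=\mathrm{Bl}_{\{R_1,R_2\}}X$ is defined over $\ka$ and carries an action of $G$ commuting with the lift of $\sigma$.

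Next I would analyse $\widehat X$ and its quotient by $\sigma$. On $\widehat X$ the pencil $\mathcal{P}$ becomes base-point-free and gives a morphism $p\colon\widehat X\to\Pro^1_{\ka}$ whose general fibre is a smooth anticanonical, hence genus-$1$, curve; since $-K_{\widehat X}$ equals the fibre class $F$, this $p$ is a relatively minimal rational elliptic surface. As $x,y$ are $\sigma$-invariant, $\sigma$ acts trivially on the base of $p$; and since $R_i$ is an isolated fixed point of the involution $\sigma$ on a surface, $\sigma$ acts as $-\mathrm{id}$ on $T_{R_i}X$, so by (the argument of) Lemma~\ref{blowup} its lift to $\widehat X$ fixes the exceptional curve $E_i$ pointwise. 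Because $C$ avoids $R_1,R_2$, we get $\mathrm{Fix}(\sigma|_{\widehat X})=C\sqcup E_1\sqcup E_2$, a disjoint union of smooth curves; hence $Y:=\widehat X/\sigma$ is smooth, $\widehat X\to Y$ is a double cover branched along the images of these curves, and $p$ descends to a conic bundle $\bar p\colon Y\to\Pro^1_{\ka}$ — indeed a general fibre $F$ of $p$ is a smooth genus-$1$ curve on which $\sigma$ has $2+1+1$ fixed points (the points of $C\cap F$, $E_1\cap F$, $E_2\cap F$), so $F/\sigma\cong\Pro^1$. The curves $E_1,E_2$ are disjoint sections of $p$; their images $\bar E_1,\bar E_2$ are disjoint sections of $\bar p$, and the double-cover self-intersection formula (here $f^*\bar E_i=2E_i$) gives $\bar E_i^2=2E_i^2=-2$.

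The technical heart is counting the singular fibres of $\bar p$. Restricting to a line $\{\lambda x+\mu y=0\}$, the corresponding fibre of $p$ is $D=\{Aw^4+Bw^2z^2+z^4+t^2=0\}$ with $A=L_4(\mu,-\lambda)$, $B=L_2(\mu,-\lambda)$, and $\sigma$ acts there by $z\mapsto-z$. A direct inspection of the binary quartic $Aw^4+Bw^2z^2+z^4$ shows three cases: if its roots are simple then $D$ is smooth and $D/\sigma\cong\Pro^1$ is a smooth fibre; if $A=0,\ B\neq0$ then $D$ is nodal and $\sigma$ interchanges the two branches at its node, so $D/\sigma\cong\Pro^1$ is again smooth; and if $B^2=4A$ then $D=C_1\cup C_2$ with $\sigma$ preserving each $C_i$ and interchanging the points of $C_1\cap C_2$, so $D/\sigma=(C_1/\sigma)\cup(C_2/\sigma)$ is a reduced reducible conic, i.e.\ a singular fibre. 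The condition $B^2=4A$ picks out the roots of the binary quartic $L_2^2-4L_4$; since $X$ is a smooth del Pezzo surface this quartic is not identically zero and, as one checks, cannot acquire a multiple root without forcing the branch quartic $z^4+L_2z^2+L_4$ (hence $X$) to be singular — so it has exactly four distinct roots, whence $\bar p$ has exactly four singular fibres. Thus $Y=\widehat X/\sigma$ is a conic bundle with four singular fibres and two disjoint $(-2)$-sections, i.e.\ an Iskovskikh surface by the remark following Definition~\ref{excCB}. Finally $\widehat X\to X$ is a $G$- and $\sigma$-equivariant birational morphism, so it induces a $G/N$-equivariant birational morphism $Y\to X/\sigma=X/N$, and $X/N$ is $G/N$-birationally equivalent to the Iskovskikh surface $Y$. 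I expect the main obstacle to be exactly this fibrewise analysis — particularly the equivalence between smoothness of $X$ and squarefreeness of $L_2^2-4L_4$ — together with the bookkeeping required to keep every step Galois-equivariant, since $R_1,R_2$ (and hence $E_1,E_2$, $\bar E_1,\bar E_2$) may be defined only over $\kka$ and interchanged by $\Gal(\kka/\ka)$.
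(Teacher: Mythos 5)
Your proposal is correct, and its core coincides with the paper's argument: both use the pencil of anticanonical curves $\lambda x = \mu y$ through the two isolated fixed points $(0:0:1:\pm\ii)$ to produce the conic bundle structure, with the curves sitting over those two points becoming the two disjoint $(-2)$-sections. The differences are in the order of operations and in one sub-argument. You blow up the isolated fixed points first and then quotient, whereas the paper quotients first and then takes the minimal resolution of the two resulting $A_1$-points; these commute and give the same smooth surface, so this is cosmetic. More substantively, you count the four singular fibres by an explicit fibrewise discriminant analysis (reducible fibres correspond to roots of the binary quartic $L_2^2-4L_4$), which forces you to verify that this quartic is squarefree when $X$ is smooth --- the one step you leave as ``as one checks.'' It is in fact true (a double root of $L_2^2-4L_4$ at $(x_0:y_0)$ makes all three partials of the branch quartic $z^4+L_2z^2+L_4$ vanish at the tangency points of the corresponding line through $(0:0:1)$, so the branch curve and hence $X$ would be singular), but the paper sidesteps it entirely: the Hurwitz formula gives $K_{X/N}^2=4$, resolving $A_1$-points leaves $K^2$ unchanged (Table~\ref{table1}), and a conic bundle with $K^2=4$ automatically has $8-4=4$ singular fibres. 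Your route buys an explicit location of the singular fibres at the cost of this extra verification; the paper's is shorter. Your handling of the $G/N$- and Galois-equivariance (stability of $\{R_1,R_2\}$ and of the pencil) is careful and consistent with what the paper later uses in Remark~\ref{DP2type1min}.
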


\begin{proof}
By Theorem \ref{DP2cyclicclass} one can choose coordinates in $\Pro_{\kka}(1 : 1 : 1 : 2)$ such that the set of $N$-fixed points on $\XX$ consists of a fixed curve $z = 0$, that has class $-K_X$, and two isolated fixed \mbox{points $(0 : 0 : 1 :\pm \ii)$}. Let~\mbox{$f: X \rightarrow X / N$} be the quotient morphism. By the Hurwitz formula (see \cite[Equation (1.38)]{Dol12}) one has $K_X = f^*(K_{X / N}) - K_X$. Therefore
$$
K_{X / N}^2 = \frac{1}{2}(2K_X)^2 = 4
$$
and the surface $X / N$ is a singular del Pezzo surface with two $A_1$-singularities. Let $\pi: \widetilde{X / N} \rightarrow X / N$ be the minimal resolution of singularities.

Let $\mathcal{L}$ be a linear system on $X$ given by $\lambda x = \mu y$. A general member of $\mathcal{L}$ is an \mbox{$N$-invariant} elliptic curve passing through the isolated fixed points of $N$. Therefore a general member of the linear system $f_*(\mathcal{L})$ is a conic passing through the two singular points on~$X / N$. Hence the linear system $\pi^{-1}_*f_*(\mathcal{L})$ defines a structure of a conic bundle on~$\widetilde{X / N}$, the exceptional curves over $A_1$-singularities are sections of this conic bundle with self-intersection number~$-2$, and $K_{\widetilde{X / N}}^2 = 4$. Thus $\widetilde{X / N}$ is an Iskovskikh surface.
\end{proof}

\begin{remark}
\label{DP2type1min}
Note that the obtained Iskovskikh surface $\widetilde{X / N}$ is $G / N$-minimal, if and only if $\rho(X)^G = 1$ and the two isolated fixed points of $N$ are permuted by the \mbox{group $G \times \Gal \left( \kka / \ka \right)$}, since otherwise $\rho\left( \widetilde{X / N} \right)^{G / N} > 2$.
\end{remark}

\begin{corollary}
\label{DP2type1fixed}
Let a finite group $G$ of order $2^n$ act on a del Pezzo surface $X$ of degree~$2$ and $N \cong \CG_2$ be a normal subgroup generated by an element of type $1$. Assume that there is an element $g \notin N$ in $G$ such that $g$ has a curve of fixed points on $X$, or $g$ acts on the tangent spaces of $X$ at the isolated fixed points by multiplication by a scalar matrix. Then the quotient $X / G$ is birationally equivalent to a surface $Y$ such that $K_Y^2 \geqslant 5$.
\end{corollary}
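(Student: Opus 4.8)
The plan is to reduce the statement to Corollary~\ref{IskGfixed} for the Iskovskikh surface produced by Lemma~\ref{DP2type1}. Since $N \cong \CG_2$ is normal in $G$, its nontrivial element is centralized by all of $G$ (conjugation induces an automorphism of $\CG_2$, which is trivial); hence every element of $G$ descends to an automorphism of $X / N$ and lifts to the minimal resolution $\pi \colon \widetilde{X / N} \rightarrow X / N$. Thus $\overline{G} := G / N$, a group of order $2^{n-1}$, acts on $\widetilde{X / N}$, which by Lemma~\ref{DP2type1} is an Iskovskikh surface (it need not be $\overline{G}$-minimal, which is irrelevant for Corollary~\ref{IskGfixed}). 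Moreover $\pi$ is a $\overline{G}$-equivariant birational morphism, so
$$
X / G = (X / N) / \overline{G} \approx \widetilde{X / N} / \overline{G}.
$$
Therefore it suffices to show that the image $\overline{g} \in \overline{G}$ of $g$ (which is nontrivial, as $g \notin N$) has a curve of fixed points on $\widetilde{X / N}$: Corollary~\ref{IskGfixed} then yields $\widetilde{X / N} / \overline{G} \approx Y$ with $K_Y^2 \geqslant 5$, hence $X / G \approx Y$ by Remark~\ref{geq5touse}.

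It remains to produce the fixed curve of $\overline{g}$, and this is where the two alternative hypotheses enter. Let $f \colon X \rightarrow X / N$ be the quotient morphism and let $q_1$, $q_2$ be the two isolated fixed points of $N$ (see Lemma~\ref{DP2type1}); then $f(q_1)$, $f(q_2)$ are the $A_1$-points of $X / N$, and the exceptional $(-2)$-curves $E_1$, $E_2$ of $\pi$ over them are the two sections of self-intersection number $-2$ of the Iskovskikh conic bundle $\widetilde{X / N} \rightarrow B$. Suppose first that $g$ has a curve of fixed points $C$ on $X$. As $g$ is centralized by $N$, the curve $C$ is $N$-invariant, so $f|_C$ is finite onto its image and $f(C)$ is again a curve; from $f \circ g = \overline{g} \circ f$ it follows that $f(C)$ is pointwise fixed by $\overline{g}$, and since the fixed locus of $\overline{g}$ is closed, so is its proper transform $\pi^{-1}_*(f(C))$ on $\widetilde{X / N}$. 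Suppose instead that $g$ fixes each of $q_1$, $q_2$ and acts on $T_{q_i} X$ by a scalar matrix. The exceptional curve $E_i$ of $\pi$ over $f(q_i)$ is identified with $\Pro\left(T_{q_i} X\right)$, and $\overline{g}$ acts on it through the image of $g$ in $\mathrm{PGL}\left(T_{q_i} X\right)$; a scalar matrix maps to the identity, so $\overline{g}$ fixes $E_i$ pointwise. In either case $\overline{g}$ has a curve of fixed points on $\widetilde{X / N}$, which completes the argument.

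The routine but genuinely delicate point is the last step of the second case: one must verify the identification of the resolution curve $E_i$ of the $A_1$-singularity $f(q_i)$ with $\Pro\left(T_{q_i} X\right)$, together with the fact that the lifted action of $\overline{g}$ on $E_i$ is exactly the one induced by the image of $g$ in $\mathrm{PGL}\left(T_{q_i} X\right)$ (recall that $N$ acts on $T_{q_i} X$ as $\operatorname{diag}(-1,-1)$). One should also note that the scalar-matrix hypothesis presupposes that $g$ fixes $q_1$ and $q_2$ individually; if $g$ swaps them one treats the tangent action of $g^2$, or argues that $g$ still permutes the two $(-2)$-sections and the analysis of Lemma~\ref{IskGFodd} together with Corollary~\ref{IskGfixed} applies directly. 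Everything else is bookkeeping: the centrality of $N$ in $G$, the equivariance of the minimal resolution, and the birational identity $X/G \approx \widetilde{X/N}/\overline{G}$.
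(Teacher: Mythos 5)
Your proposal is correct and follows essentially the same route as the paper: reduce via Lemma \ref{DP2type1} to the Iskovskikh surface $\widetilde{X/N}$ and apply Corollary \ref{IskGfixed}, the only content being that $G/N$ contains a nontrivial element with a curve of fixed points there. Your verification of that point (descent of the fixed curve of $g$, respectively pointwise fixing of the exceptional $(-2)$-section when $g$ acts by a scalar on $T_{q_i}X$) is exactly the detail the paper leaves implicit in the phrase ``it follows from the assumption.''
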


\begin{proof}
By Lemma \ref{DP2type1} the quotient $X / N$ is $G / N$-birationally equivalent to Iskovskikh surface $Z$. It follows from the assumption that in the group $G / N$ acting on $Z$ there is an element, that has a curve of fixed points on $Z$. Thus the quotient \mbox{$X / G \approx Z / (G / N)$} is birationally equivalent to a surface $Y$ such that $K_Y^2 \geqslant 5$ by Corollary~\ref{IskGfixed}.

\end{proof}

\begin{lemma}
\label{DP2type2}
Let a finite group $G$ act on a del Pezzo surface $X$ of degree $2$ and $N \cong \CG_2$ be a normal subgroup in $G$ generated by an element of type $2$. Then the quotient $X / N$ is $G / N$-birationally equivalent to a del Pezzo surface $Y$ of degree $2$.
\end{lemma}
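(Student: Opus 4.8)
The plan is to imitate the strategy of Lemma \ref{DP2type1}, but to keep track of the fact that the element of type $2$ has only isolated fixed points, so that the quotient remains a (singular) del Pezzo surface of the same degree. First I would use Theorem \ref{DP2cyclicclass} to put $\XX$ in the normal form of type $2$, namely
$$
L_4(x,y) + L_2(x,y)z^2 + z^4 + t^2 = 0,
$$
with the generator $\sigma$ of $N$ acting by $(x:y:z:t)\mapsto(x:y:-z:-t)$. The fixed locus of $\sigma$ on $\XX$ consists of the isolated points where $z=t=0$, i.e. the points $(x:y:0:0)$ with $L_4(x,y)=0$; there are four such points (counted with the weighting, they are the four ramification points lying over the line $z=0$ of the Geiser map that are not on the branch quartic). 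I would compute, via Lemma \ref{blowup} or directly from the action on the ambient weighted coordinates, that $\sigma$ acts on each such tangent space as $\operatorname{diag}(-1,-1)$, so that $X/N$ acquires four singularities of type $A_1$ (i.e. $\frac{1}{2}(1,1)$), which are Du Val.

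Next I would run the Hurwitz/ramification computation. Since the ramification divisor of $f\colon X\to X/N$ is empty in codimension one (all fixed points are isolated), the formula $K_X = f^*K_{X/N}$ holds away from the branch points, hence as $\Q$-divisors $K_X \sim f^* K_{X/N}$, and therefore
$$
K_{X/N}^2 = \frac{1}{2}\,(f^*K_{X/N})^2 = \frac{1}{2}\,K_X^2 \cdot 2 = K_X^2 = 2.
$$
(Here the factor $2$ comes from $\deg f = 2$.) So $X/N$ is a normal surface with $-K_{X/N}$ ample, with only Du Val ($A_1$) singularities, and $K_{X/N}^2 = 2$: that is, $X/N$ is a singular del Pezzo surface of degree $2$. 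To finish, I would pass to the minimal resolution $\pi\colon \widetilde{X/N}\to X/N$: by Proposition \ref{DPconnection} it is a weak del Pezzo surface, and $K_{\widetilde{X/N}}^2 = K_{X/N}^2 - 0 = 2$ since $A_1$-blowups do not change $K^2$ (Table \ref{table1}, line $m=2$). One then contracts the $(-2)$-curves back to obtain the singular del Pezzo surface, or — to get an honest smooth del Pezzo surface $Y$ as claimed — one observes that $\widetilde{X/N}$ is $G/N$-equivariantly birational to a del Pezzo surface of degree $2$: more precisely, $X/N$ itself is the required $Y$ in the singular sense, and since the statement only asserts $G/N$-birational equivalence to a del Pezzo surface of degree $2$, I would take $Y$ to be a $G/N$-minimal model of $\widetilde{X/N}$ (any such model, being a weak del Pezzo of degree $2$ with no contractible $G/N$-invariant $(-1)$-curves, is a del Pezzo surface of degree $2$ by Theorem \ref{Minclass} and Proposition \ref{DPconnection}, unless it admits a conic bundle structure — which I would rule out because the anticanonical model $X/N$ is a del Pezzo surface of degree $2$, whose anticanonical map is a degree-$2$ cover of $\Pro^2$, precluding a conic bundle).

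Throughout, the $G/N$-equivariance is automatic: $N$ is normal in $G$, so $G/N$ acts on $X/N$ and on its minimal resolution, and all the morphisms above ($f$, $\pi$, the contractions) are $G/N$-equivariant by construction. The main obstacle I anticipate is the last point — verifying that one genuinely lands on a \emph{del Pezzo} surface of degree $2$ and not merely on a weak del Pezzo or a conic bundle after taking a $G/N$-minimal model — and dealing cleanly with the distinction between the singular del Pezzo surface $X/N$ itself, its resolution, and a smooth del Pezzo birational model; the ramification and intersection-number bookkeeping ($K^2 = 2$, four $A_1$ points) is routine given Table \ref{table1} and the Hurwitz formula.
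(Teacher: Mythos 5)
Your identification of the fixed locus (the four points $(\lambda_i:\mu_i:0:0)$ with $L_4(\lambda_i,\mu_i)=0$, each giving an $A_1$-singularity on the quotient) is correct, but the Hurwitz computation that drives the rest of your argument is wrong. Since the ramification divisor is empty in codimension one, $f^*K_{X/N}=K_X$, and hence $K_X^2=(f^*K_{X/N})^2=\deg f\cdot K_{X/N}^2=2\,K_{X/N}^2$, so
$$
K_{X/N}^2=\tfrac{1}{2}K_X^2=1,
$$
not $2$; the extra factor of $2$ you inserted ("the factor $2$ comes from $\deg f=2$") is on the wrong side of the equality. Consequently $X/N$ is a singular del Pezzo surface of degree $1$ with four $A_1$-points, not of degree $2$, and its minimal resolution is a weak del Pezzo surface of degree $1$. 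Your conclusion that "$X/N$ itself is the required $Y$ in the singular sense" and that any $G/N$-minimal model of $\widetilde{X/N}$ is a del Pezzo of degree $2$ therefore collapses (and the side argument ruling out a conic bundle on a $G/N$-minimal model because the anticanonical model is a double cover of $\Pro^2$ is not valid in any case: a minimal model of a surface birational to a del Pezzo can perfectly well be a conic bundle).

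The missing idea, which is the heart of the paper's proof, is to use the $N$-invariant anticanonical curve $C=\{z=0\}$, which passes through all four fixed points. Its image $f(C)$ satisfies $f(C)^2=\tfrac12 C^2=1$, and after resolving the four $A_1$-points each contributes $-\tfrac12$ (Table \ref{table1}), so $\pi^{-1}_*f(C)^2=1-4\cdot\tfrac12=-1$. Contracting this single $(-1)$-curve (a $G/N$-equivariant contraction, since $N\lhd G$ forces $G$ to preserve $C$) raises $K^2$ from $1$ to $2$ and removes the only obstruction to ampleness, so by Proposition \ref{DPconnection} the result $Y$ is a genuine del Pezzo surface of degree $2$. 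Without this specific contraction you have no way to land on degree $2$, and indeed with your value $K_{X/N}^2=2$ the numerology of the whole construction (and of Remark \ref{DP2type2min}, which uses the precise structure of $Y$) would not fit together.
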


\begin{proof}

By Theorem \ref{DP2cyclicclass} one can choose coordinates in $\Pro_{\kka}(1 : 1 : 1 : 2)$ such that the set of $N$-fixed points on $\XX$ consists of four isolated fixed points $r_i = (\lambda_i : \mu_i : 0 : 0)$ where~$(\lambda_i : \mu_i)$ are the roots of the equation $L_4(x,y) = 0$. The curve $C$ given by $z = 0$ is $N$-invariant and passes through all $N$-fixed points.

Let $f: X \rightarrow X / N$ be the quotient morphism and
$$
\pi: \widetilde{X / N} \rightarrow X / N
$$
\noindent be the minimal resolution of singularities. Then
$$
\pi^{-1}_*f(C)^2 = f(C)^2 - 4 \cdot \frac{1}{2} = \frac{1}{2} C^2 - 2 = -1.
$$
Let $\widetilde{X / N} \rightarrow Y$ be the contraction of the $(-1)$-curve $\pi^{-1}_*f(C)$. The surface $X / N$ is a singular del Pezzo surface. Therefore $\widetilde{X / N}$ is a weak del Pezzo surface. There are no negative curves with self-intersection less than $-1$ on $Y$, thus the surface $Y$ is a del Pezzo surface by Proposition \ref{DPconnection}. Its degree is equal to
$$
K_Y^2 = K_{\widetilde{X / N}}^2 + 1 = K_{X / N}^2 + 1 = \frac{1}{2} K_X^2 + 1 = 2.
$$
\end{proof}

\begin{remark}
\label{DP2type2min}
Assume that $G = N$, $\rho(X)^G = 1$ and let us show that under the assumptions of Lemma \ref{DP2type2} the surface $Y$ is not $\ka$-rational. Let $S_1$, $S_2$, $S_3$ and $S_4$ be the proper transforms of the curves $\pi^{-1}_*f(r_i)$ on $Y$. The surface $Y$ is a del Pezzo surface of degree $2$, therefore the anticanonical map $Y \rightarrow \Pro^2_{\ka}$ defines the action of the Geiser involution on~$Y$. Denote by $T_i$ the image of $S_i$ under this action. One has
$$
S_i^2 = T_i^2 = -1, \qquad S_i \cdot S_j = T_i \cdot T_j = 1, \qquad S_i \cdot T_i = 2, \qquad S_i \cdot T_j = 0.
$$

Assume that the Galois group of the polynomial $L_4(x, y)$ is trivial. Then the curves $S_i$ and $T_i$ are defined over $\ka$, and $\rho(Y) = 4$. The complete linear system $|S_1 + S_2|$ gives a conic bundle structure $\varphi: Y \rightarrow \Pro^1_{\ka}$, and the class of $T_3 + T_4$ belongs to this linear system. One can contract the curves $S_1$ and $T_3$, which are the components of singular fibres of~$\varphi$, and get a conic bundle $Z \rightarrow \Pro^1_{\ka}$ such that $K_Z^2 = 4$ and $\rho(Z) = 2$. The surface $Z$ is minimal by Theorem \ref{MinCB}, and is not $\ka$-rational by Theorem \ref{ratcrit}.

Obviously, if the polynomial $L_4(x,y)$ has nontrivial Galois group then $Y$ is also \mbox{non-$\ka$-rational}. Minimal model $Z$ of $Y$ depends on the Galois group of $L_4(x,y)$. One can check that:

\begin{itemize}

\item if the Galois group transitively permutes the roots of $L_4(x,y) = 0$, then $\rho(Y) = 1$;

\item if any root of $L_4(x,y) = 0$ is not defined over $\ka$, and the roots of $L_4(x,y) = 0$ form two $\Gal\left( \kka / \ka \right)$-orbits, then $\rho(Y) = 2$ and $Y$ admits a structure of a minimal conic bundle;

\item if there is a unique root of $L_4(x,y) = 0$ defined over $\ka$, then $Z$ is a minimal cubic surface;

\item if the equation $L_4(x,y) = 0$ has exactly two roots defined over $\ka$, then $Z$ is a minimal del Pezzo surface of degree $4$, and $\rho(Z) = 1$;

\item if all roots of the equation $L_4(x,y) = 0$ are defined over $\ka$, then $Z$ is a minimal del Pezzo surface of degree $4$, admitting a structure of a minimal conic bundle.

\end{itemize}

\end{remark}

\begin{lemma}
\label{DP2V4}
Let a finite group $G$ act on a del Pezzo surface $X$ of degree $2$ and \mbox{$N \cong \VG_4$} be a normal subgroup in $G$ generated by elements of type $1$. Then the quotient $X / G$ is birationally equivalent to a surface $Y$ such that $K_Y^2 \geqslant 5$.
\end{lemma}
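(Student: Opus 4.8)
The plan is to reduce to the Iskovskikh surface picture via Lemma \ref{DP2type1}, and then exploit the extra symmetry coming from the second generator. Let $N \cong \VG_4 = \langle \sigma_1, \sigma_2 \rangle$ with $\sigma_1, \sigma_2$ (and hence $\sigma_3 = \sigma_1\sigma_2$) of type $1$. First I would pick one involution, say $\sigma_1$, and apply Lemma \ref{DP2type1} to the normal subgroup $N_1 = \langle \sigma_1 \rangle$: the quotient $X / N_1$ is $G / N_1$-birationally equivalent to an Iskovskikh surface $Z$, on which the residual group $\overline{N} = N / N_1 \cong \CG_2$ acts. By Remark \ref{geq5touse} it suffices to show that $Z / \overline{N}$ (equivariantly for $G / N_1$) is birationally equivalent to a surface with $K_Y^2 \geqslant 5$; indeed $X / G \approx Z / (G / N_1)$, and if $Z / \overline{N}$ is $(G/N)$-birational to such a surface we are done. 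Since the Iskovskikh surface $Z$ has only $2^n$-order group acting on it, Corollary \ref{IskGfixed} will finish the job \emph{as soon as} I can produce one nontrivial element of $G / N_1$ acting on $Z$ that has a curve of fixed points.

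The key computation is therefore to track what $\sigma_2$ (equivalently $\sigma_3$) does on $Z$. In the coordinates of Theorem \ref{DP2cyclicclass} for type $1$, $\sigma_1$ acts as $(x : y : -z : t)$ on the equation $L_4(x,y) + L_2(x,y)z^2 + z^4 + t^2 = 0$; the fixed locus of $\sigma_1$ is the curve $z = 0$ (an anticanonical elliptic curve) plus the two isolated points $(0:0:1:\pm\ii)$. A second commuting type-$1$ involution $\sigma_2$ must, in suitable coordinates compatible with $\sigma_1$, act as a sign change on one of $x, y$ together with a sign on $z$ or $t$ — concretely of the shape $(x : -y : \pm z : \pm t)$ up to the classification — so that $\sigma_2$ also fixes a curve. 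On the quotient $X / N_1 \to Z$, the image of $\sigma_2$ is an involution whose fixed locus contains the image of the $\sigma_2$-fixed curve (which is not contained in the exceptional sections $C$, $C'$ of $Z$, since those are the images of the two isolated $\sigma_1$-fixed points together with infinitely near data, whereas $\sigma_2$'s fixed curve maps to a genuine curve). Hence the image of $\sigma_2$ on $Z$ has a curve of fixed points.

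With that in hand, $G / N_1$ acts on the Iskovskikh surface $Z$, it is a $2$-group (quotient of the $2$-group $G$), and it contains an element — the image of $\sigma_2$ — with a curve of fixed points, so Corollary \ref{IskGfixed} gives that $Z / (G / N_1)$, hence $X / G$, is birationally equivalent to a surface $Y$ with $K_Y^2 \geqslant 5$. The main obstacle I anticipate is the bookkeeping in the middle step: verifying that the two commuting type-$1$ involutions can genuinely be put in the normalized coordinate form where $\sigma_2$ visibly fixes a curve, and checking that this curve's image on $Z$ really avoids the two $(-2)$-sections (so that the hypothesis "$g$ has a curve of fixed points" in Corollary \ref{IskGfixed} applies honestly, rather than degenerating into a fixed-point set contained in the exceptional sections). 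This is where one must be careful about which of $\sigma_1, \sigma_2, \sigma_3$ one contracts first — choosing the wrong one could a priori collapse the curve — but since all three are symmetric and at least one of the other two must restrict to a curve-fixing involution on $Z$, a short case analysis over the possible normalized forms of $N \cong \VG_4$ closes this gap.
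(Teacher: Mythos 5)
Your reduction has two genuine gaps, both of which matter for the way the lemma is actually used later in the paper. First, you apply Lemma \ref{DP2type1} to $N_1=\langle\sigma_1\rangle$ and then write $X/G\approx Z/(G/N_1)$; this requires $\langle\sigma_1\rangle$ to be \emph{normal in $G$}, but the hypothesis only makes $N\cong\VG_4$ normal. Conjugation by elements of $G$ may permute the three involutions of $N$, and in the paper's application (Lemma \ref{DP2odd2}, where $N=\langle\alpha^2,\beta^2\rangle$ and $G$ contains $\phi$ with $\phi\alpha^2\phi^{-1}=\beta^2$) it really does, so no single $\langle\sigma_i\rangle$ is normal and the $G/N_1$-equivariant Iskovskikh model does not exist. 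Second, you invoke Corollary \ref{IskGfixed} after asserting that $G/N_1$ "is a $2$-group (quotient of the $2$-group $G$)" --- but the lemma does not assume $G$ is a $2$-group, and it is applied to groups of order $2^k\cdot 3$; the corollary as stated only covers groups of order $2^n$ acting on the Iskovskikh surface. So your argument proves only the special case where $G$ is a $2$-group and one of the involutions of $N$ generates a normal subgroup (e.g.\ $G=N$ itself), not the statement as written.

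The paper avoids both issues with a one-step argument you should compare against: each nontrivial element of $N$ pointwise fixes a hyperplane section, and its two isolated fixed points lie on the fixed curves of the other two involutions, so every point stabilizer in $N$ is generated by reflections and $X/N$ is \emph{smooth}; the Hurwitz formula then gives $K_{X/N}^2=\tfrac14(4K_X)^2=8$, and Remark \ref{geq5touse} applied to the residual $G/N$-action on the smooth surface $X/N$ (with $K^2\geqslant 5$) yields the conclusion for arbitrary finite $G\supset N$. If you want to keep your two-step route, you would at minimum have to quotient by all of $N$ before bringing in the rest of $G$, and replace Corollary \ref{IskGfixed} by an argument valid without the $2$-power hypothesis; at that point the direct smoothness-plus-Hurwitz computation is both shorter and strictly more general.
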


\begin{proof}
Each nontrivial element of $N$ has a pointwisely fixed hyperplane section and two isolated fixed points lying on the fixed curves of other nontrivial elements. Thus the quotient $X / N$ is smooth and by the Hurwitz formula
$$
K_{X / N}^2 = \frac{1}{4}(4K_X)^2 = 8.
$$
By Remark \ref{geq5touse} the quotient $X / G \approx (X / N) / (G / N)$ is birationally equivalent to a surface~$Y$ such that $K_Y^2 \geqslant 5$.
\end{proof}

Until the end of this paper we will use the following notation.

\begin{notation}
\label{DP2notation}

Let $\alpha$, $\beta$, $\delta$ and $\gamma$ be the following automorphisms of $\Pro_{\kka} (1 : 1 : 1 : 2)$:
$$
\alpha: (x : y: z : t) \mapsto (\ii x : y : z : t); \qquad \beta: (x : y : z : t) \mapsto (x : \ii y : z : t);
$$
$$
\delta: (x : y: z : t) \mapsto (y : x : z : t); \qquad \gamma: (x : y : z : t) \mapsto (x : y : z : -t).
$$
One has $\langle \alpha, \beta, \delta, \gamma \rangle \cong \left((\CG_4 \times \CG_4) \rtimes \CG_2 \right) \times \CG_2$.

\end{notation}

\begin{lemma}
\label{DP22groups}
If $\XX$ is a del Pezzo surface of degree $2$ then one can choose the coordinates in $\Pro_{\kka}(1 : 1 : 1 : 2)$ in which there is a $2$-Sylow subgroup $S$ in $\Aut(\XX)$ such that $S \subset \langle \alpha, \beta, \delta, \gamma \rangle$. 
\end{lemma}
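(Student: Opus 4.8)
The plan is to go through the classification of the possible groups $\Aut(\XX)$ given in Table~\ref{table3} type by type, and in each case to exhibit a $2$-Sylow subgroup of $\Aut(\XX)$ which, after a suitable choice of coordinates on $\Pro_{\kka}(1:1:1:2)$, lies in $\langle\alpha,\beta,\delta,\gamma\rangle$. The elementary fact underlying every case is that in $\Pro_{\kka}(1:1:1:2)$ the transformation $\operatorname{diag}(-1,-1,-1,1)$ is scaling by $-1$, hence is the identity; therefore the sign change of any one of the coordinates $x$, $y$, $z$ already lies in $\langle\alpha^2,\beta^2\rangle$ (for instance the sign change of $z$ equals $\operatorname{diag}(-1,-1,1,1)=\alpha^2\beta^2$). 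Together with the observations that the Geiser involution can always be normalized to $\gamma$, and that a transposition of two of the coordinates $x$, $y$, $z$ may be taken to be $\delta$ after relabeling, this shows that any \emph{monomial} $2$-group of automorphisms --- one generated by diagonal matrices with roots of unity on the diagonal, by permutation matrices of $x$, $y$, $z$, and by $\gamma$ --- is conjugate, via a coordinate change, into $\langle\alpha,\beta,\delta,\gamma\rangle$.

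For type~II the group $\langle\alpha,\beta,\delta,\gamma\rangle$, which has order $64$, is itself a subgroup of $\Aut(\XX)$ in the coordinates of Table~\ref{table3}, and since $\left|\Aut(\XX)\right|=192$ it is a $2$-Sylow subgroup. For the types IV, V, VII, VIII, IX, X, XI, XII and XIII the group $\Aut(\XX)$ is again presented monomially in the coordinates of Table~\ref{table3}, so one picks a $2$-Sylow subgroup directly; it is monomial, hence lies in $\langle\alpha,\beta,\delta,\gamma\rangle$ by the previous paragraph, and it only remains to check that the chosen subgroup attains the full power of $2$ dividing $\left|\Aut(\XX)\right|$. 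For type~III one uses that the equation of a type~III surface has the same shape as that of a type~V surface, so the type~V group $\CG_2\times\mathrm{AS}_{16}=\langle\gamma,\alpha\beta,\alpha\beta^3,\delta\rangle$ embeds into $\Aut(\XX)\cong\CG_2\times\widetilde{\AG}_4$; as $\left|\mathrm{AS}_{16}\right|=16$ is exactly the order of a $2$-Sylow subgroup of $\widetilde{\AG}_4$, this monomial subgroup is a $2$-Sylow subgroup lying in $\langle\alpha,\beta,\delta,\gamma\rangle$. Finally, for type~I, where $\Aut(\XX)\cong\CG_2\times\mathrm{PSL}_2(\F_7)$ acts non-monomially in the Klein-quartic coordinates, one first rewrites $\XX$ by the equation of a type~IV surface with $A=\frac{\sqrt{-7}-1}{2}$ using Remark~\ref{DP2typeIspec}; a $2$-Sylow subgroup of the visible monomial group $\SG_4\times\langle\gamma\rangle$ then has order $16$, which is precisely the order of a $2$-Sylow subgroup of $\CG_2\times\mathrm{PSL}_2(\F_7)$, so it is a $2$-Sylow subgroup of $\Aut(\XX)$, and it is monomial.

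I expect the only real work to be bookkeeping: tracking the weighted-projective scaling identification when rewriting sign changes as words in $\alpha$ and $\beta$, reading off the relevant monomial generators from each row of Table~\ref{table3}, and confirming in each case that the chosen subgroup has the correct order, which uses that the $2$-Sylow subgroups of $\mathrm{PSL}_2(\F_7)$, $\widetilde{\AG}_4$ and $\SG_4$ have orders $8$, $16$ and $8$. The one step requiring an input beyond Table~\ref{table3} is the type~I case, which rests on Remark~\ref{DP2typeIspec} and on the observation that a $2$-Sylow subgroup of the small monomial subgroup $\SG_4\subset\mathrm{PSL}_2(\F_7)$ is already a $2$-Sylow subgroup of $\mathrm{PSL}_2(\F_7)$; this is the step I would regard as the main obstacle.
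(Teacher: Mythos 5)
Your proposal is correct and follows essentially the same route as the paper: a case-by-case pass through Table \ref{table3} exhibiting an explicit monomial $2$-Sylow subgroup for each type, with type $I$ reduced to type $IV$ via Remark \ref{DP2typeIspec} and the observation that a $2$-Sylow subgroup of $\SG_4\subset\mathrm{PSL}_2(\F_7)$ is already a $2$-Sylow subgroup of $\mathrm{PSL}_2(\F_7)$. The only difference is that the paper simply lists the subgroups (and you omit the trivial type $VI$ case, where $S=\langle\gamma\rangle$), while you additionally spell out the order counts and the weighted-projective scaling identification; this is immaterial.
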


\begin{proof}
One can easily find such $2$-Sylow subgroup for each type (see Theorem \ref{DP2groupclass}) of $\XX$:
\begin{itemize}
\item $S = \langle \alpha, \beta, \delta, \gamma \rangle$ for the case $II$;
\item $S = \langle \alpha \beta, \alpha \beta^{-1}, \delta, \gamma \rangle$ for the cases $III$ and $V$;
\item $S = \langle \alpha^2, \beta^2, \delta, \gamma \rangle$ for the cases $I$ (see Remark \ref{DP2typeIspec}), $IV$ and $VII$;
\item $S = \langle \alpha^2, \beta^2, \gamma \rangle$ for the case $X$;
\item $S = \langle \beta^2, \gamma \rangle$ for the case $VIII$;
\item $S = \langle \delta, \gamma \rangle$ for the case $IX$;
\item $S = \langle \alpha^2 \beta^2, \gamma \rangle$ for the case $XII$;
\item $S = \langle\gamma \rangle$ for the cases $VI$, $XI$ and $XIII$.
\end{itemize}
\end{proof}

\begin{lemma}
\label{DP2even}
Let a finite group $G$ of order $2^n$ act on a del Pezzo surface $X$ of degree~$2$. If the group $G$ is not listed in Proposition \ref{DP2} then the quotient~$X / G$ is birationally equivalent to a surface $Y$ such that $K_Y^2 \geqslant 5$.
\end{lemma}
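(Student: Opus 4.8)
The plan is to peel off a central subgroup of order $2$, rewrite the quotient either as a quotient of an Iskovskikh surface or of a del Pezzo surface of degree $2$ by a smaller group, and then dispose of the finitely many remaining configurations by hand.

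First I would make two preliminary reductions. If $G$ contains the Geiser involution $\gamma$, then $\langle\gamma\rangle\trianglelefteq G$, so $X/G\approx\Pro^2_{\ka}/(G/\langle\gamma\rangle)$ and the claim follows from Remark~\ref{geq5touse}. If $\rho(X)^G>1$, then $X$ is not $G$-minimal, so by Theorems~\ref{GMMP} and~\ref{Minclass} there is a $G$-equivariant birational morphism from $X$ onto a surface that either carries a $G$-equivariant conic bundle structure or is a del Pezzo surface of degree at least $3$, and we are reduced to quotients treated in the companion papers quoted in the introduction. Hence I would assume from now on that $\gamma\notin G$ and $\rho(X)^G=1$, and argue by induction on $|G|$, the cases $|G|\leqslant 2$ being immediate.

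Let $N\cong\CG_2$ be a minimal normal subgroup of $G$; since $G$ is a $2$-group, $N$ is central, and its generator is an involution distinct from $\gamma$, hence of type~$1$ or type~$2$ by Theorem~\ref{DP2cyclicclass}. If $N$ is of type~$2$, then Lemma~\ref{DP2type2} identifies $X/N$, $G/N$-birationally, with a del Pezzo surface $X'$ of degree~$2$ on which the $2$-group $G/N$ of order $|G|/2$ acts; by the inductive hypothesis, if $G/N$ is not in the list of Proposition~\ref{DP2} then $X'/(G/N)$, and hence $X/G$, is birationally equivalent to a surface $Y$ with $K_Y^2\geqslant 5$. If $N$ is of type~$1$, then Lemma~\ref{DP2type1} identifies $X/N$, $G/N$-birationally, with an Iskovskikh surface; feeding the $G/N$-action into Lemmas~\ref{IskG0}, \ref{IskGB}, \ref{IskGFodd} and~\ref{IskGFeven} together with Corollary~\ref{IskGfixed} yields the desired $Y$ unless $G/N$ is trivial or is isomorphic to $\CG_2$ or $\CG_2^2$ acting in one of the special ways singled out in Lemma~\ref{IskGFeven}. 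The subgroups $\VG_4$ generated by involutions of type~$1$ are handled separately by Lemma~\ref{DP2V4}, which already produces $K^2=8$.

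It remains to inspect the finitely many groups $G$ that survive both inductive steps: the $2$-groups which are central extensions, by $\CG_2$ or $\CG_2^2$ in the type-$1$ branch and by a group from Proposition~\ref{DP2} in the type-$2$ branch, of a group generated by a single involution of the relevant type. Here I would use Lemma~\ref{DP22groups} to change coordinates so that $G\subseteq\langle\alpha,\beta,\delta,\gamma\rangle$ and run through the $2$-Sylow subgroups exhibited in its proof; for each remaining $G$ one either recognises it as one of the eight $2$-groups listed in Proposition~\ref{DP2}, or locates inside $G$ a central involution equal to $\gamma$, or of type~$1$ with some element acting by scalars on the relevant tangent spaces, or one for which Corollaries~\ref{DP2type1fixed}, \ref{IskGfixed} and Lemma~\ref{DP2V4} apply. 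The main obstacle is precisely this last piece of bookkeeping: using Theorem~\ref{DP2groupclass} (and the list of types in Table~\ref{table2}) one must check that when $\gamma\notin G$ the square of every order-$4$ element acting on a del Pezzo surface of degree~$2$ is of type~$1$, and that in a group of order~$8$ the mutual positions of its several involutions genuinely force it — unless it is one of the listed copies of $\DG_8$ or $Q_8$ — to contain another exploitable central involution, so that no unlisted $2$-group can slip through.
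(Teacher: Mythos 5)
Your engine in the type-$1$ branch --- peel off a central involution of type $1$ via Lemma~\ref{DP2type1}, feed $G/N$ into the Iskovskikh machinery of Section~3 (packaged in the paper as Corollary~\ref{DP2type1fixed}), normalize $G$ inside $\langle\alpha,\beta,\delta,\gamma\rangle$ by Lemma~\ref{DP22groups}, and use Lemma~\ref{DP2V4} and Remark~\ref{geq5touse} for the rest --- is exactly the paper's proof. But two of your reductions do not hold up. First, the preliminary step ``assume $\rho(X)^G=1$'' is unjustified: $\rho(X)^G>1$ does not imply that $X$ fails to be $G$-minimal (a relatively $G$-minimal conic bundle of degree $2$ has $\rho^G=2$ and is $G$-minimal by Theorem~\ref{MinCB}), and, more seriously, being ``reduced to quotients treated in the companion papers'' does not yield $K_Y^2\geqslant 5$ --- those papers exhibit non-rational quotients of conic bundles and of del Pezzo surfaces of degree $3$ and $4$ by $2$-groups (see Theorem~\ref{DP4more}). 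To discard the case $\rho(X)^G>1$ you would have to check that for every unlisted $2$-group the induced action on the $G$-minimal model lands in the good cases of those papers, which is additional work you do not do. The paper's proof simply makes no such case distinction, and nothing in your later argument actually needs $\rho(X)^G=1$, so the cleanest fix is to drop this reduction.

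Second, the type-$2$ inductive branch buys you essentially nothing and introduces a gap of its own. The inductive hypothesis applied to the degree-$2$ surface $Y$ of Lemma~\ref{DP2type2} is silent whenever $G/N$ \emph{is} listed, and since the list contains groups of order up to $8$ ($\DG_8$, $Q_8$), the residue of this branch is the set of all central extensions of listed groups by $\CG_2$, i.e.\ $2$-groups of order up to $16$ --- a larger collection than what survives the paper's direct analysis (which caps $|G\cap\langle\alpha,\beta,\gamma\rangle|$ at $4$ and hence $|G|$ at $8$). Moreover, to even invoke the inductive hypothesis you must identify the types, in the sense of Table~\ref{table2}, of the automorphisms induced on $Y$ by $G/N$, which requires tracing fixed loci through the quotient and the contraction in Lemma~\ref{DP2type2}; you never address this. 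The paper sidesteps the type-$2$ involutions entirely by observing that whenever $G_0=G\cap\langle\alpha,\beta\rangle$ is nontrivial it must contain the type-$1$ central involution $\alpha^2\beta^2$, and that the squares of all order-$4$ elements of $\langle\alpha,\beta,\delta,\gamma\rangle$ are of type $1$. Finally, the decisive content of the lemma is precisely the enumeration you defer to ``bookkeeping'': checking, element by element, which involutions are killed by Corollary~\ref{DP2type1fixed} (via curves of fixed points or scalar action on the tangent spaces at the isolated fixed points of $N$) and which groups of order $4$ and $8$ remain. As written, the proposal names the right tools but leaves that verification --- the actual proof --- undone.
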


\begin{proof}
Assume that the quotient $X / G$ is not birationally equivalent to a surface $Y$ such that $K_Y^2 \geqslant 5$.

By Lemma \ref{DP22groups} we can assume that $G$ is a subgroup of $\langle \alpha, \beta, \delta, \gamma \rangle$.

The group $G$ does not contain the Geiser involution $\gamma$ since the quotient \mbox{$X / G \approx \Pro^2_{\ka} / (G / \langle \gamma \rangle)$} is birationally equivalent to a surface $Y$ such that $K_Y^2 \geqslant 5$ by Remark~\ref{geq5touse}.

Let $G_0 = G \cap \langle \alpha, \beta \rangle$ be a subgroup. Obviously $G_0$ is normal in $G$. If $G_0$ is trivial then $G$ is $\CG_2$ generated by an element of type $1$ or $2$, or $\CG_2^2$ generated by elements of type $2$. These groups are listed in cases $(2)$, $(3)$ and $(7)$ of Proposition \ref{DP2}.

If $G_0$ is not trivial then there is an element of order $2$ in $G_0$. If $G$ is a subgroup of~$\langle \alpha, \beta, \gamma \rangle$ then without loss of generality we can assume that $G_0$ contains $\alpha^2\beta^2$, since we can rename coordinates $x$, $y$ and $z$ in this case. If $G$ is not a subgroup of $\langle \alpha, \beta, \gamma \rangle$ then there is an element $h \in G$ such that $h\alpha^2h^{-1} = \beta^2$. Therefore in this case $G_0$ always contains~$\alpha^2\beta^2$. The subgroup $N$ generated by $\alpha^2\beta^2$ is normal in $G$.

By Corollary \ref{DP2type1fixed} the group $G_0$ does not contain elements $\alpha$, $\alpha^2$, $\alpha^3$, $\beta$, $\beta^2$, $\beta^3$, $\alpha \beta$ and~$\alpha^3\beta^3$. Therefore we have $G_0 = \langle \alpha^2 \beta^2 \rangle$ or $G_0 = \langle \alpha^3\beta \rangle$.

Now consider a group $G_1 = G \cap \langle \alpha, \beta, \gamma \rangle$. For an element $g = \alpha^i\beta^j\gamma$ one \mbox{has $g^2 = \alpha^{2i}\beta^{2j}$}. Therefore $i + j$ is even. Moreover, by Corollary \ref{DP2type1fixed} the group $G_1$ does not contain elements~$\alpha \beta\gamma$ and $\alpha^3\beta^3\gamma$. One has $G_1 = \langle \alpha^2\beta^2 \rangle$, $G_1 = \langle \alpha^3\beta \rangle$, $G_1 = \langle \alpha^3\beta\gamma \rangle$ \mbox{or $G_1 = \langle \alpha^2\gamma, \beta^2\gamma \rangle$}. Note that these groups are listed in cases $(2)$, $(5)$, $(6)$ and $(7)$ of Proposition \ref{DP2}.

Now assume that $G \neq G_1$. For an element $g = \alpha^i\beta^j\delta\gamma^k$ one \mbox{has $g^2 = \alpha^{i+j}\beta^{i+j}$}. Therefore $i + j$ is even, and if $\ord g = 4$ then $i + j$ is not divisible by $4$. One can easily check that any element of order $4$ in $G$ is conjugate in $\Aut \Pro_{\kka}(1:1:1:2)$ to $\operatorname{diag}(\ii, -\ii, 1, \pm 1)$. Therefore if $G_1 = \langle \alpha^2\beta^2 \rangle$ then either $G \cong \CG_4$, or $G \cong \CG_2^2$ generated by elements of type $2$. These groups are listed in cases $(5)$, $(6)$ and $(7)$ of Proposition \ref{DP2}.

By Corollary \ref{DP2type1fixed} the group $G_1$ does not contain elements $\delta$, $\alpha^3\beta \delta$, $\alpha^2\beta^2\delta$ or $\alpha \beta^3\delta$. \mbox{If $G_1 = \langle \alpha^3\beta \rangle$} then $G = \langle \alpha^3\beta, \alpha \beta \delta \rangle$, $G = \langle \alpha^3\beta, \alpha \beta \delta\gamma \rangle$ or $G = \langle \alpha^3\beta, \delta\gamma \rangle$. These groups are listed in cases $(10)$, $(11)$ and $(9)$ of Proposition \ref{DP2}.

If $G_1 = \langle \alpha^3\beta\gamma \rangle$ then $G = \langle \alpha^3\beta\gamma, \alpha^2\delta \rangle$ or $G = \langle \alpha^3\beta\gamma, \alpha \beta \delta \rangle$. These groups are listed in case~$(11)$ of Proposition \ref{DP2}.

If $G_1 = \langle \alpha^2\gamma, \beta^2\gamma \rangle$ then $G = \langle \alpha^2\gamma, \beta^2\gamma, \alpha^2\delta \rangle$ or $G = \langle \alpha^2\gamma, \beta^2\gamma, \alpha \beta \delta \rangle$. These groups are listed in case $(9)$ of Proposition \ref{DP2}.

\end{proof}

To construct explicit examples of $\ka$-rational an non-$\ka$-rational quotients we need the following corollary.

\begin{corollary}
\label{DP2evencrit}
Assume that a group $G$ of order $2^n$, listed in Proposition \ref{DP2}, acts on a del Pezzo surface $X$ of degree $2$ and contains a normal subgroup $N \cong \CG_2$ generated by an element of type $1$, and $\rho(X)^G = 1$. Let $\mathcal{L}$ be a linear subsystem of $|-K_{\XX}|$ that consists of all curves passing through the two isolated fixed points of $N$.

Assume that the pair of isolated fixed points of $N$ is permuted by the group $G \times \Gal \left( \kka / \ka \right)$, and for each element $g \in G$, $g \notin N$, fixed points of $g$, lying on a $g$-invariant member $R$ of~$\mathcal{L}$, are contained in one $G \times \Gal \left( \kka / \ka \right)$-orbit. Then the quotient $X / G$ is birationally equivalent to a surface~$Y$, admitting a structure of a minimal conic bundle~$Y \rightarrow B$ such that $K_Y^2 = 4$. In all other cases the quotient $X / G$ is birationally equivalent to a surface~$Y$ such that $K_Y^2 \geqslant 5$. In particular, if $Y(\ka) \neq \varnothing$ then $X / G$ is $\ka$-rational.
\end{corollary}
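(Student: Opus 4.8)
The plan is to reduce to the analysis of quotients of Iskovskikh surfaces carried out in Section~$3$. Applying Lemma~\ref{DP2type1} to the normal subgroup $N \cong \CG_2$, the quotient $X / N$ is $G/N$-birationally equivalent to an Iskovskikh surface $Z$, so $X / G \approx Z / H$ with $H = G / N$. Among the $2$-groups listed in Proposition~\ref{DP2}, those that contain a normal subgroup of order~$2$ generated by an element of type~$1$ are the groups in items~$(2)$, $(5)$, $(6)$, $(9)$, $(10)$, $(11)$; going through them one checks that $N$ is central in $G$, and that $H$ is trivial in item~$(2)$, isomorphic to $\CG_2$ in items~$(5)$ and~$(6)$, and isomorphic to $\CG_2^2$ in items~$(9)$, $(10)$, $(11)$. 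In particular $H$ is a $2$-group that is trivial, $\CG_2$, or $\CG_2^2$.

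By Remark~\ref{DP2type1min}, since $\rho(X)^G = 1$, the Iskovskikh surface $Z$ is $H$-minimal if and only if the two isolated fixed points of $N$ are permuted by $G \times \Gal\left( \kka / \ka \right)$; otherwise $\rho(Z)^{H} > 2$. In the latter case $Z$ is not $H$-minimal, so by Theorem~\ref{GMMP} one may $H$-equivariantly contract disjoint $(-1)$-curves on $Z$ and obtain a smooth $H$-surface $Z'$ with $K_{Z'}^2 \geqslant 5$; then $X / G \approx Z' / H$ is birationally equivalent to a surface $Y$ with $K_Y^2 \geqslant 5$ by Remark~\ref{geq5touse}. Hence from now on I would assume $Z$ is $H$-minimal, which is exactly the setting of Section~$3$.

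If $H$ is trivial, then $X / G \approx Z$ is a minimal Iskovskikh surface, hence a minimal conic bundle with $K_Z^2 = 4$, and the second hypothesis of the corollary is vacuous. If $H \cong \CG_2$ or $H \cong \CG_2^2$, I would argue as in the proof of Corollary~\ref{IskGfixed}: if the subgroup of $H$ acting trivially on all $(-1)$-curves of $Z$, or the subgroup of $H$ acting trivially on the base $B$, is nontrivial, then $X / G$ is birationally equivalent to a surface $Y$ with $K_Y^2 \geqslant 5$ by Lemmas~\ref{IskG0}, \ref{IskGB} and Remark~\ref{geq5touse}; otherwise Lemma~\ref{IskGFeven} applies and shows that $Z / H$ is birationally equivalent to a minimal conic bundle $Y \to B$ with $K_Y^2 = 4$ precisely when every nontrivial element $\bar g \in H$ has only isolated fixed points on $\overline{Z}$ and each pair of such fixed points lying in one fibre of $\overline{Z} \to \overline{B}$ forms a single $H \times \Gal\left( \kka / \ka \right)$-orbit, and that $K_Y^2 \geqslant 5$ in all remaining cases.

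The remaining step, which I expect to be the main technical point, is the dictionary between the fixed-point data of an element $g \in G \setminus N$ acting on $X$ and the fixed-point data of its image $\bar g \in H$ acting on $Z$. By the construction in the proof of Lemma~\ref{DP2type1}, the members of $\mathcal{L}$ are precisely the $N$-invariant anticanonical curves whose images on $X / N$ become, after the minimal resolution of singularities, the fibres of the conic bundle $Z \to B$, while the two $(-2)$-sections arise from the two isolated fixed points of $N$. Hence a $g$-invariant member $R \in \mathcal{L}$ corresponds to a $\bar g$-invariant fibre of $\overline{Z} \to \overline{B}$, the isolated fixed points of $g$ on $R$ correspond bijectively to the fixed points of $\bar g$ on that fibre, orbits of the group $G \times \Gal\left( \kka / \ka \right)$ are carried to orbits of $H \times \Gal\left( \kka / \ka \right)$, and in the regime treated by Lemma~\ref{IskGFeven} the element $\bar g$ has a curve of fixed points on $Z$ exactly when $g$ fixes some member of $\mathcal{L}$ pointwise. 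Substituting this into the criterion above, the first hypothesis of the corollary becomes the $H$-minimality of $Z$ and the second becomes the condition of Lemma~\ref{IskGFeven}, so $X / G$ is birationally equivalent to a minimal conic bundle $Y \to B$ with $K_Y^2 = 4$ when both hypotheses hold, and to a surface $Y$ with $K_Y^2 \geqslant 5$ otherwise. Finally, a minimal conic bundle with $K_Y^2 = 4$ is not $\ka$-rational by Theorem~\ref{ratcrit}, while if $K_Y^2 \geqslant 5$ and $Y(\ka) \neq \varnothing$ then $Y$, and hence $X / G$, is $\ka$-rational by Corollary~\ref{geq5}; this proves the last assertion.
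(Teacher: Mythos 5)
Your proposal is correct and follows essentially the same route as the paper: reduce via Lemma \ref{DP2type1} and Remark \ref{DP2type1min} to a quotient of a $G/N$-minimal Iskovskikh surface, identify members of $\mathcal{L}$ with fibres of the conic bundle, and conclude with Lemma \ref{IskGFeven} (plus Remark \ref{geq5touse} and Theorem \ref{ratcrit} for the remaining cases). The only difference is that you spell out details the paper leaves implicit — the enumeration of the admissible groups, the $G_0$/$G_B$ check, and the fixed-point dictionary — at a level of rigor at least matching the original.
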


\begin{proof}
If the two isolated fixed points of $N$ are not permuted by the group $G \times \Gal \left( \kka / \ka \right)$ then by Remarks \ref{DP2type1min} and \ref{geq5touse} the quotient $X / G$ is birationally equivalent to a surface~$Y$ such that~$K_Y^2 \geqslant 5$.

Otherwise by Remark \ref{DP2type1min} the quotient $X / N$ is $G / N$-birationally equivalent to a $G / N$-minimal Iskovskikh surface $Z$, and members of $\mathcal{L}$ correspond to fibres of conic \mbox{bundle $Z \rightarrow \Pro^1_{\ka}$}. The group $G / N$ is either trivial, or isomorphic to $\CG_2$ or $\CG_2^2$, and its nontrivial elements have only isolated fixed points on $Z$. Thus for the quotient $Z / (G / N) \approx X / G$ the assertion immediately follows from Lemma \ref{IskGFeven}.

\end{proof}

\subsection{Groups of order divisible by $3$}

Let $X$ be a del Pezzo surface of degree $2$, and $G \subset \Aut_{\ka}(X)$ be a group of order $2^k \cdot 3^n$. In this subsection we study, for which groups the quotient $X / G$ can be non-$\ka$-rational. We assume that $G$ does not contain the Geiser involution.

\begin{lemma}
\label{DP2type3}
Let a finite group $G$ act on a del Pezzo surface $X$ of degree $2$ and $N \cong \CG_3$ be a normal subgroup in $G$ generated by an element of type $3$. Then the quotient $X / G$ is birationally equivalent to a surface $Y$ such that $K_Y^2 \geqslant 5$.
\end{lemma}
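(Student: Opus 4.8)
Following the pattern of Lemmas \ref{DP2type1} and \ref{DP2type2}, the plan is to understand the quotient $X / N$ explicitly by using the normal form of a type $3$ element from Table \ref{table2} and applying the Hurwitz formula together with the numerical data of Table \ref{table1}. By Theorem \ref{DP2cyclicclass} we may choose coordinates so that $\XX$ is given by $L_4(x,y) + L_1(x,y) z^3 + t^2 = 0$ and the generator of $N$ acts by $(x : y : \omega z : t)$. First I would locate the fixed locus of $N$ on $\XX$: the fixed points of the action $(x : y : \omega z : t)$ on $\Pro_{\kka}(1:1:1:2)$ lie either in the plane $z = 0$ or at the point $(0:0:1:0)$ (together with the $t$-direction), so on $\XX$ the fixed locus consists of the curve $C$ cut out by $z = 0$ (which has class $-K_X$), possibly together with a bounded number of isolated fixed points coming from $z \ne 0$. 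One then reads off from Table \ref{table1}, row $m = 3$, $q = 1$, that each such isolated fixed point gives an $A_2$-free quotient singularity of type $\frac{1}{3}(1,1)$ with a single $(-3)$-curve in its resolution, and that the curve $C$ acquires self-intersection drop $-\tfrac13$ at each of them.

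Next I would compute $K_{X/N}^2$. Since $\gcd(3,2) = 1$, the only ramification of $f : X \to X/N$ in codimension one is along $C$, and the Hurwitz formula gives $K_X = f^*K_{X/N} + 2C \sim f^*K_{X/N} - 2K_X$, hence $f^*K_{X/N} \sim 3K_X$ and $K_{X/N}^2 = \tfrac13 (3K_X)^2 = 3 K_X^2 = 6$. Passing to the minimal resolution $\pi : \widetilde{X/N} \to X/N$ and using Table \ref{table1}, $K_{\widetilde{X/N}}^2 = K_{X/N}^2 - \tfrac13 \cdot (\text{number of } A_2\text{-type points})$; in particular $\widetilde{X/N}$ is a smooth rational surface with $K_{\widetilde{X/N}}^2 \geqslant 5$ (indeed $6$ minus a multiple of $\tfrac13$ is not an integer unless the count of singular points is a multiple of $3$, and a short check of the equation shows the number of isolated fixed points from $z \ne 0$ is at most $3$, giving $K^2 \geqslant 5$). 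Then by Remark \ref{geq5touse} the quotient $X/G \approx (X/N)/(G/N)$ is birationally equivalent to a surface $Y$ with $K_Y^2 \geqslant 5$, as the quotient of a smooth rational surface with $K^2 \geqslant 5$ by the finite group $G/N$.

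The main obstacle I anticipate is the precise bookkeeping of the isolated fixed points and the Du Val type of $X/N$: one must verify from the equation $L_4(x,y) + L_1(x,y) z^3 + t^2 = 0$ exactly how many fixed points of $N$ lie off the curve $z = 0$ (these come from solutions with $x = y = 0$, i.e. the points $(0:0:1:t)$ with $t^2 = -L_1(0,0) = 0$ or from degenerations, so generically the single point $(0:0:1:0)$), and to confirm the self-intersection arithmetic makes $K_{\widetilde{X/N}}^2$ land in the range $\geqslant 5$ and keeps $\widetilde{X/N}$ (weak) del Pezzo, so that the hypotheses of Remark \ref{geq5touse} genuinely apply. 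A delicate point is that $\widetilde{X/N}$ need not be a del Pezzo surface — it may carry $(-2)$-curves from the $A_2$ chains — but for the purposes of Remark \ref{geq5touse} we only need it to be a smooth rational surface with $K^2 \geqslant 5$, which follows once the numerical computation is done. If a cleaner route is preferred, one can instead observe that the $N$-invariant pencil $\lambda x = \mu y$ on $X$ consists of elliptic curves, its image on $X/N$ is a pencil of rational curves, and resolving gives a conic bundle on $\widetilde{X/N}$ with few singular fibres, from which $K^2 \geqslant 5$ can be read off directly; this parallels the conic-bundle argument in the proof of Lemma \ref{DP2type1}.
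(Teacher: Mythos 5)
Your overall strategy is exactly the paper's: determine the fixed locus of $N$, apply the Hurwitz formula to get $K_{X/N}^2=6$, pass to the minimal resolution, and conclude via Remark \ref{geq5touse}. However, there is a concrete error in your singularity analysis. The fixed locus is easy to pin down precisely: on the ambient $\Pro_{\kka}(1:1:1:2)$ the automorphism $(x:y:\omega z:t)$ fixes the locus $z=0$ pointwise and the single point $(0:0:1:0)$, so on $\XX$ the fixed locus is the curve $z=0$ (of class $-K_X$) together with exactly one isolated fixed point; there is no need to hedge about ``at most $3$'' points. At that point, working in the chart $z=1$ with coordinates $u=x/z$, $v=y/z$, $w=t/z^2$, the action is $\operatorname{diag}(\omega^2,\omega^2,\omega)$ and the tangent plane of the surface is $\{L_1(u,v)=0\}$, so the induced action on the tangent space of $X$ has eigenvalues $\omega^2$ and $\omega$, i.e.\ it is of type $\frac13(1,2)$, not $\frac13(1,1)$ as you claim. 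Hence the quotient acquires a Du Val singularity of type $A_2$ (row $m=n$, $q=n-1$ of Table \ref{table1} with $n=3$), the resolution consists of two $(-2)$-curves, and $K_{\widetilde{X/N}}^2=K_{X/N}^2=6$ with no drop at all. Your own bookkeeping already signals the mistake: with exactly one isolated fixed point, a contribution of $-\tfrac13$ would give $K_{\widetilde{X/N}}^2=6-\tfrac13$, which is not an integer, an impossibility for a smooth surface.

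That said, the error is not fatal to the argument: even with your (incorrect) accounting you only needed $K_{\widetilde{X/N}}^2\geqslant 5$, which holds, and the final step --- the minimal resolution is $G/N$-equivariant, so $X/G\approx\bigl(\widetilde{X/N}\bigr)/(G/N)$ is birationally equivalent to a surface $Y$ with $K_Y^2\geqslant 5$ by Remark \ref{geq5touse} --- is exactly as in the paper. To make the proof correct, replace the $\frac13(1,1)$ claim by the tangent-weight computation above, record that $X/N$ is a singular del Pezzo surface with one $A_2$-point, and conclude $K_{\widetilde{X/N}}^2=6$. The alternative conic-bundle route you sketch at the end is unnecessary once this is done.
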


\begin{proof}

The set of $N$-fixed points consists of a fixed curve $z = 0$ and a fixed \mbox{point $(0 : 0 : 1 : 0)$}. By the Hurwitz formula
$$
K_{X / N}^2 = \frac{1}{3}(3K_X)^2 = 6,
$$
\noindent and the surface $X / N$ is a singular del Pezzo surface with one $A_2$-singularity. Let~\mbox{$\widetilde{X / N} \rightarrow X / N$} be the minimal resolution of singularities. Then $K_{\widetilde{X / N}}^2 = 6$. By Remark~\ref{geq5touse} the quotient $X / G \approx \left( \widetilde{X / N} \right) / (G / N)$ is birationally equivalent to a surface~$Y$ such that $K_Y^2 \geqslant 5$.

\end{proof}

\begin{lemma}
\label{DP2type4}
Let a finite group $G$ act on a del Pezzo surface $X$ of degree $2$ and $N \cong \CG_3$ be a normal subgroup in $G$ generated by an element of type $4$. Then the quotient $X / N$ is $G / N$-birationally equivalent to a surface $Y$ such that $K_Y^2 = 4$.
\end{lemma}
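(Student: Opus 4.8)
The plan is to make the $N$-action explicit, read off the fixed-point data of the generator, identify the singularities of $X/N$, and then run an explicit $G/N$-equivariant sequence of blow-downs landing on a surface of degree~$4$. By Theorem~\ref{DP2cyclicclass} we may choose coordinates so that $\XX$ is $(x^3+y^3)z+Ax^2y^2+2Bxyz^2+z^4+t^2=0$ with $A\neq 0$ (otherwise the branch quartic of $\XX\to\Pro^2_{\kka}$ would be reducible) and $g:(x:y:z:t)\mapsto(\omega x:\omega^2 y:z:t)$. One checks directly that $g$ has exactly four fixed points on $\XX$: $P_1=(1:0:0:0)$, $P_2=(0:1:0:0)$, $P_3=(0:0:1:\ii)$, $P_4=(0:0:1:-\ii)$, and no curve of fixed points. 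Using Lemma~\ref{blowup} (or a short local computation, noting that $\XX\to\Pro^2_{\kka}$ is ramified over a smooth quartic tangent to the line $z=0$ at the images of $P_1,P_2$) one finds that $g$ acts on the tangent spaces by $\operatorname{diag}(\omega,\omega)$ at $P_1$, by $\operatorname{diag}(\omega^2,\omega^2)$ at $P_2$, and by $\operatorname{diag}(\omega,\omega^2)$ at $P_3,P_4$. Hence $X/N$ has two singularities of type $\tfrac13(1,1)$ and two of type $A_2$. Since $f:X\to X/N$ is étale in codimension~$1$, the Hurwitz formula gives $K_{X/N}^2=\tfrac13 K_X^2=\tfrac23$, and Table~\ref{table1} then gives $K_{\widetilde{X/N}}^2=\tfrac23-\tfrac13-\tfrac13=0$ for the minimal resolution $\pi:\widetilde{X/N}\to X/N$.

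Next I would exploit the section $z=0$. Restricting the equation of $\XX$ to $z=0$ gives $Ax^2y^2+t^2=0$, so this section splits as $M+M'$ with $M=\{z=0,\,t=\sqrt{-A}\,xy\}$ and $M'=\{z=0,\,t=-\sqrt{-A}\,xy\}$. Each of $M,M'$ is a $(-1)$-curve (from $M+M'\sim -K_X$ and $M\cdot M'=2$ one gets $M^2=M'^2=-1$), both are $g$-invariant since $g^*(xy)=xy$, and they meet transversally exactly at $P_1$ and $P_2$. Moreover $M$ and $M'$ are the only $(-1)$-curves through both $P_1$ and $P_2$: any further such curve $L$ would satisfy $L\cdot M\geqslant 1$ and $L\cdot M'\geqslant 1$, hence $L\cdot(-K_X)=L\cdot(M+M')\geqslant 2$, a contradiction. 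Since $\{P_1,P_2\}$ is precisely the set of fixed points of $g$ at which $g$ acts by a scalar matrix, it is invariant under $G\times\Gal(\kka/\ka)$ (as $N$ is normal in $G$ and $g$ is defined over $\ka$), and therefore so is the pair $\{M,M'\}$.

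Finally I would descend and contract. From $f^*f(M)=M$ and $\deg f=3$ one gets $f(M)^2=-\tfrac13$, and since $f(M)$ passes through the two $\tfrac13(1,1)$-points and through neither $A_2$-point, Table~\ref{table1} gives $\widetilde M^2=-\tfrac13-\tfrac13-\tfrac13=-1$ for the proper transform $\widetilde M=\pi^{-1}_*f(M)$; likewise $\widetilde M'^2=-1$, while $\widetilde M\cdot\widetilde M'=0$ because $M$ and $M'$ have distinct tangents at $P_1$ and at $P_2$ and so their proper transforms are separated on $\widetilde{X/N}$. Let $F_1,F_2$ be the $(-3)$-curves of $\pi$ over the two $\tfrac13(1,1)$-points; then $F_1\cdot F_2=0$ and $F_i\cdot\widetilde M=F_i\cdot\widetilde M'=1$. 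The pair $\{\widetilde M,\widetilde M'\}$ is $G/N$-invariant and defined over $\ka$, so by Theorem~\ref{GMMP} we may contract it; this raises $K^2$ by $2$, and the images of $F_1,F_2$ acquire self-intersection $-3+1+1=-1$ while remaining disjoint. Contracting the resulting $G/N$-invariant pair of $(-1)$-curves raises $K^2$ by $2$ more, producing a smooth surface $Y$ with $K_Y^2=0+2+2=4$ that is $G/N$-birationally equivalent to $X/N$, which is the assertion.

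I expect the main difficulty to be conceptual rather than computational: because the $\tfrac13(1,1)$-singularities of $X/N$ are not Du Val, the surface $\widetilde{X/N}$ is \emph{not} a weak del Pezzo surface and in fact $K^2_{\widetilde{X/N}}=0$, so Proposition~\ref{DPconnection} is unavailable; what rescues the argument is the (mildly surprising) coincidence that the exceptional $(-3)$-curves over these two non--Du Val points turn into $(-1)$-curves once the two curves coming from $M$ and $M'$ are blown down. The only other point requiring care is the $G\times\Gal(\kka/\ka)$-invariance of $\{P_1,P_2\}$, and hence of $\{M,M'\}$, which is exactly what guarantees that every contraction above can be carried out $G/N$-equivariantly over $\ka$.
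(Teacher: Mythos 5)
Most of your setup is correct and matches the paper: the four fixed points, the tangent-space eigenvalues, the two $\tfrac13(1,1)$-points and two $A_2$-points on $X/N$, the value $K^2_{\widetilde{X/N}}=0$, and the fact that the proper transforms $\widetilde M,\widetilde M'$ of the images of the two components of $z=0$ are disjoint $(-1)$-curves whose pair is $G/N\times\Gal(\kka/\ka)$-invariant. The error is in the very last contraction. Each of the $(-3)$-curves $F_1,F_2$ meets \emph{both} $\widetilde M$ and $\widetilde M'$ (since $M$ and $M'$ each pass through both $P_1$ and $P_2$). Hence, after contracting $\widetilde M$ and $\widetilde M'$ to points $p$ and $p'$, the images $\bar F_1,\bar F_2$ both pass through $p$ and through $p'$, and the standard formula gives
$$
\bar F_1\cdot\bar F_2 \;=\; F_1\cdot F_2+(F_1\cdot\widetilde M)(F_2\cdot\widetilde M)+(F_1\cdot\widetilde M')(F_2\cdot\widetilde M')\;=\;0+1+1\;=\;2,
$$
not $0$. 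So you have two $(-1)$-curves meeting with multiplicity $2$ (in fact $\bar F_1+\bar F_2\sim -K$ on the intermediate surface of degree $2$): they cannot be contracted simultaneously, and contracting one of them turns the other into a curve of self-intersection $+3$. Your chain therefore terminates at $K^2=2$ and does not prove the lemma.

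The paper avoids this by finding two \emph{more} disjoint $(-1)$-curves on $\widetilde{X/N}$ that you have overlooked: the $N$-invariant hyperplane sections $C_1=\{y=0\}$ and $C_2=\{x=0\}$. Each $C_i$ is an elliptic curve with $C_i^2=2$ passing through $q_1,q_2$ and $p_i$; its quotient is rational and, after the Table~\ref{table1} corrections at one $\tfrac13(1,1)$-point and two $A_2$-points, one gets $\bigl(\pi^{-1}_*f(C_i)\bigr)^2=\tfrac23-\tfrac13-\tfrac23-\tfrac23=-1$. A short intersection computation shows that $\pi^{-1}_*f(C_1)$, $\pi^{-1}_*f(C_2)$, $\widetilde M$, $\widetilde M'$ are pairwise disjoint, and this set of four $(-1)$-curves is $G/N\times\Gal(\kka/\ka)$-invariant; contracting all four at once raises $K^2$ from $0$ to $4$. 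If you want to salvage your two-stage argument, you must insert $\pi^{-1}_*f(C_1)$ and $\pi^{-1}_*f(C_2)$ into the first contraction (or contract them afterwards) rather than the curves $F_1,F_2$.
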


\begin{proof}

The group $N$ has four isolated fixed points $p_1 = (1: 0 : 0 : 0)$, $p_2 = (0: 1 : 0 :0)$ and $q_{1, 2} = (0 : 0 : 1 : \pm \ii)$. On the tangent spaces of $X$ at the points $p_i$ the group $N$ acts as $\langle\operatorname{diag}(\omega, \omega)\rangle$ and on the tangent spaces of $X$ at the points $q_i$ the group $N$ acts as~$\langle\operatorname{diag}(\omega, \omega^2)\rangle$.

Let $C_1$ and $C_2$ be curves given by $y = 0$ and $x = 0$ respectively. These curves are $N$-invariant. The curve $C_i$ passes through the points $q_1$, $q_2$ and $p_i$. The section $z = 0$ consists of two irreducible components $D_1$ and $D_2$ being $(-1)$-curves each passing through $p_1$ and $p_2$.

The surface $X / N$ has two $A_2$-singularities and two $\frac{1}{3}(1,1)$-singularities. Let \mbox{$f: X \rightarrow X / N$} be the quotient morphism and
$$
\pi: \widetilde{X / N} \rightarrow X / N
$$
\noindent be the minimal resolution of singularities. One can easily check that the proper transforms $\pi^{-1}_*f(C_i)$ and $\pi^{-1}_*f(D_i)$ are four disjoint $(-1)$-curves (see Table \ref{table1}). Let $h: \widetilde{X / N} \rightarrow Y$ be the \mbox{$G / N$-equivariant} contraction of these curves. Then
$$
K_Y^2 = K_{\widetilde{X / N}}^2 + 4 = K_{X/ N}^2 - \frac{2}{3} + 4 = \frac{1}{3}K_X^2 + \frac{10}{3} = 4.
$$

\end{proof}

\begin{remark}
\label{DP2type4min}

Assume that $\rho\left( X \right)^G = 1$. In this case the $(-1)$-curves $D_1$ and $D_2$ are permuted by the group $G \times \Gal \left( \kka / \ka \right)$, since otherwise one can contract one of these curves. We want to find conditions, when the surface $Y$ is $G / N$-minimal.

The points $p_1$ and $p_2$ lie on the ramification divisor of the map $X \rightarrow \Pro^2_{\ka}$, and the points $q_1$ and $q_2$ do not. Therefore there are five possibilities of the action of the group $G \times \Gal \left( \kka / \ka \right)$ on the set of $N$-fixed points:

\begin{enumerate}
\item $G \times \Gal \left( \kka / \ka \right)$ fixes $p_1$, $p_2$, $q_1$ and $q_2$;
\item $G \times \Gal \left( \kka / \ka \right)$ fixes $p_1$ and $p_2$, and permutes $q_1$ and $q_2$;
\item $G \times \Gal \left( \kka / \ka \right)$ permutes $p_1$ and $p_2$, and fixes $q_1$ and $q_2$;
\item the points $p_1$ and $p_2$, and the points $q_1$ and $q_2$ are permuted by the same elements of $G \times \Gal \left( \kka / \ka \right)$;
\item the points $p_1$ and $p_2$, and the points $q_1$ and $q_2$ are permuted by different elements of $G \times \Gal \left( \kka / \ka \right)$.
\end{enumerate}

Note that a curve $h\pi^{-1}\left(f(q_i)\right)$ is reducible and consists of two \mbox{$(-1)$-curves} $R_{i1}$ and $R_{i2}$ meeting each other at a point. One has
$$
R_{11} \cdot R_{21} = R_{12} \cdot R_{22} = 1, \qquad R_{11} \cdot R_{22} = R_{12} \cdot R_{21} = 0.
$$

In cases $(1)$ and $(4)$ one can $G / N$-equivariantly contract two $(-1)$-curves $R_{11}$ and~$R_{22}$, and get a surface $Z$ such that $K_Z^2 = 6$. In both cases $(2)$ and $(3)$ one has $\rho(Y)^{G / N} = 2$, and the complete linear systems $|R_{11} + R_{21}|$ and $|R_{11} + R_{12}|$ respectively give a structure of a $G / N$-equivariant conic bundle on $Y$. Thus in these cases $Y$ is \mbox{$G /N$-minimal} by Theorem~\ref{MinCB}. In case $(5)$ one has $\rho(Y)^{G / N} = 1$, and $Y$ is a $G / N$-minimal del Pezzo surface.

\end{remark}

\begin{remark}
\label{DP2type4points}
Note that in cases $(1)$ and $(2)$ the set $Y(\ka)$ is dense, since there is a \mbox{$\ka$-point~$h\pi^{-1}_*f(C_1)$} on $Y$, and by \cite[Theorem IV.7.8]{Man67} the surface $Y$ is $\ka$-rational or $\ka$-unirational of degree $2$ (i.e. birationally equivalent to a quotient of a $\ka$-rational surface by an involution).
\end{remark}

\begin{lemma}
\label{DP2odd1}
Let a finite group $G$ of order $2^k \cdot 3^n$, $n \geqslant 1$, act on a del Pezzo surface $X$ of degree $2$ and $X$ be of type $III$, $VI$, $VIII$ or $XI$ (see Theorem \ref{DP2groupclass}). Then the quotient~$X / G$ is birationally equivalent to a surface $Y$ such that $K_Y^2 \geqslant 5$.
\end{lemma}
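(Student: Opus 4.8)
The plan is to go through the four listed types of Theorem~\ref{DP2groupclass} and, for each, to exhibit a normal subgroup of $G$ through which the quotient can be reduced. Throughout we may assume $\gamma\notin G$, since otherwise $X/G\approx\Pro^2_{\ka}/(G/\langle\gamma\rangle)$ is birationally equivalent to a surface $Y$ with $K_Y^2\geqslant 5$ by Remark~\ref{geq5touse}. Writing $\Aut(\XX)=\CG_2\times A$ with $\langle\gamma\rangle$ the first factor, the projection onto $A$ then embeds $G$ into $A$.

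For types $VI$, $VIII$ and $XI$ the group $A$ is cyclic (it equals $\CG_9$, $\CG_6$ and $\CG_3$ respectively), hence $G$ is cyclic, and since $3\mid|G|$ it contains its unique, and therefore normal, subgroup $N\cong\CG_3$. Reading off the equations of Table~\ref{table3}, a generator of $N$ can be taken to act by $z\mapsto\omega^{\pm1}z$ (for type~$VI$ it is the cube of the order-$9$ automorphism $(x:\omega y:\xi_9^2z:t)$); it fixes pointwise the anticanonical elliptic curve $\{z=0\}$ and has only isolated fixed points away from it, so $N$ is generated by an element of type~$3$. Lemma~\ref{DP2type3} then yields $X/G\approx Y$ with $K_Y^2\geqslant 5$.

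The remaining, and principal, case is type~$III$, where $A=\widetilde{\AG}_4$ is a nonabelian central extension of $\AG_4$ of order $48$. One first checks that every order-$3$ element of $\widetilde{\AG}_4$ is of type~$3$ (if it were of type~$4$, then applying Lemma~\ref{DP2type4} and Remark~\ref{DP2type4min} to $G=N\cong\CG_3$ over a suitable field would produce a quotient birational to a minimal conic bundle or a minimal del Pezzo surface of degree~$4$, hence non-$\ka$-rational, contradicting the assertion). If the image of $G$ in $\AG_4$ is cyclic, then $G$ is a central extension of $\CG_3$ by a $2$-group, so it is nilpotent and its Sylow $3$-subgroup $N\cong\CG_3$ is normal and of type~$3$; Lemma~\ref{DP2type3} concludes. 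Otherwise the image of $G$ in $\AG_4$ is all of $\AG_4$, and since $\AG_4$ does not embed into $\widetilde{\AG}_4$ we have $|G|\in\{24,48\}$. Let $M\triangleleft\widetilde{\AG}_4$ be the Sylow $2$-subgroup of order $16$, namely the preimage of $\VG_4\triangleleft\AG_4$, and put $H=G\cap M$; then $H\triangleleft G$ is a $2$-group and $G/H\cong\CG_3$. If $H$ is not among the groups of Proposition~\ref{DP2} — which is automatic when $|G|=48$, so that $H=M$ is of order $16$ — then $X/H$ is birational to a surface with $K^2\geqslant 5$ by Lemma~\ref{DP2even}, and Remark~\ref{geq5touse} applied to $X/G=(X/H)/(G/H)$ finishes the case. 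The only remaining possibility is that $G$ is the nonsplit central extension of $\AG_4$ by $\CG_2$, that is $G\cong\mathrm{SL}_2(\F_3)$ of order $24$, and $H\cong Q_8$; here the unique involution $\tau$ of $H$ generates a subgroup normal in $G$, and by Lemma~\ref{DP2type1} or Lemma~\ref{DP2type2} (according as $\tau$ has type~$1$ or type~$2$) the quotient $X/\langle\tau\rangle$ is $G/\langle\tau\rangle$-birationally equivalent to an Iskovskikh surface, respectively a del Pezzo surface of degree~$2$, carrying an action of $G/\langle\tau\rangle\cong\AG_4$. In the first case Lemmas~\ref{IskG0}, \ref{IskGB} and \ref{IskGFodd} give $X/G\approx Y$ with $K_Y^2\geqslant 5$, since the groups $G_0$ and $G_B$ of this $\AG_4$-action are trivial ($\AG_4$ having no nontrivial normal cyclic subgroup) and $\AG_4$ is neither $\CG_2$ nor $\CG_2^2$; in the second case one argues similarly for the resulting $\AG_4$-action on a del Pezzo surface of degree~$2$.

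I expect the routine part to consist of types $VI$, $VIII$, $XI$ and the nilpotent subcase of type~$III$, all of which collapse immediately to Lemma~\ref{DP2type3}. The main obstacle is the type~$III$ case in which $G$ surjects onto $\AG_4$: one must determine the types (in the sense of Table~\ref{table2}) of the small-order elements of $\widetilde{\AG}_4$ acting on such a surface, and then carry out the two-step reduction above — first dividing by a normal $2$-subgroup, then disposing of the residual $\AG_4$-action on an Iskovskikh surface or a del Pezzo surface of degree~$2$ by the lemmas of Sections~$3$ and~$4$ — in order to reach the range $K^2\geqslant 5$.
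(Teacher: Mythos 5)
Your treatment of types $VI$, $VIII$ and $XI$ matches the paper's: the Sylow $3$-subgroup is normal, cyclic of order $3$, and generated by an element of type $3$, so Lemma \ref{DP2type3} applies. The type $III$ case, however, contains two genuine gaps. The first is the pivotal claim that every order-$3$ element of $\widetilde{\AG}_4$ acting on a type $III$ surface is of type $3$: you justify it by saying that a type-$4$ element would yield a possibly non-$\ka$-rational quotient, ``contradicting the assertion'' --- but the assertion is exactly the lemma you are proving, so this is circular. The claim carries the whole weight of the reduction (a normal $\CG_3$ of type $4$ is precisely case $(4)$ of Proposition \ref{DP2} and does \emph{not} reach $K^2\geqslant 5$), and it must be verified directly. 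The paper does this by writing down the order-$3$ automorphism $\zeta$ explicitly and checking that its linear part on $(x,y)$ has eigenvalues $\omega,\omega^2$, so that after rescaling in $\Pro_{\kka}(1:1:1:2)$ the element $\zeta$ fixes a hyperplane section pointwise, i.e.\ is of type $3$.

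Second, in the residual case $G\cong\mathrm{SL}_2(\F_3)$ you dismiss the possibility that the central involution $\tau$ has type $2$ with ``one argues similarly for the resulting $\AG_4$-action on a del Pezzo surface of degree $2$.'' That is not an argument: the quotient of a degree-$2$ del Pezzo surface by $\AG_4$ is an instance of the very classification being established and is not covered by any lemma available at this point, so the step is either circular or simply missing. (In fact this sub-case cannot occur: since $\gamma\notin G$, every element of $G\subset\langle\gamma\rangle\times\widetilde{\AG}_4$ has its square in $\widetilde{\AG}_4$, so the unique involution of $Q_8\subset\mathrm{SL}_2(\F_3)$, being such a square, equals $\alpha^2\beta^2$ and is of type $1$ --- but this needs to be said.) For comparison, the paper never meets $\mathrm{SL}_2(\F_3)$ at all: it first disposes of any $G$ containing $\alpha^2\beta^2$ by passing to the Iskovskikh surface $X/\langle\alpha^2\beta^2\rangle$, on which $\zeta$ acts faithfully on the base so that Theorem \ref{Cbundle} applies, and then shows by a short computation inside $\langle\alpha\beta,\alpha\beta^3,\delta\rangle$ that no remaining $G$ can surject onto $\AG_4$, leaving only $\CG_3$ and $\CG_6$ with a normal type-$3$ subgroup. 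Your route through $\mathrm{SL}_2(\F_3)$ and the Section~3 lemmas for the $\AG_4$-action on an Iskovskikh surface is viable in its type-$1$ branch, but as written the proof is incomplete at the two points above.
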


\begin{proof}

Note that if $X$ has type $VI$, $VIII$ or $XI$ then $G$ contains an element of type $3$ that generates a normal subgroup $N$. Therefore $X / G$ is birationally equivalent to a surface $Y$ such that $K_Y^2 \geqslant 5$ by Lemma \ref{DP2type3}.

Assume that $X$ is a surface of type $III$. The group $\Aut(\XX)$ is generated by elements $\alpha \beta$, $\alpha \beta^3$, $\delta$, $\gamma$ (see Notation \ref{DP2notation}) and an element $\zeta$ of order $3$ such that $\zeta \alpha \beta^3\zeta^{-1} = \alpha^2\delta$ and $\zeta \alpha \beta \zeta^{-1} = \alpha \beta$. All elements of order $3$ are conjugate in $\Aut(\XX)$, therefore we can assume that $G$ contains the element $\zeta$.

One can check that the element $\zeta$ acts in the following way
$$
\zeta(x : y : z: t) = \left( \frac{-(1+\ii)x-(1+\ii)y}{2} : \frac{(1-\ii)x - (1 - \ii)y}{2} : \omega z : \omega^2 t \right).
$$
So the element $\zeta$ is of type $3$ in the notation of Table \ref{table2}.

If the group $G$ contains the element $\alpha^2\beta^2$ then the quotient $X / \langle \alpha^2\beta^2 \rangle$ is \mbox{$G / \langle \alpha^2\beta^2 \rangle$-birationally} equivalent to an Iskovskikh surface $S$ by Lemma \ref{DP2type1}. The element~$\zeta$ does not preserve hyperplane sections $\lambda x = \mu y$ thus $\zeta$ faithfully acts on the base of the conic bundle $S \rightarrow \Pro^1_{\ka}$. Therefore the quotient $X / G \approx S / \left( G / \langle \alpha^2\beta^2 \rangle\right)$ is birationally equivalent to a surface $Y$ such that $K_Y^2 \geqslant 5$ by Theorem \ref{Cbundle}.

Assume that the group $G$ does not contain $\alpha^2\beta^2$ and $\gamma$, since otherwise $X / G$ is birationally equivalent to a surface $Y$ such that $K_Y^2 \geqslant 5$. By Theorem \ref{DP2groupclass} one has~$\Aut(\XX) \cong \CG_2 \times \widetilde{\AG}_4$, where $\widetilde{\AG}_4$ is a central extension of $\AG_4$, and $|\widetilde{\AG}_4| = 48$. Consider the natural homomorphism $G \rightarrow \AG_4$. The image of $G$ under this homomorphism is~$\AG_4$ or~$\CG_3$.

In the first case this homomorphism has nontrivial kernel since in the group $\widetilde{\AG}_4$ there are no subgroups isomorphic to $\AG_4$. Note that the kernel of the homomorphism $\widetilde{\AG_4} \rightarrow \AG_4$ is generated by $\alpha\beta$ and $\gamma$. Thus the only possibility for a nontrivial kernel is $\langle \alpha^2\beta^2\gamma\rangle$. In this case the group $G \cap \langle \alpha \beta, \alpha \beta^3, \delta \rangle$ is isomorphic to $\CG_2^2$. But any subgroup of $\langle \alpha \beta, \alpha \beta^3, \delta \rangle$ isomorphic to $\CG_2^2$ contains the element $\alpha^2 \beta^2$. Thus this case is impossible.

If the image of $G$ in $\AG_4$ is $\CG_3$, then, as in the previous case, one can see that the kernel of the homomorphism $G \rightarrow \CG_3$ is either trivial or isomorphic to $\langle \alpha^2 \beta^2 \gamma \rangle$. Thus $G$ is $\langle \zeta \rangle \cong \CG_3$ or $\langle \zeta, \alpha^2\beta^2\gamma \rangle \cong \CG_6$, respectively. In these cases~$X / G$ is birationally equivalent to a surface~$Y$ such that $K_Y^2 \geqslant 5$ by Lemma \ref{DP2type3}.

\end{proof}

For the remaining cases we need the following lemma.

\begin{lemma}
\label{DP2S3}
Let a finite group $G \cong \SG_3$ generated by elements of type $1$ act on a del Pezzo surface $X$ of degree $2$. Then the quotient $X / G$ is birationally equivalent to a surface~$Y$ such that $K_Y^2 \geqslant 5$.
\end{lemma}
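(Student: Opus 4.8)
The plan is to reduce the claim to the Iskovskikh-surface situation already handled in Section~3. Since $G\cong\SG_3$ is generated by involutions of type~$1$, each such involution has a curve of fixed points (a hyperplane section in the class $-K_X$) together with two isolated fixed points. Let $N\cong\CG_2$ be the subgroup generated by one of these involutions of type~$1$; since $\SG_3$ has three involutions, all conjugate, I may assume $N$ is generated by a fixed one of them, but $N$ is of course not normal in $G$. The key point is to instead look at the action on a $G$-minimal model and exploit that $G$ cannot act on $X$ with $\rho(X)^G=1$ in a way that avoids a quotient of large degree; so the first step is the reduction: if $X$ is not $G$-minimal, then $X\approx S$ with $K_S^2\geqslant 5$ and we are done by Remark~\ref{geq5touse}, so we may assume $\rho(X)^G=1$.

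Next I would locate $\SG_3$ inside $\Aut(\XX)$ using Theorem~\ref{DP2groupclass}. A group $\SG_3$ generated by type~$1$ involutions must sit in one of the types whose automorphism group contains such an $\SG_3$; consulting the list, the relevant ambient cases are those of type $II$, $IV$ (equivalently $I$), $IX$, and possibly $X$ and $III$. In each case one writes $\SG_3=\langle \sigma, \tau\rangle$ with $\sigma^2=\tau^2=1$ of type~$1$ and $\sigma\tau$ of order~$3$, and one identifies the order-$3$ element $\sigma\tau$: the claim to check is that $\sigma\tau$ is of type~$3$ (acting with a pointwise-fixed curve $z=0$ and one isolated fixed point), not of type~$4$. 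The reason is that the product of two type~$1$ involutions, each fixing a hyperplane section, fixes the intersection of the two hyperplane sections, which is a set of points, plus — crucially — when the two fixed hyperplanes meet along a line in $\Pro^2$ that pulls back to a curve, the order-$3$ element inherits a fixed curve; tracking the eigenvalue data through Lemma~\ref{blowup} and Table~\ref{table2} should pin this down. Once $\sigma\tau$ has type~$3$, I invoke: the normal closure of $\langle\sigma\tau\rangle$ in $G$ is $\langle\sigma\tau\rangle$ itself (it is normal in $\SG_3$), so by Lemma~\ref{DP2type3} the quotient $X/\langle\sigma\tau\rangle$ is $G/\langle\sigma\tau\rangle$-birationally equivalent to a weak del Pezzo surface of degree~$6$, hence $X/G\approx\bigl(X/\langle\sigma\tau\rangle\bigr)/(G/\langle\sigma\tau\rangle)$ is birational to a surface $Y$ with $K_Y^2\geqslant 5$ by Remark~\ref{geq5touse}, and we are done.

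The alternative route, in case the order-$3$ element turns out to have type~$4$ in some configuration, is to take $N=\langle\sigma\rangle$ of type~$1$ and apply Lemma~\ref{DP2type1}: $X/N$ is birational to an Iskovskikh surface $Z$, and the quotient $\SG_3/\langle\text{core}\rangle$ — here the core is trivial, so the residual group $\CG_2\cong G/N$ acts, but more relevantly the full $\SG_3$ acts on $Z$ in the conic-bundle picture. Since the type~$3$/type~$1$ elements act nontrivially on the base $\Pro^1$ of the conic bundle $Z\to\Pro^1_\ka$ (they do not preserve the pencil $\lambda x=\mu y$), the group $\SG_3$ maps onto a nontrivial subgroup of $\mathrm{PGL}_2$ faithfully enough that Theorem~\ref{Cbundle} — or rather the conic-bundle analysis of Corollary~\ref{IskGfixed} combined with the observation that an order-$3$ element with a fixed curve on an Iskovskikh surface forces $K_Y^2\geqslant 5$ — applies. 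Concretely: by Lemma~\ref{IskG0} applied to the odd-order subgroup one passes to a minimal Iskovskikh surface, then an element of order $3$ acting there contradicts $G$-minimality or drops us into the degree-$6$ regime.

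The main obstacle I anticipate is the bookkeeping in the second step: confirming that every copy of $\SG_3$ generated by type~$1$ involutions really does contain an order-$3$ element of type~$3$ (and not merely type~$4$), since the dichotomy type~$3$ vs.\ type~$4$ is exactly what separates Lemma~\ref{DP2type3} (giving $K^2=6$, hence rationality) from Lemma~\ref{DP2type4} (giving only $K^2=4$ and potential non-rationality). This requires an explicit coordinate computation in each relevant ambient family from Table~\ref{table3}: writing down two commuting-up-to-the-relation type~$1$ involutions, forming their product, and diagonalizing it to read off whether it fixes a curve. I expect this to be a finite check across at most four or five surface types, each routine but requiring care with the weighted coordinates on $\Pro_\kka(1:1:1:2)$ and with the fact that $\SG_3$ might be realized with its $\CG_3$ acting on the $(x,y)$-plane versus mixing in the $z$-coordinate.
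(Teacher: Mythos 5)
Your argument has a genuine gap at its central step. You claim that the order-$3$ element $\sigma\tau$ of such an $\SG_3$ should turn out to be of type $3$ (with a pointwise fixed curve), and you rest the main route on Lemma~\ref{DP2type3}. In fact the opposite always holds: in $\SG_3$ the $3$-cycle is conjugate to its inverse, while a type $3$ element $(x:y:\omega z:t)$ acts on the coordinates with eigenvalue multiset $\{1,1,\omega\}$ and its square with $\{1,1,\omega^2\}$, and no overall rescaling identifies these multisets; hence a type $3$ element is never conjugate to its inverse inside $\Aut(\XX)$ and cannot generate the Sylow $3$-subgroup of an $\SG_3$. So the order-$3$ element is necessarily of type $4$, with only isolated fixed points, and Lemma~\ref{DP2type3} is unavailable. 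Your heuristic that the product of two involutions with fixed curves inherits a fixed curve is simply false here (the type IX surfaces of Table~\ref{table3} give explicit counterexamples).

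Your fallback route is also broken: for an involution $\sigma\in\SG_3$ the subgroup $\langle\sigma\rangle$ is not normal, so there is no residual group acting on $X/\langle\sigma\rangle$, and neither Lemma~\ref{DP2type1} (which requires $N$ to be normal in $G$) nor Corollary~\ref{IskGfixed} (which concerns $2$-groups acting on an Iskovskikh surface) can be invoked; the phrase ``the full $\SG_3$ acts on $Z$'' has no meaning. The reduction that works goes through the normal subgroup: the Sylow $3$-subgroup $N\cong\CG_3$ is normal in $\SG_3$ and, as explained above, is generated by a type $4$ element, so by Lemma~\ref{DP2type4} the quotient $X/N$ is $G/N$-birationally equivalent to a surface $S$ with $K_S^2=4$. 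The type $1$ involutions have curves of fixed points on $X$, hence the residual involution in $G/N\cong\CG_2$ has a curve of fixed points on $S$, and the quotient of such a degree $4$ surface by such an involution is birationally equivalent to a surface $Y$ with $K_Y^2\geqslant 5$ by \cite[Lemmas 5.3 and 5.7]{Tr17}. Without this passage through the normal $\CG_3$ and the degree $4$ results, your proof does not close.
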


\begin{proof}

By Lemma~\ref{DP2type4} the quotient $X / \CG_3$ is $\CG_2$-birationally equivalent to a del Pezzo surface~$S$ of degree $4$. Any element of order $2$ in $\SG_3$ has a curve of fixed points on $X$, thus the element of order $2$ in $\SG_3 / \CG_3 \cong \CG_2$ has a curve of fixed points on $S$. Therefore the quotient $X / G \approx S / \CG_2$ is birationally equivalent to a surface $Y$ such that $K_Y^2 \geqslant 5$ by~\cite[Lemmas 5.3 and 5.7]{Tr17}.

\end{proof}

\begin{lemma}
\label{DP23groups}
If $\XX$ is a del Pezzo surface of degree $2$ of type $II$, $IV$ or $IX$ (see Theorem~\ref{DP2groupclass}) then the group $\Aut(\XX)$ is a subgroup of $\langle \alpha, \beta, \phi, \delta, \gamma \rangle$, where $\alpha$, $\beta$, $\delta$ and $\gamma$ are defined in Notation \ref{DP2notation} and
$$
\phi: (x : y : z : t) \mapsto (z : x : y: t).
$$ 
\end{lemma}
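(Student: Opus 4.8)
The plan is to treat the three types separately, in each case invoking the explicit description of $\Aut(\XX)$ from Theorem \ref{DP2groupclass} and checking that its generators can be taken inside $H := \langle \alpha, \beta, \phi, \delta, \gamma \rangle$; for type $IX$ this will require a linear change of the coordinates $x,y,z$ (the statement being understood, as in Lemma \ref{DP22groups}, up to such a choice of coordinates).

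First I would record the structure of $H$. The subgroup $\langle \alpha, \beta, \gamma \rangle$ consists of the diagonal transformations $\operatorname{diag}(\ii^a, \ii^b, 1, \pm 1)$ of $\Pro_{\kka}(1 : 1 : 1 : 2)$; one checks that $\langle \alpha, \beta \rangle \cong \CG_4^2$ has order $16$, that $\gamma \notin \langle \alpha, \beta \rangle$, and hence that $\langle \alpha, \beta, \gamma \rangle \cong \CG_4^2 \times \CG_2$ has order $32$. This subgroup is normalized by $\phi$ and $\delta$, which permute $x,y,z$ and fix $t$, and it meets $\langle \phi, \delta \rangle \cong \SG_3$ trivially, so $|H| = 32 \cdot 6 = 192$. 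All five generators of $H$ preserve the quartic $x^4 + y^4 + z^4 + t^2$, hence $H \subseteq \Aut(\XX)$ when $\XX$ has type $II$; since $|\Aut(\XX)| = |\CG_2 \times (\CG_4^2 \rtimes \SG_3)| = 192$ by Theorem \ref{DP2groupclass}, this forces $H = \Aut(\XX)$ and settles that case.

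For type $IV$ I would note that the polynomial $x^4 + y^4 + z^4 + A(x^2y^2 + x^2z^2 + y^2z^2) + t^2$ is invariant under $\gamma$, under the coordinate permutations generating $\langle \phi, \delta \rangle$, and under the sign changes $\alpha^2 = \operatorname{diag}(-1,1,1,1)$ and $\beta^2 = \operatorname{diag}(1,-1,1,1)$. The subgroup $\langle \alpha^2, \beta^2, \phi, \delta, \gamma \rangle$ of $H$ is isomorphic to $\CG_2 \times (\CG_2^2 \rtimes \SG_3) \cong \CG_2 \times \SG_4$ of order $48$ and lies in $\Aut(\XX)$, so comparison with $|\Aut(\XX)| = 48$ gives $\Aut(\XX) = \langle \alpha^2, \beta^2, \phi, \delta, \gamma \rangle \subseteq H$. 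For type $IX$ the group $\Aut(\XX) \cong \CG_2 \times \SG_3$ is generated by the Geiser involution $\gamma$, the involution $\delta$ (which preserves $(x^3+y^3)z + Ax^2y^2 + Bxyz^2 + z^4 + t^2$), and the element $g = \operatorname{diag}(\omega, \omega^{-1}, 1, 1)$ of order $3$ and type $4$. Since $g$ acts diagonally rather than by a permutation of coordinates, I would change coordinates: the group $\langle g, \delta \rangle \cong \SG_3$ acts on the space $\langle x, y, z \rangle$ of linear forms with $\langle z \rangle$ as a trivial subrepresentation and $\langle x, y \rangle$ as the standard two-dimensional irreducible one, so this representation is isomorphic to the permutation representation of $\SG_3$ on $\kka^3$. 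Choosing an $\SG_3$-equivariant isomorphism and using it as a change of $x,y,z$ while keeping $t$ fixed (so that $\gamma$ is unaffected) turns $\langle g, \delta \rangle$ into the coordinate permutation group $\langle \phi, \delta \rangle$; hence in the new coordinates $\Aut(\XX) = \langle \phi, \delta, \gamma \rangle \subseteq H$.

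The only nonroutine ingredient is the representation-theoretic observation in the type $IX$ case, together with the point that the coordinate change there can be chosen to fix $t$ and hence the Geiser involution; the other two cases reduce to matching Theorem \ref{DP2groupclass} against explicit monomial transformations of $\Pro_{\kka}(1 : 1 : 1 : 2)$ and comparing group orders, where I anticipate no real difficulty.
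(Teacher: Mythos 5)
Your proof is correct and follows essentially the same route as the paper, whose proof simply lists the explicit generating sets $\langle \alpha, \beta, \phi, \delta, \gamma \rangle$, $\langle \alpha^2, \beta^2, \phi, \delta, \gamma \rangle$ and $\langle \phi, \delta, \gamma \rangle$ for types $II$, $IV$ and $IX$ as an easy check. Your order counts and the explicit $\SG_3$-equivariant change of coordinates in case $IX$ (needed because in the coordinates of Table \ref{table3} the order-$3$ automorphism is diagonal rather than a permutation of $x$, $y$, $z$) supply details that the paper leaves implicit.
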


\begin{proof}
One can easily check that $\Aut(\XX)$ is:
\begin{itemize}
\item $\langle \alpha, \beta, \phi, \delta, \gamma \rangle$ for the case $II$;
\item $\langle \alpha^2, \beta^2, \phi, \delta, \gamma \rangle$ for the case $IV$;
\item $\langle \phi, \delta, \gamma \rangle$ for the case $IX$.
\end{itemize}
\end{proof}

\begin{lemma}
\label{DP2odd2}
Let a finite group $G$ of order $2^k \cdot 3$ act on a del Pezzo surface $X$ of degree~$2$ and $X$ be of type $I$, $II$, $IV$ or $IX$. If the group $G$ is not listed in Proposition \ref{DP2} then the quotient $X / G$ is birationally equivalent to a surface~$Y$ such that $K_Y^2 \geqslant 5$.
\end{lemma}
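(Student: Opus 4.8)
The plan is to reduce, case by case, to the lemmas already established in this section. By Remark~\ref{DP2typeIspec} a surface of type $I$ can be written in the coordinates of a surface of type $IV$, so it is enough to treat types $II$, $IV$, $IX$; by Lemma~\ref{DP23groups} we may then assume that $G$ is a subgroup of $\langle\alpha,\beta,\phi,\delta,\gamma\rangle$. If $\gamma\in G$, then $X/G\approx\Pro^2_{\ka}/(G/\langle\gamma\rangle)$ is birationally equivalent to a surface $Y$ with $K_Y^2\geqslant 5$ by Remark~\ref{geq5touse}, so from now on we assume $\gamma\notin G$. Since $\alpha$ and $\beta$ have order $4$ while $\delta$ and $\gamma$ have order $2$, a $3$-Sylow subgroup of $\langle\alpha,\beta,\phi,\delta,\gamma\rangle$ is generated by a conjugate of the cyclic permutation $\phi$, whose fixed locus on $\XX$ is finite; hence by Table~\ref{table2} every element of order $3$ in $\Aut(\XX)$ is of type $4$.

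Let $P\cong\CG_3$ be a $3$-Sylow subgroup of $G$. Suppose first that $P$ is normal in $G$. A direct computation of the normalizer of an order-$3$ element in the groups of Theorem~\ref{DP2groupclass} shows that $N_{\Aut(\XX)}(P)\cong\CG_2\times\SG_3$ for each of the types $II$, $IV$, $IX$, the first factor being generated by $\gamma$. Since $\gamma\notin G$ and every subgroup of $\CG_2\times\SG_3$ of order divisible by $3$ and not containing $\gamma$ is isomorphic to $\CG_3$ or to $\SG_3$, we obtain $G\cong\CG_3$ or $G\cong\SG_3$. In the first case $G$ is the group of case~$(4)$ of Proposition~\ref{DP2}, so there is nothing to prove. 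In the second case one writes the three involutions of $G$ in the chosen coordinates; being conjugate in $G$, they are of one and the same type, which is either $1$ --- and then $X/G$ is birationally equivalent to a surface with $K^2\geqslant 5$ by Lemma~\ref{DP2S3} --- or $2$, which is case~$(8)$ of Proposition~\ref{DP2}.

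Suppose now that $P$ is not normal in $G$. This can occur only for surfaces of type $II$ and $IV$, since for type $IX$ one has $|\Aut(\XX)|=12$ with a unique, and therefore normal, $3$-Sylow subgroup. Going through Theorem~\ref{DP2groupclass}, one checks that the only subgroups $G$ with $|G|=2^k\cdot 3$, $\gamma\notin G$, and $P$ non-normal are, up to composition with the Geiser twist, $\AG_4$, $\SG_4$, $\CG_4^2\rtimes\CG_3$ and $\CG_4^2\rtimes\SG_3$. Each of these groups contains a normal Klein subgroup $M\cong\VG_4$ whose three nontrivial elements are, in the chosen coordinates, diagonal sign changes $\operatorname{diag}(-1,-1,1)$ and thus of type $1$; therefore $X/G$ is birationally equivalent to a surface with $K^2\geqslant 5$ by Lemma~\ref{DP2V4}.

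The step I expect to be the main obstacle is the bookkeeping behind the last two paragraphs: enumerating the finitely many subgroups $G\subseteq\langle\alpha,\beta,\phi,\delta,\gamma\rangle$ with $|G|=2^k\cdot 3$ for each surface type, determining in the chosen coordinates which of the relevant involutions are of type $1$ and which of type $2$ (so as to decide which copies of $\SG_3$ fall under Lemma~\ref{DP2S3} and which are case~$(8)$), and verifying in the non-normal case that the Klein subgroup $M$ is genuinely normal in $G$ and is generated by involutions of type $1$. Once these facts are in place, the conclusion follows at once from Lemmas~\ref{DP2S3} and~\ref{DP2V4} and Remark~\ref{geq5touse}.
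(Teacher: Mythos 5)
Your proof is correct, but it is organized around a different normal subgroup than the paper's. The paper works with $G_0 = G \cap \langle \alpha, \beta, \gamma \rangle$ (the diagonal part of $G$), which is automatically normal; a short conjugation argument with $\phi$ shows that any involution in $G_0$ either forces $\gamma \in G$ (excluded) or forces $\langle \alpha^2, \beta^2 \rangle \subseteq G$, so that Lemma \ref{DP2V4} applies, while $G_0$ trivial gives $G \hookrightarrow \SG_3$ directly. You instead case on whether the $3$-Sylow subgroup $P$ is normal: if it is, you compute $N_{\Aut(\XX)}(P) \cong \CG_2 \times \SG_3$ and land on $\CG_3$ or $\SG_3$; if it is not, you enumerate the possible $G$ (namely $\AG_4$, $\SG_4$, $\CG_4^2 \rtimes \CG_3$, $\CG_4^2 \rtimes \SG_3$ and their Geiser twists) and exhibit a normal Klein subgroup of type-$1$ involutions, again invoking Lemma \ref{DP2V4}. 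Both routes terminate in the same two lemmas (\ref{DP2V4} and \ref{DP2S3}) and reach the same list of exceptional groups. The trade-off is that the paper's argument avoids the subgroup enumeration entirely, whereas yours requires the normalizer computation and the verification, which you correctly flag as the main remaining work, that the normal $\VG_4$ in the non-normal-Sylow case is exactly $\langle \alpha^2, \beta^2 \rangle$ (the twisted Klein lifts such as $\langle \alpha^2\gamma, \beta^2\gamma \rangle$ are not $\phi$-invariant, so they cannot occur without forcing $\gamma \in G$); these checks do go through, so the argument is sound.
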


\begin{proof}

If $X$ has type $I$ and $G$ has order $2^k \cdot 3$ then $X$ can be considered as a surface of type $IV$ by Remark \ref{DP2typeIspec}. For the other types by Lemma \ref{DP23groups} we can assume that $G$ is a subgroup of $\langle \alpha, \beta, \phi, \delta, \gamma \rangle$. All elements of order $3$ are conjugate in this group, so we can assume that $G$ contains the element $\phi$ of type $4$ in the notation of Table \ref{table2}.

Let $G_0 = G \cap \langle \alpha, \beta, \gamma \rangle$ be a subgroup. Obviously $G_0$ is normal in $G$.

If $G_0$ is nontrivial then it contains an element of order $2$. As in the proof of Lemma~\ref{DP2even}, we can assume that~$\gamma$ is not contained in $G$. Let an element $\alpha^2\gamma$ be contained in $G$. Then $\phi\alpha^2\gamma \phi^{-1} = \beta^2\gamma$ and $\phi^{-1}\alpha^2\gamma \phi = \alpha^2\beta^2 \gamma$ are contained in $G$. But the composition of these three elements is~$\gamma$. This contradicts the assumption that $\gamma$ does not lie in $G$.

If $\alpha^2$, $\beta^2$ or $\alpha^2\beta^2$ is contained in $G$ then $G$ contains a normal subgroup $N = \langle \alpha^2, \beta^2 \rangle$. Therefore $X / G$ is birationally equivalent to a surface $Y$ such that $K_Y^2 \geqslant 5$ by Lemma~\ref{DP2V4}.

Now we can assume that $G_0$ is trivial. Therefore the group $G$ is isomorphic to $\SG_3$ or $\CG_3$. The cases $G \cong \SG_3$ generated by elements of type $2$ and $G \cong \CG_3$ are listed in Proposition~\ref{DP2}. If $G \cong \SG_3$ is generated by elements of type $1$ then the quotient $X / G$ is birationally equivalent to a surface $Y$ such that $K_Y^2 \geqslant 5$ by Lemma \ref{DP2S3}.

\end{proof}

\subsection{Subgroups of $\CG_2 \times \mathrm{PSL}_2 \left( \F_7 \right)$}

In this section $X$ is a del Pezzo surface of degree~$2$ of type $I$ (see Theorem \ref{DP2groupclass}). Actually, any del Pezzo surface of degree $2$ having an automorphism of order $7$ is of type $I$ (see Table \ref{table3}). In this case the group $\Aut\left(\XX\right)$ is $\CG_2 \times \mathrm{PSL}_2 \left( \F_7 \right)$. In this subsection we study, for which subgroups $G \subset \Aut\left(\XX\right)$ the quotient $X / G$ can be non-$\ka$-rational. We assume that $G$ does not contain the Geiser involution.
 
\begin{lemma}
\label{DP2type5}
Let a finite group $G$ act on a del Pezzo surface $X$ of degree $2$ and $N \cong \CG_7$ be a normal subgroup in $G$ generated by an element of type $5$. Then the quotient $X / G$ is birationally equivalent to a surface $Y$ such that $K_Y^2 \geqslant 5$.
\end{lemma}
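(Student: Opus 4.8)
The plan is to first pin down the possibilities for $G$, and then exploit the fact that a del Pezzo surface of degree $2$ carrying an automorphism of order $7$ is an equivariant blow-up of $\Pro^2$.

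\textbf{Reducing $G$.} Since any del Pezzo surface of degree $2$ with an automorphism of order $7$ is of type $I$, we have $\Aut(\XX)\cong\CG_2\times\mathrm{PSL}_2(\F_7)$ by Theorem \ref{DP2groupclass}. As $N\cong\CG_7$ is normal in $G$, the group $G$ lies in the normalizer of $N$ in $\Aut(\XX)$; the normalizer of a Sylow $7$-subgroup of $\mathrm{PSL}_2(\F_7)$ is the Frobenius group $\CG_7\rtimes\CG_3$ of order $21$, all of whose elements have order $1$, $3$ or $7$. Hence an element of $G$ with nontrivial Geiser component would have an odd power equal to the Geiser involution, which is excluded by assumption; so $G\subseteq\CG_7\rtimes\CG_3$, and since $N\subseteq G$ we get $G\cong\CG_7$ or $G\cong\CG_7\rtimes\CG_3$.

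\textbf{The quotient by $N$.} Next I would read off from Table \ref{table2} that a generator $g$ of $N$ has exactly the three coordinate points of $\Pro_\kka(1:1:1:2)$ as fixed points, has no curve of fixed points, and acts on the three tangent planes with weights giving three singularities of type $\tfrac17(1,3)$ on $X/N$ (cf.\ Table \ref{table1}). The key structural point is that $\rho(\XX)^N=2$: a Lefschetz fixed-point count with three isolated fixed points forces $g^*$ to have trace $0$ on $K_X^{\perp}\cong\mathrm{E}_7\otimes\Q$, so the $N$-invariant sublattice of $\Pic(\XX)$ has rank $2$. Moreover $\XX$ carries no $N$-invariant pencil of conics, because the six monomials spanning the space of conics on the $\Pro^2$ of the anticanonical double cover lie in pairwise distinct $N$-eigenlines; hence $\XX$ is not an $N$-equivariant conic bundle, and by Theorems \ref{Minclass} and \ref{GMMP} it is the blow-up of $\Pro^2_{\kka}$ at a single $N$-orbit $\mathcal{O}$ of seven points, the contraction being $N$-equivariant. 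One checks that there are exactly two such orbits $\mathcal{O},\mathcal{O}'$ (they correspond to the two extremal contractions of $\overline{\mathrm{NE}}(\XX)^N$, and they are interchanged by the Geiser involution, with $\sum_{\mathcal{O}}E+\sum_{\mathcal{O}'}E\sim-7K_X$), so $G$, being of odd order and normalizing $N$, fixes each of them.

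\textbf{Descending and concluding.} If $\mathcal{O}$ is $\Gal(\kka/\ka)$-invariant, the contraction is defined over $\ka$; it lands on an honest $\Pro^2_\ka$, since a nontrivial Severi--Brauer surface has index $3$ and thus cannot contain a reduced closed subscheme of degree $7$. Then $X\to\Pro^2_\ka$ is a $G$-equivariant birational morphism, $X/G\approx\Pro^2_\ka/G$, and since $K_{\Pro^2}^2=9$ and $\Pro^2(\ka)$ is Zariski dense, Corollary \ref{geq5} gives that $X/G$ is $\ka$-rational; in particular it is birational to a surface $Y$ with $K_Y^2\geqslant5$. The main obstacle is the remaining case, in which $\Gal$ interchanges $\mathcal{O}$ and $\mathcal{O}'$, equivalently $\rho(X)^N=1$ and $X$ is a minimal del Pezzo surface of degree $2$: then no $N$-equivariant contraction to $\Pro^2$ is defined over $\ka$, and one must still produce a model of $X/G$ with $K^2\geqslant5$. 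I would attack this by passing to the quadratic extension $\ka'$ splitting $\{\mathcal{O},\mathcal{O}'\}$, over which $X/G$ becomes $\ka'$-rational, and tracking the resulting quadratic twist (assembling the two contractions over $\ka$); controlling this descent is where the technical weight of the argument lies.
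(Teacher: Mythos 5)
Your proposal diverges from the paper's argument and, as you yourself flag, it does not close the one case that actually carries the content of the lemma. Everything up to the descent step is essentially sound: the reduction of $G$ to a subgroup of $\CG_7\rtimes\CG_3$, the computation $\rho(\XX)^N=2$, the exclusion of an $N$-equivariant conic bundle structure, and the identification of $\XX$ as the blow-up of $\Pro^2_{\kka}$ at one of two Geiser-conjugate $N$-orbits of seven points (you should also note explicitly that a size-one orbit is impossible because $7\nmid|\mathrm{W}(\mathrm{E}_6)|$, so no cubic surface admits an order-$7$ automorphism). The split case, where one orbit is Galois-stable, is then handled correctly. But when $\Gal(\kka/\ka)$ interchanges $\mathcal{O}$ and $\mathcal{O}'$ — i.e.\ when $X$ is a minimal del Pezzo surface of degree $2$ with $\rho(X)^N=1$ — you only announce a strategy (``pass to the splitting quadratic extension and track the twist'') without executing it. That is precisely the situation the lemma must address, since for non-minimal $X$ one could in any case fall back on Remark \ref{geq5touse} after an equivariant contraction; so the proof as written is incomplete.

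The paper sidesteps this dichotomy entirely by working downstairs on $X/N$ rather than upstairs on $X$. The three $N$-fixed points give three singularities of type $\frac{1}{7}(1,3)$ on $X/N$, and the three $N$-invariant hyperplane sections $x=0$, $y=0$, $z=0$ map to curves on $X/N$; this whole configuration is permuted by $\Gal(\kka/\ka)$ and by $G/N$, hence is defined over $\ka$ as a set regardless of whether $X$ is minimal. On the minimal resolution $\widetilde{X/N}$ each singular point contributes a chain $L_i,M_i,N_i$ with self-intersections $-3,-2,-2$ (Table \ref{table1}), the proper transforms of the three invariant curves are $(-1)$-curves positioned so that one can contract, $G/N$- and Galois-equivariantly, first these three curves, then the transforms of the $N_i$, then the transforms of the $L_i$; the intersection-theoretic bookkeeping gives a surface $S$ over $\ka$ with $K_S^2=8$, and Remark \ref{geq5touse} applied to $S/(G/N)\approx X/G$ concludes. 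If you want to salvage your route, you would need to carry out the twist analysis in the non-split case and show the resulting form of $\Pro^2/G$ is still birational over $\ka$ to something of degree $\geqslant 5$; the quotient-side computation is the cleaner way to get there.
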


\begin{proof}

If $N \cong \CG_7$ is generated by an element of type $5$ then the group $N$ has three isolated fixed points $p_1 = (1: 0 : 0 : 0)$, $p_2 = (0: 1 : 0 :0)$ and $p_3 = (0 : 0 : 1 : 0)$. On the tangent spaces of $X$ at the points $p_i$ the group $N$ acts as $\langle\operatorname{diag}(\xi_7, \xi_7^3) \rangle$.

Let $C_{23}$, $C_{31}$ and $C_{12}$ be curves given by $x = 0$, $y = 0$ and $z = 0$ respectively. The curve~$C_{ij}$ is $N$-invariant, and passes through the points $p_i$ and $p_j$.

The surface $X / N$ has three $\frac{1}{7}(1,3)$ singularities. Let $f: X \rightarrow X / N$ be the quotient morphism and
$$
\pi: \widetilde{X / N} \rightarrow X / N
$$
\noindent be the minimal resolution of singularities. Then $\pi^{-1}(f(p_i))$ is a chain of curves $L_i$, $M_i$ and $N_i$ with negative self-intersection numbers such that
$$
L_i^2 = -3, \qquad M_i^2 = N_i^2 = -2, \qquad L_i \cdot M_i = M_i \cdot N_i = 1, \qquad L_i \cdot N_i = 0,
$$
\noindent see Table \ref{table1}. Each proper transform $\pi^{-1}_*f(C_{ij})$ is a $(-1)$-curve intersecting $N_i$ and passing through the intersection point of $L_j$ and $M_j$. We can $G / N$-equivariantly contract these three $(-1)$-curves, then contract the proper transforms of $N_i$, and then contract the proper transforms of $L_i$. We get a surface~$S$ such that
$$
K_S^2 = K_{\widetilde{X / N}}^2 + 9 = K_{X / N}^2 - 3 \cdot \frac{3}{7} + 9 = \frac{1}{7}K_X^2 - \frac{9}{7} + 9 = 8.
$$
By Remark \ref{geq5touse} the quotient $X / G \approx S / (G / N)$ is birationally equivalent to a surface~$Y$ such that $K_Y^2 \geqslant 5$.

\end{proof}

\begin{lemma}
\label{DP2simple}
Let a finite group $G \cong \mathrm{PSL}_2(\F_7)$ act on a del Pezzo surface $X$ of degree~$2$. Then the quotient $X / G$ is birationally equivalent to a surface~$Y$ such that $K_Y^2 = 5$.
\end{lemma}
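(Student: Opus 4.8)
The plan is to realise $S:=X/G$ as an explicit singular rational surface, compute $K_S^2$ by Riemann--Hurwitz, classify its singularities, and take $Y=\widetilde{X / G}$ to be the minimal resolution; the point will be that the discrepancy corrections add up so that $K_Y^2=5$ exactly. I would first fix the geometry: by Table \ref{table3} the surface $X$ is of type $I$, with equation $x^3y+y^3z+z^3x+t^2=0$ and $\Aut(\XX)=\CG_2\times\mathrm{PSL}_2(\F_7)$; since $\mathrm{PSL}_2(\F_7)$ is perfect, $G$ must be the factor $\{1\}\times\mathrm{PSL}_2(\F_7)$, so it does not contain the Geiser involution $\gamma$. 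Because $7$ divides $|G|$ but not the order of the Weyl group of a del Pezzo surface of degree $\geqslant 3$, and $\mathrm{PSL}_2(\F_7)$ is not a subgroup of $\mathrm{PGL}_2$, the surface $X$ cannot be $G$-birational to a del Pezzo surface of higher degree or to a conic bundle; hence $X$ is $G$-minimal and $\rho(X)^G=1$.

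Next I would run Riemann--Hurwitz for the quotient morphism $f\colon X\to S$. The only elements of $G$ fixing a curve on $\XX$ pointwise are its $21$ involutions: they are conjugate and of type $1$ (in the $\SG_4$-model of Remark \ref{DP2typeIspec} a double transposition acts as $\operatorname{diag}(-1,-1,1,1)=(x:y:-z:t)$; elements of type $3$ do not occur, as the order-$3$ elements of $\mathrm{PSL}_2(\F_7)$ are of type $4$ by the computation in the proof of Lemma \ref{DP2type4}, and $\gamma\notin G$). Their fixed curves $C_1,\dots,C_{21}$ are distinct, each linearly equivalent to $-K_X$, and constitute the codimension-one ramification of $f$, with ramification index $2$. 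Since $\rho(X)^G=1$ we have $\sum C_i\sim -21K_X$, so $f^*K_S=K_X-\sum C_i\sim 22K_X$ and therefore
\[
168\,K_S^2=(f^*K_S)^2=484\,K_X^2=968,\qquad K_S^2=\frac{121}{21}.
\]

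Then comes the main point: classifying the singularities of $S$, all cyclic quotient singularities at images of points of $\XX$ with non-trivial stabiliser. Wherever the stabiliser acts through a complex reflection group --- which one checks happens everywhere except at certain fixed points of elements of order $3$, $4$ and $7$ --- the quotient is smooth (Chevalley--Shephard--Todd), so only ``rotation'' fixed points matter. Using the orbit structure of $\mathrm{PSL}_2(\F_7)$ on $\Pro^2$ and on the Klein quartic $C_4=\{x^3y+y^3z+z^3x=0\}$ (orbits of sizes $24$, $56$, $84$, $168$ with stabilisers $\CG_7$, $\CG_3$, $\CG_2$, $1$) one finds: a $7$-Sylow fixes three points of $\XX$, all over $C_4$, with action $\operatorname{diag}(\xi_7,\xi_7^3)$, and the $24$ such points form one $G$-orbit, giving a single singularity of type $\frac{1}{7}(1,3)$; an order-$3$ element fixes two points over $C_4$ with scalar action $\operatorname{diag}(\omega,\omega)$, and these $56$ points form one $G$-orbit, giving a single singularity of type $\frac{1}{3}(1,1)$, while its other fixed points give only $A_2$-singularities; and order-$4$ elements, together with all larger stabilisers, give only Du Val singularities (on $\Pro^2$ an order-$4$ element has distinct eigenvalues $1,\ii,-\ii$, so its tangent action at a fixed point is never scalar, points of $\XX$ off the Geiser curve inherit this, and order-$4$ elements fix no point over $C_4$ since no stabiliser there has order divisible by $4$). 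So $S$ has exactly one $\frac{1}{7}(1,3)$-point, one $\frac{1}{3}(1,1)$-point and finitely many Du Val points, and Table \ref{table1} gives
\[
K_Y^2=K_{\widetilde{X / G}}^2=K_S^2-\frac{3}{7}-\frac{1}{3}=\frac{121}{21}-\frac{9}{21}-\frac{7}{21}=5 .
\]

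The hard part is this singularity classification: it relies on the explicit orbit data for $\mathrm{PSL}_2(\F_7)$ on $\Pro^2$ and on the Klein quartic, and on checking that no ``rotation'' fixed point other than the two distinguished ones is worse than Du Val --- in particular that there is no $\frac{1}{4}(1,1)$-point and no second $\frac{1}{3}(1,1)$-point --- since this is exactly what forces the corrections to sum to the value landing on $K_Y^2=5$ (rather than something smaller, which would additionally require exhibiting $(-1)$-curves to contract).
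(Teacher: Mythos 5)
Your proposal is correct and takes essentially the same route as the paper: compute $K_{X/G}^2=\tfrac{121}{21}$ by Riemann--Hurwitz from the $21$ conjugate type-$1$ involutions, show via the stabilizer/orbit structure of $\mathrm{PSL}_2(\F_7)$ that the only non-Du Val singularities of $X/G$ are one $\tfrac{1}{3}(1,1)$-point and one $\tfrac{1}{7}(1,3)$-point, and pass to the minimal resolution using the discrepancy data of Table \ref{table1}. The only soft spot is the claim that order-$4$ elements and the larger stabilizers contribute at worst Du Val points because their tangent actions are ``never scalar'' --- non-scalarity alone is not a sufficient criterion (e.g.\ $\operatorname{diag}(\ii,-1)$ is non-scalar and one must additionally observe that the cyclic group it generates contains a reflection, or, as the paper does, check directly that the stabilizers $\DG_8$, $\SG_3$ and $\CG_2^2$ are generated by reflections, so those image points are in fact smooth) --- but this does not affect the computation, since Du Val resolutions are crepant and the final value $K_Y^2=\tfrac{121}{21}-\tfrac{1}{3}-\tfrac{3}{7}=5$ comes out the same.
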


\begin{proof}

Let $f: X \rightarrow X / G$ be the quotient morphism and
$$
\pi: \widetilde{X / G} \rightarrow X / G
$$
\noindent be the minimal resolution of the singularities.

We want to describe the set of singular points on $X / G$. If for a point $p$ on $X$ the point~$f(p)$ is singular then the stabilizer of $P$ in $G$ is nontrivial and is not generated by reflections (see Subsection 2.3). Therefore we want to find points with nontrivial stabilizers on $X$ to describe the set of singular points on $X / G$. We use the method considered in the proof of \cite[Subsection 2.1, Proposition]{Elk99}.

The group $G$ has no fixed points, since $\mathrm{PSL}_2(\F_7)$ does not have nontrivial $2$-dimensional linear representations. Therefore a stabilizer of any point on $X$ is contained in a maximal subgroup of $G$. It is well known that any maximal subgroup of $\mathrm{PSL}_2(\F_7)$ is isomorphic either to~$\SG_4$ or to $\CG_7 \rtimes \CG_3$ (see \cite[p. 3]{ATLAS}).

By Remark \ref{DP2typeIspec} we can consider a surface given by the equation
$$
x^4 + y^4 + z^4 + A(x^2y^2 + x^2z^2 + y^2z^2) + t^2 = 0
$$
\noindent to find the points with nontrivial stabilizers contained in $\SG_4$. This set of points consists of the orbits of the points $(0 : 0 : 1 : \pm \ii)$ with the stabilizer $\DG_8$, the points \mbox{$(1 : 1 : 1 : \pm \sqrt{-3-3A})$} with the stabilizer $\SG_3$, the points $(1 : -1 : 0 : \pm \sqrt{-2 - A})$ with the stabilizer $\CG_2^2$, the point $(1 : \omega : \omega^2 : 0)$ with the stabilizer $\CG_3$, and the orbits of the curves of fixed points $z = 0$ and $x = y$ with the stabilizers $\CG_2$. For all these points except the orbit of $(1 : \omega : \omega^2 : 0)$ the stabilizer is generated by reflections. The stabilizer group $\CG_3$ acts on the neighbourhood of $(1 : \omega : \omega^2 : 0)$ as $\operatorname{diag}\left(\omega, \omega \right)$. 

We can consider a surface given by the equation
$$
x^3y + y^3z + z^3x + t^2 = 0
$$
\noindent to find the points with nontrivial stabilizers contained in $\CG_7 \rtimes \CG_3$. This set of points consists of the orbits of the point $(1 : 0 : 0 : 0)$ with the stabilizer $\CG_7$ and the point $(1 : \omega : \omega^2 : 0)$ with the stabilizer $\CG_3$. Therefore the stabilizer of each point with nontrivial stabilizer not generated by reflections is either $\CG_7$, or $\CG_3$. The stabilizer group $\CG_7$ acts on the neighbourhood of $(1 : 0 : 0 : 0)$ as $\operatorname{diag}\left(\xi_7, \xi_7^3 \right)$. 

There are $28$ subgroups isomorphic to $\CG_3$ in $G$. These subgroups are generated by elements of type $4$ in the notation of Table \ref{table2}. Such an element has four fixed points and acts on the neighbourhood of two of these points as $\operatorname{diag}\left(\omega, \omega^2 \right)$, and on the neighbourhood of the two other fixed points as $\operatorname{diag}\left(\omega, \omega \right)$. Therefore there is one $\frac{1}{3}(1,1)$-singular point on $X / G$.

There are $8$ subgroups isomorphic to $\CG_7$ in $G$. These subgroups are generated by elements of type $5$ in the notation of Table \ref{table2}. Such an element has three fixed points and act on the neighbourhood of these points as $\operatorname{diag}\left(\xi_7, \xi_7^3 \right)$. Therefore there is \mbox{one $\frac{1}{7}(1,3)$-singular} point on $X / G$.

Hence the set of singular points of $X / G$ is the following: one $\frac{1}{3}(1,1)$-point, and one~$\frac{1}{7}(1,3)$-point. Each element of order $2$ in $G$ has pointwisely fixed hyperplane section, and the other elements have only isolated fixed points. By the Hurwitz formula one has
$$
K_{\widetilde{X / G}}^2 = K_{X / G}^2 - \frac{1}{3} - \frac{3}{7} = \frac{1}{168}\left(22K_X\right)^2 - \frac{16}{21} = 5.
$$
\end{proof}

\begin{lemma}
\label{DP2typeI}
Let a finite group $G$ act on a del Pezzo surface $X$ of degree $2$ and $X$ be of type $I$. If the group $G$ is not listed in Proposition \ref{DP2} then the quotient $X / G$ is birationally equivalent to a surface $Y$ such that $K_Y^2 \geqslant 5$.
\end{lemma}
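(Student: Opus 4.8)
The plan is to reduce, by means of the lemmas already available, to the single case in which $7$ divides the order of $G$, and then to read off the answer from the subgroup structure of $\mathrm{PSL}_2\left( \F_7 \right)$ together with Lemmas \ref{DP2type5} and \ref{DP2simple}. Since $\left| \Aut\left( \XX \right) \right| = \left| \CG_2 \times \mathrm{PSL}_2\left( \F_7 \right) \right| = 2^4 \cdot 3 \cdot 7$, the order of $G$ is of the form $2^k \cdot 3^j \cdot 7^l$ with $j, l \leqslant 1$. If $G$ is a $2$-group, then by Lemma \ref{DP2even} either $G$ occurs in Proposition \ref{DP2} or $X / G$ is birationally equivalent to a surface $Y$ with $K_Y^2 \geqslant 5$. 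If $\left| G \right| = 2^k \cdot 3$, then, since $X$ is of type $I$, the same alternative holds by Lemma \ref{DP2odd2}. Hence we may assume $7 \mid \left| G \right|$. If $G$ contains the Geiser involution $\gamma$, then $X / G \approx \Pro^2_{\ka} / \left( G / \langle \gamma \rangle \right)$ is birationally equivalent to a surface $Y$ with $K_Y^2 \geqslant 5$ by Remark \ref{geq5touse}, and such a $G$ (of order divisible by $7$) does not appear in Proposition \ref{DP2}; so we may also assume $\gamma \notin G$.

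Then the projection $\CG_2 \times \mathrm{PSL}_2\left( \F_7 \right) \to \mathrm{PSL}_2\left( \F_7 \right)$ is injective on $G$; write $\overline{G}$ for its image, a subgroup of $\mathrm{PSL}_2\left( \F_7 \right)$ of order divisible by $7$. The subgroups of $\mathrm{PSL}_2\left( \F_7 \right)$ of order divisible by $7$ are exactly $\CG_7$, $\CG_7 \rtimes \CG_3$ and $\mathrm{PSL}_2\left( \F_7 \right)$: the number of Sylow $7$-subgroups of any subgroup divides $24$ and is congruent to $1$ modulo $7$, hence equals $1$ or $8$; if it is $1$ the subgroup lies in the normalizer $\CG_7 \rtimes \CG_3$ of a Sylow $7$-subgroup, and if it is $8$ the order of the subgroup is $56$ or $168$, with $56$ excluded because $\mathrm{PSL}_2\left( \F_7 \right)$ has no proper subgroup of index $< 7$. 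Each of $\CG_7$, $\CG_7 \rtimes \CG_3$, $\mathrm{PSL}_2\left( \F_7 \right)$ has abelianization of order prime to $2$ — respectively $\CG_7$, $\CG_3$ and the trivial group — hence admits no nontrivial homomorphism to $\CG_2$; consequently $G$ is in fact contained in the subgroup $\mathrm{PSL}_2\left( \F_7 \right)$ of $\Aut\left( \XX \right)$ and acts on $X$ exactly as $\overline{G}$ does. If $\overline{G} \cong \CG_7$ or $\overline{G} \cong \CG_7 \rtimes \CG_3$, then $G$ contains a normal subgroup $N \cong \CG_7$ generated by an automorphism of order $7$, which is of type $5$ by Theorem \ref{DP2cyclicclass}, so $X / G$ is birationally equivalent to a surface $Y$ with $K_Y^2 \geqslant 5$ by Lemma \ref{DP2type5}. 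If $\overline{G} \cong \mathrm{PSL}_2\left( \F_7 \right)$, then $X / G$ is birationally equivalent to a surface $Y$ with $K_Y^2 = 5$ by Lemma \ref{DP2simple}. Since none of $\CG_7$, $\CG_7 \rtimes \CG_3$, $\mathrm{PSL}_2\left( \F_7 \right)$ is listed in Proposition \ref{DP2}, this completes the argument.

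With the earlier lemmas in hand the proof is essentially bookkeeping, so I do not expect a serious obstacle. The two points that genuinely require care are the identification of the subgroups of $\mathrm{PSL}_2\left( \F_7 \right)$ of order divisible by $7$, and the observation that, because these subgroups have no nontrivial homomorphism to $\CG_2$, passing from $\Aut\left( \XX \right) = \CG_2 \times \mathrm{PSL}_2\left( \F_7 \right)$ to $G$ introduces no twist of the action by the central Geiser involution; this last point is what guarantees that the order-$7$ elements of $G$ act as elements of type $5$ (so that Lemma \ref{DP2type5} applies) and that the case $\overline{G} \cong \mathrm{PSL}_2\left( \F_7 \right)$ is precisely the situation of Lemma \ref{DP2simple}.
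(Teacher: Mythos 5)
Your proof is correct and follows essentially the same route as the paper: reduce the cases of order $2^n$ and $2^k\cdot 3$ to Lemmas \ref{DP2even} and \ref{DP2odd2}, and then use the Sylow count to land either on a normal $\CG_7$ (Lemma \ref{DP2type5}) or on $\mathrm{PSL}_2(\F_7)$ (Lemma \ref{DP2simple}). The only difference is that you spell out explicitly why a subgroup of order divisible by $7$ not containing the Geiser involution sits inside the $\mathrm{PSL}_2(\F_7)$ factor without a twist by $\gamma$ — a detail the paper leaves implicit — which is a harmless and welcome addition.
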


\begin{proof}

If the group $G$ does not contain elements of order $7$ then $G$ is a subgroup \mbox{of $\CG_2 \times \SG_4$}, where the first factor is generated by the Geiser involution. These subgroups are considered in Lemmas \ref{DP2even} and \ref{DP2odd2}.

If $G$ contains a subgroup $\CG_7$ then by Sylow theorem there are $1$ or $8$ such subgroups in the group $G$. If there is a unique subgroup $\CG_7$ in $G$ then $X / G$ is birationally equivalent to a surface $Y$ such that $K_Y^2 \geqslant 5$ by Lemma~\ref{DP2type5}. Otherwise the group $G$ is $\mathrm{PSL}_2(\F_7)$, and $X / G$ is birationally equivalent to a surface $Y$ such that $K_Y^2 = 5$ by Lemma \ref{DP2simple}.

\end{proof}

Now we can prove Proposition \ref{DP2}.

\begin{proof}[Proof of Proposition \ref{DP2}]
If $G$ is a group that is not listed in Proposition \ref{DP2} then $X / G$ is birationally equivalent to a surface $Y$ such that $K_Y^2 \geqslant 5$ by Lemmas \ref{DP2even}, \ref{DP2odd1}, \ref{DP2odd2} and~\ref{DP2typeI}. One has $Y(\ka) \neq \varnothing$ since $X(\ka)$ is dense. Any minimal model of $Y$ is $\ka$-rational by Theorem~\ref{ratcrit}. Therefore $X / G$ is $\ka$-rational.

\end{proof}

\section{The Weyl group $\mathrm{W}(\mathrm{E}_7)$}

A del Pezzo surface $\XX$ over an algebraically closed field $\kka$ is isomorphic to a blowup of~$\Pro^2_{\kka}$ at seven points $p_1$, $\ldots$, $p_7$ in general position. Therefore the group $\Pic(\XX)$ is generated by the proper transform $L$ of the class of a line on $\Pro^2_{\kka}$, and the classes $E_1$, $\ldots$, $E_7$ of the exceptional divisors. The sublattice $K_X^{\perp}$ of classes $C$ in $\Pic(\XX)$ such that $C \cdot K_X = 0$, is generated by 
$$
L - E_1 - E_2 - E_3, \qquad E_1 - E_2, \qquad E_2 - E_3, \qquad \ldots, \qquad E_6 - E_7.
$$
This set of generators are simple roots for the root system of type $\mathrm{E}_7$. Therefore any group acting on the Picard lattice $\Pic(\XX)$ and preserving the intersection form is a subgroup of the Weyl group $\mathrm{W}(\mathrm{E}_7)$. Moreover, if a finite group $G$ acts on a del Pezzo surface~$X$ of degree $2$ then there is an embedding $G \hookrightarrow \mathrm{W}(\mathrm{E}_7)$ (see \cite[Lemma 6.2]{DI1}). For convenience we will identify the group $G$ with its image in $\mathrm{W}(\mathrm{E}_7)$. Also we denote the image of the group $\Gal\left(\kka / \ka\right)$ in $\mathrm{W}(\mathrm{E}_7)$ by $\Gamma$. The groups $G$ and $\Gamma$ commute.

Note that one can choose seven disjoint $(-1)$-curves on $\XX$ corresponding to a blowup $\XX \rightarrow \Pro^2_{\kka}$ in many ways. Therefore the embeddings of $G$ and $\Gamma$ into $\mathrm{W}(\mathrm{E}_7)$ are defined up to conjugacy.

In this section we study some properties of the group $\mathrm{W}(\mathrm{E}_7)$ and its subgroups. First we prove the following lemma.

\begin{lemma}
\label{DP2type2nonrat}
Let a finite group $G \cong \CG_2$ act on a del Pezzo surface $X$ of degree $2$. Suppose that the group $G$~is generated by an element of type $2$ in the notation of Table \ref{table2}, \mbox{and $\rho(X)^G = 1$}. Then~$X$ is non-$\ka$-rational.
\end{lemma}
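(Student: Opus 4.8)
The plan is to work inside the lattice $\Pic(\XX)\cong\Z^8$ with the two commuting actions of $G$ and of the Galois image $\Gamma$ in $\mathrm{W}(\mathrm{E}_7)$. First I would record the action on $\Pic(\XX)$ of the generator $g$ of $G\cong\CG_2$. Since $g$ has type $2$, it has exactly four isolated fixed points and no fixed curve on $\XX$ (Table \ref{table2}), so by the topological Lefschetz fixed point formula $\operatorname{tr}\bigl(g\mid\Pic(\XX)\otimes\Q\bigr)=2$; as $g$ is an involution this forces $\Pic(\XX)\otimes\Q=V_+\oplus V_-$ with $\dim V_+=5$ and $\dim V_-=3$, where $V_\pm$ are the $(\pm1)$-eigenspaces. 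In particular $g$ acts on $K_X^{\perp}\cong\mathrm{E}_7$ as $-1$ on a rank-$3$ (root) sublattice, so $g$ lies in an $A_1^3$-type conjugacy class of $\mathrm{W}(\mathrm{E}_7)$; I would describe $V_-$ and the negative-definite rank-$4$ lattice $V_+\cap K_X^{\perp}$ explicitly, together with the centralizer of $g$ in $\mathrm{W}(\mathrm{E}_7)$ and its orbits on the $56$ lines.

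Next I would reinterpret the hypothesis $\rho(X)^G=1$. Because $\Gamma$ commutes with $g$, it preserves $V_+$ and $V_-$, hence $\Pic(X)\otimes\Q=V_+^{\Gamma}\oplus V_-^{\Gamma}$ and $\Pic(X)^G\otimes\Q=V_+^{\Gamma}$. So $\rho(X)^G=1$ says precisely $V_+^{\Gamma}=\Q\cdot K_X$: the Galois group has no nonzero invariant vector in the negative-definite rank-$4$ lattice $V_+\cap K_X^{\perp}$. Consequently $\rho(X)=1+r$ with $r=\dim V_-^{\Gamma}\in\{0,1,2,3\}$, so $\rho(X)\leqslant4$.

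If $r=0$, then $\rho(X)=1$, so $X$ is a $\ka$-minimal del Pezzo surface of degree $2$ and Theorem \ref{ratcrit} gives that $X$ is not $\ka$-rational. If $r\geqslant1$, I would run the minimal model program over $\ka$ on $X$ (Theorem \ref{GMMP}): contracting $\Gal(\kka/\ka)$-invariant sets of disjoint $(-1)$-curves produces a minimal model $Z$ with $X\approx Z$ and $K_Z^2\geqslant K_X^2=2$, so by Theorem \ref{ratcrit} it suffices to prove $K_Z^2\leqslant4$. For this one decomposes every $(-1)$-curve $E$ on $\XX$ as $E=E_++E_-$ with $E_{\pm}\in V_{\pm}\otimes\Q$, uses that any $\Gamma$-invariant configuration $\Sigma$ of pairwise disjoint $(-1)$-curves satisfies $\sum_{E\in\Sigma}E_+\in\Q K_X$, and combines this with $\operatorname{rk}V_-=3$ and $\operatorname{rk}V_-^{\Gamma}=r\leqslant3$; a finite verification — carried out inside $C_{\mathrm{W}(\mathrm{E}_7)}(g)$ acting on the $56$ lines — then shows that $X$ admits no $\Gamma$-orbit of three pairwise disjoint $(-1)$-curves and, more generally, that at most two $(-1)$-curves are contracted over $\kka$ on the way to $Z$ (so that $Z$ is a minimal conic bundle with $K_Z^2\in\{2,4\}$ or a minimal del Pezzo surface of degree $2$, $3$ or $4$). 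Hence $K_Z^2\leqslant4$ and $X$ is not $\ka$-rational by Theorem \ref{ratcrit}.

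The main obstacle is exactly this last combinatorial step. Since the minimal model program over $\ka$ is not $G$-equivariant, $g$ need not act on the intermediate surfaces and one cannot induct on $K^2$; instead one must bound, once and for all, the $\Gamma$-stable configurations of disjoint $(-1)$-curves on $X$ using only the fixed $\mathrm{W}(\mathrm{E}_7)$-datum attached to $g$ (the eigenlattices $V_{\pm}$, the $56$ lines, the centralizer) together with the single constraint $V_+^{\Gamma}=\Q K_X$. I expect the non-existence of large orbits of disjoint $(-1)$-curves to reduce to the root-system bookkeeping in $V_+\oplus V_-$ sketched above; the description of the minimal models of the quotient surface in Remark \ref{DP2type2min}, all of which have $K^2\leqslant4$, provides a useful consistency check.
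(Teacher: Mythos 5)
Your setup is correct and coincides with the paper's: the eigenspace decomposition $\Pic(\XX)\otimes\Q=V_+\oplus V_-$ with $\dim V_+=5$, $\dim V_-=3$, the fact that $\Gamma$ preserves both summands, and the reformulation of $\rho(X)^G=1$ as $V_+^{\Gamma}=\Q\cdot K_X$ are all right, and the case $r=\dim V_-^{\Gamma}=0$ is correctly disposed of by Theorem \ref{ratcrit}. But for $r\geqslant 1$ the entire content of the lemma is concentrated in the step you label a ``finite verification'': that no $\Gamma$-stable configuration of three or more pairwise disjoint $(-1)$-curves exists under the constraint $V_+^{\Gamma}=\Q K_X$. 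You do not carry this out, and you explicitly flag it as something you ``expect'' to follow from root-system bookkeeping. The one invariant you do extract, $\sum_{E\in\Sigma}E_+=-\tfrac{|\Sigma|}{2}K_X$, is automatically proportional to $K_X$ for \emph{every} such configuration (indeed $\sum_{E\in\Sigma}(E+gE)=-|\Sigma|K_X$ as well), so it produces no contradiction with $V_+^{\Gamma}=\Q K_X$ and cannot by itself exclude anything; the exclusion, if it holds, requires a genuine enumeration inside the centralizer of $g$ acting on the $56$ lines, which is a substantial computation that is absent. As it stands this is a gap, not a proof.

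The paper closes the argument by a much shorter route that your own decomposition already sets up but that you did not exploit. Your $V_-$ (the $(-1)$-eigenspace of $g$ on $K_X^{\perp}$) is precisely the $(+1)$-eigenspace of $g\gamma$, where $\gamma$ is the Geiser involution acting as $-1$ on $K_X^{\perp}$; hence $V^{\Gamma}\subseteq\Q K_X\oplus V_-^{\Gamma}\subseteq V^{\langle g\gamma\rangle}$. Consequently every class contracted in the minimal model program over $\ka$ is $g\gamma$-invariant, so the whole contraction $X\rightarrow Y$ onto a minimal model is $\langle g\gamma\rangle$-equivariant --- this is exactly the remedy for your worry that ``the MMP over $\ka$ is not $G$-equivariant'': it is not $g$-equivariant, but it is $g\gamma$-equivariant. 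Since $g\gamma$ has type $1$ it pointwise fixes an elliptic curve on $X$, and a pointwise-fixed curve of positive genus is a birational invariant of the pair $(Y,\langle g\gamma\rangle)$; no involution of a del Pezzo surface of degree $\geqslant 5$ fixes such a curve. Note that this obstruction is not purely lattice-theoretic (the holomorphic Lefschetz formula does not by itself forbid an elliptic fixed curve on a surface with $K^2\geqslant 5$), which is a further reason to be cautious about asserting that the conclusion can be recovered from the $\mathrm{W}(\mathrm{E}_7)$-data of $g$ and $\Gamma$ alone without actually performing the check.
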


\begin{proof}
Let $V$ be a vector space $K_X^{\perp} \otimes \mathbb{Q} \subset \Pic(\XX) \otimes \mathbb{Q}$, and $g$ be the generator of $G$. The element $g$ has order $2$, and eigenvalues of its action on $V$ are $\pm 1$. The Geiser involution~$\gamma$ acts as multiplication by $-1$ on $V$. Therefore one has $V = V^{\langle g \rangle} \oplus V^{\langle g\gamma \rangle}$.

One has $V^{G \times \Gamma} = 0$, since $\rho(X)^G = 1$. Thus $V^{\Gamma} \subset V^{\langle g\gamma \rangle}$. Let $Y$ be a minimal model of $X$. If $X$ is $\ka$-rational then $Y$ is a minimal del Pezzo surface of degree $5$ or greater by Theorem \ref{ratcrit}. For the birational morphism $X \rightarrow Y$, the classes of contracted divisors lie in $V^{\Gamma}$, therefore the contraction $X \rightarrow Y$ is $\langle g\gamma \rangle$-equivariant.

The element $g\gamma$ has type $1$ in the notation of Table \ref{table2}, and thus it pointwisely fixes a hyperplane section of $X$, that is an elliptic curve. Therefore the element $g\gamma$ should pointwisely fix an elliptic curve on $Y$. But it is well-known that for an involution $\iota$ acting on a del Pezzo surface of degree~$5$ or greater the set of fixed points can consist only of isolated fixed points and curves of genus $0$, since there exists a $\iota$-equivariant birational map to $\Pro^2_{\ka}$. The obtained contradiction shows that $X$ is non-$\ka$-rational.
\end{proof}

\begin{remark}
\label{DP1C2nonmin}
Note that the same method works for del Pezzo surfaces of degree $1$. Indeed let an element $g$ of order $2$ act on a del Pezzo surface $X$ of degree $1$, and suppose that~$\rho(X)^{\langle g \rangle} = 1$. If for the Bertini involution $\beta$ the element $g\beta$ pointwisely fixes a hyperplane section then $X$ is non-$\ka$-rational.
\end{remark}

Now we want to study some properties of elements of type $4$ in the notation of Table~\ref{table2}. Throughout this section we use the following notation.

\begin{notation}
\label{DP_1curves}
Let $X$ be a del Pezzo surface of degree $2$. Then $\XX$ can be realized as a blowup~\mbox{$f: \XX \rightarrow \Pro^2_{\kka}$} at $7$ points $p_1$, $\ldots$, $p_7$ in general position. Put $E_i = f^{-1}(p_i)$ \mbox{and $L = f^*l$}, where~$l$ is the class of a line on $\Pro^2_{\kka}$. One has
$$
-K_{\XX} \sim 3L - \sum \limits_{i=1}^7 E_i.
$$
The set of $(-1)$-curves on $\XX$ consists of $E_i$, the proper transforms \mbox{$L_{ij} \sim L - E_i - E_j$} of the lines passing through a pair of points $p_i$ and $p_j$, the proper transforms
$$
Q_{ij} \sim 2L + E_i + E_j - \sum \limits_{k = 1}^7 E_k
$$
\noindent of the conics passing through all the points of the blowup except $p_i$ and $p_j$, and the proper transforms
$$
C_i \sim 3L - E_i - \sum \limits_{k = 1}^7 E_k
$$
of the rational cubic curves passing through all the points of the blowup, and having a singularity at $p_i$.

In this notation one has:
$$
E_i \cdot E_j = 0; \qquad E_i \cdot L_{ij} = 1; \qquad E_i \cdot L_{jk} = 0; \qquad E_i \cdot Q_{ij} = 0; \qquad E_i \cdot Q_{jk} = 1;
$$
$$
E_i \cdot C_i = 2; \qquad E_i \cdot C_j = 1; \qquad L_{ij} \cdot L_{ik} = 0; \qquad L_{ij} \cdot L_{kl} = 1; \qquad L_{ij} \cdot Q_{ij} = 2;
$$
$$
L_{ij} \cdot Q_{ik} = 1; \qquad L_{ij} \cdot Q_{kl} = 0; \qquad L_{ij} \cdot C_i = 0; \qquad L_{ij} \cdot C_k = 1;
$$
$$
Q_{ij} \cdot Q_{ik} = 0; \qquad Q_{ij} \cdot Q_{kl} = 1; \qquad Q_{ij} \cdot C_i = 1; \qquad Q_{ij} \cdot C_k = 0,
$$
\noindent where $i$, $j$, $k$ and $l$ are different numbers from the set $\{1, 2, 3, 4, 5, 6, 7\}$.
\end{notation}

One can see that the Geiser involution $\gamma$ maps $E_i$ to $C_i$, and maps $L_{ij}$ to $Q_{ij}$. Also there is a (non-normal) subgroup $\SG_7 \subset \mathrm{W}(\mathrm{E}_7)$ that acts on the set of $(-1)$-curves in the following way: for a given permutation $\sigma \in \SG_7$ one has
$$
\sigma(E_i) = E_{\sigma(i)}; \qquad \sigma(L_{ij}) = L_{\sigma(i)\sigma(j)}; \qquad \sigma(Q_{ij}) = Q_{\sigma(i)\sigma(j)}; \qquad \sigma(C_i) = C_{\sigma(i)}.
$$

\begin{lemma}
\label{Eltype4}
An element of type $4$ in the notation of Table \ref{table2} is conjugate \mbox{to $(123)(456) \in \SG_7$} in~$\mathrm{W}(\mathrm{E}_7)$.
\end{lemma}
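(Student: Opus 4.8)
The plan is to pin down the $\mathrm{W}(\mathrm{E}_7)$-conjugacy class of an element $g$ of type $4$ (in particular $g$ has order $3$) by computing the characteristic polynomial of its action on $\Pic(\XX)\otimes\Q$ and comparing it with the characteristic polynomial of $(123)(456)$, using that an element of order $3$ in $\mathrm{W}(\mathrm{E}_7)$ is determined up to conjugacy by its trace on $\Pic(\XX)\otimes\Q$.

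First I would compute $\operatorname{tr}\bigl(g\mid\Pic(\XX)\otimes\Q\bigr)$ via the topological Lefschetz fixed-point formula. Since $X$ is a rational surface, $H^0(X,\Q)\cong H^4(X,\Q)\cong\Q$ carry the trivial $g$-action, $H^1(X,\Q)=H^3(X,\Q)=0$, and $H^2(X,\Q)\cong\Pic(\XX)\otimes\Q$ as a $\langle g\rangle$-module. By Lemma~\ref{DP2type4} the fixed locus of $g$ on $\XX$ consists of the four isolated points $p_1,p_2,q_1,q_2$, and at each of them the action on the tangent space (namely $\operatorname{diag}(\omega,\omega)$ or $\operatorname{diag}(\omega,\omega^2)$) does not have the eigenvalue $1$, so each of them contributes $+1$ to the Lefschetz number. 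Hence
$$
2+\operatorname{tr}\bigl(g\mid H^2(X,\Q)\bigr)=\sum_i(-1)^i\operatorname{tr}\bigl(g\mid H^i(X,\Q)\bigr)=\#\operatorname{Fix}(g)=4,
$$
so $\operatorname{tr}\bigl(g\mid\Pic(\XX)\otimes\Q\bigr)=2$. As $g$ has order $3$ and $\operatorname{rk}\Pic(\XX)=8$, the eigenvalues of $g$ on $\Pic(\XX)\otimes\Q$ are cube roots of unity with multiplicities $4$, $2$, $2$ for $1$, $\omega$, $\omega^2$; that is, the characteristic polynomial of $g$ is $(t-1)^4(t^2+t+1)^2$.

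Next I would do the same for $\sigma=(123)(456)\in\SG_7\subset\mathrm{W}(\mathrm{E}_7)$. In the basis $L,E_1,\dots,E_7$ of $\Pic(\XX)$, the element $\sigma$ fixes $L$ and $E_7$ and cyclically permutes $E_1,E_2,E_3$ and $E_4,E_5,E_6$; hence its trace on $\Pic(\XX)\otimes\Q$ is also $2$ and its characteristic polynomial is again $(t-1)^4(t^2+t+1)^2$. Note that $\sigma$ is the Coxeter element of the subsystem $\langle E_1-E_2,E_2-E_3\rangle\oplus\langle E_4-E_5,E_5-E_6\rangle$ of type $2A_2$. Finally, an element of order $3$ of $\mathrm{W}(\mathrm{E}_7)$ lies in one of exactly three conjugacy classes (of Carter types $A_2$, $2A_2$, $3A_2$), whose traces on $\Pic(\XX)\otimes\Q$ are $5$, $2$, $-1$ respectively, hence pairwise distinct. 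Therefore $g$ and $\sigma$ lie in the same ($2A_2$-)conjugacy class, which proves the lemma.

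The main obstacle is the last step: one needs that the trace on $\Pic(\XX)\otimes\Q$ separates the order-$3$ conjugacy classes of $\mathrm{W}(\mathrm{E}_7)$. This is a standard consequence of Carter's classification of conjugacy classes in Weyl groups and of the classification of root subsystems of $\mathrm{E}_7$ (see also the tables in \cite{DI1}); alternatively it can be read off from the isomorphism $\mathrm{W}(\mathrm{E}_7)\cong\CG_2\times\mathrm{Sp}_6(\F_2)$ together with the character table of $\mathrm{Sp}_6(\F_2)$ in \cite{ATLAS}. All the other steps are routine Lefschetz-number and lattice computations.
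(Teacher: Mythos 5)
Your proof is correct, but it takes a genuinely different route from the paper's. The paper argues geometrically: it observes that the $g$-invariant hyperplane section $z=0$ splits into two $(-1)$-curves, contracts one of them (call it $E_7$) to obtain a cubic surface on which $g$ still acts with only isolated fixed points, and then invokes \cite[Lemma 4.1]{Tr16b} to identify the action with $(123)(456)\in\SG_6\subset\mathrm{W}(\mathrm{E}_6)$, whence the statement in $\mathrm{W}(\mathrm{E}_7)$ via the identification of the stabilizer of $E_7$ with $\mathrm{W}(\mathrm{E}_6)$. You instead compute the trace of $g$ on $\Pic(\XX)\otimes\Q$ by the topological Lefschetz formula (the four isolated fixed points from Lemma \ref{DP2type4} give Lefschetz number $4$, hence trace $2$ on $H^2$), match it with the trace of $(123)(456)$, and conclude using the fact that the three order-$3$ classes of $\mathrm{W}(\mathrm{E}_7)$ (Carter types $A_2$, $2A_2$, $3A_2$) have pairwise distinct traces $5$, $2$, $-1$ on the $8$-dimensional lattice. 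All your computations check out (in particular the eigenvalue multiplicities $4,2,2$ and the identification of $(123)(456)$ as a $2A_2$-element), and the separating-traces fact is indeed standard from Carter's classification or the ATLAS character table of $\mathrm{Sp}_6(\F_2)$. The trade-off is that your argument is self-contained apart from that one group-theoretic input, whereas the paper's is shorter but outsources the real work to the cubic-surface lemma of \cite{Tr16b}; your method also generalizes immediately to pinning down conjugacy classes of other automorphisms by fixed-point data, which is in the spirit of how \cite{DI1} tabulates such classes.
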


\begin{proof}

By Theorem \ref{DP2cyclicclass} a del Pezzo surface $\XX$ can be given by the equation
$$
(x^3 + y^3)z + Ax^2y^2 + Bxyz^2 + z^4 + t^2 = 0
$$
\noindent in $\Pro^2_{\kka}(1 : 1 : 1 : 2)$, and an element $g$ of type $4$ acts as
$$
(x : y : z : t) \mapsto ( \omega x : \omega^2 y : z : t).
$$

The $g$-invariant plane section $z = 0$ is reducible and consists of two $(-1)$-curves. Denote one of these curves by $E_7$ and consider the contraction of $E_7$. We get a cubic surface $\overline{Y}$, and the element $g$ acts on $\overline{Y}$ and has only isolated fixed points, since $g$ has only isolated fixed points on $\XX$. In this case the action of~$g$ on~$\overline{Y}$ is conjugate to $(123)(456) \in \SG_6 \subset \mathrm{W}(\mathrm{E}_6)$ by \cite[Lemma 4.1]{Tr16b}. Therefore the action of $g$ on $\XX$ is conjugate \mbox{to $(123)(456) \in \SG_7 \subset \mathrm{W}(\mathrm{E}_7)$}.

\end{proof}

In the rest of this section let $G$ be the group $\langle (123)(456) \rangle \cong \CG_3$. The group $\Gamma \subset \mathrm{W}(\mathrm{E}_7)$ commutes with $G$. Thus~$\Gamma$ is always a subgroup of the centralizer $C(G)$ of $G$ in $\mathrm{W}(\mathrm{E}_7)$. To describe the group~$C(G)$ we use the following notation:
$$
a = (123), \qquad b = (456), \qquad c = (14)(25)(36),
$$
\noindent and $r$ and $s$ are elements in $\mathrm{W}(\mathrm{E}_7)$ of order $3$ and $2$ respectively such that
$$
s(E_7) = E_7; \qquad s(E_i) = Q_{i7}\,\,\textrm{if}\,\,1 \leqslant i \leqslant 6;
$$
$$
s(L_{ij}) = L_{ij}\,\,\textrm{if}\,\,1 \leqslant i < j \leqslant 6; \qquad s(L_{i7}) = C_{i}\,\,\textrm{if}\,\,1 \leqslant i \leqslant 6
$$
$$
r(E_7) = E_7; \qquad r(E_i) = Q_{i7} \,\,\textrm{if}\,\,1 \leqslant i \leqslant 3; \qquad r^2(E_i) = Q_{i7} \,\,\textrm{if}\,\,4 \leqslant i \leqslant 6;
$$
$$
r^2(E_i) = L_{jk} \,\,\textrm{if}\,\,i, j\,\,\textrm{and}\,\,k\,\,\textrm{are different numbers from the set}\,\,\{1, 2, 3\};
$$
$$
r(E_i) = L_{jk} \,\,\textrm{if}\,\,i, j\,\,\textrm{and}\,\,k\,\,\textrm{are different numbers from the set}\,\,\{4, 5, 6\}.
$$
This notation is introduced in \cite[Section 4]{Tr16b}, and we use some results of that paper. 

\begin{proposition}
\label{type4center}
The centralizer of the group $G \cong \CG_3$ generated by the element $ab$ in $\mathrm{W}(\mathrm{E}_7)$ is a subgroup
$$
C(G) = \langle a, b, cs, r, s, \gamma \rangle \cong \left( \CG_3^2 \rtimes \CG_2 \right) \times \SG_3 \times \CG_2,
$$
\noindent where the first factor is generated by $a$, $b$ and $cs$, the second factor is generated by $r$ and~$s$, and the third factor is generated by $\gamma$.
\end{proposition}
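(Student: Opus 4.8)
The plan is to reduce the computation in $\mathrm{W}(\mathrm{E}_7)$ to the analogous, already-known computation inside the subgroup of $\mathrm{W}(\mathrm{E}_7)$ stabilizing the $(-1)$-curve $E_7$, and then to show that the Geiser involution $\gamma$ contributes exactly one further coset. First I would check that every listed generator centralizes $ab = (123)(456)$: this is clear for $\gamma$, which is central in $\mathrm{W}(\mathrm{E}_7)$; for $a$ and $b$, which are disjoint cycles commuting with each other; and for $c$, which conjugates $a$ to $b$ and $b$ to $a$, hence fixes $ab = ba$. For $r$ and $s$ it follows from \cite[Section~4]{Tr16b}, since their action on the sublattice $\langle L, E_1, \dots, E_6 \rangle$ coincides with that of the elements called $r$, $s$ there and they fix $E_7$. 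Thus $cs$ centralizes $ab$ as well, and $H := \langle a, b, cs, r, s, \gamma \rangle \subseteq C(G)$.

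For the reverse inclusion I would first list the $(-1)$-curves fixed by $ab$. Running through the $56$ curves $E_i$, $L_{ij}$, $Q_{ij}$, $C_i$ of Notation~\ref{DP_1curves}, the index permutation $(123)(456)$ fixes only the index $7$ and fixes no two-element subset of $\{1, \dots, 7\}$; hence the only $ab$-invariant $(-1)$-curves are $E_7$ and $C_7 = \gamma(E_7)$. Since $C(G)$ permutes the $ab$-fixed $(-1)$-curves, it acts on $\{E_7, C_7\}$, and $\gamma \in C(G)$ realizes the transposition, so the subgroup of $C(G)$ fixing $E_7$ has index $2$:
$$
|C(G)| = 2 \cdot \bigl| C(G) \cap \operatorname{Stab}_{\mathrm{W}(\mathrm{E}_7)}(E_7) \bigr|.
$$
Contracting $E_7$ presents $\XX$ as a cubic surface (as in the proof of Lemma~\ref{Eltype4}), which identifies $\operatorname{Stab}_{\mathrm{W}(\mathrm{E}_7)}(E_7)$ with $\mathrm{W}(\mathrm{E}_6)$ acting on $\langle L, E_1, \dots, E_6 \rangle$ and carries $ab$ to the element $(123)(456) \in \SG_6 \subset \mathrm{W}(\mathrm{E}_6)$ treated in \cite{Tr16b}. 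By \cite[Section~4]{Tr16b} its centralizer in $\mathrm{W}(\mathrm{E}_6)$ is exactly $\langle a, b, cs, r, s \rangle \cong \left( \CG_3^2 \rtimes \CG_2 \right) \times \SG_3$, of order $108$; as this group fixes $E_7$ it equals $C(G) \cap \operatorname{Stab}_{\mathrm{W}(\mathrm{E}_7)}(E_7)$, so $|C(G)| = 216$ and $C(G) = \langle a, b, cs, r, s \rangle \sqcup \gamma \langle a, b, cs, r, s \rangle = H$. Finally, $\gamma$ is central of order $2$ and moves $E_7$, so it does not lie in $\langle a, b, cs, r, s \rangle$; adjoining it contributes a direct factor $\CG_2$, which yields the stated decomposition $\left( \CG_3^2 \rtimes \CG_2 \right) \times \SG_3 \times \CG_2$ with the factors as described.

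The step demanding the most care is this reduction: checking, by a complete enumeration of the $56$ $(-1)$-curves, that no curve other than $E_7$ and $C_7$ is $ab$-invariant, and setting up the identification $\operatorname{Stab}_{\mathrm{W}(\mathrm{E}_7)}(E_7) \cong \mathrm{W}(\mathrm{E}_6)$ so that $ab$ and the elements $a, b, c, r, s$ correspond precisely to those of \cite{Tr16b}. Once this is in place, the order count and the internal direct-product structure follow immediately from the results of \cite{Tr16b}.
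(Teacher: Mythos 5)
Your proposal is correct and follows essentially the same route as the paper: both identify $E_7$ and $C_7$ as the only $ab$-invariant $(-1)$-classes, split off the central factor $\langle\gamma\rangle$ via the index-$2$ stabilizer of $E_7$, and reduce the remaining computation to the centralizer of $(123)(456)$ in $\mathrm{W}(\mathrm{E}_6)$ established in \cite[Proposition 4.5]{Tr16b}. No gaps.
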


\begin{proof}
In $\mathrm{W}(\mathrm{E}_7)$ the element $ab$ has two invariant classes of $(-1)$-curves: $E_7$ and $C_7$. These classes are permuted by the Geiser involution $\gamma$ that commutes with $ab$. Therefore one has \mbox{$C(G) = C_0(G) \times \langle \gamma \rangle$}, where the group $C_0(G)$ is the stabilizer of $E_7$ in $C(G)$. The group $C_0(G)$ faithfully acts on the sublattice $E_7^{\perp} \cong \langle L, E_1, \ldots, E_6 \rangle$. Therefore $C_0(G)$ is a subgroup of $\mathrm{W}(\mathrm{E}_6) \subset \mathrm{W}(\mathrm{E}_7)$, and any element of $C_0(G)$ commutes with $ab \in \mathrm{W}(\mathrm{E}_6)$. The group $C_0(G)$ is
$$
\langle a, b, cs, r, s \rangle \cong \left( \CG_3^2 \rtimes \CG_2 \right) \times \SG_3
$$
\noindent by \cite[Proposition 4.5]{Tr16b}. Therefore
$$
C(G) = \langle a, b, cs, r, s, \gamma \rangle \cong \left( \CG_3^2 \rtimes \CG_2 \right) \times \SG_3 \times \CG_2.
$$
\end{proof}

We want to find subgroups of $C(G)$ such that $\rho(X)^G = 1$ and there exists a birational morphism $X \rightarrow Y$, where $K_Y^2 \geqslant 5$. In Section $6$ we use this to construct examples of quotients of $\ka$-rational del Pezzo surfaces of degree $2$.

In the following two lemmas and proposition the reader should pay attention to difference between the notions of minimal surface and $G$-minimal surface, $\rho(X)$ and $\rho(X)^G$, morphism and \mbox{$G$-equivariant} morphism.

\begin{lemma}
\label{type4min2nonrational}
If $s\gamma \in \Gamma$ then $X$ is minimal.
\end{lemma}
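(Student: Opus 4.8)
The plan is to prove minimality by exhibiting, directly on $\Pic(\XX)$, that no nonempty $\Gamma$-invariant set of pairwise disjoint $(-1)$-curves exists on $\XX$. Recall that by Theorem \ref{GMMP} (applied with trivial acting group, so that ``$G$-invariant and defined over $\ka$'' becomes simply ``$\Gamma$-invariant'') any birational morphism of smooth surfaces over $\ka$ factors into contractions of $\Gamma$-invariant sets of disjoint $(-1)$-curves; hence the absence of such a nonempty set is exactly the assertion that $X$ is minimal. Since $s\gamma \in \Gamma$, every $\Gamma$-invariant set of disjoint $(-1)$-curves is in particular $\langle s\gamma \rangle$-invariant, so it suffices to show that there is no nonempty $\langle s\gamma \rangle$-invariant set of pairwise disjoint $(-1)$-curves on $\XX$.

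The next step is to compute the orbits of $s\gamma$ on the $56$ lines of $\XX$, using Notation \ref{DP_1curves} and Lemma \ref{Eltype4}. Since $\gamma$ is central in $\mathrm{W}(\mathrm{E}_7)$ and $s^2 = 1$, the element $s\gamma$ is an involution. Recall that $\gamma$ interchanges $E_i \leftrightarrow C_i$ and $L_{ij} \leftrightarrow Q_{ij}$, while $s$ fixes $E_7$, interchanges $E_i \leftrightarrow Q_{i7}$ and $L_{i7} \leftrightarrow C_i$ for $1 \leqslant i \leqslant 6$, and fixes $L_{ij}$ for $1 \leqslant i < j \leqslant 6$; moreover $s$ fixes $C_7$ and fixes $Q_{ij}$ for $1 \leqslant i < j \leqslant 6$, because $s^2 = 1$ and $C_7$ (respectively $Q_{ij}$) is the only $(-1)$-curve meeting the $s$-invariant line $E_7$ (respectively $L_{ij}$) with intersection multiplicity $2$. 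Combining these facts (e.g.\ $s\gamma(E_i) = s(C_i) = L_{i7}$, $s\gamma(C_i) = s(E_i) = Q_{i7}$, $s\gamma(L_{ij}) = s(Q_{ij}) = Q_{ij}$, $s\gamma(E_7) = s(C_7) = C_7$) one finds that $s\gamma$ has exactly $28$ orbits on the lines, each of size $2$: the orbit $\{E_7, C_7\}$, the orbits $\{L_{ij}, Q_{ij}\}$ for $1 \leqslant i < j \leqslant 6$, and the orbits $\{E_i, L_{i7}\}$ and $\{C_i, Q_{i7}\}$ for $1 \leqslant i \leqslant 6$.

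To conclude, note that in each of these orbits the two curves intersect: $E_7 \cdot C_7 = L_{ij} \cdot Q_{ij} = 2$ and $E_i \cdot L_{i7} = C_i \cdot Q_{i7} = 1$ by Notation \ref{DP_1curves}. Therefore any nonempty $\langle s\gamma \rangle$-invariant set of $(-1)$-curves, being a union of some of these orbits, contains two curves with positive intersection number, so it is not a set of pairwise disjoint curves. By the reduction of the first paragraph, $\XX$ carries no nonempty $\Gamma$-invariant set of pairwise disjoint $(-1)$-curves, and hence $X$ is minimal. (As a byproduct, $\rho(X) = \operatorname{rk} \Pic(\XX)^{\langle s\gamma \rangle} = 2$, so $X$ is actually a minimal conic bundle of degree $2$; this is exactly what is needed together with Theorem \ref{ratcrit} to deduce non-$\ka$-rationality in the examples of Section $6$.)

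The only genuinely delicate point is the orbit bookkeeping in the second paragraph, and within it the identification of $s$ on $C_7$ and on the $Q_{ij}$ with $1 \leqslant i < j \leqslant 6$, which are not recorded explicitly in Notation \ref{DP_1curves}. I would settle this either by the uniqueness of a $(-1)$-curve meeting a given $s$-invariant line in a prescribed multiplicity, as indicated above, or by writing out $s$ as a linear self-map of $\Pic(\XX)$ (it suffices to know the images of $L$ and the $E_i$) and evaluating it on the classes $C_7$ and $Q_{ij}$; everything else is a routine check against the intersection table.
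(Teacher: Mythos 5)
Your proof is correct, but it takes a different route from the paper. The paper argues through the invariant Picard lattice: it observes that $L_{16},\dots,L_{56},E_7$ are disjoint and $s$-invariant, deduces that $s$ has eigenvalues $(+1)^6,(-1)$ on $K_X^{\perp}\otimes\mathbb{Q}$, hence $s\gamma$ has $\rho(\XX)^{\langle s\gamma\rangle}=2$ with $\Pic(\XX)^{\langle s\gamma\rangle}=\langle -K_X, L-E_7\rangle$, and then invokes the conic bundle structure $|L-E_7|$ together with Theorem \ref{MinCB}. You instead work directly with the $56$ lines: you determine $s$ on the classes not listed explicitly ($C_7$ and the $Q_{ij}$ with $i,j\leqslant 6$) by the uniqueness of the $(-1)$-curve meeting a fixed line with multiplicity $2$, list the $28$ orbits of the involution $s\gamma$, check that the two members of each orbit meet, and conclude via the factorization of birational morphisms that no nonempty Galois-invariant set of disjoint $(-1)$-curves exists. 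Both arguments are sound; yours is more elementary and self-contained (no appeal to Theorem \ref{MinCB}, and it sidesteps the question of whether the conic bundle is defined over $\ka$), while the paper's is shorter and produces the invariant lattice, which is reused elsewhere in Section $5$. One caveat about your parenthetical ``byproduct'': $\rho(X)=\operatorname{rk}\Pic(\XX)^{\Gamma}$, and $\Gamma$ may be strictly larger than $\langle s\gamma\rangle$ (e.g.\ it may contain $c\gamma$, which does not fix $L-E_7$), so $\rho(X)$ can equal $1$ rather than $2$ and $X$ need not carry a conic bundle structure over $\ka$. This does not affect the lemma or its use in Section $6$ --- minimality plus $K_X^2=2<5$ already gives non-$\ka$-rationality by Theorem \ref{ratcrit} --- but the equality $\rho(X)=2$ should not be asserted unconditionally.
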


\begin{proof}
Note that $(-1)$-curves $L_{16}$, $L_{26}$, $L_{36}$, $L_{46}$, $L_{56}$ and $E_7$ are disjoint and $s$-invariant. Therefore the action of $s$ on $K_X^{\perp} \otimes \mathbb{Q}$ has one eigenvalue $-1$ and six eigenvalues $1$. The Geiser involution $\gamma$ acts on $K_X^{\perp} \otimes \mathbb{Q}$ by the multiplication on $-1$, thus the action of $s\gamma$ on $K_X^{\perp} \otimes \mathbb{Q}$ has one eigenvalue $1$ and six eigenvalues $-1$, and $\rho(\XX)^{\langle s\gamma \rangle} = 2$. One can check that $\Pic(\XX)^{\langle s\gamma \rangle}$ is generated by $-K_X$ and $L - E_7$. The linear system $|L - E_7|$ gives a structure of a conic bundle on $\XX$, therefore $X$ is minimal by Theorem~\ref{MinCB}.
\end{proof}

\begin{lemma}
\label{type4min3nonrational}
If the group $\Gamma$ is $\langle abr \rangle$, $\langle a^2br \rangle$, $\langle ab, r \rangle$ or $\langle a^2b, r \rangle$ then $\rho(X) = 2$ and $\Pic(X)$ is generated by $K_X$ and $E_7$.
\end{lemma}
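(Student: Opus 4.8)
The plan is to compute the invariant sublattice $\Pic(\XX)^{\Gamma}$ in the basis $L, E_1, \dots, E_7$ of $\Pic(\XX)$ and to show that in all four cases it equals $\Z K_X + \Z E_7$. I would begin with two elementary observations. First, the rank-$2$ sublattice $\Z K_X + \Z E_7 \subset \Pic(\XX)$ is primitive: writing $K_X = -3L + \sum_{i=1}^{7} E_i$, one checks that $\alpha K_X + \beta E_7 \in n\,\Pic(\XX)$ holds if and only if $n \mid \alpha$ and $n \mid \beta$, so $\Pic(\XX)/(\Z K_X + \Z E_7)$ is torsion free. Second, the class $E_7$ is $\Gamma$-invariant in every one of the four cases: the permutations $a = (123)$ and $b = (456)$ fix $7$, and $r(E_7) = E_7$ by the definition of $r$, so each generator of $\Gamma$ fixes $E_7$. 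Hence both $K_X$ and $E_7$ are classes of divisors defined over $\ka$, which gives the inclusions
$$
\Z K_X + \Z E_7 \subseteq \Pic(X) \subseteq \Pic(\XX)^{\Gamma}.
$$
Since $\Pic(\XX)^{\Gamma}$ is automatically a primitive sublattice of the torsion-free group $\Pic(\XX)$, it will then be enough to prove that $\operatorname{rk}\Pic(\XX)^{\Gamma} = 2$: given this, the two inclusions collapse to equalities (a primitive sublattice cannot have a proper finite-index oversublattice), so $\Pic(X) = \Z K_X + \Z E_7$ and $\rho(X) = 2$. As a by-product, $E_7$ is then a conic carrying the degree-$1$ class $-K_X|_{E_7}$, so $E_7 \cong \Pro^1_{\ka}$ and $X(\ka) \neq \varnothing$.

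Next I would reduce the rank computation to the two cyclic cases. As $ab$ and $r$ commute by Proposition \ref{type4center} and both have order $3$, the groups $\langle ab, r\rangle$ and $\langle a^2 b, r\rangle$ contain $abr$ and $a^2 b r$ respectively, so
$$
\Pic(\XX)^{\langle ab, r\rangle} \subseteq \Pic(\XX)^{\langle abr \rangle}, \qquad \Pic(\XX)^{\langle a^2 b, r\rangle} \subseteq \Pic(\XX)^{\langle a^2 b r \rangle}.
$$
Combined with the inclusion $\Z K_X + \Z E_7 \subseteq \Pic(\XX)^{\Gamma}$ already established, it then suffices to show that the order-$3$ elements $abr$ and $a^2 b r$ each have a $2$-dimensional invariant subspace on $\Pic(\XX)\otimes\Q$. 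For an order-$3$ automorphism of the $8$-dimensional rational vector space $\Pic(\XX)\otimes\Q$ the multiplicity $m$ of the eigenvalue $1$ satisfies $3m = 8 + 2\operatorname{tr}$, so this amounts to showing $\operatorname{tr}(abr) = \operatorname{tr}(a^2 b r) = -1$.

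The final step is this trace computation, which I expect to be the only genuinely calculational — and hence the main — obstacle. Using Notation \ref{DP_1curves} one writes the matrices of $a$, $b$, $r$ in the basis $L, E_1, \dots, E_7$: those of $a = (123)$ and $b = (456)$ are the evident permutation matrices fixing $L$, and $r$ is determined by $r(E_7) = E_7$, $r(E_i) = Q_{i7}$ for $1 \leqslant i \leqslant 3$, $r(E_i) = L_{jk}$ for $\{i,j,k\} = \{4,5,6\}$, together with $r(L) = 4L - (E_1 + E_2 + E_3) - 2(E_4 + E_5 + E_6)$, the last relation being forced by $r(K_X) = K_X$. Multiplying, the diagonal entries of the matrix of $abr$ come out to be $4$ in the $L$-slot, $-1$ in each of the $E_1,\dots,E_6$-slots, and $1$ in the $E_7$-slot, so $\operatorname{tr}(abr) = 4 - 6 + 1 = -1$; replacing $a$ by $a^2 = (132)$ leaves every diagonal entry unchanged, so $\operatorname{tr}(a^2 b r) = -1$ as well. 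The one delicate point is computing $r(L)$ correctly (equivalently, expanding the $r$-images $Q_{i7}$ and $L_{jk}$ of the $E_i$); everything else is routine bookkeeping, and assembling the three steps completes the proof.
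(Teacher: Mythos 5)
Your argument is correct, and it is genuinely different from the paper's: the paper disposes of this lemma in one line by citing \cite[Lemma 4.10]{Tr16b}, i.e.\ it transports the analogous statement for cubic surfaces through the $\langle ab, r\rangle$- and $\Gamma$-invariant curve $E_7$, whose contraction realizes the relevant subgroup inside $\mathrm{W}(\mathrm{E}_6)$. You instead give a self-contained computation in $\mathrm{W}(\mathrm{E}_7)$, and each step checks out: $\Z K_X + \Z E_7$ is indeed primitive in $\Pic(\XX)$ (the coefficients of $E_1$ and $E_7$ force $n \mid \alpha$ and $n \mid \beta$); since $r$ lies in the $\SG_3$-factor of the direct product decomposition of $C(\langle ab\rangle)$ in Proposition \ref{type4center}, it commutes with $a$ and $b$, so the two non-cyclic cases reduce to the cyclic ones; the formula $r(L) = 4L - (E_1+E_2+E_3) - 2(E_4+E_5+E_6)$ is forced by $r(K_X)=K_X$ and is consistent ($r(L)^2=1$, $r(L)\cdot K_X=-3$, and it reproduces $r^2(E_1)=L_{23}$); and the diagonal entries of $abr$ and $a^2br$ are $4, -1,\dots,-1, 1$ because $E_i$ always reappears with coefficient $-1$ in $ab(Q_{i7})$ and $ab(L_{jk})$, the permutations being fixed-point-free on $\{1,\dots,6\}$. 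This gives $\operatorname{tr} = -1$, hence eigenvalue $1$ with multiplicity $2$, and the sandwich $\Z K_X + \Z E_7 \subseteq \Pic(X) \subseteq \Pic(\XX)^{\Gamma}$ collapses. What your route buys is independence from \cite{Tr16b} and an explicit identification of the invariant lattice; what it costs is the matrix bookkeeping that the citation avoids. The only point worth flagging is cosmetic: the inclusion $\Pic(X) \subseteq \Pic(\XX)^{\Gamma}$ uses Hilbert 90 (injectivity of $\Pic(X) \to \Pic(\XX)$), which you use tacitly, and the closing remark about $X(\ka) \neq \varnothing$ is not needed for the lemma.
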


\begin{proof}
This lemma directly follows from \cite[Lemma 4.10]{Tr16b}.
\end{proof}

\begin{proposition}
\label{type4minrational}
Let $X$ be a del Pezzo surface of degree $2$ such that the group \mbox{$G = \langle ab \rangle \cong \CG_3$} acts on~$X$. There exists a birational morphism $X \rightarrow Y$ such that $K_Y^2 \geqslant 5$, and $\rho(X)^G = 1$ if and only if $\Gamma$ is conjugate to $\langle r, cs\gamma \rangle$, $\langle r, c\gamma \rangle$ or $\langle r, c\gamma, s \rangle$.
\end{proposition}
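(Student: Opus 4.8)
The plan is to exploit that $\Gamma$ commutes with $G = \langle ab \rangle$, so that $\Gamma$ lies in the centralizer $C(G)$ computed in Proposition~\ref{type4center}, and then to go through the conjugacy classes of subgroups of $C(G) \cong \left( \CG_3^2 \rtimes \CG_2 \right) \times \SG_3 \times \CG_2$, testing the two conditions of the proposition for each. Both conditions are lattice-theoretic. Write $N = K_X^{\perp} \otimes \Q \subset \Pic(\XX) \otimes \Q$; since $K_X^2 = 2$ one has the orthogonal decomposition $\Pic(\XX) \otimes \Q = \Q K_X \oplus N$, hence $\rho(X) = 1 + \dim N^{\Gamma}$, and because $\Gamma$ centralizes $G$ the equality $\rho(X)^G = 1$ amounts to $N^{\langle ab \rangle} \cap N^{\Gamma} = 0$. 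As $ab = (123)(456)$ fixes $L$ and $E_7$ and cyclically permutes $\{E_1, E_2, E_3\}$ and $\{E_4, E_5, E_6\}$, the space $N^{\langle ab \rangle}$ has rank $3$; thus $\rho(X)^G = 1$ forces $\Gamma$ to be fairly large, and in particular $\rho(X) \leq 4$.

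For the second condition I would use equivariant minimal model theory for the trivial group: by Theorem~\ref{GMMP} a birational morphism of smooth $\ka$-surfaces is a composition of contractions of $\Gamma$-invariant sets of pairwise disjoint $(-1)$-curves, so a birational morphism $X \to Y$ with $K_Y^2 \geq 5$ exists if and only if $\XX$ admits such contractions removing at least three $(-1)$-curves in total. To produce one it suffices to exhibit a single $\Gamma$-invariant set of three pairwise disjoint $(-1)$-curves on $\XX$ and contract its $\Gamma$-orbits one after another, raising $K^2$ from $2$ to at least $5$. To rule one out I would invoke Lemma~\ref{type4min2nonrational} (if $s\gamma \in \Gamma$ then $X$ is minimal) and Lemma~\ref{type4min3nonrational} (if $\Gamma$ is $\langle abr \rangle$, $\langle a^2br \rangle$, $\langle ab, r \rangle$ or $\langle a^2b, r \rangle$ then $\rho(X) = 2$ and $\Pic(X) = \langle K_X, E_7 \rangle$, so the only contraction available leads to $K_Y^2 = 3$), together with a direct inspection of $\Pic(\XX)^{\Gamma}$ and of the $\Gamma$-orbits of the $(-1)$-curves listed in Notation~\ref{DP_1curves} in the few remaining borderline cases.

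Carrying this out, the forward direction runs through the subgroups $\Gamma \leq C(G)$, discards those with $N^{\langle ab \rangle} \cap N^{\Gamma} \neq 0$ (too small to give $\rho(X)^G = 1$) and those for which the available $\Gamma$-invariant contractions of $\XX$ stop short of degree $5$, and then checks that the survivors are exactly the conjugacy classes of $\langle r, cs\gamma \rangle$, $\langle r, c\gamma \rangle$ and $\langle r, c\gamma, s \rangle$. For the converse, for each of these three groups I would verify directly that $N^{\langle ab \rangle} \cap N^{\Gamma} = 0$, so that $\rho(X)^G = 1$, and exhibit an explicit $\Gamma$-invariant triple of pairwise disjoint $(-1)$-curves — which can be found among $E_7$ and suitable $L_{ij}$ and $Q_{ij}$, using that $\gamma$ interchanges $L_{ij} \leftrightarrow Q_{ij}$ and that $s$, $r$ and $c$ permute these classes in the prescribed way — whose contraction produces a $Y$ with $K_Y^2 \geq 5$.

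The main obstacle is the middle bookkeeping: for a subgroup $\Gamma$ not covered by Lemmas~\ref{type4min2nonrational} and~\ref{type4min3nonrational} one must decide whether the chain of $\Gamma$-invariant contractions of $\XX$ actually reaches degree $5$, which is not a mere rank count — a del Pezzo surface of degree $3$ or $4$ obtained after a first contraction may itself turn out to be minimal, so one has to track which $(-1)$-curve classes genuinely form $\Gamma$-orbits at each stage. Organising the subgroups of $C(G)$ so that this detailed check is needed only for a short list of candidates is precisely where Proposition~\ref{type4center} and Lemmas~\ref{type4min2nonrational}, \ref{type4min3nonrational} do the heavy lifting.
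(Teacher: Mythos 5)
Your overall strategy coincides with the paper's: both reduce to $\Gamma\subset C(G)$ via Proposition~\ref{type4center}, test $\rho(X)^G=1$ by the lattice condition and non-minimality via Lemmas~\ref{type4min2nonrational} and~\ref{type4min3nonrational}, and then eliminate subgroups. However, as written your argument is a roadmap rather than a proof: the entire content of the proposition lies in the case elimination that you defer as ``bookkeeping,'' and you never actually show that the survivors are exactly $\langle r, cs\gamma \rangle$, $\langle r, c\gamma \rangle$ and $\langle r, c\gamma, s \rangle$. The paper makes this elimination tractable by decomposing $\Gamma$ into its $2$-part and $3$-part: the $2$-Sylow subgroup must avoid $\gamma$ and $s\gamma$ (Lemma~\ref{type4min2nonrational}) and cannot lie in $\langle c,s\rangle$ because $E_7$ is $G$- and $\langle c,s\rangle$-invariant, which would force $\rho(X)^G>1$; the $3$-part is constrained by Lemma~\ref{type4min3nonrational} together with conjugation by $c\gamma$ or $cs\gamma$; and the residual case $\Gamma\subset\langle c\gamma,s\rangle$ is killed by the $G\times\Gamma$-invariant sextuple $L_{12}, L_{13}, L_{23}, Q_{45}, Q_{46}, Q_{56}$. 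None of these specific steps appears in your write-up, and without them the claim is unproved. Note also that the worry you raise about intermediate surfaces being minimal is unnecessary in the forward direction: the hypothesis $K_Y^2\geqslant 5>2$ is only used to conclude that $X$ itself is not minimal, and all further eliminations use only $\rho(X)^G=1$; the full strength of ``reaches degree $\geqslant 5$'' is needed only in the converse, where one exhibits an explicit invariant set (the paper contracts $L_{15}$, $Q_{24}$, $L_{16}$, $Q_{34}$ to reach degree $6$).

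Two smaller inaccuracies. First, from $\dim N^{\langle ab\rangle}=3$ and $N^{\langle ab\rangle}\cap N^{\Gamma}=0$ in the $7$-dimensional space $N$ you get $\rho(X)\leqslant 5$, not $\leqslant 4$. Second, in the converse you propose to find the invariant disjoint triple ``among $E_7$ and suitable $L_{ij}$ and $Q_{ij}$''; but $E_7$ cannot belong to any $\Gamma$-invariant set of pairwise disjoint $(-1)$-curves for the three groups in question, since $c\gamma$ sends $E_7$ to $C_7$ and $E_7\cdot C_7=2$. The invariant set must be built entirely from the $L_{ij}$ and $Q_{ij}$, as in the paper. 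Finally, for the converse you still owe the verification that $\rho(X)^G=1$ for each of the three groups; the paper gets this from Lemma~\ref{type4min3nonrational} ($\Pic(\XX)^{\langle ab, r\rangle}$ is generated by $K_X$ and $E_7$) together with the observation that $c\gamma$ and $cs\gamma$ do not preserve $E_7$ --- a short argument, but one that must be made.
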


\begin{proof}
Assume that there exists a birational morphism $X \rightarrow Y$ such that $K_Y^2 \geqslant 5$ and~$\rho(X)^G = 1$. In particular, $X$ is not minimal.

Let us consider a $2$-Sylow subgroup $\Gamma_2$ of $\Gamma$. We may assume that $\Gamma_2$ is a subgroup of a~$2$-Sylow subgroup of $C(G)$, that is $\langle c, s, \gamma \rangle \cong \CG_3^2$ (see Proposition \ref{type4center}). If $\Gamma_2$ contains $\gamma$ or $s\gamma$ then $X$ is minimal by Lemma \ref{type4min2nonrational}, and we have a contradiction. Moreover, the curve~$E_7$ is $G$-invariant and $\langle c, s \rangle$-invariant. Therefore $\Gamma_2$ is $\langle c\gamma \rangle$, $\langle cs\gamma \rangle$ or $\langle c\gamma, s \rangle$.

By Lemma \ref{type4min3nonrational} the group $\Gamma$ does not contain $abr$, $a^2br$, $ab^2r$ and $a^2b^2r$. If $\Gamma$ contains~$ar$ then $\Gamma$ contains $br$ and $abr^2$, since $cac^{-1} = b$ and $\Gamma$ contains $c\gamma$ or $cs\gamma$. Thus a $3$-Sylow subgroup of $\Gamma$ is either a subgroup of $\langle a, b \rangle$ or is $\langle r \rangle \cong \CG_3$.

Note that $\Pic(\XX)^{\langle ab \rangle} = \Pic(\XX)^{\langle a^2b \rangle}$ is generated by $-K_X$, $E_1 + E_2 + E_3$, $E_4 + E_5 + E_6$ and $E_7$. Therefore $\Gamma$ does not contain $ab$ and $a^2b$, since otherwise $\rho(X) = \rho(X)^G = 1$. Moreover, if $\Gamma$ contains $a$ then $\Gamma$ contains $b$ and $ab$, since $cac^{-1} = b$ and $\Gamma$ contains $c\gamma$ or~$cs\gamma$.

Note that if $\Gamma \subset \langle c\gamma, s \rangle$ then the set of disjoint curves $L_{12}$, $L_{13}$, $L_{23}$, $Q_{45}$, $Q_{46}$ and $Q_{56}$ is $G \times \Gamma$-invariant, and $\rho(X)^G > 1$. Therefore $\Gamma$ is $\langle r, cs\gamma \rangle$, $\langle r, c\gamma \rangle$ or $\langle r, c\gamma, s \rangle$.

Now we show that the groups $\langle r, cs\gamma \rangle$, $\langle r, c\gamma \rangle$ and $\langle r, c\gamma, s \rangle$ satisfy the conditions of Proposition \ref{type4minrational}.

The groups $\langle r, cs\gamma \rangle$ and $\langle r, c\gamma \rangle$ are subgroups of the group $\langle r, c\gamma, s \rangle$. One can \mbox{$\langle r, c\gamma, s \rangle$-equivariantly} contract $(-1)$-curves $L_{15}$, $Q_{24}$, $L_{16}$ and $Q_{34}$ and get a del Pezzo surface $Y$ of degree $6$.

By Lemma \ref{type4min3nonrational} one has $\rho(\XX)^{\langle ab, r \rangle} = 2$ and $\Pic(\XX)^{\langle ab, r \rangle}$ is generated by $K_X$ and $E_7$. The elements $cs\gamma$ and $c\gamma$ do not preserve the curve $E_7$. Therefore for the groups $\langle ab, r, cs\gamma \rangle$, $\langle ab, r, c\gamma \rangle$ or $\langle ab, r, c\gamma, s \rangle$ one has $\rho(X)^G = 1$.

\end{proof}

\section{Examples}

In this section we construct explicit examples of quotients of del Pezzo surfaces $X$ of degree $2$ by finite groups $G$ listed in Proposition \ref{DP2} such that $\rho(X)^G = 1$, $X(\ka) \neq \varnothing$ and there is a smooth $\ka$-point on $X / G$. If $G$ is trivial then $X$ is non-$\ka$-rational by Theorem~\ref{ratcrit}, and if $G$ is of type $(3)$ then $X$ is non-$\ka$-rational by Lemma \ref{DP2type2nonrat} and $X / G$ is non-$\ka$-rational by Remark \ref{DP2type2min}. For the other groups listed in Proposition \ref{DP2} we show that each of the four possibilities of $\ka$-rationality of $X$ and $X / G$ is realized for certain $\ka$: the surface $X$ can be $\ka$-rational and $X / G$ can be non-$\ka$-rational, $X$ can be non-$\ka$-rational and $X / G$ can be non-$\ka$-rational, $X$ can be $\ka$-rational and $X / G$ can be $\ka$-rational, $X$ can be non-$\ka$-rational and $X / G$ can be $\ka$-rational. For each type of group, $\ka$-rational and non-$\ka$-rational $X$, \mbox{$\ka$-rational} and non-$\ka$-rational $X / G$ we give a reference to the corresponding example in the following table.

\vbox{
\begin{center}
\begin{longtable}{|c|c|c|c|c|}
\caption[]{\label{table4} examples of quotients}\endhead
\hline
Type of $G$ & $X$ rat., $X / G$ rat. & $X$ rat., $X / G$ not & $X$ not, $X / G$ rat. & $X$ not, $X / G$ not  \\
\hline
$\mathrm{id}$, type $(1)$ & Impossible & Impossible & Impossible & Example \ref{example5} \\
\hline
$\CG_2$, type $(2)$ & Example \ref{example4} & Example \ref{example1} & Example \ref{example8} & Example \ref{example5} \\
\hline
$\CG_2$, type $(3)$ & Impossible & Impossible & Impossible & Example \ref{example5} \\
\hline
$\SG_3$, type $(4)$ & Example \ref{rattorat} & Example \ref{rattonrat} & Example \ref{nrattorat} & Example \ref{nrattonrat} \\
\hline
$\CG_4$, type $(5)$ & Example \ref{example2} & Example \ref{example1} & Example \ref{example8} & Example \ref{example5} \\
\hline
$\CG_4$, type $(6)$ & Example \ref{example2} & Example \ref{example1} & Example \ref{example7} & Example \ref{example5} \\
\hline
$\CG_2^2$, type $(7)$ & Example \ref{example1} & Example \ref{example4} & Example \ref{example5} & Example \ref{example7} \\
\hline
$\SG_3$, type $(8)$ & Example \ref{rattorat} & Example \ref{rattonrat} & Example \ref{nrattorat} & Example \ref{nrattonrat} \\
\hline
$\DG_8$, type $(9)$ & Example \ref{example1} & Example \ref{example3} & Example \ref{example5} & Example \ref{example6}  \\
\hline
$Q_8$, type $(10)$ & Example \ref{example0} & Example \ref{example1} & Example \ref{example8} & Example \ref{example5} \\
\hline
$Q_8$, type $(11)$ & Example \ref{example2} & Example \ref{example1} & Example \ref{example6} & Example \ref{example5} \\
\hline

\end{longtable}
\end{center}}

\subsection{2-groups}

Let $X$ be a del Pezzo surface of degree $2$ and $G$ be a finite subgroup of~$\Aut_{\ka}(X)$. In this subsection for the groups of types $(2)$, $(5)$, $(6)$, $(7)$, $(9)$, $(10)$, $(11)$ in the notation of Proposition \ref{DP2}, we construct examples of \mbox{$\ka$-rational} and \mbox{non-$\ka$-rational} quotients of $\ka$-rational and non-$\ka$-rational del Pezzo surfaces of degree~$2$ such \mbox{that $\rho(X)^G = 1$}. As a by-product for the groups of types $(1)$ and $(3)$ we construct examples of non-$\ka$-rational quotients of non-$\ka$-rational del Pezzo surfaces of degree $2$ such that $\rho(X)^G = 1$.

Assume that the field $\ka$ contains $\ii$. Let $X$ be a del Pezzo surface of degree $2$ given in~$\Pro_{\ka}(1 : 1 : 1 : 2)$ by the equation
\begin{equation}
\label{exequation}
Ax^4 + 2Bx^2y^2 + Ay^4 + Cz^4 - t^2 = 0,
\end{equation}
\noindent where
$$
A = w^2\eta = u^2\sigma + v^2 \tau, \qquad B = u^2\sigma - v^2 \tau, \qquad C = q^2 \sigma \tau \eta,
$$
\noindent and $u$, $v$, $w$, $\sigma$, $\tau$ and $\eta$ are nonzero elements of $\ka$. Note that if $\sigma = \tau$, $\sigma = \eta$ or $\tau = \eta$ then such triple $\{ u, v, w \}$ exists, since there is a $\ka$-point on a smooth conic $w^2\eta = u^2\sigma + v^2 \tau$ \mbox{in $\Pro^2_{\ka} = \mathrm{Proj}[u, v, w]$}. Otherwise, we require additional conditions on the field $\ka$.

A finite group $\left((\CG_4 \times \CG_2) \rtimes \CG_2\right) \times \CG_2$ generated by
$$
\alpha\beta: (x : y : z : t) \mapsto (\ii x : \ii y : z : t); \qquad \alpha\beta^3: (x : y : z : t) \mapsto (\ii x : -\ii y : z : t);
$$
$$
\delta: (x : y : z : t) \mapsto (y : x : z : t); \qquad \gamma: (x : y : z : t) \mapsto (x : y : z : -t),
$$
\noindent acts on $X$. We assume that $G$ is a subgroup in $\left((\CG_4 \times \CG_2) \rtimes \CG_2\right) \times \CG_2$ of a type listed in Proposition \ref{DP2}, and $G$ contains a normal subgroup $N = \langle \alpha^2\beta^2 \rangle$. In the notation of Proposition \ref{DP2} the group $G$ has type $(2)$, $(5)$, $(6)$, $(7)$, $(9)$, $(10)$ or $(11)$.

We want to prove the following proposition.

\begin{proposition}
\label{DP22example}
Assume that the field $\ka$ contains $\ii$, a group $G$ acts on a del Pezzo surface $X$ of degree $2$ and has type $(2)$, $(5)$, $(6)$, $(7)$, $(9)$, $(10)$ or $(11)$ of Proposition \ref{DP2}, there is a smooth $\ka$-point on $X / G$ and $\rho(X)^G = 1$. Under these conditions the following holds.

\begin{itemize}
\item For any $\ka$ and $G$ of type $(10)$ there exists an example of a $\ka$-rational surface $X$ such that $X / G$ is $\ka$-rational.

\item If the field $\ka$ contains an element $\mu$ such that $\Gal\left(\ka(\sqrt{\mu}) / \ka\right) \cong \CG_2$, then for $G$ of type $(5)$, $(6)$, $(7)$, $(9)$ or $(11)$ there exists an example of a $\ka$-rational surface $X$ such that $X / G$ is $\ka$-rational; for $G$ of type $(2)$, $(5)$, $(6)$, $(9)$, $(10)$ or $(11)$ there exists an example of a $\ka$-rational surface $X$ such that $X / G$ is non-$\ka$-rational; for~$G$ of type $(7)$, $(9)$ or $(11)$ there exists an example of a non-$\ka$-rational surface $X$ such that $X / G$ is $\ka$-rational; for $G$ of type~$(2)$, $(5)$, $(6)$, $(9)$, $(10)$ or $(11)$ there exists an example of a non-$\ka$-rational surface $X$ such that $X / G$ is non-$\ka$-rational. 

\item If the field $\ka$ contains elements $\mu$, $\nu$ such that $\Gal\left(\ka\left(\sqrt{\mu}, \sqrt{\nu}\right) / \ka\right) \cong \CG_2^2$ and the conic $\mu u^2 + \nu v^2 = \mu\nu w^2$ in $\Pro^2_{\ka}$ has a $\ka$-point, then for $G$ of type $(2)$ there exists an example of a $\ka$-rational surface $X$ such that $X / G$ is $\ka$-rational; for~$G$ of type $(7)$ there exists an example of a $\ka$-rational surface $X$ such that $X / G$ is non-$\ka$-rational; for $G$ of type $(2)$, $(5)$, $(6)$ or $(10)$ there exists an example of a non-$\ka$-rational surface $X$ such that $X / G$ is $\ka$-rational; for $G$ of type $(7)$ there exists an example of a non-$\ka$-rational surface $X$ such that $X / G$ is non-$\ka$-rational.

\end{itemize}

\end{proposition}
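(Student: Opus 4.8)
The plan is to reduce the whole statement to Corollary \ref{DP2evencrit} applied to the normal subgroup $N=\langle\alpha^2\beta^2\rangle$ of $G$, and then to produce, for each of the seven group types and each of the three hypotheses on $\ka$, explicit parameter values realizing the claimed pairs of rationality behaviours of $X$ and $X/G$ (the examples collected in Table \ref{table4}).

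First I would record the relevant geometry. The automorphism $\alpha^2\beta^2\colon(x:y:z:t)\mapsto(-x:-y:z:t)$ is, in $\Pro_{\kka}(1:1:1:2)$, the same projective transformation as $(x:y:-z:t)$, so it is of type $1$ in Table \ref{table2}: it pointwise fixes the genus-$1$ curve $\{z=0\}\cap X\in|-K_X|$ and has exactly the two isolated fixed points $P_{\pm}=(0:0:1:\pm\sqrt{C})$, $C=q^2\sigma\tau\eta$. Hence $N\cong\CG_2$ is a normal subgroup of $G$ generated by an element of type $1$, and the subsystem $\mathcal L\subset|-K_{\XX}|$ of curves through $P_+$ and $P_-$ is the pencil cut out by $\{\lambda x=\mu y\}$, whose general member $\{x=\mu y\}\cap\XX$ is an elliptic curve. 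By Corollary \ref{DP2evencrit}, for $\rho(X)^G=1$ the quotient $X/G$ is birational to a minimal conic bundle $Y\to B$ with $K_Y^2=4$ precisely when (a) some element of $G\times\Gamma$ interchanges $P_+$ and $P_-$ and (b) for each $g\in G\setminus N$ the $g$-fixed points lying on a $g$-invariant member of $\mathcal L$ form a single $G\times\Gamma$-orbit; in that case $Y$ is a minimal surface with $K_Y^2=4<5$, so $X/G$ is non-$\ka$-rational by Theorem \ref{ratcrit}. In every other case $X/G\approx Y$ with $K_Y^2\geqslant5$, and since $X/G$ carries a smooth $\ka$-point (so $Y(\ka)\neq\varnothing$) Theorem \ref{ratcrit} shows $X/G$ is $\ka$-rational. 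Thus the $\ka$-rationality of $X/G$ becomes an explicit condition on how $G$ and $\Gamma$ act on $\{P_+,P_-\}$ and on the fixed loci on invariant members of $\mathcal L$; note for instance that (a) holds automatically once $G$ contains a $\gamma$-twisted element (types $(6),(7),(9),(11)$), while for types $(2),(5),(10)$ it holds iff $\sigma\tau\eta\notin(\ka^{\times})^2$.

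Second, the $\ka$-rationality of $X$ itself I would read off from the image $\Gamma\subset\mathrm{W}(\mathrm{E}_7)$ of the Galois group, which is governed by the square classes of $\sigma$, $\tau$, $\eta$ (and of a few derived quantities, e.g.\ those controlling whether the bitangents $\{z=\kappa x\}$ of the branch quartic $u^2\sigma(x^2+y^2)^2+v^2\tau(x^2-y^2)^2+q^2\sigma\tau\eta z^4=0$ are defined over $\ka$). For parameters in sufficiently general position $X$ is $\Gamma$-minimal, hence a minimal del Pezzo surface of degree $2$ and non-$\ka$-rational by Theorem \ref{ratcrit}; alternatively, when $\Gamma$ contains an element acting as a type-$2$ element with the corresponding invariant Picard rank equal to $1$, non-$\ka$-rationality follows from (the proof of) Lemma \ref{DP2type2nonrat}. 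Conversely, when enough of $\sigma$, $\tau$, $\eta$ become squares (or the auxiliary conics in the hypotheses acquire $\ka$-points) the group $\Gamma$ shrinks, $X$ blows down $\Gamma$-equivariantly to a minimal model with $K^2\geqslant5$, and since $X(\ka)\neq\varnothing$ — a $\ka$-point being exhibited using $A=w^2\eta$ and $C=q^2\sigma\tau\eta$ — Theorem \ref{ratcrit} gives $\ka$-rationality of $X$. Throughout, $\rho(X)^G=1$ is checked by computing $\Pic(\XX)^{\langle G,\Gamma\rangle}$.

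It then remains to go through the types $(2),(5),(6),(7),(9),(10),(11)$ in the three field regimes and, in each case declared possible, to exhibit the parameter values (the entries of Table \ref{table4}) and verify in turn: $\rho(X)^G=1$; the existence of a $\ka$-point on $X$ and of a smooth $\ka$-point on $X/G$; the $\ka$-rationality of $X$ by the previous paragraph; and the $\ka$-rationality of $X/G$ by Corollary \ref{DP2evencrit}. I expect the main obstacle to be this last bookkeeping for the groups of types $(7)$ and $(9)$, which contain several conjugacy classes of involutions (mixing types $1$ and $2$): one must carefully decide which members of $\mathcal L$ are invariant under which involution, locate the fixed points on them, and arrange the $G\times\Gamma$-orbit structure demanded by condition (b), while at the same time keeping $\rho(X)^G=1$, keeping the required $\ka$-points, and forcing the prescribed behaviour of $X$. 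It is the tension between these simultaneous demands that accounts for the progressively stronger hypotheses on $\ka$: realizing an $\ka$-rational $X$ for the types $(5),(6),(7),(9),(11)$ requires a quadratic subextension of $\ka$, and for type $(2)$ it additionally requires a $\ka$-point on the conic $\mu u^2+\nu v^2=\mu\nu w^2$.
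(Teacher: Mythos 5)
Your skeleton coincides with the paper's: you identify $\alpha^2\beta^2$ as a type-$1$ involution with fixed curve $\{z=0\}$ and isolated fixed points $(0:0:1:\pm q\sqrt{\sigma\tau\eta})$, take $\mathcal L$ to be the pencil $\lambda x=\mu y$, decide the $\ka$-rationality of $X/G$ by Corollary \ref{DP2evencrit} (your conditions (a), (b), and your case analysis of when (a) holds for each group type, are exactly what the paper records in Table \ref{Fixedpoints}), decide the $\ka$-rationality of $X$ from the Galois image in $\mathrm{W}(\mathrm{E}_7)$ acting on the $56$ lines, and check $\rho(X)^G=1$ on the invariant Picard lattice. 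Your factorization of the branch quartic as $u^2\sigma(x^2+y^2)^2+v^2\tau(x^2-y^2)^2+q^2\sigma\tau\eta z^4$ is correct and is precisely why the lines split into the families the paper calls $\theta$-, $\sigma$-, $\tau$- and $\eta$-lines.

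The gap is that the proposition is an existence statement and you never exhibit the examples; worse, the mechanism you propose for producing them would not suffice as stated. You plan to control the rationality of $X$ by how many of $\sigma$, $\tau$, $\eta$ "become squares", but these square classes are exactly what governs conditions (a) and (b) for the quotient, since every fixed point relevant to Corollary \ref{DP2evencrit} has coordinates involving only $\sqrt{\sigma}$, $\sqrt{\tau}$, $\sqrt{\eta}$ and their products (Table \ref{Fixedpoints}); so they cannot be used to toggle the rationality of $X$ independently of that of $X/G$. The paper's resolution is the additional free parameter $q$, which enters the fields of definition of the $\sigma$-, $\tau$- and $\eta$-lines (through quantities such as $k^2=\pm 2\ii uv/(wq\eta)$) but drops out of the quotient analysis entirely: with $q=uvw$ the surface $X$ is always $\ka$-rational (Lemma \ref{linesrat}), whereas with $q=uvw\sigma\tau\eta$ or $q=uvw\sigma\tau$ and the \emph{same} $\sigma$, $\tau$, $\eta$ one obtains minimal, hence non-$\ka$-rational, surfaces $X$ with identical quotient behaviour (Examples \ref{example5}--\ref{example8}). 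Without isolating this (or an equivalent) degree of freedom you cannot realize all the required combinations — for instance a non-$\ka$-rational $X$ with non-$\ka$-rational $X/G$ for type $(10)$ over a field with only a quadratic extension forces $\sigma=\tau=\eta=\mu$ non-square for the quotient, and then $q=uvw$ would make $X$ rational. The remaining verifications ($\rho(X)^G=1$ via minimality of the line families, $\ka$-points via Remark \ref{ratpoints} and Lemma \ref{Quotpoints}) are routine once parameters are fixed, so this is the one substantive missing idea rather than mere bookkeeping.
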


\begin{remark}
The assumption that the field $\ka$ has a quadratic extension is necessary for non-$\ka$-rational quotients, since otherwise constructed conic bundles are not minimal.

Moreover, in some cases the field $\ka$ should have an extension of degree $4$. For example, in the conditions of Proposition \ref{DP22example} quotients by the group $N$ can be $\ka$-rational only if the field $\ka$ has an extension of degree $4$. In this case the isolated fixed points of $N$ are defined over $\ka$ by Corollary \ref{DP2evencrit}. All $(-1)$-curves passing through an isolated $N$-fixed point are $N$-invariant. Therefore $\rho(X)^N = 1$ only if the four $(-1)$-curves passing through an $N$-fixed point are transitively permuted by the group $\Gal\left( \kka / \ka \right)$.

Note that if the field $\ka$ contains elements $\mu$, $\nu$ such that $\Gal\left(\ka\left(\sqrt{\mu}, \sqrt{\nu}\right) / \ka\right) \cong \CG_2^2$ then in some cases the conic $\mu u^2 + \nu v^2 = \mu\nu w^2$ in $\Pro^2_{\ka}$ has a $\ka$-point. For example, for the field $\mathbb{Q}(\ii)$ one has $\Gal\left(\ka\left(\sqrt{2}, \sqrt{3}\right) / \ka\right) \cong \CG_2^2$, and the conic $2u^2 + 3v^2 = 6w^2$ has a $\ka$-point~$(3 : 2\ii : 1)$. The author does not know an example of a field having an extension of degree $4$ with the Galois group $\CG_2^2$ such that if for any elements $\mu \in \ka$, $\nu \in \ka$ one has $\Gal\left(\ka\left(\sqrt{\mu}, \sqrt{\nu}\right) / \ka\right) \cong \CG_2^2$ then the conic $\mu u^2 + \nu v^2 = \mu\nu w^2$ in $\Pro^2_{\ka}$ does not have a $\ka$-point.
\end{remark}

To prove Proposition \ref{DP22example} for each considered type of $G$ in Examples \ref{example0}--\ref{example8} we construct \mbox{$\ka$-rational} and non-$\ka$-rational surfaces $X$ with \mbox{$\ka$-rational} and non-$\ka$-rational quotients $X / G$ (see Table \ref{table4} for more precise references).

One can find explicit equations of all $56$ lines on $X$. For convenience we refer to these lines in the following way:

~

\noindent eight \textit{$\theta$-lines}, given by
$$
t = \pm q\sqrt{\sigma \tau \eta} z^2, \qquad x = ky, \qquad k = \frac{\pm v \sqrt{\tau} \pm \ii u \sqrt{\sigma}}{w\sqrt{\eta}};
$$
\noindent sixteen \textit{$\eta$-lines}, given by
$$
t = \pm w\sqrt{\eta} \left( x^2 + \frac{B}{A} y^2 \right), \qquad z = ky, \qquad k^2 = \pm\frac{2\ii uv}{wq \eta};
$$
$$
t = \pm w\sqrt{\eta} \left( \frac{B}{A}x^2 + y^2 \right), \qquad z = kx, \qquad k^2 = \pm\frac{2\ii uv}{wq \eta};
$$
\noindent sixteen \textit{$\sigma$-lines}, given by
$$
t = \pm \frac{1}{u\sqrt{\sigma}} \left( A(x^2 + y^2) + (A - B) xy \right), \qquad z = k(x + y), \qquad k^2 = \pm\frac{vw}{uq \sigma};
$$
$$
t = \pm \frac{1}{u\sqrt{\sigma}} \left( A(x^2 + y^2) + (B - A) xy \right), \qquad z = k(x - y), \qquad k^2 = \pm\frac{vw}{uq \sigma};
$$
\noindent sixteen \textit{$\tau$-lines}, given by
$$
t = \pm \frac{1}{v\sqrt{\tau}} \left( A(x^2 - y^2) + \ii (A + B) xy \right), \qquad z = k(x + \ii y), \qquad k^2 = \pm\frac{uw}{vq \tau};
$$
$$
t = \pm \frac{1}{v\sqrt{\tau}} \left( A(x^2 - y^2) - \ii (A + B) xy \right), \qquad z = k(x - \ii y), \qquad k^2 = \pm\frac{uw}{vq \tau}.
$$

Note that each of the sets of $\theta$-lines, $\eta$-lines, $\sigma$-lines, $\tau$-lines is invariant under the action of $G$ and $\Gal\left( \kka / \ka \right)$. Moreover, the action of any element of $\Gal\left( \kka / \ka\right)$ on a set of $\eta$-lines, $\sigma$-lines or $\tau$-lines is trivial or coincides with the action of $\alpha^2\beta^2$, $\alpha^2\beta^2\gamma$ or $\gamma$.

\begin{remark}
\label{a2b2lines}
One can easily see that each $\theta$-line is $\alpha^2\beta^2$-invariant, and for each line $E$ from the sets of $\eta$-lines, $\sigma$-lines, $\tau$-lines one has $E \cdot \alpha^2\beta^2 E = 1$.
\end{remark}

We use the following definition for convenience.

\begin{definition}
A $G \times \Gal\left( \kka / \ka \right)$-invariant set $\Sigma$ of lines on $\XX$ is called \textit{minimal} (resp. \textit{$G$-minimal}) if a $\Gal\left( \kka / \ka \right)$-orbit (resp. $G \times \Gal\left( \kka / \ka \right)$-orbit) of each line in $\Sigma$ has \mbox{class $-nK_X$}.
\end{definition}

In particular, if $\Sigma$ is a minimal (resp. $G$-minimal) set of lines, then for any line $D$ in~$\Sigma$ one can not contract the $\Gal\left( \kka / \ka \right)$-orbit (resp. $G \times \Gal\left( \kka / \ka \right)$-orbit) of $D$.

Obviously, $\rho(X) = 1$ (resp. $\rho(X)^G = 1$) if and only if each of the sets of $\theta$-lines, $\eta$-lines, $\sigma$-lines, $\tau$-lines is minimal (resp. $G$-minimal). But we need the following lemma. 

\begin{lemma}
\label{linesmincrit}
One has $\rho(X) = 1$ (resp. $\rho(X)^G = 1$) if and only if each of the sets of $\eta$-lines, $\sigma$-lines, $\tau$-lines is minimal (resp. $G$-minimal).
\end{lemma}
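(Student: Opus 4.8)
The \emph{only if} direction is immediate from the remark preceding the lemma: if $\rho(X)=1$ (resp.\ $\rho(X)^G=1$) then all four of the sets of $\theta$-, $\eta$-, $\sigma$- and $\tau$-lines are minimal (resp.\ $G$-minimal), in particular the three sets named in the lemma are. Since $\rho(X)=1$ (resp.\ $\rho(X)^G=1$) is equivalent to minimality (resp.\ $G$-minimality) of \emph{all four} of these sets, the content of the lemma is that minimality (resp.\ $G$-minimality) of the sets of $\eta$-, $\sigma$- and $\tau$-lines forces the set of $\theta$-lines to be minimal (resp.\ $G$-minimal), and that is what I would prove.

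First I would record, from the explicit equations of the $56$ lines, the following facts about the eight $\theta$-lines. They are disjoint from each of the $48$ lines in the other three sets; four of them pass through one isolated fixed point $P_1$ of $N=\langle\alpha^2\beta^2\rangle$ and the other four through the remaining isolated fixed point $P_2$, where $\{P_1,P_2\}$ is the preimage of $(0:0:1)$ under the anticanonical double cover; and the Geiser involution $\gamma$ interchanges the two quadruples, sending a $\theta$-line $D$ to the one with $D+\gamma D\sim -K_X$. Using that $K_X^{\perp}$ is negative definite one checks that the four $\theta$-lines through $P_i$ sum to $-2K_X$ in $\Pic(\XX)$, that any two of them meet transversally in a single point, and hence that no proper nonempty subset of a quadruple has class proportional to $K_X$. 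Finally, the permutation of the eight $\theta$-lines induced by an element $g$ of $\Gal\left(\kka/\ka\right)$ (resp.\ of $G$) is determined by the three signs $g\left(\sqrt{\sigma}\right)/\sqrt{\sigma}$, $g\left(\sqrt{\tau}\right)/\sqrt{\tau}$, $g\left(\sqrt{\eta}\right)/\sqrt{\eta}\in\{\pm 1\}$; for $\Gal\left(\kka/\ka\right)$ this action thus factors through a homomorphism to $\left(\Z/2\Z\right)^3$.

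Then I would exploit the hypotheses. The action of an element of $\Gal\left(\kka/\ka\right)$ on the $16$ $\sigma$-lines coincides with the action of one of $\alpha^2\beta^2$, $\alpha^2\beta^2\gamma$, $\gamma$ (as recalled before the lemma), and, using Remark~\ref{a2b2lines}, one checks that the set of $\sigma$-lines is minimal only when the image of $\Gal\left(\kka/\ka\right)$ is $\langle\gamma\rangle$ or $\langle\alpha^2\beta^2,\gamma\rangle$; in both cases this image contains $\gamma$, which forces the character $\sqrt{\sigma}\mapsto\pm\sqrt{\sigma}$ to be nontrivial on $\Gal\left(\kka/\ka\right)$, and likewise for $\tau$ and $\eta$. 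Hence the image of $\Gal\left(\kka/\ka\right)$ in $\left(\Z/2\Z\right)^3$ surjects onto every factor. There are only finitely many such subgroups, and for each of them a direct computation — using the facts of the previous paragraph (each quadruple sums to $-2K_X$, $D+\gamma D\sim -K_X$, and the intersection pattern inside a quadruple) — shows that every orbit of $\theta$-lines has class a multiple of $-K_X$, so the set of $\theta$-lines is minimal and $\rho(X)=1$. For the $G$-minimal statement one runs the same argument with $G\times\Gal\left(\kka/\ka\right)$ in place of $\Gal\left(\kka/\ka\right)$; the only new feature is that $G$ may act on the three families through elements of order $4$ (coming from $\alpha\beta$ and $\delta$), which merely enlarges the orbits and makes the conclusion easier.

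The main obstacle I anticipate is organizational rather than conceptual: one has to handle the explicit equations of all $56$ lines carefully enough to verify the geometric claims of the second paragraph — in particular the disjointness of the $\theta$-lines from the other $48$ lines, the splitting of the eight $\theta$-lines into the quadruples through $P_1$ and $P_2$, and the description of the induced permutations by the quadratic characters of $\sqrt{\sigma}$, $\sqrt{\tau}$, $\sqrt{\eta}$ — and then to carry out the finite case check over the admissible subgroups of $\left(\Z/2\Z\right)^3$ (and its $G$-analogue) while keeping track of the extra square roots that appear in the $\eta$-, $\sigma$- and $\tau$-lines but not in the $\theta$-lines.
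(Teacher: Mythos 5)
Your reduction of the lemma to the claim ``minimality of the $\eta$-, $\sigma$- and $\tau$-families forces minimality of the $\theta$-family'' is a legitimate route (granting the remark preceding the lemma), but it is not the paper's, and it contains one false assertion and one genuine gap. The false assertion: a $\theta$-line cannot be disjoint from all $48$ remaining lines, since on a del Pezzo surface of degree $2$ every $(-1)$-curve is disjoint from exactly $27$ of the other $55$ (count from the intersection table in Notation \ref{DP_1curves}). This is not load-bearing for your final computation, but the verification you promise in your last paragraph would fail. The genuine gap is in the $G$-equivariant half of the statement, which is the half the paper actually uses in Examples \ref{example0}--\ref{example8}. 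Your key step extracts ``the character $g\mapsto g(\sqrt{\sigma})/\sqrt{\sigma}$ is nontrivial'' from ``the $\sigma$-family is minimal'', which works because the image of $\Gal\left(\kka/\ka\right)$ on the $\sigma$-lines lies in $\{1,\alpha^2\beta^2,\gamma,\alpha^2\beta^2\gamma\}$ and minimality forces it to contain $\gamma$. But $G$-minimality of the $\sigma$-family forces nothing about $\sqrt{\sigma}$: by Remark \ref{linesGmin} that family is already $\langle\alpha^3\beta\rangle$-minimal, $\langle\alpha^2\delta\rangle$-minimal, etc., for purely group-theoretic reasons, with $\sigma$ allowed to be a square. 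In Example \ref{example0} one has $\sigma=\tau=\eta=1$ and $G\cong Q_8$, all three families are $G$-minimal, and your asserted surjection of the image of $G\times\Gal\left(\kka/\ka\right)$ onto each factor of $\left(\Z/2\Z\right)^3$ is simply unavailable. The $G$-action does not ``merely enlarge the orbits and make the conclusion easier'': it weakens the hypothesis, so your finite case check over subgroups of $\left(\Z/2\Z\right)^3$ would have to be replaced by a much larger analysis of subgroups of $G\times\Gal\left(\kka/\ka\right)$ acting on all four families simultaneously, which you have not outlined.

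The paper sidesteps all of this and never shows the $\theta$-family is minimal. It argues: if there were a (resp.\ $G$-equivariant) birational contraction $f:X\to Z$ with exceptional divisor $E$, then $E$ would meet every $\eta$-, $\sigma$- and $\tau$-line (each orbit in those families has class $-nK_X$, $-K_X\cdot E>0$, and $E$ is orbit-invariant), so every $(-1)$-curve of $Z$ comes from one of the eight $\theta$-lines; hence $K_Z^2\geqslant 6$ and $f$ contracts at least four disjoint $\theta$-lines, impossible because any four $\theta$-lines contain two passing through a common generalized Eckardt point. If instead $X$ were a relatively minimal conic bundle, the twelve components of singular fibres would all have to be $\theta$-lines, of which there are only eight. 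This counting argument uses no information about how $G\times\Gal\left(\kka/\ka\right)$ permutes the $\theta$-lines and treats the plain and $G$-equivariant cases identically; I recommend adopting it.
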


\begin{proof}
Assume that each of the sets of $\eta$-lines, $\sigma$-lines, $\tau$-lines is minimal. If there exists a (resp. $G$-equivariant) birational morphism $f: X \rightarrow Z$, and $E$ is the exceptional divisor of $f$ then $E$ meets each $\eta$-line, $\sigma$-line and $\tau$-line, since $-K_X \cdot E > 0$. Therefore the images of $\eta$-lines, $\sigma$-lines and $\tau$-lines on $Z$ have non-negative self-intersection. Thus there are no more than eight $(-1)$-curves on $Z$, and $K_Z^2 \geqslant 6$. But one cannot find four or more disjoint $\theta$-lines on $X$. So this case is impossible.

If $X$ is minimal (resp. $G$-minimal) and admits a structure of (resp. $G$-equivariant) a conic bundle $\varphi: X \rightarrow B$, then a $\Gal\left( \kka / \ka \right)$-orbit (resp. $G \times \Gal\left( \kka / \ka \right)$-orbit) of each component of singular fibres of $\varphi$ has class $nF$, where $F$ is the class of fibre of $\varphi$. But there are $12$ components of singular fibres of $\varphi$ on $X$ and only eight $\theta$-lines. We have a contradiction. Therefore $\rho(X) = 1$ (resp. $\rho(X)^G = 1$). 

\end{proof}

\begin{remark}
\label{linesGmin}
One can check the following:

\begin{itemize}
\item the set of $\sigma$-lines is $\langle \alpha^3\beta \rangle$-minimal, $\langle \alpha^2\delta \rangle$-minimal, $\langle \alpha\beta\delta\gamma \rangle$-minimal \mbox{and $\langle \delta\gamma, \alpha^2\beta^2\delta\gamma \rangle$-minimal};
\item the set of $\tau$-lines is $\langle \alpha\beta\delta \rangle$-minimal, $\langle \alpha^3\beta \rangle$-minimal, $\langle \alpha^2\delta\gamma \rangle$-minimal \mbox{and $\langle \alpha^3\beta\delta\gamma, \alpha\beta^3\delta\gamma \rangle$-minimal};
\item the set of $\eta$-lines is $\langle \alpha^2\delta \rangle$-minimal, $\langle \alpha\beta\delta \rangle$-minimal, $\langle \alpha^3\beta\gamma \rangle$-minimal \mbox{and $\langle \alpha^2\gamma, \beta^2\gamma \rangle$-minimal}.
\end{itemize}

\end{remark}

\begin{lemma}
\label{linesNmin}

If $\sigma$ (resp. $\tau$, resp. $\eta$) is not a square in $\ka$ then the set of $\sigma$-lines (resp. $\tau$-lines, resp. $\eta$-lines) is $N$-minimal, where $N = \langle \alpha^2\beta^2 \rangle$.
\end{lemma}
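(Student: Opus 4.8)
The three assertions are symmetric, so the plan is to prove the statement for the $\sigma$-lines; for the $\tau$-lines (resp. $\eta$-lines) one repeats the argument verbatim with $\sqrt{\tau}$ (resp. $\sqrt{\eta}$) in place of $\sqrt{\sigma}$. Write $N=\langle\alpha^2\beta^2\rangle$. I would fix an arbitrary $\sigma$-line $E$ on $\XX$, determine its $N\times\Gal\left(\kka/\ka\right)$-orbit explicitly, and check that the sum of the classes in this orbit is proportional to $-K_X$. Two inputs are available from the earlier discussion: by Remark~\ref{a2b2lines} one has $E\cdot\alpha^2\beta^2 E=1$, so $\alpha^2\beta^2 E\neq E$ and $N$ acts nontrivially on the $\sigma$-lines; and, as recorded just before Remark~\ref{a2b2lines}, every element of $\Gal\left(\kka/\ka\right)$ acts on the set of $\sigma$-lines either trivially or as one of $\alpha^2\beta^2$, $\alpha^2\beta^2\gamma$, $\gamma$.

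The next step uses the hypothesis $\sigma\notin\left(\ka^{*}\right)^{2}$: there is then an element $g\in\Gal\left(\kka/\ka\right)$ with $g\left(\sqrt{\sigma}\right)=-\sqrt{\sigma}$. In the explicit equations of the $\sigma$-lines the radical $\sqrt{\sigma}$ occurs in the factor $\tfrac{1}{u\sqrt{\sigma}}$ of the $t$-equation of every $\sigma$-line (and possibly also inside $k$, through $k^{2}=\pm\tfrac{vw}{uq\sigma}$), while all remaining coefficients lie in $\ka$; hence $g$ reverses the sign of the right-hand side of the $t$-equation of every $\sigma$-line, so $g$ acts nontrivially on this set. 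Since $\alpha^2\beta^2$, written as $\left(x:y:z:t\right)\mapsto\left(x:y:-z:t\right)$, leaves the $t$-equation of a $\sigma$-line $\{\,t=f\left(x,y\right),\ z=k\left(x\pm y\right)\,\}$ unchanged and only flips the sign of $k$, whereas $\gamma$ and $\alpha^2\beta^2\gamma$ both reverse the $t$-equation, the element $g$ must act on the $\sigma$-lines as $\gamma$ or as $\alpha^2\beta^2\gamma$.

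It then follows that the $N\times\Gal\left(\kka/\ka\right)$-orbit of $E$ contains the four curves $E$, $\alpha^2\beta^2 E$, $\gamma E$, $\alpha^2\beta^2\gamma E$, and since $\Gal\left(\kka/\ka\right)$ acts on the $\sigma$-lines only through $\langle\alpha^2\beta^2,\gamma\rangle$, the orbit is exactly this four-element set. I would check that these curves are pairwise distinct using the Geiser-involution identity $E+\gamma E\sim -K_X$, which gives $E\cdot\gamma E=2$ and $E\cdot\alpha^2\beta^2\gamma E=0$; together with $E\cdot\alpha^2\beta^2 E=1$ and $E^{2}=-1$ this excludes any coincidence. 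The relation $E+\gamma E\sim -K_X$ is standard: $\gamma$ is the Geiser involution, i.e. the deck transformation of the anticanonical double cover $f\colon X\to\Pro^{2}_{\ka}$, the image $f\left(E\right)$ is a line, and $E+\gamma E\sim f^{*}f_{*}E\sim -K_X$ (equivalently, $\gamma$ acts as $-1$ on $K_X^{\perp}$). Applying $\alpha^2\beta^2$, which commutes with $\gamma$ and fixes $K_X$, one also gets $\alpha^2\beta^2 E+\alpha^2\beta^2\gamma E\sim -K_X$; adding the two relations, the orbit of $E$ has class $-2K_X$. Since this holds for every $\sigma$-line, the set of $\sigma$-lines is $N$-minimal.

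The only genuinely delicate point is the inference, in the second step, that some Galois element acts on the $\sigma$-lines through an element of $\langle\alpha^2\beta^2,\gamma\rangle$ that actually involves $\gamma$, rather than merely through $\alpha^2\beta^2\in N$; this is exactly where $\sigma\notin\left(\ka^{*}\right)^{2}$ is used, and it is indispensable, because $\left(E+\alpha^2\beta^2 E\right)^{2}=0$ shows that $E+\alpha^2\beta^2 E$ is not proportional to $-K_X$, so without the extra Galois contribution an $N$-orbit of a $\sigma$-line need not have class $-nK_X$.
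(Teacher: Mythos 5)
Your proposal is correct and follows essentially the same route as the paper: the non-squareness of $\sigma$ produces a Galois element that reverses the sign of $\sqrt{\sigma}$ and hence acts on the set of $\sigma$-lines as $\gamma$ or $\alpha^2\beta^2\gamma$, so every $N\times\Gal\left(\kka/\ka\right)$-orbit contains the Geiser image of each of its lines and therefore has class $-nK_X$. The extra verifications you include (distinctness of the four curves in an orbit and the explicit class $-2K_X$) are correct but are left implicit in the paper's shorter argument.
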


\begin{proof}

The two sections $t = \pm \frac{1}{u\sqrt{\sigma}} \left( A(x^2 + y^2) + (A - B) xy \right)$ are permuted by $\Gal\left( \kka / \ka \right)$, and the two sections $t = \pm \frac{1}{u\sqrt{\sigma}} \left( A(x^2 + y^2) + (B - A) xy \right)$ are also permuted by $\Gal\left( \kka / \ka \right)$. Therefore there is an element $h$ in $\Gal\left( \kka / \ka \right)$, whose action coincides with the action of $\gamma$ or $\alpha^2\beta^2\gamma$. In the first case the set of $\sigma$-lines is minimal, and in the second case the set of $\sigma$-lines is $N$-minimal since the action of $\alpha^2\beta^2h$ coincides with the action of $\gamma$.

The proof for $\tau$-lines and $\eta$-lines is similar.

\end{proof}

We will use Corollary \ref{DP2evencrit} to find examples of $\ka$-rational and non-$\ka$-rational quotients~$X / G$. Therefore we want to know fixed points and invariant members of $\mathcal{L}$ for elements in $\langle \alpha\beta, \alpha\beta^3, \delta, \gamma \rangle$. This information is given in Table \ref{Fixedpoints}.

\begin{table}
\caption{} \label{Fixedpoints}

\begin{tabular}{|c|c|c|c|}
\hline
Elements & Order & Invariant members & Fixed points  \\
\hline
$\alpha^3\beta$, $\alpha\beta^3$ & $4$\rule[-5pt]{0pt}{16pt} & $x = 0$, $y = 0$ & $\left(0 : 0 : 1 : \pm q \sqrt{\sigma\tau\eta}\right)$ \\
\hline
$\alpha^3\beta\gamma$, $\alpha\beta^3\gamma$ & $4$\rule[-5pt]{0pt}{16pt} & $x = 0$, $y = 0$ & $\left(0 : 1 : 0 : \pm w \sqrt{\eta}\right)$, $\left(1 : 0 : 0 : \pm w \sqrt{\eta}\right)$ \\
\hline
$\alpha^2\delta$, $\beta^2\delta$ & $4$\rule[-5pt]{0pt}{16pt} & $x = \ii y$, $x = -\ii y$ & $\left(0 : 0 : 1 : \pm q \sqrt{\sigma\tau\eta}\right)$ \\
\hline
$\alpha^2\delta\gamma$, $\beta^2\delta\gamma$ & $4$\rule[-5pt]{0pt}{16pt} & $x = \ii y$, $x = -\ii y$ & $\left(\ii : 1 : 0 : \pm 2v \sqrt{\tau}\right)$, $\left(-\ii : 1 : 0 : \pm 2v \sqrt{\tau}\right)$ \\
\hline
$\alpha\beta\delta$, $\alpha^3\beta^3\delta$ & $4$\rule[-5pt]{0pt}{16pt} & $x = y$, $x = -y$ & $\left(0 : 0 : 1 : \pm q \sqrt{\sigma\tau\eta}\right)$ \\
\hline
$\alpha\beta\delta\gamma$, $\alpha^3\beta^3\delta\gamma$ & $4$\rule[-5pt]{0pt}{16pt} & $x = y$, $x = -y$ & $\left(1 : 1 : 0 : \pm 2u \sqrt{\sigma}\right)$, $\left(-1 : 1 : 0 : \pm 2u \sqrt{\sigma}\right)$ \\
\hline
$\alpha^2\gamma$ & $2$\rule[-8pt]{0pt}{23pt} & $x = 0$, $y = 0$ & $\left(0 : \pm\sqrt{\pm wq{\sqrt{\sigma \tau}}} : w : 0\right)$ \\
\hline
$\beta^2\gamma$ & $2$\rule[-8pt]{0pt}{23pt} & $x = 0$, $y = 0$ & $\left(\pm\sqrt{\pm wq{\sqrt{\sigma \tau}}} : 0 : w : 0\right)$ \\
\hline
$\delta\gamma$ & $2$\rule[-8pt]{0pt}{23pt} & $x = y$, $x = -y$ & $\left(q\tau\eta : q\tau\eta : \pm \sqrt{\pm 2uq\tau\eta\sqrt{\tau\eta}} : 0\right)$ \\
\hline
$\alpha^2\beta^2\delta\gamma$ & $2$\rule[-8pt]{0pt}{23pt} & $x = y$, $x = -y$ & $\left(q\tau\eta : -q\tau\eta : \pm \sqrt{\pm 2uq\tau\eta\sqrt{\tau\eta}} : 0\right)$ \\
\hline
$\alpha^3\beta\delta\gamma$ & $2$\rule[-8pt]{0pt}{23pt} & $x = \ii y$, $x = -\ii y$ & $\left(q\sigma\eta : \ii q\sigma\eta : \pm \sqrt{\pm 2vq\sigma\eta\sqrt{\sigma\eta}} : 0\right)$ \\
\hline
$\alpha\beta^3\delta\gamma$ & $2$\rule[-8pt]{0pt}{23pt} & $x = \ii y$, $x = -\ii y$ & $\left(q\sigma\eta : -\ii q\sigma\eta : \pm \sqrt{\pm 2vq\sigma\eta\sqrt{\sigma\eta}} : 0\right)$ \\
\hline

\end{tabular}

\end{table}

\begin{remark}
\label{allinone}
One can see that for any type of $G$, if $\rho(X)^G = 1$, and $\sigma$, $\tau$, $\eta$, $\sigma \tau$, $\sigma \eta$, $\tau \eta$ and $\sigma \tau \eta$ are not squares in $\ka$, then by Corollary \ref{DP2evencrit} the quotient $X / G$ is not $\ka$-rational. But we want to find examples with stricter conditions on the field $\ka$.
\end{remark}

\begin{lemma}
\label{Quotpoints}

If the surface $X$ given by equation \eqref{exequation} contains a $\ka$-point, then the set of $\ka$-points on~$X / G$ is dense.

\end{lemma}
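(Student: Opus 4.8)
The plan is to divide by the normal subgroup $N=\langle\alpha^2\beta^2\rangle\subset G$, which is cyclic of order $2$ and generated by an element of type $1$ in the sense of Table \ref{table2} (after rescaling, $\alpha^2\beta^2$ acts as $(x:y:-z:t)$). Its fixed locus on $\XX$ consists of the curve $\{z=0\}$ and the two isolated points $r_{\pm}=\left(0:0:1:\pm q\sqrt{\sigma\tau\eta}\right)$. By Lemma \ref{DP2type1} the minimal resolution $Z$ of $X/N$ is an Iskovskikh surface; its conic bundle $Z\to\Pro^1_{\ka}$ has base the $\Pro^1_{\ka}$ parametrising the pencil $\lambda x=\mu y$, its two $(-2)$-sections lie over the images of $r_{+}$ and $r_{-}$, and $X/N$ is smooth away from those two $A_1$-points. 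Moreover $X/G\approx Z/(G/N)$, and since density of $\ka$-points is a birational invariant and is carried forward by dominant $\ka$-morphisms (in particular by the finite surjection $Z\to Z/(G/N)$), it suffices to prove that $Z(\ka)$ is dense.

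First I would dispose of the case $\sqrt{\sigma\tau\eta}\in\ka$. Then $r_{+}$ and $r_{-}$ are two distinct $\ka$-points, so the two $A_1$-singularities of $X/N$ are $\ka$-points and the two exceptional $(-2)$-curves of $Z$ are defined over $\ka$. Each of them is a section of $Z\to\Pro^1_{\ka}$, so it meets the generic fibre — a smooth conic over $\ka(\Pro^1)$ — in a rational point; hence that generic fibre is $\cong\Pro^1_{\ka(\Pro^1)}$ and $Z$ is birational to a $\Pro^1$-bundle over $\Pro^1_{\ka}$, i.e. $Z$ is $\ka$-rational and $Z(\ka)$ is dense. (In this case one does not even need the hypothesis $X(\ka)\neq\varnothing$.)

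In the remaining case $\sqrt{\sigma\tau\eta}\notin\ka$ one has $r_{\pm}\notin X(\ka)$, so the given $\ka$-point $P\in X(\ka)$ differs from $r_{+}$ and $r_{-}$. Its image in $X/N$ is then a smooth $\ka$-point — the only singular points of $X/N$ being the images of $r_{\pm}$ — and it lifts along the resolution to a $\ka$-point of $Z$ lying on neither $(-2)$-section. By Lemma \ref{IskUnirat} the surface $Z$ is then $\ka$-unirational, so again $Z(\ka)$ is dense. In both cases $Z(\ka)$, and hence $(X/G)(\ka)$, is dense. The one step that is not pure bookkeeping is the observation in the first case that a $(-2)$-section defined over $\ka$ is automatically a $\ka$-rational section of the conic bundle on the Iskovskikh surface, and so forces its $\ka$-rationality; it is this point, rather than any delicate arithmetic of $\sigma,\tau,\eta$, that I expect to be the crux, and one should be a little careful to check that the base of the conic bundle furnished by Lemma \ref{DP2type1} really is $\Pro^1_{\ka}$ so that the section is genuinely $\ka$-rational.
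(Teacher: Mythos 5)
Your proposal is correct and follows essentially the same route as the paper: pass to the quotient by $N=\langle\alpha^2\beta^2\rangle$, invoke Lemma \ref{DP2type1} to land on an Iskovskikh surface, and then either apply Lemma \ref{IskUnirat} to the image of a $\ka$-point away from the two isolated $N$-fixed points, or, when those points are $\ka$-rational, use the fact that the resulting $(-2)$-sections are defined over $\ka$ and isomorphic to the base $B\cong\Pro^1_{\ka}$ to produce $\ka$-points in the fibres. The only (harmless) difference is that you phrase the dichotomy via $\sqrt{\sigma\tau\eta}\in\ka$ and push the second case all the way to $\ka$-rationality of the Iskovskikh surface, whereas the paper stops at density.
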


\begin{proof}
Let~\mbox{$f: X \rightarrow X / N$} be the quotient morphism, and
$$
\pi: \widetilde{X / N} \rightarrow X / N
$$
be the minimal resolution of singularities. By Lemma \ref{DP2type1} the surface $\widetilde{X / N}$ is an Iskovskikh surface.

If $X$ contains a $\ka$-point $p$, that differs from $\left(0 : 0 : 1 : \pm q \sqrt{\sigma\tau\eta}\right)$ then $f(p)$ is a smooth $\ka$-point on $X / N$. Thus the set of $\ka$-points on $\widetilde{X / N}$ is dense by Lemma \ref{IskUnirat}, since $\pi^{-1}f(p)$ is a $\ka$-point on $\widetilde{X / N}$ that does not lie on a section of $\widetilde{X / N} \rightarrow B$ with self-intersection number $-2$. Therefore the set of $\ka$-points on~$X / G \approx \left( \widetilde{X / N} \right) / (G / N)$ is dense.

If $X$ contains $\ka$-points $\left(0 : 0 : 1 : q \sqrt{\sigma\tau\eta}\right)$ then each section of $\widetilde{X / N} \rightarrow B \cong \Pro^1_{\ka}$ with self-intersection number $-2$ is defined over $\ka$. Thus these sections are isomorphic to $B$. Therefore each smooth fibre $F$ over a $\ka$-point on $B$ is isomorphic to $\Pro^1_{\ka}$, and the set of $\ka$-points on $\widetilde{X / N}$ is dense. Therefore the set of $\ka$-points on~$X / G \approx \left( \widetilde{X / N} \right) / (G / N)$ is dense.

\end{proof}

\begin{remark}
\label{ratpoints}
To construct examples of $\ka$-rational and non-$\ka$-rational quotients satisfying the assumptions of Proposition \ref{DP22example} we should find $\ka$-points on $X$. Table \ref{Fixedpoints} contains such points for certain $\sigma$, $\tau$ and $\eta$.

If $\sigma = 1$ then $\left(1 : 1 : 0 : 2u \sqrt{\sigma}\right)$ is a $\ka$-point on $X$.

If $\tau = 1$ then $\left(\ii : 1 : 0 : 2v \sqrt{\tau}\right)$ is a $\ka$-point on $X$.

If $\eta = 1$ then $\left(0 : 1 : 0 : w \sqrt{\eta}\right)$ is a $\ka$-point on $X$.

If $\sigma = \tau = \eta$ then $\left( v \sqrt{\tau} + \ii u\sqrt{\sigma} : w\sqrt{\eta} : 0 : 0 \right)$ is a $\ka$-point on $X$.

If $\eta = \sigma\tau$ then $\left(0 : 0 : 1 : q \sqrt{\sigma\tau\eta}\right)$ is a $\ka$-point on $X$.

In all these cases $X(\ka) \neq \varnothing$ and the set of $\ka$-points on $X / G$ is dense by Lemma \ref{Quotpoints}. These conditions hold for following Examples \ref{example0}--\ref{example8}.
\end{remark}

Now we construct $\ka$-rational and non-$\ka$-rational quotients of $\ka$-rational surfaces $X$. Assume that $q = uvw$. Then the following lemma holds.

\begin{lemma}
\label{linesrat}
If $X$ is given by equation \eqref{exequation}, $X(\ka) \neq \varnothing$ and $q = uvw$ then $X$ is $\ka$-rational.
\end{lemma}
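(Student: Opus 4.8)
\emph{Plan.} The plan is to derive the statement from Theorem~\ref{ratcrit} by showing that the hypothesis $q=uvw$ forces $\rho(X)$ to be large; precisely, I will argue that $\rho(X)\geqslant 5$. Granting this, run the ordinary minimal model program (Theorem~\ref{GMMP} with $G=1$): there is a birational morphism $X\to X_{\min}$ over $\ka$, each step of which contracts a $\Gal(\kka/\ka)$‑invariant set of disjoint $(-1)$‑curves, and $X_{\min}$ is minimal rational, hence $\rho(X_{\min})\leqslant 2$ by Theorem~\ref{Minclass}. Each step lowers the Picard number by $1$ and raises $K^2$ by the size of the contracted orbit, which is at least $1$, so $K_{X_{\min}}^2\geqslant K_X^2+\bigl(\rho(X)-\rho(X_{\min})\bigr)\geqslant 2+3=5$. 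Since $X(\ka)\neq\varnothing$ we get $X_{\min}(\ka)\neq\varnothing$, so $X_{\min}$, and therefore $X$, is $\ka$‑rational by Theorem~\ref{ratcrit}.

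\emph{Reducing the Galois action.} I control $\rho(X)=\operatorname{rk}\Pic(\XX)^{\Gal(\kka/\ka)}$ through the $56$ lines on $\XX$. The key point is that once $q=uvw$ all of them are defined over the multiquadratic field $L:=\ka(\sqrt{\sigma},\sqrt{\tau},\sqrt{\eta})$. Indeed, substituting $q=uvw$ into the explicit equations collapses the ``$z$‑slopes'' of the four families: the $\sigma$‑lines acquire $k^2=\pm\tfrac{1}{u^2\sigma}$ and $t=\pm\tfrac{1}{u\sqrt{\sigma}}(\cdots)$, so they lie over $\ka(\sqrt{\sigma})$; likewise the $\tau$‑ and $\eta$‑lines lie over $\ka(\sqrt{\tau})$ and $\ka(\sqrt{\eta})$ (here one uses $\ii\in\ka$ and $(1\pm\ii)^2=\pm 2\ii$); and since $Ax^4+2Bx^2y^2+Ay^4=A\bigl(x^2-\varrho y^2\bigr)\bigl(x^2-\varrho^{-1}y^2\bigr)$ with $\varrho=(v\sqrt{\tau}+\ii u\sqrt{\sigma})^2/(w^2\eta)\in L$, the $\theta$‑lines are $\{x=\pm\sqrt{\varrho}^{\,\pm 1}y,\ t=\pm uvw\sqrt{\sigma\tau\eta}\,z^2\}$, again defined over $L$. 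Hence $\Gal(\kka/\ka)$ acts on $\Pic(\XX)$ through $\Gal(L/\ka)$, which embeds into the subgroup $H:=\langle g_\sigma,g_\tau,g_\eta\rangle\subseteq\mathrm{W}(\mathrm{E}_7)$ generated by the involutions induced by $\sqrt{\sigma}\mapsto-\sqrt{\sigma}$, $\sqrt{\tau}\mapsto-\sqrt{\tau}$, $\sqrt{\eta}\mapsto-\sqrt{\eta}$. Therefore $\rho(X)\geqslant\operatorname{rk}\Pic(\XX)^{H}$.

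\emph{The rank estimate (the delicate step).} Because the equations of the $\tau$‑ and $\eta$‑lines involve only $\sqrt{\tau}$ and $\sqrt{\eta}$, the involution $g_\sigma$ fixes (as subschemes, hence fixes the classes of) all $16+16=32$ lines outside the $\sigma$‑ and $\theta$‑families. The one genuinely computational ingredient of the proof is the check, via the intersection table of Notation~\ref{DP_1curves}, that these $32$ classes span a sublattice of rank $7$ in the rank‑$8$ lattice $\Pic(\XX)$; then the $(-1)$‑eigenspace of $g_\sigma$ has rank $\leqslant 1$, and the same holds for $g_\tau$ and $g_\eta$. Consequently the sum of the three $(-1)$‑eigenspaces has rank $\leqslant 3$, so the common $(+1)$‑eigenspace has rank $\geqslant 8-3=5$, i.e.\ $\operatorname{rk}\Pic(\XX)^{H}\geqslant 5$. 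This yields $\rho(X)\geqslant 5$ and completes the argument. I expect this last rank computation for the $32$ lines to be the main obstacle; the rest is bookkeeping with the explicit equations above, together with the routine observation (a by‑product, and a sanity check) that under $q=uvw$ the conjugation $g_\sigma$ moves a $\sigma$‑line to a \emph{disjoint} one — contrary to the general situation of Remark~\ref{a2b2lines} and Lemma~\ref{linesNmin} — which is the feature $q\neq uvw$ would destroy.
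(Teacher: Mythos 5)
Your overall architecture is sound and genuinely different from the paper's: instead of exhibiting explicit contractions, you bound $\rho(X)\geqslant 5$ by eigenspace estimates for the three commuting involutions $g_\sigma,g_\tau,g_\eta$ and then let the (non-equivariant) MMP together with Theorems \ref{Minclass} and \ref{ratcrit} finish the job; the reduction of the Galois action to $\Gal\bigl(\ka(\sqrt{\sigma},\sqrt{\tau},\sqrt{\eta})/\ka\bigr)$ and the inequality $K_{X_{\min}}^2\geqslant K_X^2+\rho(X)-\rho(X_{\min})$ are both correct. However, the proof as written has a genuine gap at its pivot: the claim that the $32$ classes of $\tau$- and $\eta$-lines span a rank-$7$ sublattice of $\Pic(\XX)$ (equivalently, that each $g_\bullet$ has $(-1)$-eigenspace of rank $\leqslant 1$) is only asserted — you yourself flag it as ``the main obstacle'' and do not carry it out. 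Without it the estimate degenerates (a priori each $(-1)$-eigenspace could have rank up to $3$, giving only $\rho(X)\geqslant 2$), so nothing is proved. The claim is in fact true, and there is a cleaner route to it than the intersection-table bookkeeping you envision (which would also require matching the $56$ explicit lines with the classes of Notation \ref{DP_1curves}): every involution in $\mathrm{W}(\mathrm{E}_7)$ is a product $s_{\alpha_1}\cdots s_{\alpha_r}$ of reflections in pairwise orthogonal roots, and since $s_{\alpha_1}\cdots s_{\alpha_r}(\ell)=\ell+\sum(\ell\cdot\alpha_i)\alpha_i$ with the $\alpha_i$ independent, the lines it fixes are exactly those orthogonal to all $\alpha_i$; this number is $32$ for $r=1$ and strictly smaller for $r\geqslant 2$ (the $32$ lines orthogonal to a root $\alpha$ span $\alpha^{\perp}$, of rank $7$). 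Hence a nontrivial involution fixing $32$ lines is a reflection, and $g_\sigma$ is nontrivial on $\Pic(\XX)$ whenever $\sigma$ is a nonsquare because it sends each $\sigma$-line $E$ to $\alpha^2\beta^2\gamma E$ with $E\cdot\alpha^2\beta^2\gamma E=0\neq E^2$. With this inserted your argument closes.

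For comparison, the paper avoids any rank computation: it observes that $\Gal(\kka/\ka)$ acts on each of the three families trivially or as $\alpha^2\beta^2\gamma$, that $\{E,\alpha^2\beta^2\gamma E\}$ is a disjoint Galois-invariant pair for any $\sigma$-, $\tau$- or $\eta$-line $E$ (since $(\alpha^2\beta^2E+\alpha^2\beta^2\gamma E)\cdot E=-K_X\cdot E=1$ and $\alpha^2\beta^2E\cdot E=1$), and then contracts two such pairs in succession to reach a surface $W$ with $K_W^2\geqslant 6$. That proof yields only $\rho(X)\geqslant 3$ but gets $\ka$-rationality directly; yours, once the gap is filled, proves the stronger statement $\rho(X)\geqslant 5$ at the cost of the Weyl-group argument above.
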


\begin{proof}

All lines on $X$ are defined over $\ka\left( \sqrt{\sigma}, \sqrt{\tau}, \sqrt{\eta} \right)$. Let $\Gamma = \Gal\left( \ka\left( \sqrt{\sigma}, \sqrt{\tau}, \sqrt{\eta} \right) / \ka \right)$. The action of $\Gamma$ on each of the sets of $\sigma$-lines, $\tau$-lines and $\eta$-lines is either trivial or coincides with the action of $\alpha^2\beta^2\gamma$. For any $\sigma$-, $\tau$- or $\eta$-line $E$ one has $\alpha^2\beta^2 E \cdot E = 1$, therefore~$\alpha^2\beta^2\gamma E \cdot E = 0$, since
$$
\left( \alpha^2\beta^2 E + \alpha^2\beta^2\gamma E \right) \cdot E = -K_X \cdot E = 1.
$$
\noindent Let~$f: X \rightarrow Z$ be a $\Gamma$-equivariant contraction of any $\alpha^2\beta^2\gamma$-invariant pair of $\sigma$-lines. Then there are $16$ lines on $Z$, therefore at least one of the lines on $Z$ is the image of a $\tau$- or $\eta$-line $E$. Thus we can $\Gamma$-equivariantly contract the pair $f(E)$ and $f(\alpha^2\beta^2\gamma E)$, and get a surface $W$ with~$K_W^2 \geqslant 6$. This surface is $\ka$-rational by Theorem \ref{ratcrit}. Therefore $X \approx W$ is $\ka$-rational. 

\end{proof}

In following Examples \ref{example0}--\ref{example4} the surface $X$ is $\ka$-rational by Lemma \ref{linesrat}.

\begin{example}
\label{example0}
Assume that $X$ is given by equation \eqref{exequation}, one has
$$
q = uvw, \qquad \sigma = \tau = \eta = 1
$$
\noindent and $G = \langle \alpha^3\beta, \alpha^2\delta \rangle \cong Q_8$. Then $X$ and $X / G$ are $\ka$-rational, and $\rho(X)^G = 1$.

The surface $X / G$ is $\ka$-rational by Corollary \ref{DP2evencrit}, since the isolated fixed points of $N$ are not permuted by $G \times \Gal\left( \kka / \ka \right)$.

The sets of $\sigma$-lines, $\tau$-lines and $\eta$-lines are $G$-minimal by Remark \ref{linesGmin}. Therefore~\mbox{$\rho(X)^G = 1$} by Lemma \ref{linesmincrit}.
 
\end{example}

\begin{example}
\label{example1}
Assume that $\ka$ contains an element $\mu$ such that $\Gal(\ka(\sqrt{\mu}) / \ka) \cong \CG_2$, the surface $X$~is given by equation \eqref{exequation}, one has
$$
q = uvw, \qquad \sigma = \tau = \eta = \mu
$$
\noindent and $G$ is a group of type $(2)$, $(5)$, $(6)$, $(7)$, $(9)$, $(10)$ or $(11)$ of Proposition \ref{DP2}. Then $X$ is $\ka$-rational and $\rho(X)^G = 1$. The quotient $X / G$ is non-$\ka$-rational if $G \cong \CG_2$, $G \cong \CG_4$ or~$G \cong Q_8$, and is $\ka$-rational if $G \cong \CG_2^2$ or $G \cong \DG_8$.

The surface $X / G$ is $\ka$-rational only for $G \cong \CG_2^2$, $G \cong \DG_8$ by Corollary \ref{DP2evencrit}, since the fixed points of $g\in G$ lying on a $g$-invariant member of $\mathcal{L}$ are not contained in one $G \times \Gal\left( \kka / \ka \right)$-orbit, only if $g$ is an element of type $2$ in the notation of Table \ref{table2} (see Table~\ref{Fixedpoints}).

The sets of $\sigma$-lines, $\tau$-lines and $\eta$-lines are $G$-minimal by Lemma \ref{linesNmin}. Therefore~\mbox{$\rho(X)^G = 1$} by Lemma \ref{linesmincrit}.
 
\end{example}

\begin{example}
\label{example2}
Assume that $\ka$ contains an element $\mu$ such that $\Gal(\ka(\sqrt{\mu}) / \ka) \cong \CG_2$, the surface $X$~is given by equation \eqref{exequation}, one has
$$
q = uvw, \qquad \sigma = \tau = \mu, \qquad \eta = 1
$$
\noindent and $G$ is a group $\langle \alpha^2\delta \rangle \cong \CG_4$, $\langle \alpha^3\beta\gamma \rangle \cong \CG_4$ or $\langle \alpha^3\beta\gamma, \alpha^2\delta \rangle \cong Q_8$. Then $X$ and $X / G$ are $\ka$-rational, and $\rho(X)^G = 1$.

The surface $X / \langle \alpha^2\delta \rangle$ is $\ka$-rational by Corollary \ref{DP2evencrit}, since the isolated fixed points of~$N$ are not permuted by $G \times \Gal\left( \kka / \ka \right)$. The surfaces $X / \langle \alpha^3\beta\gamma \rangle$ and $X / \langle \alpha^3\beta\gamma, \alpha^2\delta \rangle$ are $\ka$-rational by Corollary \ref{DP2evencrit}, since the fixed points of $\alpha^3\beta\gamma$ lying on an \mbox{$\alpha^3\beta\gamma$-invariant} member of $\mathcal{L}$ are not contained in one $G \times \Gal\left( \kka / \ka \right)$-orbit (see Table \ref{Fixedpoints}).

The sets of $\sigma$-lines and $\tau$-lines are $G$-minimal by Lemma \ref{linesNmin}, and the set of $\eta$-lines is $G$-minimal by Remark \ref{linesGmin}. Therefore $\rho(X)^G = 1$ by Lemma \ref{linesmincrit}.

\end{example}

\begin{example}
\label{example3}
Assume that $\ka$ contains an element $\mu$ such that $\Gal(\ka(\sqrt{\mu}) / \ka) \cong \CG_2$, the surface $X$~is given by equation \eqref{exequation}, one has
$$
q = uvw, \qquad \sigma = \tau = 1, \qquad \eta = \mu
$$
\noindent and $G  = \langle \alpha^3\beta, \delta\gamma \rangle \cong \DG_8$. Then $X$ is $\ka$-rational, $X / G$ is non-$\ka$-rational, and $\rho(X)^G = 1$.

The surface $X / G$ is non-$\ka$-rational by Corollary \ref{DP2evencrit}, since the fixed points of any element $g \in G$ lying on a $g$-invariant member of $\mathcal{L}$ are contained in one \mbox{$N \times \Gal\left( \kka / \ka \right)$-orbit} (see Table \ref{Fixedpoints}).

The sets of $\sigma$-lines and $\tau$-lines are $G$-minimal by Remark \ref{linesGmin}, and the set of $\eta$-lines is $G$-minimal by Lemma \ref{linesNmin}. Therefore $\rho(X)^G = 1$ by Lemma \ref{linesmincrit}.

\end{example}

\begin{example}
\label{example4}
Assume that $\ka$ contains elements $\mu$ and $\nu$ such that \mbox{$\Gal\left(\ka\left(\sqrt{\mu}, \sqrt{\nu}\right) / \ka\right) \cong \CG^2_2$} and the conic $\mu u^2 + \nu v^2 = \mu\nu w^2$ in $\Pro^2_{\ka}$ has a $\ka$-point, the surface $X$~is given by equation \eqref{exequation}, one has
$$
q = uvw, \qquad \sigma = \mu, \qquad \tau = \nu, \qquad \eta = \mu\nu
$$
\noindent and $G$ is a group $N$ or~$\langle \alpha^2\gamma, \beta^2\gamma \rangle \cong \CG_2^2$. Then $X$ is $\ka$-rational and $\rho(X)^G = 1$. The quotient $X / G$ is $\ka$-rational if~$G \cong \CG_2$ and is non-$\ka$-rational if $G \cong \CG_2^2$.

The surface $X / N$ is $\ka$-rational by Corollary \ref{DP2evencrit}, since the isolated fixed points of $N$ are not permuted by $\Gal\left( \kka / \ka \right)$. The surface $X / \langle \alpha^2\gamma, \beta^2\gamma \rangle$ is non-$\ka$-rational by Corollary~\ref{DP2evencrit}, since the fixed points of any element $g \in G$ lying on a $g$-invariant member of $\mathcal{L}$ are contained in one $G \times \Gal\left( \kka / \ka \right)$-orbit (see Table \ref{Fixedpoints}).

The sets of $\sigma$-lines, $\tau$-lines and $\eta$-lines are $G$-minimal by Lemma \ref{linesNmin}. Therefore~\mbox{$\rho(X)^G = 1$} by Lemma \ref{linesmincrit}.
 
\end{example}

Now we construct $\ka$-rational and non-$\ka$-rational quotients of non-$\ka$-rational surfaces $X$. Assume that $q = uvw\sigma\tau\eta$.

\begin{example}
\label{example5}
Assume that $\ka$ contains an element $\mu$ such that $\Gal(\ka(\sqrt{\mu}) / \ka) \cong \CG_2$, the surface $X$~is given by equation \eqref{exequation}, one has
$$
q = uvw\sigma\tau\eta, \qquad \sigma = \tau = \eta = \mu
$$
\noindent and $G$ is a group of type $(1)$, $(2)$, $(3)$, $(5)$, $(6)$, $(7)$, $(9)$, $(10)$ or $(11)$ of Proposition \ref{DP2}. Then $X$ is non-$\ka$-rational and~$\rho(X)^G = \rho(X) = 1$. The quotient $X / G$ is non-$\ka$-rational if $G$ is trivial, $G \cong \CG_2$, $G \cong \CG_4$ or $G \cong Q_8$, and is $\ka$-rational if $G \cong \CG_2^2$ or $G \cong \DG_8$.

All lines on $X$ are defined over $\ka\left( \mu \right)$. The $\Gal\left( \ka(\mu)/\ka \right)$-orbit of any $(-1)$-curve on $X$ coincides with the $\langle \gamma \rangle$-orbit. Therefore $\rho(X) = 1$ and $X$ is non-$\ka$-rational by Theorem~\ref{ratcrit}.

If $G$ is trivial then $X = X / G$ is non-$\ka$-rational. If $G$ has type $(3)$ then $X / G$ is \mbox{non-$\ka$-rational} by Remark \ref{DP2type2min}. For the other types of $G$ the surface $X / G$ is $\ka$-rational only for $G \cong \CG_2^2$, $G \cong \DG_8$ by Corollary \ref{DP2evencrit}, since the fixed points of $g\in G$ lying on a~$g$-invariant member of~$\mathcal{L}$ are not contained in one $G \times \Gal\left( \kka / \ka \right)$-orbit, only if $g$ is an element of type~$2$ in the notation of Table \ref{table2} (see Table~\ref{Fixedpoints}).

\end{example}

\begin{example}
\label{example6}
Assume that $\ka$ contains an element $\mu$ such that $\Gal(\ka(\sqrt{\mu}) / \ka) \cong \CG_2$, the surface $X$~is given by equation \eqref{exequation}, one has
$$
q = uvw\sigma\tau\eta, \qquad \sigma = \tau = 1, \qquad \eta = \mu
$$
\noindent and $G$ is a group $\langle \alpha^3\beta, \alpha^2\delta\gamma \rangle \cong Q_8$ or $\langle \alpha^3\beta, \delta\gamma \rangle \cong \DG_8$. Then $X$ is non-$\ka$-rational and~$\rho(X)^G = 1$. The quotient $X / G$ is $\ka$-rational if $G \cong Q_8$, and is non-$\ka$-rational if~$G \cong \DG_8$.

All lines on $X$ are defined over $\ka\left( \mu \right)$. The $\Gal\left( \ka(\mu)/\ka \right)$-orbit of any $\sigma$-line or $\tau$-line coincides with the $\langle \alpha^2\beta^2 \rangle$-orbit, and the $\Gal\left( \ka(\mu)/\ka \right)$-orbit of any $\eta$-line coincides with the $\langle \gamma \rangle$-orbit. Therefore a pair of $\Gal\left( \ka(\mu)/\ka \right)$-invariant lines from one of these sets cannot be contracted by Remark~\ref{a2b2lines}. Thus one can contract a pair of $\Gal\left( \ka(\mu)/\ka \right)$-invariant $\theta$-lines and get a minimal del Pezzo surface $Z$ of degree $4$. This surface is non-$\ka$-rational by Theorem \ref{ratcrit}. Therefore $X \approx Z$ is non-$\ka$-rational. 

The surface $X / \langle \alpha^3\beta, \alpha^2\delta\gamma \rangle$ is $\ka$-rational by Corollary \ref{DP2evencrit}, since the fixed points of $\alpha^2\delta\gamma$ lying on an $\alpha^2\delta\gamma$-invariant member of $\mathcal{L}$ are not contained in one $G \times \Gal\left( \kka / \ka \right)$-orbit (see Table \ref{Fixedpoints}). The surface $X / \langle \alpha^3\beta, \delta\gamma \rangle$ is non-$\ka$-rational by Corollary~\ref{DP2evencrit}, since the fixed points of any element~$g \in G$ lying on a $g$-invariant member of $\mathcal{L}$ are contained in one $N \times \Gal\left( \kka / \ka \right)$-orbit (see Table \ref{Fixedpoints}).

The sets of $\sigma$-lines and $\tau$-lines are $G$-minimal by Remark \ref{linesGmin}, and the set of $\eta$-lines is $G$-minimal by Lemma \ref{linesNmin}. Therefore $\rho(X)^G = 1$ by Lemma \ref{linesmincrit}.

\end{example}

\begin{example}
\label{example7}
Assume that $\ka$ contains elements $\mu$ and $\nu$ such that \mbox{$\Gal\left(\ka\left(\sqrt{\mu}, \sqrt{\nu}\right) / \ka\right) \cong \CG^2_2$} and the conic $\mu u^2 + \nu v^2 = w^2$ in $\Pro^2_{\ka}$ has a $\ka$-point (this condition is equivalent to the condition that the conic $\mu u^2 + \nu v^2 = \mu\nu w^2$ in $\Pro^2_{\ka}$ has a $\ka$-point), $X$~is given by equation \eqref{exequation}, one has
$$
q = uvw\sigma\tau\eta, \qquad \sigma = \mu, \qquad \tau = \nu, \qquad \eta = 1
$$
\noindent and $G$ is a group $\langle \alpha^3\beta\gamma \rangle \cong \CG_4$ or $\langle \alpha^2\gamma, \beta^2\gamma \rangle \cong \CG_2^2$. Then $X$ is non-$\ka$-rational \mbox{and $\rho(X)^G = 1$}. The quotient $X / G$ is $\ka$-rational if $G \cong \CG_4$, and is non-$\ka$-rational if~$G \cong \CG_2^2$.

All lines on $X$ are defined over $\ka\left( \mu, \nu \right)$. The $\Gal\left( \ka(\mu, \nu)/\ka \right)$-orbit of any $\sigma$-line or \mbox{$\tau$-line} coincides with the $\langle \alpha^2\beta^2, \gamma \rangle$-orbit, and the $\Gal\left( \ka(\mu, \nu)/\ka \right)$-orbit of any $\eta$-line coincides with the $\langle \alpha^2\beta^2 \rangle$-orbit. Therefore a pair of $\Gal\left( \ka(\mu, \nu)/\ka \right)$-invariant lines from one of these sets can not be contracted by Remark \ref{a2b2lines}. One can check that the \mbox{$\Gal\left( \ka\left(\mu, \nu\right)/\ka \right)$-orbit} of each \mbox{$\theta$-line} consists of four lines. Such quadruple can not be contracted. Therefore $X$ is minimal and non-$\ka$-rational by Theorem \ref{ratcrit}.

The surface $X / \langle \alpha^3\beta\gamma \rangle$ is $\ka$-rational by Corollary \ref{DP2evencrit}, since the fixed points of $\alpha^3\beta\gamma$ lying on an $\alpha^3\beta\gamma$-invariant member of $\mathcal{L}$ are not contained in one $G \times \Gal\left( \kka / \ka \right)$-orbit (see Table \ref{Fixedpoints}). The surface $X / \langle \alpha^2\gamma, \beta^2\gamma \rangle$ is non-$\ka$-rational by Corollary \ref{DP2evencrit}, since the fixed points of any element~$g \in G$ lying on a $g$-invariant member of $\mathcal{L}$ are contained in one $N \times \Gal\left( \kka / \ka \right)$-orbit (see Table \ref{Fixedpoints}).

The sets of $\sigma$-lines and $\tau$-lines are $G$-minimal by Lemma \ref{linesNmin}, and the set of $\eta$-lines is $G$-minimal by Remark \ref{linesGmin}. Therefore $\rho(X)^G = 1$ by Lemma \ref{linesmincrit}.

\end{example}

For the remaining three cases we assume that $q = uvw \sigma\tau$.

\begin{example}
\label{example8}
Assume that $\ka$ contains elements $\mu$ and $\nu$ such that \mbox{$\Gal\left(\ka\left(\sqrt{\mu}, \sqrt{\nu}\right) / \ka \right) \cong \CG^2_2$} and the conic $\mu u^2 + \nu v^2 = \mu\nu w^2$ in $\Pro^2_{\ka}$ has a $\ka$-point, $X$~is given by equation \eqref{exequation}, one has
$$
q = uvw\sigma\tau, \qquad \sigma = \mu, \qquad \tau = \nu, \qquad \eta = \mu\nu
$$
\noindent and $G$ is a group~$N$, $\langle \alpha^3\beta \rangle \cong \CG_4$ or $\langle \alpha^3\beta, \alpha^2\delta \rangle \cong Q_8$. Then $X$ is non-$\ka$-rational, $X / G$ is $\ka$-rational and~$\rho(X)^G = \rho(X) = 1$.

All lines on $X$ are defined over $\ka\left( \mu, \nu \right)$. The $\Gal\left( \ka(\mu, \nu)/\ka \right)$-orbit of any $\sigma$-line or \mbox{$\tau$-line} coincides with the $\langle \alpha^2\beta^2, \gamma \rangle$-orbit, and the $\Gal\left( \ka(\mu, \nu)/\ka \right)$-orbit of any $\eta$-line coincides with the $\langle \gamma \rangle$-orbit. Therefore the sets of these lines are minimal. Hence $\rho(X) = 1$ by Lemma \ref{linesmincrit}, and $X$ is non-$\ka$-rational by Theorem~\ref{ratcrit}.

The surface $X / G$ is $\ka$-rational by Corollary \ref{DP2evencrit}, since the isolated fixed points of $N$ are not permuted by $G \times \Gal\left( \kka / \ka \right)$.

\end{example}

\subsection{The groups $\CG_3$ and $\SG_3$}

In this section we consider del Pezzo surfaces $X$ such that the groups $\CG_3$ and $\SG_3$ of types $(4)$ and $(8)$, listed in Proposition \ref{DP2}, act on $X$. For~$G \cong \CG_3$ and $G 
\cong \SG_3$ we construct examples of $\ka$-rational and non-$\ka$-rational quotients of $\ka$-rational and non-$\ka$-rational del Pezzo surfaces of degree~$2$ such that $\rho(X)^G = 1$.

Assume that the field $\ka$ contains $\omega$. Let $X$ be a del Pezzo surface of degree $2$ given in~$\Pro_{\ka}(1 : 1 : 1 : 2)$ by the equation
\begin{equation}
\label{exequation3}
(x^3 + y^3)z + Ax^2y^2 + 2Bxyz^2 + Cz^4 - t^2 = 0.
\end{equation}

A finite group $G \cong \SG_3$ generated by
$$
(x : y : z : t) \mapsto (\omega x : \omega^2 y : z : t); \qquad  (x : y : z : t) \mapsto (y : x : z : -t)
$$
\noindent acts on $X$. The set of fixed points of the normal subgroup $N \cong \CG_3 \lhd \SG_3$ consists of four isolated fixed points:
$$
p_1 = (1 : 0 : 0 : 0), \quad p_2 = (0 : 1 : 0 : 0), \quad q_1 = \left(0 : 0 : 1 : \sqrt{C} \right), \quad q_2 = \left(0 : 0 : 1 : -\sqrt{C} \right).
$$
Note that $p_1$ and $p_2$ are $\ka$-points on $X$, and by Remark \ref{DP2type4points} the sets of $\ka$-points on $X / N$ and $X / G = \left( X / N \right) / (G / N)$ are dense, since the sections $x = 0$ and $y = 0$ are defined over~$\ka$.

We have the following two corollaries from Remark \ref{DP2type4min}.

\begin{corollary}
\label{DP2C3quotient}
Let $X$ be a surface given by equation \eqref{exequation3} and $N \cong \CG_3$ act on $X$. Then the quotient $X / N$ is $\ka$-rational if and only if $C$ is a square in $\ka$.
\end{corollary}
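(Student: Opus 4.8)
The plan is to apply Lemma \ref{DP2type4} together with the analysis in Remark \ref{DP2type4min}, checking which of the five configurations of the action of $\Gal\left( \kka / \ka \right)$ on the four $N$-fixed points $p_1$, $p_2$, $q_1$, $q_2$ occurs for the surface $X$ given by equation \eqref{exequation3}. Since only $N \cong \CG_3$ acts (so $G = N$ in the notation of Remark \ref{DP2type4min}, and the group permuting the fixed points is just $\Gamma = \Gal\left( \kka / \ka \right)$), the points $p_1 = (1:0:0:0)$ and $p_2 = (0:1:0:0)$ are $\ka$-points, hence always fixed by $\Gamma$. The points $q_1 = \left(0:0:1:\sqrt{C}\right)$ and $q_2 = \left(0:0:1:-\sqrt{C}\right)$ are fixed by $\Gamma$ if and only if $\sqrt{C} \in \ka$, i.e.\ $C$ is a square in $\ka$; otherwise they are interchanged by $\Gamma$. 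So we are always in case $(1)$ or case $(2)$ of Remark \ref{DP2type4min}.

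First I would recall from Remark \ref{DP2type4min} that in case $(1)$ — when $\Gamma$ fixes all four points, i.e.\ $C$ is a square — one can contract two $(-1)$-curves $R_{11}$ and $R_{22}$ to obtain a surface $Z$ with $K_Z^2 = 6$. Since $p_1$ is a $\ka$-point on $X$ and the construction is defined over $\ka$, the curve $h\pi^{-1}_*f(C_1)$ is a $\ka$-point on $Y$ (cf.\ Remark \ref{DP2type4points}), so $Y(\ka) \ne \varnothing$ and hence $Z(\ka) \ne \varnothing$. By Remark \ref{geq5touse} the quotient $X / N \approx Z$ is birationally equivalent to a surface of degree $\geqslant 5$ with a $\ka$-point, so by Theorem \ref{ratcrit} it is $\ka$-rational. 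This gives the "if" direction.

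For the "only if" direction, suppose $C$ is not a square in $\ka$. Then $q_1$ and $q_2$ are interchanged by $\Gamma$, so we are in case $(2)$ of Remark \ref{DP2type4min}: there $\rho(Y)^{G/N} = \rho(Y)^{\Gamma}$-style argument shows $\rho(Y) = 2$ and the complete linear system $|R_{11} + R_{21}|$ gives a structure of a minimal conic bundle on $Y$ (minimal over $\ka$ because $q_1, q_2$, and hence the components $R_{11}, R_{21}$ of the corresponding singular fibre, are swapped by $\Gamma$). By Theorem \ref{MinCB} this conic bundle, having $K_Y^2 = 4 \ne 3,5,6,7,8$, is minimal; since $K_Y^2 = 4 < 5$, Theorem \ref{ratcrit} shows $Y$ is not $\ka$-rational, hence neither is $X / N \approx Y$.

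The main subtlety — though it is really just bookkeeping rather than a genuine obstacle — is to verify that the various curves and contractions in Remark \ref{DP2type4min} are indeed defined over $\ka$ and $\Gamma$-equivariant, so that the geometric statements there descend to statements over $\ka$; in particular one must check that in case $(2)$ the orbit structure of $\Gamma$ on the components $R_{ij}$ of the fibre over $q_i$ forces the conic bundle $Y \to B$ to be minimal over $\ka$ (no component of a singular fibre is defined over $\ka$). This follows because $\Gamma$ swaps $q_1$ and $q_2$ and commutes with $N$, so it cannot fix any of $R_{11}, R_{12}, R_{21}, R_{22}$ individually. Once this is in hand, the equivalence "$X / N$ is $\ka$-rational $\iff$ $C$ is a square in $\ka$" is immediate.
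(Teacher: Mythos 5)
Your proposal is correct and follows essentially the same route as the paper: the paper's own proof simply observes that since $p_1$ and $p_2$ are $\ka$-points only cases $(1)$ and $(2)$ of Remark \ref{DP2type4min} can occur, with case $(1)$ ($\sqrt{C}\in\ka$) giving a $\ka$-rational quotient and case $(2)$ ($\sqrt{C}\notin\ka$) giving a minimal conic bundle with $K_Y^2=4$, hence non-$\ka$-rational. Your write-up just makes explicit the details (the $\ka$-point from Remark \ref{DP2type4points}, the appeal to Theorems \ref{MinCB} and \ref{ratcrit}) that the paper leaves implicit.
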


\begin{proof}
The $N$-fixed points $p_1$ and $p_2$ are defined over $\ka$, therefore only cases $(1)$ and~$(2)$ of Remark \ref{DP2type4min} can be achieved. For the case $(1)$ the quotient $X / N$ is $\ka$-rational and~$\sqrt{C} \in \ka$, for the case $(2)$ the quotient $X / N$ is not $\ka$-rational and~$\sqrt{C} \notin \ka$.
\end{proof}

\begin{corollary}
\label{DP2S3quotient}
Let $X$ be a surface given by equation \eqref{exequation3} and $G \cong \SG_3$ act on $X$. Then the quotient $X / G$ is $\ka$-rational if $C$ is a square in $\ka$. The quotient $X / G$ is not $\ka$-rational if $C$ is not a square in $\ka$, and the roots of the equation
\begin{equation}
\label{S3fixedpoints}
Cz^4 + 2Bx^2z^2 + 2x^3z + Ax^4 = 0
\end{equation}
\noindent are transitively permuted by the Galois group $\Gal\left(\kka / \ka \right)$.
\end{corollary}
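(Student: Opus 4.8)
The plan is to treat the two implications separately, reducing each to the quotient $X / N$ by the normal subgroup $N \cong \CG_3$, whose geometry is described in Remark \ref{DP2type4min} and Corollary \ref{DP2C3quotient}; throughout I keep the standing assumption $\rho(X)^G = 1$ of this section. Write $\tau$ for the involution $(x : y : z : t) \mapsto (y : x : z : -t)$, so that $G = N \rtimes \langle \tau \rangle$ and $G / N = \langle \bar\tau \rangle \cong \CG_2$; note that $\tau$ interchanges $p_1$ with $p_2$ and $q_1$ with $q_2$, that $\Gal(\kka / \ka)$ fixes $p_1$ and $p_2$, and that the set of $\ka$-points on $X / G = (X / N)/(G / N)$ is dense, as recorded above via Remark \ref{DP2type4points}.

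Suppose first that $C$ is a square in $\ka$. Then $q_1$ and $q_2$ are $\ka$-points, so $\Gal(\kka / \ka)$ fixes all four $N$-fixed points, and the only nontrivial permutations of $\{p_1, p_2, q_1, q_2\}$ induced by $G \times \Gal(\kka / \ka)$ come from $\tau$, which swaps both pairs; hence $X$ is in case $(4)$ of Remark \ref{DP2type4min}, and $X / N$ is $G / N$-birationally equivalent to a rational surface $Z$ with $K_Z^2 = 6$. By Remark \ref{geq5touse} the quotient $X / G \approx Z /(G / N)$ is birationally equivalent to a surface $W$ with $K_W^2 \geqslant 5$; since $W(\ka) \ne \varnothing$, $X / G$ is $\ka$-rational by Theorem \ref{ratcrit}.

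Suppose now that $C$ is not a square in $\ka$. Then $\Gal(\kka / \ka)$ interchanges $q_1$ and $q_2$ but fixes $p_1$ and $p_2$, so the stabilizers of $p_1$ and of $q_1$ in $G \times \Gal(\kka / \ka)$ are distinct subgroups of index $2$; this is case $(5)$ of Remark \ref{DP2type4min}, so $X / N$ is $G / N$-birationally equivalent to a $G / N$-minimal del Pezzo surface $Y$ of degree $4$ with $\rho(Y)^{G / N} = 1$, on which $G / N \cong \CG_2$ acts through $\bar\tau$. Since $\tau$ is an automorphism of type $2$, its fixed locus on $X$ is the set of four points with $x = y$, $t = 0$ and $z \ne 0$, that is, the roots of equation \eqref{S3fixedpoints}; these lie off the $N$-fixed points and off the curves $\{x = 0\}$, $\{y = 0\}$, $\{z = 0\}$ contracted in Lemma \ref{DP2type4}, and following that construction one checks that $\bar\tau$ gains no new fixed points, so $\bar\tau$ acts on $Y$ with exactly four isolated fixed points, in $\Gal(\kka / \ka)$-equivariant bijection with the roots of \eqref{S3fixedpoints}. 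Thus $X / G \approx Y / \langle \bar\tau \rangle$ is the quotient of a del Pezzo surface of degree $4$ by a group of order $2$ whose nontrivial element has only isolated fixed points; by the analysis of such quotients in \cite{Tr17} (the proof of Theorem \ref{DP4more}) it is birationally equivalent to a surface $W$ with $K_W^2 \leqslant 4$ whose $\Gal(\kka / \ka)$-orbit structure on negative curves is governed by the $\Gal(\kka / \ka)$-orbit structure on the four fixed points of $\bar\tau$. When $\Gal(\kka / \ka)$ permutes the roots of \eqref{S3fixedpoints} transitively, this forces $W$ to be $\ka$-minimal, and since $K_W^2 \leqslant 4 < 5$, Theorem \ref{ratcrit} shows that $W$, hence $X / G$, is not $\ka$-rational.

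The case distinctions for Remark \ref{DP2type4min} and the fixed-point count for $\bar\tau$ are routine. The delicate step is the end of the second implication: I would have to make precise, following \cite{Tr17}, the chain of birational maps $X \dashrightarrow X / N \dashrightarrow Y \dashrightarrow W$, identify the minimal model $W$ of $X / G$ together with its conic bundle (or del Pezzo) structure, and --- this is the real obstacle --- match the $\Gal(\kka / \ka)$-orbit structure on the four $\bar\tau$-fixed points, equivalently on the roots of \eqref{S3fixedpoints}, with the $\Gal(\kka / \ka)$-orbit structure on the components of the singular fibres of $W$, so that transitivity on the roots indeed yields $\rho(W)^{\Gal(\kka / \ka)} = 2$ and, via Theorem \ref{MinCB}, the $\ka$-minimality needed to invoke Theorem \ref{ratcrit}.
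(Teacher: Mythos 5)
Your overall strategy coincides with the paper's: pass to the quotient by the normal subgroup $N \cong \CG_3$, observe that the residual involution swaps $p_1$ with $p_2$ and $q_1$ with $q_2$ so that only cases $(4)$ and $(5)$ of Remark \ref{DP2type4min} occur according to whether $\sqrt{C} \in \ka$ or not, dispose of case $(4)$ via the degree-$6$ model and Corollary \ref{geq5} (you use Remark \ref{geq5touse} plus Theorem \ref{ratcrit}, which amounts to the same thing), and in case $(5)$ reduce to the quotient of a $G/N$-minimal del Pezzo surface of degree $4$ by an involution whose four isolated fixed points are identified with the roots of \eqref{S3fixedpoints}. All of that matches the paper's argument.

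The one genuine gap is exactly the step you flag yourself at the end. Your proposal says the quotient of the degree-$4$ surface is ``birationally equivalent to a surface $W$ with $K_W^2 \leqslant 4$'' and that transitivity of the Galois action on the four fixed points ``forces $W$ to be $\ka$-minimal,'' but you give no mechanism for either claim, and an inequality $K_W^2 \leqslant 4$ together with minimality is what you would still have to establish. The paper closes this by citing \cite[Lemma 6.1]{Tr17}, which states precisely that if $Z$ is a del Pezzo surface of degree $4$ with $\rho(Z)^{\CG_2} = 1$ and the involution has four isolated fixed points transitively permuted by $\Gal(\kka/\ka)$, then $Z/\CG_2$ is birationally equivalent to a \emph{minimal conic bundle with $K^2 = 2$}; non-$\ka$-rationality is then immediate from Theorem \ref{ratcrit}. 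So your reduction is correct and the missing ingredient is a single external lemma rather than a new idea, but as written the second implication is not proved: you would either have to quote that lemma or redo its proof (tracking the singular fibres of the anticanonical conic bundle structures on $Z$ through the quotient and matching their Galois orbits with the orbit of the four fixed points). One further small point to tighten: your assertion that $\bar\tau$ ``gains no new fixed points'' on $Y$ needs the observation that the fixed loci of the three conjugate involutions of $\SG_3$ all map to the same four points of $X/N$, and that no fixed points appear on the exceptional curves of the resolution; the paper glosses this too, but it is part of what makes the identification with the roots of \eqref{S3fixedpoints} legitimate.
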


\begin{proof}
The element $(x : y : z : t) \mapsto (y : x : z : -t)$ of $G$ permutes the $N$-fixed points $p_1$ and $p_2$, and permutes the $N$-fixed points $q_1$ and $q_2$, therefore only cases $(4)$ and $(5)$ of Remark \ref{DP2type4min} can be achieved.

For the case $(4)$ one has $\sqrt{C} \in \ka$, and the quotient $X / N$ is $G / N$-birationally equivalent to a del Pezzo surface $Z$ of degree $6$. By Corollary \ref{geq5} the quotient $X / G \approx Z / (G / N)$ is $\ka$-rational.

For the case $(5)$ one has $\sqrt{C} \notin \ka$, and the quotient $X / N$ is $G / N$-birationally equivalent to a del Pezzo surface $Z$ of degree $4$ such that $\rho(Z)^{G / N} = 1$. In this case if all fixed points of $G / N$ on $Z$ are transitively permuted by the Galois group, then by \cite[Lemma 6.1]{Tr17} the quotient $X / G \approx Z / (G / N)$ is birationally equivalent to a minimal conic bundle $Y$ such that $K_Y^2 = 2$. Thus $X / G \approx Y$ is not $\ka$-rational by Theorem \ref{ratcrit}.

The fixed points of $G / N$ on $Z$ are transitively permuted by $\Gal\left(\kka / \ka \right)$, if and only if the fixed points of $(x : y : z : t) \mapsto (y : x : z : -t)$ are transitively permuted by~$\Gal\left(\kka / \ka \right)$. These four fixed points on $X$ are given by the equations $t = 0$, $y = x$, and equation~\eqref{S3fixedpoints}.
\end{proof}

Now we want to find conditions on the coefficients of equation \eqref{exequation3} such that $X$ is $\ka$-rational or not. We use the notation of Section $5$. By Lemma \ref{Eltype4} we can assume that the automorphism $(x : y : z : t) \mapsto (\omega x : \omega^2 y : z : t)$ corresponds to the \mbox{element $ab = (123)(456) \in \SG_{7} \subset \mathrm{W}(\mathrm{E}_7)$.} By Proposition \ref{type4center} the image $\Gamma$ of the Galois group~$\Gal\left(\kka / \ka \right)$ in $\mathrm{W}(\mathrm{E}_7)$ is a subgroup of
$$
\langle a, b, cs, r, s, \gamma \rangle \cong \left( \CG_3^2 \rtimes \CG_2 \right) \times \SG_3 \times \CG_2,
$$
\noindent where the elements $a$, $b$, $c$, $r$, $s$ and $\gamma$ are defined in Section $5$.

The element $ab$ has two invariant $(-1)$-curves: $E_7$ and $C_7$. One can easily see that these curves lie in the hyperplane section $z = 0$, and are given by the equations $\sqrt{A} x^2y^2 = \pm t$. Therefore if $A$ is a square in $\ka$ then $E_7$ is $\Gamma$-invariant, and $\Gamma \subset \langle a, b, cs, r, s \rangle$.

\begin{definition}
\label{Eckardtpoint}
A point $p$ on a del Pezzo surface of degree $2$ is called \textit{generalized Eckardt point} if there are four $(-1)$-curves passing through $p$.
\end{definition}

\begin{lemma}
\label{Eckardtpoints}
On the del Pezzo surface $X$ of degree $2$ given by equation \eqref{exequation3} there are six generalized Eckardt points:
$$
\left( 1 : -1 : 0: \pm\sqrt{A} \right), \qquad \left( \omega : -\omega^2 : 0: \pm\sqrt{A} \right), \qquad \left( \omega^2 : -\omega : 0: \pm\sqrt{A} \right).
$$
\end{lemma}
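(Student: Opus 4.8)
The plan is to read off generalized Eckardt points from the anticanonical double cover. Writing $F(x,y,z) = (x^3+y^3)z + Ax^2y^2 + 2Bxyz^2 + Cz^4$, the anticanonical map presents $X$ as a double cover $\pi \colon X \to \Pro^2_{\kka}$, $(x:y:z:t) \mapsto (x:y:z)$, branched along the smooth quartic $C_4 = \{F = 0\}$, with the Geiser involution $\gamma$ as the covering involution. Recall that the $(-1)$-curves on $X$ are exactly the components of $\pi^{-1}(\ell)$ for bitangent lines $\ell$ of $C_4$, that $\pi^{-1}(\ell) = L \cup L'$ is a pair of $(-1)$-curves swapped by $\gamma$, and that if $p \in X$ has $\bar p = \pi(p) \notin C_4$ then $p \ne \gamma p$, so exactly one of $L$, $L'$ passes through $p$ (if both did, $p$ would lie over a contact point of $\ell$, hence over $C_4$). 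Hence, for $\bar p \notin C_4$, the number of $(-1)$-curves through $p$ equals the number of bitangents of $C_4$ through $\bar p$, and $p$ is a generalized Eckardt point if and only if four bitangents of $C_4$ pass through $\bar p$.

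For the forward direction I would first note that the six listed points lie on $X$: at $(1:-1:0)$, $(\omega:-\omega^2:0)$, $(\omega^2:-\omega:0)$ one has $F = A \ne 0$, so $t^2 = A$ there, and these three images are not on $C_4$. To count bitangents through $(1:-1:0)$ I parametrize the pencil of lines through it by $\ell_s = \{z = s(x+y)\}$, $s \in \kka$, together with $\{x+y = 0\}$. Using $x^3 + y^3 = (x+y)(x^2 - xy + y^2)$ and the affine coordinate $w = x+1$ (with $y = 1$) on $\ell_s$, the restriction $F|_{\ell_s}$ becomes
$$
F|_{\ell_s} = m w^4 + n w^3 + (A - n) w^2 - 2A w + A, \qquad m = s(1 + Cs^3), \quad n = s(2Bs - 3).
$$
Demanding that this binary quartic be the square $(pw^2 + qw + r)^2$ of a quadratic — exactly the condition that $\ell_s$ be bitangent to $C_4$ — forces $r^2 = A$, $q = -r$, $2pr = -n$, and finally $n^2 = 4Am$, i.e. $s = 0$ or $(4B^2 - 4AC)s^3 - 12Bs^2 + 9s - 4A = 0$; a separate check shows $\{x+y=0\}$, on which $F$ restricts to $Ax^4 - 2Bx^2z^2 + Cz^4$, is bitangent only when $B^2 = AC$ (and in that case the cubic drops to a quadratic, so the total stays four). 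Thus exactly four bitangents pass through $(1:-1:0)$, namely $\{z=0\}$ and the three lines $\ell_{s_i}$ for the roots $s_i$ of the cubic, so both points $(1:-1:0:\pm\sqrt A)$ are generalized Eckardt points. Since the automorphism $(x:y:z:t) \mapsto (\omega x : \omega^2 y : z : t)$ of $X$ preserves $C_4$ and cyclically permutes $(1:-1:0)$, $(\omega:-\omega^2:0)$, $(\omega^2:-\omega:0)$, the four remaining points are generalized Eckardt points as well.

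For exhaustiveness, observe first that a generalized Eckardt point $p$ cannot have $\bar p \in C_4$: any bitangent through such a $p$ meets $C_4$ only at its two contact points, hence is the unique tangent line to $C_4$ at $\bar p$, so at most one bitangent passes through $\bar p$ and at most two $(-1)$-curves through $p$. So every generalized Eckardt point lies over a point of $\Pro^2_{\kka} \setminus C_4$ on at least four of the $28$ bitangents of $C_4$, and it remains to show the only such centres are the three above. I would do this by running the same pencil computation for an arbitrary centre, which turns the bitangency condition into the vanishing of a polynomial of controlled degree in the pencil parameter, and then bounding the number of its roots. This last step — bounding the bitangents through a general centre and pinning down exactly where four occur — is the part I expect to be the genuine obstacle, as it requires an honest case analysis; it is shortened by using the action of $\SG_3 \times \langle \gamma \rangle \subset \Aut(\XX)$ to reduce the candidate centres to a few orbits, or by appealing to the classification of generalized Eckardt points on del Pezzo surfaces of degree $2$ in \cite{Dol12}.
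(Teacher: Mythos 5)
Your core computation is the paper's own argument in a different guise: the paper intersects $X$ with the pencil of hyperplane sections $z = k(x+y)$ through $\left(1:-1:0:\sqrt{A}\right)$ and asks when the section is reducible (i.e.\ splits into two $(-1)$-curves), which yields exactly your equation $k\left(4(B^2-AC)k^3 - 12Bk^2 + 9k - 4A\right) = 0$, and then transports the conclusion to the remaining points by the $\SG_3$-action; phrasing this downstairs in terms of bitangents of the branch quartic is equivalent. The exhaustiveness step you flag as the genuine obstacle is not addressed in the paper either --- its proof only establishes that the six listed points are generalized Eckardt points, which is all that is used later in Remark~\ref{From2to3} --- so your proposal matches the paper's proof in substance and is in fact slightly more careful about the residual line $x+y=0$ and the degenerate case $B^2 = AC$.
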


\begin{proof}
Note that the considered points lie in one orbit of the group $G \cong \SG_3$, therefore it is sufficient to show that one of these points is a generalized Eckardt point.

Let us find reducible members of the family of hyperplane sections $z = k(x + y)$ passing through the point $\left( 1 : -1 : 0: \sqrt{A} \right)$:
$$
k(x^3 + y^3)(x + y) + Ax^2y^2 + 2Bk^2xy(x + y)^2 + Ck^4(x + y)^4 - t^2 = 0,
$$
$$
\left( Ck^4 + k \right) (x + y)^4 + \left( 2Bk^2 - 3k \right) xy( x + y)^2 + A(xy)^2 - t^2 = 0.
$$
This section is reducible if $\left( Ck^4 + k \right) (x + y)^4 + \left( 2Bk^2 - 3k \right) xy( x + y)^2 + A(xy)^2$ is a square in $\ka(x , y)$. Therefore one has
$$
\left( 2Bk^2 - 3k \right)^2 - 4A\left( Ck^4 + k \right) = 0, \qquad k\left( 4(B^2 - AC)k^3 - 12Bk^2 + 9k - 4A \right) = 0.  
$$
The last equation has four roots, therefore $\left( 1 : -1 : 0: \sqrt{A} \right)$ is a generalized Eckardt point.
\end{proof}

\begin{remark}
\label{From2to3}
Let $\XX \rightarrow \overline{S}$ be the contraction of the $(-1)$-curve $E_7$. Then the images of the three generalized Eckardt points lying on the $(-1)$-curve $C_7$ are Eckardt points on the cubic surface $\overline{S}$, permuted by the group $N = \langle ab \rangle \cong \CG_3$. In this case we can apply results of \cite[Lemma 5.7]{Tr16b} and see that $\Gamma$ is a subgroup of $\langle a^2b, cs, r, s, \gamma \rangle$. The elements $r$, $s$ and $\gamma$ trivially act on the roots of the equation
\begin{equation}
\label{Equationabc}
4(B^2 - AC)k^3 - 12Bk^2 + 9k - 4A = 0,
\end{equation}
\noindent and the elements $a^2b$ and $cs$ permute three and two roots of this equation respectively.
\end{remark}

Note that there are six $N$-invariant conic bundle structures on $\XX$ that have the following classes of fibres:
$$
L - E_7, \qquad 2L - \sum \limits_{i = 1}^3 E_i - E_7, \qquad 4L - 2\sum \limits_{i = 1}^3 E_i - \sum \limits_{i = 4}^6 E_i - E_7,
$$
$$
5L - 2\sum \limits_{i = 1}^6 E_i - E_7, \qquad 4L - \sum \limits_{i = 1}^3 E_i - 2\sum \limits_{i = 4}^6 E_i - E_7, \qquad 2L - \sum \limits_{i = 4}^6 E_i - E_7.
$$
Let $F$ be the class of a fibre of one of these conic bundles. The group $N$ has only two invariant $(-1)$-curves, therefore it permutes singular fibres of $\varphi_{|F|}: \XX \rightarrow \Pro^1_{\kka}$. Thus there are exactly two \mbox{$N$-invariant curves} in the linear system $|F|$. Note \mbox{that $\gamma F = cs F = -2K_X - F$}. Therefore there are six reducible $\langle N, \gamma \rangle$-invariant curves with classes $-2K_X$. The elements $a^2b$, $cs$ and $\gamma$ do not permute these curves, and the elements $r$ and $s$ nontrivially permute this set of curves.

An $\langle N, \gamma \rangle$-invariant curve with class $-2K_X$ is given by $yz = \lambda x^2$, $xz = \lambda y^2$, $xy = \lambda z^2$ or $t = 0$. Let us find reducible curves given by $yz = \lambda x^2$:
$$
\lambda(x^3 + y^3)x^2y^3 + Ax^2y^6 + 2B\lambda^2x^5y^3 + C\lambda^4x^8 - y^4t^2 = 0,
$$
$$
x^2\left( C\lambda^4x^6 + (\lambda + 2B\lambda^2)x^3y^3 + (A + \lambda)y^6 \right) - y^4t^2 = 0.
$$
This section is reducible if $C\lambda^4x^6 + (\lambda + 2B\lambda^2)x^3y^3 + (A + \lambda)y^6$ is a square in $\ka(x , y)$. Therefore one has
$$
(\lambda + 2B\lambda^2)^2 - 4(A + \lambda)C\lambda^4 = 0, \qquad \lambda^2\left(4C\lambda^3 - 4(B^2 - AC)\lambda^2 - 4B\lambda - 1 \right) = 0.
$$
For the root $\lambda = 0$ the section $yz = 0$ consists of three irreducible components, and it is not our case. Therefore three curves corresponding to $N$-invariant fibres are given by~$yz = \lambda x^2$, where
\begin{equation}
\label{Equationrs}
4C\lambda^3 - 4(B^2 - AC)\lambda^2 - 4B\lambda - 1 = 0.
\end{equation}
\noindent The three other curves are given by $xz = \lambda y^2$ for the same $\lambda$. One can check that any curve given by $xy = \lambda z^2$, $\lambda \neq 0$ is irreducible.

Now construction of examples is reduced to finding coefficients $A$, $B$ and $C$ such that the Galois groups of \eqref{Equationabc}, \eqref{Equationrs} and \eqref{S3fixedpoints} satisfy certain conditions. For simplicitly we put $B = 0$. The following lemma follows from direct computations.

\begin{lemma}
\label{discrimination}
Let $B = 0$ then the order of the Galois group of \eqref{Equationabc} is divisible by $2$ if and only if $AC(16A^3C - 27)$ is not a square in $\ka$, and the order of the Galois group of~\eqref{Equationrs} is divisible by $2$ if and only if $16A^3C - 27$ is not a square in $\ka$.
\end{lemma}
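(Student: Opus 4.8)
The plan is to reduce the statement to a discriminant computation. Recall the classical fact that for a cubic polynomial $f$ over a field of characteristic zero, the Galois group of its splitting field is a subgroup of $\SG_3$, and it is contained in $\AG_3\cong\CG_3$ — equivalently, it has odd order — if and only if the discriminant of $f$ is a square in the ground field. This covers the reducible and multiple-root cases uniformly: a cubic over $\ka$ with a repeated root splits completely over $\ka$, so its Galois group is trivial and its discriminant vanishes (hence is a square). Thus "the order of the Galois group is divisible by $2$" is precisely the negation of "the discriminant is a square in $\ka$", and it suffices to compute the discriminants of \eqref{Equationabc} and \eqref{Equationrs} after setting $B=0$, and to decide when they are squares in $\ka$. (Here I will use that smoothness of the surface $X$ given by \eqref{exequation3} forces $A\neq 0$ and $C\neq 0$, so both cubics genuinely have degree $3$.)

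First I would substitute $B=0$: equation \eqref{Equationabc} becomes $-4ACk^3+9k-4A=0$ and equation \eqref{Equationrs} becomes $4C\lambda^3+4AC\lambda^2-1=0$. Applying the standard formula $\Delta=18abcd-4b^3d+b^2c^2-4ac^3-27a^2d^2$ for the discriminant of $ax^3+bx^2+cx+d$, a direct computation gives, for \eqref{Equationabc}, $\Delta=-432\,AC\,(16A^3C-27)$, and for \eqref{Equationrs}, $\Delta=16\,C^2\,(16A^3C-27)$.

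The only point requiring any care is recognizing which factors are squares in $\ka$, and this is where the hypothesis $\omega\in\ka$ enters. Since $\omega=\xi_3\in\ka$, one has $\sqrt{-3}=2\omega+1\in\ka$, so $-3$ is a square in $\ka$; consequently $-432=-2^4\cdot 3^3=12^2\cdot(-3)$ is a square in $\ka$, and also $16C^2=(4C)^2$ is a square. Therefore the discriminant of \eqref{Equationabc} is a square in $\ka$ if and only if $AC(16A^3C-27)$ is, and the discriminant of \eqref{Equationrs} is a square in $\ka$ if and only if $16A^3C-27$ is. Combining these observations with the discriminant criterion of the first paragraph gives exactly the two asserted equivalences. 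There is no genuine obstacle beyond the bookkeeping; the one substantive observation is that the presence of $\omega$ in $\ka$ makes the spurious numerical factor $-432$ a square, so that it drops out of the square-class of the discriminant.
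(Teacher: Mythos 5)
Your proof is correct, and it is exactly the ``direct computation'' that the paper leaves unstated: reducing to the discriminant criterion for cubics, computing $\Delta=-432\,AC(16A^3C-27)$ and $\Delta=16C^2(16A^3C-27)$ for the two equations, and using $\omega\in\ka$ to see that $-432=12^2\cdot(-3)$ is a square so that only the stated factors survive in the square class. The edge cases (reducible cubics, repeated roots, and $A,C\neq 0$ from smoothness) are handled properly, so nothing is missing.
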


\begin{example}
\label{nrattonrat}
Assume that a del Pezzo surface $X$ of degree $2$ is given by equation~\eqref{exequation3}, and $A$ is not a square in $\ka$, $B = 0$, $C = \frac{9}{4A^3}$. Then $16A^3C - 27 = 9$ \mbox{and $AC(16A^3C - 27) = \frac{9}{A^2}$} are squares in $\ka$. Therefore by Lemma \ref{discrimination} the only element of order $2$ in $\Gamma$ is $\gamma$. Thus $X$ is not $\ka$-rational.

The number $C$ is not a square in $\ka$. Therefore $X / N$ is not $\ka$-rational by Corollary \ref{DP2C3quotient}.

Equation \eqref{S3fixedpoints} takes form
$$
\frac{9}{4A^3} \cdot z^4 + 2x^3z + Ax^4 = 0.
$$
One can put $Ax = 1$, and get the equation $9z^4 + 8z + 4 = 0$. If the roots of this equation are transitively permuted by the Galois group then the quotient $X / G$ is not $\ka$-rational by Corollary \ref{DP2S3quotient}. For example, this holds for $\ka = \mathbb{Q} (\omega)$.
\end{example}

\begin{example}
\label{nrattorat}
Assume that a del Pezzo surface $X$ of degree $2$ is given by equation~\eqref{exequation3}, and $A$ is not a square in $\ka$, $B = 0$, $C = \frac{u^2}{A^2}$, and $u$ is a nonzero number such that~\mbox{$16Au^2 - 27 = Aw^2$} (such $u$ and $w$ always exist). Then $16A^3C - 27 = Aw^2$ is not a square in $\ka$, and~$AC(16A^3C - 27) = \frac{u^2w^2}{A^2}$ is a square $\ka$. Therefore by Lemma~\ref{discrimination} the group $\Gamma$ contains an element conjugate to $s\gamma$. Thus $X$ is not $\ka$-rational by Lemma~\ref{type4min2nonrational} and Theorem \ref{ratcrit}.

The number $C$ is a square in $\ka$. Therefore $X / N$ and $X / G$ are $\ka$-rational by Corollaries~\ref{DP2C3quotient} and \ref{DP2S3quotient}.
\end{example}

Now we construct examples of $\ka$-rational and non-$\ka$-rational quotients of $\ka$-rational del Pezzo surfaces $X$ of degree $2$ such that $\rho(X)^N = 1$. According to Proposition \ref{type4minrational} in this case the group $\Gamma$ is conjugate to $\langle r, cs\gamma \rangle$, $\langle r, c\gamma \rangle$ or $\langle r, c\gamma, s \rangle$. In particular, this group does not contain the elements $a^2b$, $a^2br$ and $a^2br^2$. It means that equation \eqref{Equationabc} has a root defined over~$\ka$. We assume that this root is $k = 1$. Then one has
$$
C = \frac{9 - 4A}{4A}, \qquad 16A^3C - 27 = 4A^2(9 - 4A) - 27 = (2A - 3)^2(-4A - 3),
$$
$$
AC(16A^3C - 27) = \frac{(2A - 3)^2(9 - 4A)(-4A - 3)}{4}.
$$

\begin{example}
\label{rattorat}
Assume that a del Pezzo surface $X$ of degree $2$ is given by equation~\eqref{exequation3}, and $A = -1$, $B = 0$, $C = - \frac{13}{4}$. Then $9 - 4A = 13$, and $-4A - 3 = 1$ is a square. Equation~\eqref{Equationrs} takes form
$$
-13\lambda^3 + 13\lambda^2 - 1 = 0.
$$
Assume that this equation does not have a root in $\ka$, $\sqrt{-13} \in \ka$ and $\sqrt{13} \notin \ka$ (this holds, for example, for $\ka = \mathbb{Q}\left(\omega, \sqrt{-13}\right)$). Then $\Gamma$ contains $r$, and by Lemma \ref{discrimination} one has~$\Gamma = \langle r, cs\gamma \rangle$. Thus $X$ is $\ka$-rational by Proposition \ref{type4minrational} and Theorem \ref{ratcrit}.

The number $C = - \frac{13}{4}$ is a square in $\ka$. Therefore $X / N$ and $X / G$ are $\ka$-rational by Corollaries~\ref{DP2C3quotient} and \ref{DP2S3quotient}.
\end{example}

\begin{example}
\label{rattonrat}
Assume that a del Pezzo surface $X$ of degree $2$ is given by equation~\eqref{exequation3}, and $A = 2$, $B = 0$, $C = \frac{1}{8}$. Then $9 - 4A = 1$ is a square, and $-4A - 3 = -11$. Equation~\eqref{Equationrs} takes form
$$
\lambda^3 + 2\lambda^2 - 2 = 0.
$$
Assume that this equation does not have a root in $\ka$, $\sqrt{-22} \in \ka$ and $\sqrt{2} \notin \ka$ (this holds, for example, for $\ka = \mathbb{Q}\left(\omega, \sqrt{-22}\right)$). Then $\Gamma$ contains $r$, and by Lemma \ref{discrimination} one has~$\Gamma = \langle r, c\gamma \rangle$. Thus $X$ is $\ka$-rational by Proposition \ref{type4minrational} and Theorem \ref{ratcrit}.

The number $C = \frac{1}{8}$ is not a square in $\ka$. Therefore $X / N$ is not $\ka$-rational by Corollary~\ref{DP2C3quotient}.

Equation \eqref{S3fixedpoints} takes form
$$
z^4 + 16x^3z + 16x^4 = 0.
$$
If the roots of this equation are transitively permuted by the Galois group then the quotient $X / G$ is not $\ka$-rational by Corollary \ref{DP2S3quotient}. For example, this holds for~\mbox{$\ka = \mathbb{Q} (\omega, \sqrt{-22})$}.
\end{example}

\bibliographystyle{alpha}

\end{document}